\newtheorem{theorem}{Theorem}[section]
\newtheorem*{theorem*}{Theorem}
\newtheorem{axiom}[theorem]{Axiom}
\newtheorem*{claim*}{Claim}
\newtheorem{conjecture}[theorem]{Conjecture}
\newtheorem{corollary}[theorem]{Corollary}
\newtheorem{example}[theorem]{Example}
\newtheorem{exercise}[theorem]{Exercise}
\newtheorem{proposition}[theorem]{Proposition}
\theoremstyle{definition}
\newtheorem{remark}[theorem]{Remark}
\newtheorem{lemma}[theorem]{Lemma}
\newtheorem{definition}[theorem]{Definition}
\chardef\@x10\chardef\@xv60
\def\tcitime{
\def\@time{%
  \@minute\time\@hour\@minute\divide\@hour\@xv
  \ifnum\@hour<\@x 0\fi\the\@hour:%
  \multiply\@hour\@xv\advance\@minute-\@hour
  \ifnum\@minute<\@x 0\fi\the\@minute
  }}%
\def\QCTOpt[#1]#2{%
  \def\QCTOptB{#1}
  \def\QCTOptA{#2}
}
\def\QCTNOpt#1{%
  \def\QCTOptA{#1}
  \let\QCTOptB\empty
}
\def\Qct{%
  \@ifnextchar[{%
    \QCTOpt}{\QCTNOpt}
}
\def\QCBOpt[#1]#2{%
  \def\QCBOptB{#1}
  \def\QCBOptA{#2}
}
\def\QCBNOpt#1{%
  \def\QCBOptA{#1}
  \let\QCBOptB\empty
}
\def\Qcb{%
  \@ifnextchar[{%
    \QCBOpt}{\QCBNOpt}
}
\def\PrepCapArgs{%
  \ifx\QCBOptA\empty
    \ifx\QCTOptA\empty
      {}%
    \else
      \ifx\QCTOptB\empty
        {\QCTOptA}%
      \else
        [\QCTOptB]{\QCTOptA}%
      \fi
    \fi
  \else
    \ifx\QCBOptA\empty
      {}%
    \else
      \ifx\QCBOptB\empty
        {\QCBOptA}%
      \else
        [\QCBOptB]{\QCBOptA}%
      \fi
    \fi
  \fi
}
\def\GRAPHICSPS#1{%
 \ifcase\GRAPHICSTYPE
   \special{ps: #1}%
 \or
   \special{language "PS", include "#1"}%
 \fi
}%
\def\graffile#1#2#3#4{%
    \leavevmode
    \raise -#4 \BOXTHEFRAME{%
        \hbox to #2{\raise #3\hbox to #2{\null #1\hfil}}}%
}%
\def\draftbox#1#2#3#4{%
 \leavevmode\raise -#4 \hbox{%
  \frame{\rlap{\protect\tiny #1}\hbox to #2%
   {\vrule height#3 width\z@ depth\z@\hfil}%
  }%
 }%
}%
\newif\ifwasdraft
\def\GRAPHIC#1#2#3#4#5{%
 \ifnum\draft=\@ne\draftbox{#2}{#3}{#4}{#5}%
  \else\graffile{#1}{#3}{#4}{#5}%
  \fi
 }%
\def\addtoLaTeXparams#1{%
    \edef\LaTeXparams{\LaTeXparams #1}}%
\newif\ifBoxFrame \BoxFramefalse
\newif\ifOverFrame \OverFramefalse
\newif\ifUnderFrame \UnderFramefalse
\def\BOXTHEFRAME#1{%
   \hbox{%
      \ifBoxFrame
         \frame{#1}%
      \else
         {#1}%
      \fi
   }%
}
\def\doFRAMEparams#1{\BoxFramefalse\OverFramefalse\UnderFramefalse\readFRAMEparams#1\end}%
\def\readFRAMEparams#1{%
 \ifx#1\end%
  \let\next=\relax
  \else
  \ifx#1i\dispkind=\z@\fi
  \ifx#1d\dispkind=\@ne\fi
  \ifx#1f\dispkind=\tw@\fi
  \ifx#1t\addtoLaTeXparams{t}\fi
  \ifx#1b\addtoLaTeXparams{b}\fi
  \ifx#1p\addtoLaTeXparams{p}\fi
  \ifx#1h\addtoLaTeXparams{h}\fi
  \ifx#1X\BoxFrametrue\fi
  \ifx#1O\OverFrametrue\fi
  \ifx#1U\UnderFrametrue\fi
  \ifx#1w
    \ifnum\draft=1\wasdrafttrue\else\wasdraftfalse\fi
    \draft=\@ne
  \fi
  \let\next=\readFRAMEparams
  \fi
 \next
 }%
\def\IFRAME#1#2#3#4#5#6{%
      \bgroup
      \let\QCTOptA\empty
      \let\QCTOptB\empty
      \let\QCBOptA\empty
      \let\QCBOptB\empty
      #6%
      \parindent=0pt%
      \leftskip=0pt
      \rightskip=0pt
      \setbox0 = \hbox{\QCBOptA}%
      \@tempdima = #1\relax
      \ifOverFrame
          \typeout{This is not implemented yet}%
          \show\HELP
      \else
         \ifdim\wd0>\@tempdima
            \advance\@tempdima by \@tempdima
            \ifdim\wd0 >\@tempdima
               \textwidth=\@tempdima
               \setbox1 =\vbox{%
                  \noindent\hbox to \@tempdima{\hfill\GRAPHIC{#5}{#4}{#1}{#2}{#3}\hfill}\\%
                  \noindent\hbox to \@tempdima{\parbox[b]{\@tempdima}{\QCBOptA}}%
               }%
               \wd1=\@tempdima
            \else
               \textwidth=\wd0
               \setbox1 =\vbox{%
                 \noindent\hbox to \wd0{\hfill\GRAPHIC{#5}{#4}{#1}{#2}{#3}\hfill}\\%
                 \noindent\hbox{\QCBOptA}%
               }%
               \wd1=\wd0
            \fi
         \else
            \ifdim\wd0>0pt
              \hsize=\@tempdima
              \setbox1 =\vbox{%
                \unskip\GRAPHIC{#5}{#4}{#1}{#2}{0pt}%
                \break
                \unskip\hbox to \@tempdima{\hfill \QCBOptA\hfill}%
              }%
              \wd1=\@tempdima
           \else
              \hsize=\@tempdima
              \setbox1 =\vbox{%
                \unskip\GRAPHIC{#5}{#4}{#1}{#2}{0pt}%
              }%
              \wd1=\@tempdima
           \fi
         \fi
         \@tempdimb=\ht1
         \advance\@tempdimb by \dp1
         \advance\@tempdimb by -#2%
         \advance\@tempdimb by #3%
         \leavevmode
         \raise -\@tempdimb \hbox{\box1}%
      \fi
      \egroup%
}%
\def\DFRAME#1#2#3#4#5{%
 \begin{center}
     \let\QCTOptA\empty
     \let\QCTOptB\empty
     \let\QCBOptA\empty
     \let\QCBOptB\empty
     \ifOverFrame 
        #5\QCTOptA\par
     \fi
     \GRAPHIC{#4}{#3}{#1}{#2}{\z@}
     \ifUnderFrame 
        \nobreak\par #5\QCBOptA
     \fi
 \end{center}%
 }%
\def\FFRAME#1#2#3#4#5#6#7{%
 \begin{figure}[#1]%
  \let\QCTOptA\empty
  \let\QCTOptB\empty
  \let\QCBOptA\empty
  \let\QCBOptB\empty
  \ifOverFrame
    #4
    \ifx\QCTOptA\empty
    \else
      \ifx\QCTOptB\empty
        \caption{\QCTOptA}%
      \else
        \caption[\QCTOptB]{\QCTOptA}%
      \fi
    \fi
    \ifUnderFrame\else
      \label{#5}%
    \fi
  \else
    \UnderFrametrue%
  \fi
  \begin{center}\GRAPHIC{#7}{#6}{#2}{#3}{\z@}\end{center}%
  \ifUnderFrame
    #4
    \ifx\QCBOptA\empty
      \caption{}%
    \else
      \ifx\QCBOptB\empty
        \caption{\QCBOptA}%
      \else
        \caption[\QCBOptB]{\QCBOptA}%
      \fi
    \fi
    \label{#5}%
  \fi
  \end{figure}%
 }%
\def\makeactives{
  \catcode`\"=\active
  \catcode`\;=\active
  \catcode`\:=\active
  \catcode`\'=\active
  \catcode`\~=\active
}
   \gdef\activesoff{%
      \def"{\string"}
      \def;{\string;}
      \def:{\string:}
      \def'{\string'}
      \def~{\string~}
    }
\def\FRAME#1#2#3#4#5#6#7#8{%
 \bgroup
 \@ifundefined{bbl@deactivate}{}{\activesoff}
 \ifnum\draft=\@ne
   \wasdrafttrue
 \else
   \wasdraftfalse%
 \fi
 \def\LaTeXparams{}%
 \dispkind=\z@
 \def\LaTeXparams{}%
 \doFRAMEparams{#1}%
 \ifnum\dispkind=\z@\IFRAME{#2}{#3}{#4}{#7}{#8}{#5}\else
  \ifnum\dispkind=\@ne\DFRAME{#2}{#3}{#7}{#8}{#5}\else
   \ifnum\dispkind=\tw@
    \edef\@tempa{\noexpand\FFRAME{\LaTeXparams}}%
    \@tempa{#2}{#3}{#5}{#6}{#7}{#8}%
    \fi
   \fi
  \fi
  \ifwasdraft\draft=1\else\draft=0\fi{}%
  \egroup
 }%
\def\TEXUX#1{"texux"}
\long\def\QQQ#1#2{%
     \long\expandafter\def\csname#1\endcsname{#2}}%
\long\def\QQA#1#2{}%
\def\QTR#1#2{{\csname#1\endcsname #2}}
\def\EXPAND#1[#2]#3{}%
\def\NOEXPAND#1[#2]#3{}%
\def\LaTeXparent#1{}%
\def\ChildStyles#1{}%
\def\ChildDefaults#1{}%
\def\QTagDef#1#2#3{}%
\def\QQfnmark#1{\footnotemark}
\def\makeatletter\input gnuindex.sty\makeatother\makeindex{\makeatletter\input gnuindex.sty\makeatother\makeindex}%
\def\initial#1{\bigbreak{\raggedright\large\bf #1}\kern 2\p@\penalty3000}}%
 \def\abstract{%
  \if@twocolumn
   \section*{Abstract (Not appropriate in this style!)}%
   \else \small 
   \begin{center}{\bf Abstract\vspace{-.5em}\vspace{\z@}}\end{center}%
   \quotation 
   \fi
  }%
   \def\registered{\relax\ifmmode{}\r@gistered
                    \else$\m@th\r@gistered$\fi}%
 \def\r@gistered{^{\ooalign
  {\hfil\raise.07ex\hbox{$\scriptstyle\rm\text{R}$}\hfil\crcr
  \mathhexbox20D}}}}{}%
\newdimen\theight
\def\Column{%
 \vadjust{\setbox\z@=\hbox{\scriptsize\quad\quad tcol}%
  \theight=\ht\z@\advance\theight by \dp\z@\advance\theight by \lineskip
  \kern -\theight \vbox to \theight{%
   \rightline{\rlap{\box\z@}}%
   \vss
   }%
  }%
 }%
\def\qed{%
 \ifhmode\unskip\nobreak\fi\ifmmode\ifinner\else\hskip5\p@\fi\fi
 \hbox{\hskip5\p@\vrule width4\p@ height6\p@ depth1.5\p@\hskip\p@}%
 }%
\def\miss{\hbox{\vrule height2\p@ width 2\p@ depth\z@}}%
\def\tcol#1{{\baselineskip=6\p@ \vcenter{#1}} \Column}  %
\def\newfmtname{LaTeX2e}
\def\chkcompat{%
   \if@compatibility
   \else
     \usepackage{latexsym}
   \fi
}
  \DeclareOldFontCommand{\rm}{\normalfont\rmfamily}{\mathrm}
  \DeclareOldFontCommand{\sf}{\normalfont\sffamily}{\mathsf}
  \DeclareOldFontCommand{\tt}{\normalfont\ttfamily}{\mathtt}
  \DeclareOldFontCommand{\bf}{\normalfont\bfseries}{\mathbf}
  \DeclareOldFontCommand{\it}{\normalfont\itshape}{\mathit}
  \DeclareOldFontCommand{\sl}{\normalfont\slshape}{\@nomath\sl}
  \DeclareOldFontCommand{\sc}{\normalfont\scshape}{\@nomath\sc}
\def\alpha{\Greekmath 010B }%
\def\beta{\Greekmath 010C }%
\def\gamma{\Greekmath 010D }%
\def\delta{\Greekmath 010E }%
\def\epsilon{\Greekmath 010F }%
\def\zeta{\Greekmath 0110 }%
\def\eta{\Greekmath 0111 }%
\def\theta{\Greekmath 0112 }%
\def\iota{\Greekmath 0113 }%
\def\kappa{\Greekmath 0114 }%
\def\lambda{\Greekmath 0115 }%
\def\mu{\Greekmath 0116 }%
\def\nu{\Greekmath 0117 }%
\def\xi{\Greekmath 0118 }%
\def\pi{\Greekmath 0119 }%
\def\rho{\Greekmath 011A }%
\def\sigma{\Greekmath 011B }%
\def\tau{\Greekmath 011C }%
\def\upsilon{\Greekmath 011D }%
\def\phi{\Greekmath 011E }%
\def\chi{\Greekmath 011F }%
\def\psi{\Greekmath 0120 }%
\def\omega{\Greekmath 0121 }%
\def\varepsilon{\Greekmath 0122 }%
\def\vartheta{\Greekmath 0123 }%
\def\varpi{\Greekmath 0124 }%
\def\varrho{\Greekmath 0125 }%
\def\varsigma{\Greekmath 0126 }%
\def\varphi{\Greekmath 0127 }%
\def\nabla{\Greekmath 0272 }
\def\FindBoldGroup{%
   {\setbox0=\hbox{$\mathbf{x\global\edef\theboldgroup{\the\mathgroup}}$}}%
}
\def\Greekmath#1#2#3#4{%
    \if@compatibility
        \ifnum\mathgroup=\symbold
           \mathchoice{\mbox{\boldmath$\displaystyle\mathchar"#1#2#3#4$}}%
                      {\mbox{\boldmath$\textstyle\mathchar"#1#2#3#4$}}%
                      {\mbox{\boldmath$\scriptstyle\mathchar"#1#2#3#4$}}%
                      {\mbox{\boldmath$\scriptscriptstyle\mathchar"#1#2#3#4$}}%
        \else
           \mathchar"#1#2#3#4%
        \fi 
    \else 
        \FindBoldGroup
        \ifnum\mathgroup=\theboldgroup 
           \mathchoice{\mbox{\boldmath$\displaystyle\mathchar"#1#2#3#4$}}%
                      {\mbox{\boldmath$\textstyle\mathchar"#1#2#3#4$}}%
                      {\mbox{\boldmath$\scriptstyle\mathchar"#1#2#3#4$}}%
                      {\mbox{\boldmath$\scriptscriptstyle\mathchar"#1#2#3#4$}}%
        \else
           \mathchar"#1#2#3#4%
        \fi     	    
	  \fi}
\newif\ifGreekBold  \GreekBoldfalse
\let\SAVEPBF=\pbf
\def\pbf{\GreekBoldtrue\SAVEPBF}%
  \newcounter{equationnumber}  
  \def\mathletters{%
     \addtocounter{equation}{1}
     \edef\@currentlabel{\theequation}%
     \setcounter{equationnumber}{\c@equation}
     \setcounter{equation}{0}%
     \edef\theequation{\@currentlabel\noexpand\alph{equation}}%
  }
    \def\BibTeX{{\rm B\kern-.05em{\sc i\kern-.025em b}\kern-.08em
                 T\kern-.1667em\lower.7ex\hbox{E}\kern-.125emX}}}{}%
\def\AmS{{\protect\usefont{OMS}{cmsy}{m}{n}%
                A\kern-.1667em\lower.5ex\hbox{M}\kern-.125emS}}}{}%
\let\DOTSI\relax
\def\RIfM@{\relax\ifmmode}%
\def\FN@{\futurelet\next}%
\def\iint{\DOTSI\intno@\tw@\FN@\ints@}%
\def\iiint{\DOTSI\intno@\thr@@\FN@\ints@}%
\def\iiiint{\DOTSI\intno@4 \FN@\ints@}%
\def\idotsint{\DOTSI\intno@\z@\FN@\ints@}%
\def\ints@{\findlimits@\ints@@}%
\newif\iflimtoken@
\newif\iflimits@
\def\findlimits@{\limtoken@true\ifx\next\limits\limits@true
 \else\ifx\next\nolimits\limits@false\else
 \limtoken@false\ifx\ilimits@\nolimits\limits@false\else
 \ifinner\limits@false\else\limits@true\fi\fi\fi\fi}%
\def\multint@{\int\ifnum\intno@=\z@\intdots@                          
 \else\intkern@\fi                                                    
 \ifnum\intno@>\tw@\int\intkern@\fi                                   
 \ifnum\intno@>\thr@@\int\intkern@\fi                                 
 \int}
\def\multintlimits@{\intop\ifnum\intno@=\z@\intdots@\else\intkern@\fi
 \ifnum\intno@>\tw@\intop\intkern@\fi
 \ifnum\intno@>\thr@@\intop\intkern@\fi\intop}%
\def\intic@{%
    \mathchoice{\hskip.5em}{\hskip.4em}{\hskip.4em}{\hskip.4em}}%
\def\negintic@{\mathchoice
 {\hskip-.5em}{\hskip-.4em}{\hskip-.4em}{\hskip-.4em}}%
\def\ints@@{\iflimtoken@                                              
 \def\ints@@@{\iflimits@\negintic@
   \mathop{\intic@\multintlimits@}\limits                             
  \else\multint@\nolimits\fi                                          
  \eat@}
 \else                                                                
 \def\ints@@@{\iflimits@\negintic@
  \mathop{\intic@\multintlimits@}\limits\else
  \multint@\nolimits\fi}\fi\ints@@@}%
\def\intkern@{\mathchoice{\!\!\!}{\!\!}{\!\!}{\!\!}}%
\def\plaincdots@{\mathinner{\cdotp\cdotp\cdotp}}%
\def\intdots@{\mathchoice{\plaincdots@}%
 {{\cdotp}\mkern1.5mu{\cdotp}\mkern1.5mu{\cdotp}}%
 {{\cdotp}\mkern1mu{\cdotp}\mkern1mu{\cdotp}}%
 {{\cdotp}\mkern1mu{\cdotp}\mkern1mu{\cdotp}}}%
\def\RIfM@{\relax\protect\ifmmode}
\def\text{\RIfM@\expandafter\text@\else\expandafter\mbox\fi}
\let\nfss@text\text
\def\text@#1{\mathchoice
   {\textdef@\displaystyle\f@size{#1}}%
   {\textdef@\textstyle\tf@size{\firstchoice@false #1}}%
   {\textdef@\textstyle\sf@size{\firstchoice@false #1}}%
   {\textdef@\textstyle \ssf@size{\firstchoice@false #1}}%
   \glb@settings}
\def\textdef@#1#2#3{\hbox{{%
                    \everymath{#1}%
                    \let\f@size#2\selectfont
                    #3}}}
\newif\iffirstchoice@
\def\Let@{\relax\iffalse{\fi\let\\=\cr\iffalse}\fi}%
\def\vspace@{\def\vspace##1{\crcr\noalign{\vskip##1\relax}}}%
\def\multilimits@{\bgroup\vspace@\Let@
 \baselineskip\fontdimen10 \scriptfont\tw@
 \advance\baselineskip\fontdimen12 \scriptfont\tw@
 \lineskip\thr@@\fontdimen8 \scriptfont\thr@@
 \lineskiplimit\lineskip
 \vbox\bgroup\ialign\bgroup\hfil$\m@th\scriptstyle{##}$\hfil\crcr}%
\def\Sb{_\multilimits@}%
\def\endSb{\crcr\egroup\egroup\egroup}%
\def\Sp{^\multilimits@}%
\newdimen\ex@
\def\rightarrowfill@#1{$#1\m@th\mathord-\mkern-6mu\cleaders
 \hbox{$#1\mkern-2mu\mathord-\mkern-2mu$}\hfill
 \mkern-6mu\mathord\rightarrow$}%
\def\leftarrowfill@#1{$#1\m@th\mathord\leftarrow\mkern-6mu\cleaders
 \hbox{$#1\mkern-2mu\mathord-\mkern-2mu$}\hfill\mkern-6mu\mathord-$}%
\def\leftrightarrowfill@#1{$#1\m@th\mathord\leftarrow
\mkern-6mu\cleaders
 \hbox{$#1\mkern-2mu\mathord-\mkern-2mu$}\hfill
 \mkern-6mu\mathord\rightarrow$}%
\def\overrightarrow{\mathpalette\overrightarrow@}%
\def\overrightarrow@#1#2{\vbox{\ialign{##\crcr\rightarrowfill@#1\crcr
 \noalign{\kern-\ex@\nointerlineskip}$\m@th\hfil#1#2\hfil$\crcr}}}%
\def\overleftarrow{\mathpalette\overleftarrow@}%
\def\overleftarrow@#1#2{\vbox{\ialign{##\crcr\leftarrowfill@#1\crcr
 \noalign{\kern-\ex@\nointerlineskip}$\m@th\hfil#1#2\hfil$\crcr}}}%
\def\overleftrightarrow{\mathpalette\overleftrightarrow@}%
\def\overleftrightarrow@#1#2{\vbox{\ialign{##\crcr
   \leftrightarrowfill@#1\crcr
 \noalign{\kern-\ex@\nointerlineskip}$\m@th\hfil#1#2\hfil$\crcr}}}%
\def\underrightarrow{\mathpalette\underrightarrow@}%
\def\underrightarrow@#1#2{\vtop{\ialign{##\crcr$\m@th\hfil#1#2\hfil
  $\crcr\noalign{\nointerlineskip}\rightarrowfill@#1\crcr}}}%
\def\underleftarrow{\mathpalette\underleftarrow@}%
\def\underleftarrow@#1#2{\vtop{\ialign{##\crcr$\m@th\hfil#1#2\hfil
  $\crcr\noalign{\nointerlineskip}\leftarrowfill@#1\crcr}}}%
\def\underleftrightarrow{\mathpalette\underleftrightarrow@}%
\def\underleftrightarrow@#1#2{\vtop{\ialign{##\crcr$\m@th
  \hfil#1#2\hfil$\crcr
 \noalign{\nointerlineskip}\leftrightarrowfill@#1\crcr}}}%
\def\qopnamewl@#1{\mathop{\operator@font#1}\nlimits@}
\let\nlimits@\displaylimits
\def\setboxz@h{\setbox\z@\hbox}
\def\varlim@#1#2{\mathop{\vtop{\ialign{##\crcr
 \hfil$#1\m@th\operator@font lim$\hfil\crcr
 \noalign{\nointerlineskip}#2#1\crcr
 \noalign{\nointerlineskip\kern-\ex@}\crcr}}}}
 \def\rightarrowfill@#1{\m@th\setboxz@h{$#1-$}\ht\z@\z@
  $#1\copy\z@\mkern-6mu\cleaders
  \hbox{$#1\mkern-2mu\box\z@\mkern-2mu$}\hfill
  \mkern-6mu\mathord\rightarrow$}
\def\leftarrowfill@#1{\m@th\setboxz@h{$#1-$}\ht\z@\z@
  $#1\mathord\leftarrow\mkern-6mu\cleaders
  \hbox{$#1\mkern-2mu\copy\z@\mkern-2mu$}\hfill
  \mkern-6mu\box\z@$}
\def\projlim{\qopnamewl@{proj\,lim}}
\def\injlim{\qopnamewl@{inj\,lim}}
\def\varinjlim{\mathpalette\varlim@\rightarrowfill@}
\def\varprojlim{\mathpalette\varlim@\leftarrowfill@}
\def\varliminf{\mathpalette\varliminf@{}}
\def\varliminf@#1{\mathop{\underline{\vrule\@depth.2\ex@\@width\z@
   \hbox{$#1\m@th\operator@font lim$}}}}
\def\varlimsup{\mathpalette\varlimsup@{}}
\def\varlimsup@#1{\mathop{\overline
  {\hbox{$#1\m@th\operator@font lim$}}}}
\def\align{\@verbatim \frenchspacing\@vobeyspaces \@alignverbatim
You are using the "align" environment in a style in which it is not defined.}
\let\csname endalign*\endcsname =\endtrivlist
\def\alignat{\@verbatim \frenchspacing\@vobeyspaces \@alignatverbatim
You are using the "alignat" environment in a style in which it is not defined.}
\let\csname endalignat*\endcsname =\endtrivlist
\def\xalignat{\@verbatim \frenchspacing\@vobeyspaces \@xalignatverbatim
You are using the "xalignat" environment in a style in which it is not defined.}
\let\csname endxalignat*\endcsname =\endtrivlist
\def\gather{\@verbatim \frenchspacing\@vobeyspaces \@gatherverbatim
You are using the "gather" environment in a style in which it is not defined.}
\let\csname endgather*\endcsname =\endtrivlist
\def\multiline{\@verbatim \frenchspacing\@vobeyspaces \@multilineverbatim
You are using the "multiline" environment in a style in which it is not defined.}
\let\csname endmultiline*\endcsname =\endtrivlist
\def\arrax{\@verbatim \frenchspacing\@vobeyspaces \@arraxverbatim
You are using a type of "array" construct that is only allowed in AmS-LaTeX.}
\def\tabulax{\@verbatim \frenchspacing\@vobeyspaces \@tabulaxverbatim
You are using a type of "tabular" construct that is only allowed in AmS-LaTeX.}
\let\csname endarrax*\endcsname =\endtrivlist
\let\csname endtabulax*\endcsname =\endtrivlist
\def\@@eqncr{\let\@tempa\relax
    \ifcase\@eqcnt \def\@tempa{& & &}\or \def\@tempa{& &}%
      \else \def\@tempa{&}\fi
     \@tempa
     \if@eqnsw
        \iftag@
           \@taggnum
        \else
           \@eqnnum\stepcounter{equation}%
        \fi
     \fi
     \global\tag@false
     \global\@eqnswtrue
     \global\@eqcnt\z@\cr}
 \def\endequation{%
     \ifmmode\ifinner 
      \iftag@
        \addtocounter{equation}{-1} 
        $\hfil
           \displaywidth\linewidth\@taggnum\egroup \endtrivlist
        \global\tag@false
        \global\@ignoretrue   
      \else
        $\hfil
           \displaywidth\linewidth\@eqnnum\egroup \endtrivlist
        \global\tag@false
        \global\@ignoretrue 
      \fi
     \else   
      \iftag@
        \addtocounter{equation}{-1} 
        \eqno \hbox{\@taggnum}
        \global\tag@false%
        $$\global\@ignoretrue
      \else
        \eqno \hbox{\@eqnnum}
        $$\global\@ignoretrue
      \fi
     \fi\fi
 } 
 \newif\iftag@ \tag@false
 \def\tag{\@ifnextchar*{\@tagstar}{\@tag}}
 \def\@tag#1{%
     \global\tag@true
     \global\def\@taggnum{(#1)}}
 \def\@tagstar*#1{%
     \global\tag@true
     \global\def\@taggnum{#1}%
}
\begin{document}
\def\cprime{$'$}

\title[Model theory and Rokhlin dimension for compact quantum group actions]{Model theory and Rokhlin
dimension\\ for compact quantum group actions}
\author[Eusebio Gardella]{Eusebio Gardella}
\address{Eusebio Gardella\\
Westf\"{a}lische Wilhelms-Universit\"{a}t M\"{u}nster, Fachbereich
Mathematik, Einsteinstrasse 62, 48149 M\"{u}nster, Germany}
\email{gardella@uni-muenster.de}
\urladdr{https://wwwmath.uni-muenster.de/u/gardella/}
\author[Mehrdad Kalantar]{Mehrdad Kalantar}
\address{Mehrdad Kalantar, Department of Mathematics, University of Houston,
Philip Guthrie Hoffman Hall,
3551 Cullen Blvd., Houston, TX 77204, USA}
\email{kalantar@math.uh.edu}
\urladdr{https://www.math.uh.edu/~kalantar/}
\author{Martino Lupini}
\address{Martino Lupini\\
Mathematics Department\\
California Institute of Technology\\
1200 E. California Blvd\\
MC 253-37\\
Pasadena, CA 91125}
\email{lupini@caltech.edu}
\urladdr{http://www.lupini.org/}
\date{\today }
\subjclass[2000]{Primary 20G42, 46L55, 54H05; Secondary 03E15, 37A55}
\thanks{This work was initiated during a visit of E.G. and M.L. to the
Mathematisches Forschungsinstitut Oberwolfach in August 2016, supported by
an Oberwolfach Leibnitz Fellowship of M.L. Parts of this work were carried
out during a visit of E.G. and M.K. to the California Institute of
Technology in January 2017, and during a visit of E.G. and M.L. to the
Centre de Recerca Matem{\`{a}}tica in March 2017 in occasion of the
Intensive Research Programme on Operator Algebras. The authors gratefully
acknowledge the hospitality and the financial support of all these
institutions. E.G. was partially funded by SFB 878 \emph{Groups, Geometry
and Actions}, and by a postdoctoral fellowship from the Humboldt Foundation.
M.K. was partially supported by the NSF Grant DMS-1700259.
M.L. was partially supported by the NSF Grant DMS-1600186.
This work is part of the project supported by the grant 
H2020-MSCA-RISE-2015-691246-QUANTUM DYNAMICS.}
\keywords{C*-algebra, compact quantum group, quantum group action, Rokhlin
property, Rokhlin dimension, logic for metric structures, axiomatizable class, positive existential embedding}
\dedicatory{}

\begin{abstract}
We show that, for a given compact or discrete quantum group $G$, the class
of actions of $G$ on C*-algebras is first-order axiomatizable in the logic
for metric structures. As an application, we extend the notion of Rokhlin
property for $G$-C*-algebra, introduced by Barlak, Szab\'{o}, and Voigt in
the case when $G$ is second countable and coexact, to an arbitrary compact
quantum group $G$. All the the preservations and rigidity results for
Rokhlin actions of second countable coexact compact quantum groups obtained
by Barlak, Szab\'{o}, and Voigt are shown to hold in this general context.
As a further application, we extend the notion of equivariant order zero
dimension for equivariant *-homomorphisms, introduced in the classical
setting by the first and third authors, to actions of compact quantum
groups. This allows us to define the Rokhlin dimension of an action of a
compact quantum group on a C*-algebra, recovering the Rokhlin property as
Rokhlin dimension zero. We conclude by establishing a preservation result
for finite nuclear dimension and finite decomposition rank when passing to
fixed point algebras and crossed products by compact quantum group actions
with finite Rokhlin dimension.
\end{abstract}

\maketitle

\section{Introduction}

The Rokhlin property is a freeness condition for actions of groups on
C*-algebras. It has been intensively studied in recent years due to, among
other things, the strong implications it has on the structural properties of
fixed point algebras and crossed products. While generalizations to
noncompact groups, such as $\mathbb{Z}$ \cite{kishimoto_rohlin_1995}, or $%
\mathbb{R}$ \cite{hirshberg_rokhlin_2016}, have been considered, the most
common setting where the Rokhlin property has been studied is the one of
finite or, more generally, compact groups; see \cite%
{izumi_finite_2004,gardella_crossed_2014}. In this case, it has recently
been shown implicitly \cite{barlak_sequentially_2016}, and explicitly in 
\cite{gardella_equivariant_2016}, that the Rokhlin property is of
model-theoretic nature. Namely it corresponds to the *-homomorphism defining
the action being positively existential in the sense of first order logic
for metric structures.

Building on this work, the notion of Rokhlin property and the
above-mentioned preservation results have been generalized to the more
general setting of actions of coexact compact \emph{quantum }groups on
C*-algebras in \cite{barlak_spatial_2017}. This work has significantly
expanded the scope of the preservation results for Rokhlin actions, paving
the way of finding several new examples of classifiable C*-algebras arising
from compact quantum group actions. Furthermore, approach from \cite%
{barlak_spatial_2017} has also contributed to a simplification and better
understanding of the Rokhlin property, even in the classical setting.

While very fruitful, the Rokhlin property is also quite restrictive. In
order to circumvent this problem, the notion of Rokhlin dimension has been
recently introduced in the setting of actions of finite groups \cite%
{hirshberg_rokhlin_2015}, compact groups \cite{gardella_rokhlin_2014}, or $%
\mathbb{R}$ \cite{hirshberg_rokhlin_2016}. In this setting, the Rokhlin
property corresponds to having Rokhlin dimension equal to zero. Furthermore,
unlike Rokhlin actions, actions with finite Rohlin dimension are prevalent,
even in the case of C*-algebras that do not admit any Rokhlin action, such
as the Jiang--Su algebra.

The model-theoretic description of the Rokhlin property has been generalized
to Rokhlin dimension in \cite{gardella_equivariant_2016}, in terms of the
notion, introduced therein, of equivariant order zero dimension for an
equivariant *-homomorphism. Such a notion subsumes the notion of positively
existential equivariant *-homomorphism, which corresponds to having
equivariant order zero dimension equal to zero. This new perspective has
been used in \cite{gardella_equivariant_2016} to recover and extend various
preservation results for fixed point algebras and crossed products of
actions with finite Rokhlin dimension.

The goal of this paper is to show how these notions and results naturally
extend to the more general setting of actions of compact quantum groups. To
this purpose, we begin by showing that, for a fixed discrete or compact
quantum group $G$, the class of actions of $G$ on C*-algebras ($G$%
-C*-algebras) is first-order axiomatizable in a suitable language in the
logic for metric structures. This provides a notion of positively
existential $G$-equivariant *-homomorphism between $G$-C*-algebras, as well
as a notion of ultraproducts and reduced products for $G$-C*-algebras
consistent with the one considered in \cite{barlak_spatial_2017}. This
perspective is then used to show how to remove all coexactness and
separability assumptions in the preservation and rigidity results from \cite%
{barlak_spatial_2017}.

We then consider the natural generalization to the notion of equivariant
order zero dimension for equivariant *-homomorphisms, and define Rokhlin
dimension for compact quantum group actions in terms of such a notion. We
use the perspective of order zero dimension to establish many relevant facts
about actions with finite Rokhlin dimension, some of which are new even in
the well studied case of finite group actions. We conclude by showing how
the preservation results for nuclear dimension and decomposition rank under
fixed point algebras and crossed product from \cite%
{hirshberg_rokhlin_2015,gardella_rokhlin_2014,gardella_equivariant_2016}
admit natural extensions to this setting.

In order to make the present paper accessible to readers that are not
necessarily familiar with quantum groups, we recall all the definitions and
results that we use. The interested reader can find more information in the
monographs \cite{timmermann_invitation_2008, de_commer_actions_2016} and the
surveys \cite{maes_notes_1998,kustermans_survey_1999,kustermans_survey_2000}%
. Both compact and discrete quantum groups are subsumed by the more general
class of locally compact quantum groups as defined and studied in \cite%
{kustermans_locally_2000,kustermans_locally_2003,kustermans_operator_2000,vaes_locally_2001}%
. While this allows one to give a unified treatment, it is also technically
more demanding. In order to make the paper more accessible, and since all
our results regard quantum groups that either compact or discrete, we will
present all the notions that we consider in these special cases. The
interested reader is referred to \cite%
{kustermans_locally_2000,kustermans_locally_2003,kustermans_operator_2000,vaes_locally_2001}
as well as the monograph \cite{timmermann_invitation_2008} for more
information on locally compact quantum groups.

For convenience of the readers unfamiliar with first order logic for metric
structures, we include an appendix containing the notions and results from
the logic for metric structures that are used in the present paper. We will
work in the framework of logic for metric structures with domains of
quantification as considered in \cite{farah_model_2014}. A good introduction
to this topic is offered by the monograph \cite{ben_yaacov_model_2008}. A
systematic study of C*-algebras from the perspective of model theory has
been undertaken \cite{farah_model_2016}; see also \cite%
{farah_model_2013,farah_model_2014,farah_model_2014-1,goldbring_kirchbergs_2015,eagle_saturation_2015,farah_relative_2017,carlson_omitting_2014,farah_countable_2013,eagle_fraisse_2016,masumoto_fraisse_2016,masumoto_jiang-su_2016,goldbring_axiomatizability_2016,eagle_pseudoarc_2016}%
.

The present paper is divided into five sections, besides this introduction.
In Section \ref{Section:axiomatization-discrete} we recall the notion of
discrete quantum group, discrete quantum group action, and show that the
class of actions of a given discrete quantum group on C*-algebras is
first-order axiomatizable. The same is done in Section \ref%
{Section:axiomatization-compact} for compact quantum groups. Section \ref%
{Section:crossed} contains some results, to be used in the following
sections, relating the notion of ultraproduct of $G$-C*-algebras with
crossed products and stabilization. In Section \ref{Section:existential} we
introduce the notion of positive existential embeddings and Rokhlin property
for a $G$-C*-algebra, generalizing notions introduced in \cite%
{barlak_spatial_2017} when $G$ is coexact and second countable. The main
results of \cite{barlak_spatial_2017} are then generalized to the case of an
arbitrary compact quantum group $G$. Finally, Section \ref%
{Section:order-zero} contains the notion of $G$-equivariant order zero
dimension for morphisms between $G$-C*-algebras, which is used to define the
Rokhlin dimension of a $G$-C*-algebra. Our main preservation results for
nuclear dimension and decomposition rank for $G$-C*-algebras with finite
Rokhlin dimension are presented here.

Given a subset $X$ of a Banach space $E$, we denote by $[ X] $ the closure
of the linear span of $X$ inside $E$. If $A $ is a C*-algebra, then we let $%
M( A) $ be its \emph{multiplier algebra}, and by $\tilde{A}$ its minimal 
\emph{unitization}. We canonically identify $A$ with an essential ideal of $%
M( A) $ and of $\tilde{A}$. We denote by $\otimes $ the \emph{injective }%
tensor product of Banach spaces, and the \emph{minimal }tensor product of
C*-algebras (which indeed coincides with the injective tensor product as
Banach spaces). The \emph{algebraic }tensor product of complex algebras is
denoted in this paper by $\odot $. A *-homomorphism $\pi\colon A\to B$
between C*-algebras is said to be \emph{nondegenerate} if $[ \pi ( A) B] =B$%
. Given a nondegenerate *-homomorphism $\pi \colon A\rightarrow M(B)$, we
also denote by $\pi $ its unique extension to a (unital, strictly continuous)%
\emph{\ }*-homomorphism $\pi \colon M( A) \rightarrow M( B) $. We say that a
C*-subalgebra $A$ of $B$ is nondegenerate if the inclusion map from $A $ to $%
B$ is nondegenerate. We will frequently use in the following that if $A $ is
a C*-algebra, and $I$ is a dense two-sided ideal in $A$, then $I$ contains
an increasing approximate unit for $A$.

Given a Hilbert space $\mathcal{H}$, we let $\mathbb{B}(\mathcal{H})$ be the
space of bounded linear operators on $\mathcal{H}$, and $\mathbb{K}(\mathcal{%
H})$ be the space of compact operators on $\mathcal{H}$. Given vectors $\xi
,\eta \in \mathcal{H}$ we denote by $\phi _{\xi ,\eta }$ the corresponding
vector linear functional $\phi _{\xi ,\eta }(T):=\langle \eta ,T\xi \rangle $%
. We will frequently use the \emph{leg notation }for elements in a tensor
product \cite[Notation 7.1.1]{timmermann_invitation_2008}. For a Hilbert
space $\mathcal{H}$, we let $\Sigma \in \mathbb{B}(\mathcal{H}\otimes 
\mathcal{H})$ denote the flip unitary. If $T\in \mathbb{B}(\mathcal{H}%
\otimes \mathcal{H})$, we let $T_{12},T_{23},T_{13}\in \mathbb{B}(\mathcal{H}%
\otimes \mathcal{H}\otimes \mathcal{H})$ be defined by $T_{12}=T\otimes 
\mathrm{id}_{\mathcal{H}}$, $T_{23}=\mathrm{id}_{\mathcal{H}}\otimes T$, and 
$T_{13}=\Sigma _{23}T_{12}\Sigma _{23}=\Sigma _{12}T_{23}\Sigma _{12}$. More
generally, one can similarly define the operators $T_{i_{1}\ldots i_{k}}\in
B(\mathcal{H}\otimes \cdots \otimes \mathcal{H})$ for any sequence of
indices $i_{1},\ldots ,i_{k}$.

\section{An axiomatization of discrete quantum group actions\label%
{Section:axiomatization-discrete}}

\subsection{Discrete quantum groups\label{Subsection:group-discrete}}

A \emph{discrete quantum group }$G$\emph{\ }is a C*-algebra $c_{0}(G)$ which
is a direct sum of full matrix algebras endowed with a nondegenerate
*-homomorphism\emph{\ }$\Delta \colon c_{0}(G)\rightarrow M(c_{0}(G)\otimes
c_{0}(G))$ (\emph{comultiplication}) such that:

\begin{itemize}
\item $( \Delta \otimes \mathrm{id}) \circ \Delta =( \mathrm{id}\otimes
\Delta ) \circ \Delta $;

\item $[ ( c_{0}( G) \otimes 1) \Delta ( c_{0}( G) ) ] =[ ( 1 \otimes c_{0}(
G)) \Delta ( c_{0}( G) ) ] =c_{0}( G) \otimes c_{0}( G) $.
\end{itemize}

The discrete quantum group $G$ is said to be \emph{second countable} if $%
c_{0}( G) $ is separable.

Let $G$ be a discrete quantum group. Then there exist an index set $\Lambda$
and finite dimensional Hilbert spaces $\mathcal{H}_{\lambda }$, for $%
\lambda\in\Lambda$, such that $c_{0}( G) $ is isomorphic to $%
\bigoplus_{\lambda \in \Lambda }\mathbb{K}( \mathcal{H}_{\lambda })$. Set $%
c_{0}(G)_{\lambda }=\mathbb{K}( \mathcal{H}_{\lambda })$ for $\lambda\in\Lambda$%
, and denotes its unit by $1_{\lambda }$. We identify $c_0(G)_{\lambda }$ with a
subalgebra of $c_{0}( G) $, $M( c_{0}(G)) $ with $\prod_{\lambda \in \Lambda
}c_{0}(G)_{\lambda }$, and $M( c_{0}( G) \otimes c_{0}( G) ) $ with $%
\prod_{\mu ,\nu \in \Lambda }( c_{0}(G)_{\mu }\otimes c_{0}(G)_{\nu }) $.

For $\lambda,\mu,\nu\in\Lambda$, we fix a homomorphism $\Delta _{\mu ,\nu
}^{\lambda }\colon c_{0}(G)_{\lambda }\rightarrow c_{0}(G)_{\mu }\otimes
c_{0}(G)_{\nu }$ satisfying 
\begin{equation*}
\Delta ( x) =( \Delta _{\mu ,\nu }^{\lambda }( x) ) _{\mu ,\nu \in \Lambda
}\in \prod_{\mu ,\nu \in \Lambda }c_{0}(G)_{\mu }\otimes c_{0}(G)_{\nu }
\end{equation*}%
for every $x\in c_{0}(G)_{\lambda }$.

\begin{remark}
\label{Remark:finite-discrete} Let $\mu ,\nu \in \Lambda $, and set $\Lambda
_{\mu ,\nu }=\{\lambda\in\Lambda\colon \Delta _{\mu ,\nu }^{\lambda }\neq0\}$%
. By \cite[Proposition 2.2]{van_daele_discrete_1996}, the set $%
\Lambda_{\mu,\nu}$ is finite.
\end{remark}

Since $\Delta $ is nondegenerate, we have 
\begin{equation*}
\sum_{\lambda \in \Lambda _{\mu ,\nu }}\Delta _{\mu ,\nu }^{\lambda }(
1_{\lambda }) =1_{\mu }\otimes 1_{\nu }
\end{equation*}%
for every $\mu ,\nu \in \Lambda $. Therefore the canonical extension of $%
\Delta $ to a unital *-homomorphism 
\begin{equation*}
\Delta \colon \prod_{\lambda \in \lambda }c_{0}(G)_{\lambda }\rightarrow
\prod_{\mu ,\nu \in \Lambda }c_{0}(G)_{\mu }\otimes c_{0}(G)_{\nu }
\end{equation*}%
is defined by%
\begin{equation*}
\Delta ( ( x_{\lambda }) _{\lambda \in \Lambda }) =\left( \sum_{\lambda \in
\Lambda _{\mu ,\nu }}\Delta _{\mu ,\nu }^{\lambda }( x) \right) _{\mu ,\nu
\in \Lambda }\in \prod_{\mu ,\nu \in \Lambda }c_{0}(G)_{\mu }\otimes
c_{0}(G)_{\nu }\text{.}
\end{equation*}

\subsection{Discrete quantum group actions\label{Subsection:action-discrete}}

Let $G$ be a discrete group, and let $A$ be a C*-algebra.

\begin{definition}
\label{Definition:discrete-action}A (left) \emph{action }of $G$ on $A$ is an
injective nondegenerate *-homomorphism $\alpha \colon A\rightarrow M( c_{0}(
G) \otimes A) $ such that:

\begin{enumerate}
\item $( \Delta \otimes \mathrm{id}) \circ \alpha =( \mathrm{id}\otimes
\alpha ) \circ \alpha $ (action condition);

\item $[ ( c_{0}( G) \otimes 1) \alpha ( A) ] =c_{0}( G) \otimes A$ (density
condition).
\end{enumerate}
\end{definition}

We will refer to a C*-algebra $A$ endowed with a distinguished action of $G$
as a $G$-C*-algebra. If $( A,\alpha ) $ and $( B,\beta ) $ are $G$%
-C*-algebras, then a *-homomorphism $\phi \colon A\rightarrow B$ is said to
be $G$-\emph{equivariant} if it satisfies $( \mathrm{id}\otimes \phi ) \circ
\alpha =\beta \circ \phi $. It follows from the density condition that, if $%
( u_{i}) $ is an approximate unit for $A$, then $( \alpha ( u_{i}) ) $ is an
approximate unit for $c_{0}( G) \otimes A$.

Let $\alpha \colon A\rightarrow M( c_{0}(G)\otimes A)$ be a nondegenerate
injective *-homomorphism. For $\lambda\in\Lambda$, let $\alpha _{\lambda
}\colon A\rightarrow c_{0}(G)_{\lambda }\otimes A$ be the coordinate
function for $\alpha$, so that 
\begin{equation*}
\alpha ( a) =( \alpha _{\lambda }( a) ) _{\lambda \in \Lambda }\in
\prod_{\lambda \in \Lambda }c_{0}(G)_{\lambda }\otimes A\text{.}
\end{equation*}%
The conditions from Definition \ref{Definition:discrete-action} can be
restated as

\begin{enumerate}
\item[(1')] $(\mathrm{id}_{c_{0}(G)_{\mu }}\otimes \alpha _{\nu })\circ
\alpha _{\mu }=\sum_{\lambda \in \Lambda _{\mu ,\nu }}( \Delta _{\mu ,\nu
}^{\lambda }\otimes \mathrm{id}_{A}) \circ \alpha _{\lambda } $ for every $%
\mu ,\nu \in \Lambda $;

\item[(2')] $[ (c_{0}(G)_{\lambda }\otimes 1)\alpha _{\lambda }( A) ]
=c_{0}(G)_{\lambda }\otimes A$ for every $\lambda \in \Lambda $.
\end{enumerate}

In turn, by Cohen's factorization theorem, (2') is equivalent to the
assertion that for every $\lambda \in \Lambda $, for every $a\in
c_{0}(G)_{\lambda }\otimes A$ of norm at most $1$, and every $\varepsilon >0$%
, there exist $x\in c_{0}(G)_{\lambda }$ and $b\in A$ of norm at most $1$
such that $\Vert (x\otimes 1)\alpha (b)-a\Vert <\varepsilon $.

\begin{example}
If $G$ is a classical discrete group, then one can regard $G$ as a discrete
quantum group by considering the C*-algebra $c_{0}( G) $ of functions from $%
G $ to $\mathbb{C}$ vanishing at infinity. In this case, one has that $%
\Lambda =G$ and $c_{0}(G)_{\lambda }=\mathbb{C}$ for every $\lambda \in G$.
The comultiplication function $\Delta \colon c_{0}( G) \rightarrow M( c_{0}(
G) \otimes c_{0}( G) ) $ is defined by setting $\Delta_{\mu\nu}^{%
\lambda}(1)=\delta_{\lambda,\mu\nu}$ for $\lambda ,\mu ,\nu \in G$.

Every discrete quantum group $G$ such that $c_{0}(G)$ is a commutative
C*-algebra arises from a classical discrete group in this fashion.
\end{example}

\subsection{Axiomatization\label{Subsection:axiomatization-discrete}}

We continue to fix a discrete quantum group $G$. We now describe a natural
(multi-sorted) language $\mathcal{L}_{G}^{\text{C*}}$ that has $G$%
-C*-algebras as structures. For comparison, one can refer to the language
considered in the axiomatization of operator systems from \cite[Appendix B]%
{goldbring_kirchbergs_2015}.

\subsubsection{The language\label{Subsubsection:language-discrete}}

The language $\mathcal{L}_{G}^{\text{C*}}$ has, for every $n\geq 1$, sorts $%
\mathcal{S}^{( n) }$ and $\mathcal{C}^{( n) }$ to be interpreted as $M_{n}( 
\mathbb{C}) \otimes A$ and $M_{n}( \mathbb{C}) $, respectively. For each of
these, the domains of quantifications should be interpreted as the balls
with respect to the norm centered at the origin.

The function and relation symbols consist of:

\begin{enumerate}
\item function and relation symbols for the C*-algebra operations and the
C*-algebra norm on $M_{n}( \mathbb{C}) \otimes A$ and $M_{n}( \mathbb{C}) $
for every $n\in \mathbb{N}$;

\item function symbols for the $M_{n}( \mathbb{C}) $-bimodule structure on $%
M_{n}( \mathbb{C}) \otimes A$;

\item constant symbols the elements of $M_{n}( \mathbb{C}) $;

\item for any bounded linear map $T\colon M_{n}( \mathbb{C}) \rightarrow
M_{m}( \mathbb{C}) $, function symbols $\mathcal{C}^{(n)}\rightarrow 
\mathcal{C}^{(m)}$ and $\mathcal{S}^{(n)}\rightarrow \mathcal{S}^{(m)}$ to
be interpreted as $T$ and $T\otimes \mathrm{id}_{A}$;

\item function symbols for the canonical inclusions of each of the tensor
factors in $M_{n}( \mathbb{C}) \otimes A$;

\item function symbols $\mathcal{S}\rightarrow \mathcal{S}^{(d_{\lambda })}$
to be interpreted as the injective *-homomorphisms $\alpha _{\lambda }\colon
A\rightarrow c_{0}(G)_{\lambda }\otimes A$ that define the action.
\end{enumerate}

It is clear that any $G$-C*-algebra can be seen as an $\mathcal{L}_{G}^{%
\text{C*}}$-structure in a canonical way.

\subsubsection{The axioms\label{Subsubection:axioms-discrete}}

We now describe axioms for the class of $G$-C*-algebras in the language $%
\mathcal{L}_{G}^{\text{C*}}$ described above. Such axioms are designed to
guarantee the following:

\begin{enumerate}
\item the interpretations of the sort $\mathcal{S}^{( n) }$ and $\mathcal{C}%
^{( n) }$ are C*-algebras;

\item the domains of quantifications are interpreted as the balls (see \cite[%
Example 2.2.1]{farah_model_2016});

\item the sorts $\mathcal{S}^{( n) }$ and $\mathcal{C}^{( n) }$, the symbols
for the canonical inclusions of each of the tensor factors in $M_{n}( 
\mathbb{C}) \otimes A$, the symbols for the maps $T$ and $T\otimes \mathrm{id%
}_{A}$ for any bounded linear map $T\colon M_{n}( \mathbb{C}) \rightarrow
M_{m}( \mathbb{C}) $, the symbols for the $M_{n}( \mathbb{C}) $-bimodule
structure on $M_{n}( \mathbb{C}) \otimes A$, and the symbols for the
elements of $M_{n}( \mathbb{C}) $, are interpreted as they should be (see 
\cite[Appendix C]{goldbring_kirchbergs_2015});

\item the interpretation of the symbols for the maps $\alpha _{\lambda }$
that define the action are isometric *-homomorphisms;

\item for every $\mu ,\nu \in \Lambda $, 
\begin{equation*}
(\mathrm{id}_{c_{0}(G)_{\mu }}\otimes \alpha _{\nu })\circ \alpha _{\mu
}=\sum_{\lambda \in \Lambda _{\mu ,\nu }}( \Delta _{\mu ,\nu }^{\lambda
}\otimes \mathrm{id}_{A}) \circ \alpha _{\lambda };
\end{equation*}

\item for every $a\in c_{0}(G)_{\lambda }\otimes A $ of norm at most $1$,
and every $\varepsilon >0$, there exist $x\in c_{0}(G)_{\lambda }$ and $b\in
A$, both of norm at most $1$, such that $\| ( x\otimes 1) \alpha ( b) -a\|
<\varepsilon $.
\end{enumerate}

It is clear from the discussion above that these axioms indeed axiomatize
the class of $G$-C*-algebras. Furthermore, it is easy to see that these
axioms are all given by conditions of the form $\sigma \leq r$ where $\sigma 
$ is a positive primitive $\forall \exists $-$\mathcal{L}_{G}^{\text{C*}}$%
-sentence and $r\in \mathbb{R}$. Therefore, this shows that the class of $G$%
-C*-algebras is positively primitively $\forall \exists $-axiomatizable in
the language $\mathcal{L}_{G}^{\text{C*}}$ in the sense of Definition \ref%
{Definition:pp-axiomatizable}. One can observe that, when $G$ is
second-countable, the language $\mathcal{L}_{G}^{\text{C*}}$ is separable
for the class of $G$-C*-algebras; see Definition \ref%
{Definition:separable-language}. More generally, the density character of $%
\mathcal{L}_{G}^{\text{C*}}$ for the class of $G$-C*-algebras is equal to
the size of $\Lambda $. It is not difficult to verify that an $\mathcal{L}%
_{G}^{\text{C*}}$-morphism between $G$-C*-algebras is precisely a $G$%
-equivariant *-homomorphism, while an $\mathcal{L}_{G}^{\text{C*}}$%
-embedding is an injective $G$-equivariant *-homomorphism.

\subsubsection{Ultraproducts\label{Subsubection:ultraproducts-discrete}}

Once $G$-C*-algebras are regarded as $\mathcal{L}_{G}^{\text{C*}}$%
-structures as described above, one can consider ultraproducts of $G$%
-C*-algebras as a particular instance of ultraproducts in first-order logic
for metric structures; see \cite{farah_model_2014}. It follows from \L os'
theorem that the ultraproduct of $G$-C*-algebras is again a $G$-C*-algebra.

More generally, one can consider reduced products with respect to an
arbitrary filter $\mathcal{F}$ as defined in \cite{ghasemi_reduced_2016}. It
is easy to see that, in the particular case when $\mathcal{F}$ is the filter
of cofinite subsets of $\mathbb{N}$, then the reduced power of a $G$%
-C*-algebra $( A,\alpha ) $ with respect to $\mathcal{F}$ coincides with the 
$G$-C*-algebra $( A_{\infty },\alpha _{\infty }) $ constructed in \cite%
{barlak_spatial_2017}. It follows from the fact that the class of $G$%
-C*-algebras is positively primitively $\forall \exists $-axiomatizable in
the language $\mathcal{L}_{G}^{\text{C*}}$ together with Corollary \ref%
{Corollary:Los} that the reduced product of $G$-C*-algebras is again a $G$%
-C*-algebra. This recovers \cite[Lemma 2.6]{barlak_spatial_2017} as a
particular case.

\subsubsection{Other languages\label{Subsubsection:other-discrete}}

Occasionally, it is useful to consider C*-algebras as structures in a
language other than the standard language for C*-algebras $\mathcal{L}^{%
\text{C*}}$. These other languages are useful to capture properties that are
preserved by more general classes of morphisms, rather than just
*-homomorphisms, such as for example completely positive contractive maps or
completely positive contractive order zero maps. Several languages are
considered in \cite[Section 3]{gardella_equivariant_2016}. For each such
language $\mathcal{L}$, one can consider a corresponding $G$-equivariant
language $\mathcal{L}_{G}$. This can defined as in Subsubsection \ref%
{Subsubsection:other-discrete}, by starting with the language $\mathcal{L}$
rather than $\mathcal{L}^{\text{C*}}$.

\section{An axiomatization of compact quantum group actions\label%
{Section:axiomatization-compact}}

\subsection{Compact quantum groups\label{Subsection:groups-compact}}

A (reduced, C*-algebraic) \emph{compact quantum group }$G$ is given by a
unital C*-algebra $C( G) $ endowed with a unital *-homomorphism (\emph{%
comultiplication}) $\Delta \colon C( G) \rightarrow C( G) \otimes C( G) $
and a faithful state $h\colon C(G)\to \mathbb{C}$ (the \emph{Haar state})
satisfying

\begin{itemize}
\item $( \Delta \otimes \mathrm{id}) \circ \Delta =( \mathrm{id}\otimes
\Delta ) \circ \Delta $,

\item $[ \Delta ( C( G) ) ( 1\otimes C( G) ) ] =[ \Delta ( C( G) ) ( C( G)
\otimes 1) ] =C( G) \otimes C( G) $, and

\item $( h\otimes \mathrm{id}) \circ \Delta =( \mathrm{id}\otimes h) \circ
\Delta =h$
\end{itemize}

where in the last equation we identify $\mathbb{C}$ with the space of scalar
multiples for the identity in $C( G) $. The first two conditions assert that 
$C( G) $ endowed with the comultiplication $\Delta $ is a unital Hopf
C*-algebra \cite[Definition 2.2]{nest_equivariant_2010}. A compact quantum
group is said to be \emph{second countable }if $C( G) $ is separable.

A \emph{unitary representation} of $G$ on a\emph{\ }Hilbert space $\mathcal{H%
}$ is a unitary $u\in M\left( C(G)\otimes \mathbb{K}(\mathcal{H})\right) $
satisfying $(\Delta \otimes \mathrm{id})(u)=u_{13}u_{23}$. In the following
we will only consider the case when $\mathcal{H}$ is finite-dimensional, in
which case one has that $u\in C\left( G\right) \otimes \mathbb{B}\left( 
\mathcal{H}\right) $. We also identify the unitary representation $u$ of $G$
on $\mathcal{H}$ with the linear map $\mathcal{H}\rightarrow C(G)\otimes 
\mathcal{H}$ given by $\eta \mapsto u(1\otimes \eta )$ for all $\eta \in 
\mathcal{H}$. A subspace $\mathcal{K}\subseteq\mathcal{H}$ is \emph{invariant%
} if $u(1\otimes \mathcal{K} )\subseteq C(G)\otimes \mathcal{K}$. Direct sum
and tensor product of unitary representations is defined in the usual way. A
unitary representation is called \emph{irreducible} if it has no non-trivial
invariant closed subspace. Every irreducible unitary representation of $G$
is finite-dimensional, and every unitary representation of $G$ is equivalent
to a direct sum of irreducible unitary representations.

We let $\mathrm{Rep}( G) $ be the set of unitary representations of $G$ on
finite-dimensional Hilbert spaces. For $\lambda \in \mathrm{Rep}( G) $ we
let $u^{\lambda }\in C( G) \otimes \mathbb{K}( \mathcal{H}_{\lambda }) $ be
the associated representation, and we denote by $d_{\lambda }$ the dimension
of $\mathcal{H}_{\lambda }$. We also define $\mathrm{Irr}( G) $ to be the
set of equivalence classes of irreducible unitary representations of $G$.
Given $\lambda ,\mu \in \mathrm{Irr}( G) $, we set $\delta _{\lambda,\mu }=1$
if $\lambda$ and $\mu $ are equivalent, and $\delta _{\lambda, \mu }=0$
otherwise. For $\lambda \in \mathrm{Rep}( G) $ and $\xi ,\eta \in \mathcal{H}%
_{\lambda }$, set $u_{\xi ,\eta }^{\lambda }%
=(\mathrm{id}\otimes \phi _{\xi ,\eta })(u^{\lambda })\in C( G) $.\ These
are called the \emph{matrix coefficients }of the unitary representation $%
\lambda $. If $\lambda \in \mathrm{Irr}( G) $, then we fix an orthonormal
basis $\{ e^{\lambda}_{k}\}_{k=1}^{d_{\lambda }} $ of $\mathcal{H}_{\lambda }$
and set $u_{jk}^{\lambda }=u_{e_{j},e_{k}}^{\lambda }$ for $1\leq j,k\leq
d_{\lambda }$. For an arbitrary $\lambda \in \mathrm{Rep}( G) $, we write $%
\lambda $ as a direct sum of irreducible unitary representations $\lambda
_{1}\oplus \cdots \oplus \lambda _{n}$, and then we consider the orthonormal
basis of $\mathcal{H}_{\lambda }=\mathcal{H}_{\lambda _{1}}\oplus \cdots
\oplus \mathcal{H}_{\lambda _{n}}$ associated with the given orthonormal
bases of $\mathcal{H}_{\lambda _{1}},\ldots ,\mathcal{H}_{\lambda _{n}}$.

Define $\mathcal{O}(G)$ to be the dense selfadjoint subalgebra of $C(G)$
given by 
\begin{equation*}
\mathcal{O}( G) =\left\{ u_{\xi ,\eta }^{\lambda }\colon \lambda \in \mathrm{%
Rep}( G) ,\xi ,\eta \in \mathcal{H}_{\lambda }\right\} =\mathrm{span}\left\{
u_{ij}^{\lambda }\colon \lambda \in \mathrm{Irr}( G) ,1\leq i,j\leq
d_{\lambda }\right\},
\end{equation*}%
and observe that it is invariant under the comultiplication. The induced
*-bialgebra structure on $\mathcal{O}( G) $ turns it into a Hopf *-algebra 
\cite[Definition 1.3.24]{timmermann_invitation_2008}. The counit map $%
\epsilon $ on $\mathcal{O}( G) $ is defined by $\epsilon ( u_{\xi ,\eta
}^{\lambda }) =\left\langle \xi ,\eta \right\rangle$, while the antipode map 
$S$ is given by$S( u_{\xi ,\eta }^{\lambda }) =( u_{\eta ,\xi }^{\lambda })
^{\ast }$, for $\xi,\eta\in\mathcal{H}_{\lambda }$ and $\lambda\in$ Rep$(G)$.

\begin{remark}
The C*-algebra $C( G) $ together with its comultiplication can be recovered
from $\mathcal{O}( G) $ as the closure of the image of $\mathcal{O}( G) $ in
the GNS representation induced by the restriction of the Haar state to $%
\mathcal{O}( G) $.
\end{remark}

By \cite[Theorem~1.8]{de_commer_actions_2016}, for every $\lambda \in 
\mathrm{Irr}(G)$ there is a positive invertible operator $F_{\lambda }\in 
\mathbb{K}(\mathcal{H}_{\lambda })$ satisfying: 
\begin{equation*}
h(u_{\xi ,\eta }^{\lambda }(u_{\zeta ,\chi }^{\lambda })^{\ast })=\frac{%
\left\langle \xi ,\zeta \right\rangle \left\langle \chi ,F_{\lambda }\eta
\right\rangle }{\dim _{q}(\lambda )}\ \ \mbox{and}\ \ h((u_{\xi ,\eta
}^{\lambda })^{\ast }u_{\zeta ,\chi }^{\lambda })=\frac{\left\langle \xi
,(F_{\lambda })^{-1}\zeta \right\rangle \left\langle \chi ,\eta
\right\rangle }{\dim _{q}(\lambda )}\text{,}
\end{equation*}%
for all $\xi ,\eta \in \mathcal{H}_{\lambda }$, where $\dim _{q}(\lambda )=%
\mathrm{\mathrm{Tr}}(F_{\lambda })$ is the \emph{quantum dimension} of $%
\lambda $. Furthermore, one has 
\begin{equation*}
h(u_{\xi ,\eta }^{\lambda }(u_{\zeta ,\chi }^{\mu })^{\ast })=h((u_{\xi
,\eta }^{\lambda })^{\ast }u_{\zeta ,\chi }^{\mu })=0
\end{equation*}%
whenever $\lambda ,\mu $ are nonequivalent, for all $\xi ,\eta \in \mathcal{H%
}_{\lambda }$, and all $\zeta ,\chi \in \mathcal{H}_{\mu }$. In particular,
we deduce that%
\begin{equation}
h(u_{ij}^{\lambda }(u_{k\ell }^{\mu })^{\ast })=\delta _{\lambda \mu }\delta
_{ik}\frac{F_{\ell j}^{\lambda }}{\dim _{q}(\lambda )}\text{.\label%
{Equation:Schur}}
\end{equation}%
for $\lambda ,\mu \in \mathrm{Irr}(G)$, $1\leq i,j\leq d_{\lambda }$, and $%
1\leq k,\ell \leq d_{\mu }$. 
Similarly to the classical case, to each $\lambda ,\mu \in \mathrm{Irr}(G)$
there corresponds a \emph{contragradient }representation $\overline{\lambda} 
\mathrm{Irr}(G)$ (cf. \cite[Section 6]{maes_notes_1998}).

For $\lambda \in \mathrm{Rep}(G)$, set 
\begin{equation*}
C(G)_{\lambda }=\left\{ u_{\xi ,\eta }^{\lambda }\colon \xi ,\eta \in 
\mathcal{H}_{\lambda }\right\} =\mathrm{span}\left\{ u_{ij}^{\lambda }\colon
1\leq i,j\leq d_{\lambda }\right\} \text{.}
\end{equation*}%
Then $C(G)_{\lambda }$ is a finite-dimensional subspace of $C(G)$ which is
invariant under the comultiplication. If $\lambda ,\mu \in \mathrm{Irr}(G)$,
then $C(G)_{\lambda }$ and $C(G)_{\mu }$ are orthogonal with respect to the
inner product defined by the Haar state $h$. It follows that $\mathcal{O}(G)$
is equal to the algebraic direct sum of $C(G)_{\lambda }$ for $\lambda \in 
\mathrm{Irr}(G)$. For $\lambda ,\mu \in \mathrm{Rep}(G)$, we have%
\begin{equation*}
C(G)_{\lambda }+C(G)_{\mu }=C(G)_{\lambda \oplus \mu }\text{,}\ \ \mathrm{%
\mathrm{span}}(C(G)_{\lambda }C(G)_{\mu })=C(G)_{\lambda \otimes \mu }\text{,%
}\ \ \mbox{and}\ \ (C(G)_{\lambda })^{\ast }=C(G)_{\overline{\lambda }}\text{%
.}
\end{equation*}

Define $\mathcal{O}(\hat{G})$ to be the space of linear functionals on $C(
G) $ of the form $x\mapsto h( xy) $ for some $y\in \mathcal{O}( G) $. For $%
\lambda \in \mathrm{Rep}( G) $ and $1\leq s,m\leq d_{\lambda }$, let $\omega
_{sm}^{\lambda }\in \mathcal{O}(\hat{G})$ be given by%
\begin{equation*}
\omega _{sm}^{\lambda }( x) =\dim _{q}( \lambda ) \sum_{k=1}^{d_{\lambda }}(
F_{\lambda }) _{mk}^{-1}h(x( u_{sk}^{\lambda }) ^{\ast })\text{.}
\end{equation*}%
It follows from Equation \ref{Equation:Schur} that 
\begin{equation*}
\omega _{sm}^{\lambda }( u_{ln}^{\mu }) =\delta _{\mu \lambda }\delta
_{ls}\delta _{mn} \ \ \mbox{and} \ \ ( \omega _{sm}^{\lambda }\otimes \omega
_{ln}^{\mu }) \circ \Delta =\delta _{ml}\delta _{\mu \nu }\omega _{sn}^{\mu }
\end{equation*}%
for $\lambda ,\mu \in \mathrm{Irr}( G) $, $1\leq s,m\leq d_{\lambda }$, and $%
1\leq l,n\leq d_{\lambda }$. More generally, one can define%
\begin{equation*}
\omega _{\xi ,\eta }^{\lambda }=\dim _{q}( \lambda ) \sum_{k=1}^{d_{\lambda
}}\left\langle (F_{\lambda })^{-1}e_{k},\eta \right\rangle h(x( u_{\xi
,e_{k}}^{\lambda }) ^{\ast })
\end{equation*}%
for $\xi ,\eta \in \mathcal{H}_{\lambda }$, and observe that$( \omega _{\xi
,\eta }^{\lambda }\otimes \omega _{\zeta ,\chi }^{\lambda }) \circ \Delta
=\left\langle \eta ,\zeta \right\rangle \omega _{\xi ,\chi }^{\lambda }$ for
every $\xi ,\eta ,\zeta ,\chi \in \mathcal{H}_{\lambda }$.

Set $P_{ks}^{\lambda }=( \omega _{ks}^{\lambda }\otimes \mathrm{id}) \circ
\Delta $ for $1\leq k,s\leq d_{\lambda }$. From the relations above, it
follows that $P_{sm}^{\lambda }P_{ln}^{\mu }=\delta _{\lambda \mu }\delta
_{ml}P_{sn}^{\lambda }$ for $\lambda ,\mu \in \mathrm{Irr}( G) $, $1\leq
s,m\leq d_{\lambda }$, and $1\leq l,n\leq d_{\mu }$. Furthermore, $%
P_{kk}^{\lambda }$ is a projection operator onto 
\begin{equation*}
C( G) _{\lambda ,k}:=\mathrm{span}\left\{ u_{ik}^{\lambda }\colon 1\leq
i\leq d_{\lambda }\right\} \text{,}
\end{equation*}%
and $P^{\lambda }:=\sum_{k=1}^{d_{\lambda }}P_{kk}^{\lambda }$ is a
projection onto $C( G) _{\lambda }$.

\subsection{The fundamental unitaries}

We now recall the important concept of fundamental unitaries associated with
a given compact quantum group $G$.

\begin{definition}
\label{Definition:multiplicative} The fundamental unitaries associated with $%
G$ are the unitary operators $V,W\in \mathbb{B}( L^{2}( G) \otimes L^{2}( G)
) $ defined as follows.

\begin{itemize}
\item The operator $W$ is the adjoint of the unitary operator induced by the
linear map with dense image $C( G) \otimes C( G) \rightarrow C( G) \otimes
C( G) $ given by $x\otimes y\mapsto \Delta ( y) ( x\otimes 1) $.

\item Similarly, $V$ is the unitary operator induced by the linear map with
dense image $C( G) \otimes C( G) \rightarrow C( G) \otimes C( G) $ given by $%
x\otimes y\mapsto \Delta ( x) ( 1\otimes y) $.
\end{itemize}
\end{definition}

Abbreviate $\mathbb{K}(L^{2}(G))$ to $\mathbb{K}_{G}$, and identify the
Banach-space dual $\mathbb{K}_{G}^{\ast }$ with the predual of $\mathbb{B}%
(L^{2}(G))$. The fundamental unitaries satisfy the \emph{pentagon equations}%
\begin{equation*}
V_{12}V_{13}V_{23}=V_{23}V_{12}\text{\quad and\quad }%
W_{12}W_{13}W_{23}=W_{23}W_{12}\text{.}
\end{equation*}%
The C*-algebra $C(G)\subseteq \mathbb{B}(L^{2}(G))$ can be recovered as the 
\emph{left leg }of $W$:%
\begin{equation*}
C(G)=[\{\left( \mathrm{id}\otimes \phi _{\xi ,\xi ^{\prime }}\right) W\colon
\xi ,\xi ^{\prime }\in L^{2}(G)\}]=[\left\{ (\mathrm{id}\otimes \omega
)(W)\colon \omega \in \mathbb{K}_{G}^{\ast }\right\} ]\text{.}
\end{equation*}%
Similarly, the group C*-algebra $C^{\ast }(G)=c_{0}(\hat{G})$ can be
recovered as the \emph{right leg }of $W$: 
\begin{equation*}
C^{\ast }(G)=[\{\left( \phi _{\xi ,\xi ^{\prime }}\otimes \mathrm{id}\right)
(W) \colon \xi ,\xi ^{\prime }\in L^{2}(G)\}]=[\left\{ (\omega \otimes 
\mathrm{id})(W)\colon \omega \in \mathbb{K}_{G}^{\ast }\right\} ]\text{.}
\end{equation*}

One can recover $C( G) $ as the right leg of $V$, and $c_{0}(\hat{G})$ as
the left leg of $V$. Furthermore, $W$ belongs to $M(C( G) \otimes c_{0}(\hat{%
G}))$ and $V$ belongs to $M(c_{0}(\hat{G})\otimes C( G) )$.

Define normal *-homomorphisms $\Delta ,\hat{\Delta }\colon \mathbb{B}(
L^{2}( G) ) \rightarrow \mathbb{B}( L^{2}( G) \otimes L^{2}( G) ) $ by%
\begin{equation*}
\Delta ( x) =W^{\ast }( 1\otimes x) W \ \ \mbox{ and } \ \ \hat{\Delta }( x)
=\hat{W}{}^{\ast }{}( 1\otimes y) \hat{W}\text{,}
\end{equation*}%
for all $x,y\in \mathbb{B}(L^2(G))$. The restriction of $\Delta $ to $C( G) $
gives the comultiplication of $C( G) $, while the restriction of $\hat{%
\Delta }$ to $c_{0}(\hat{G})$ gives the comultiplication of $c_{0}(\hat{G})$.

\subsection{Compact quantum group actions\label{Subsection:action-compact}}

Let $G$ be a (reduced, C*-algebraic) compact quantum group, and let $A$ be a
C*-algebra. 

\begin{definition}
\label{Definition:compact-action}A (left, continuous) \emph{action} of $G$
on $A$ is an injective nondegenerate *-homomorphism $\alpha \colon
A\rightarrow C( G) \otimes A$ satisfying the following conditions:

\begin{enumerate}
\item $( \Delta \otimes \mathrm{id}) \circ \alpha =( \mathrm{id}\otimes
\alpha ) \circ \alpha $ (action condition);

\item $[ ( C( G) \otimes 1) \alpha ( A) ] =C( G) \otimes A$, where $C( G)
\otimes 1$, $\alpha ( G) $, and $C( G) \otimes A$ are canonically regarded
as subalgebras of $M( C( G) \otimes A) $ (density condition).
\end{enumerate}
\end{definition}

If $A$ is a C*-algebra endowed with a distinguished action $\alpha $ of $G$
on $A$, then we refer to the pair $( A,\alpha ) $ as a $G$-C*-algebra. A
canonical example of $G$-C*-algebra is $C( G) $ endowed with the left
translation action of $G$ on $C( G) $ given by the comultiplication $\Delta $%
.

Suppose now that $\lambda \in \mathrm{Rep}( G) $. An \emph{intertwiner }%
between $\lambda $ and $\alpha $ is a linear map $v\colon \mathcal{H}%
_{\lambda }\rightarrow A$ such that%
\begin{equation*}
\alpha ( v\xi ) =( \mathrm{id}\otimes v) u^{\lambda }( 1\otimes \xi )
\end{equation*}%
for every $\xi \in \mathcal{H}_{\lambda }$. The space of intertwiners
between $\lambda $ and $\alpha $ is denoted by $\mathrm{Int}( \lambda
,\alpha ) $. The $\lambda $-\emph{isotypical component }(or $\lambda $-\emph{%
spectral subspace}) is the closed subspace%
\begin{equation*}
A_{\lambda }=\left\{ v\xi \colon \xi \in \mathcal{H}_{\lambda },v\in \mathrm{%
Int}( \lambda ,\alpha ) \right\} \text{.}
\end{equation*}

Define $E_{ks}^{\lambda }\colon A\to A_{\lambda }$ as $E_{ks}^{\lambda}=(
\omega _{ks}^{\lambda }\otimes \mathrm{id}) \circ \alpha $ for $1\leq
k,s\leq d_{\lambda }$. It follows from the Schur orthogonality relations %
\eqref{Equation:Schur} that $E_{sm}^{\lambda }E_{ln}^{\mu }=\delta _{\lambda
\mu }\delta _{ml}E_{sn} $ for $\lambda ,\mu \in \mathrm{Irr}(G)$. The proof
of \cite[Theorem 1.5]{podles_symmetries_1995} shows that $A_{\lambda }$ is a
closed subspace of $A $. Indeed, one can alternatively describe $A_{\lambda
} $ as%
\begin{equation*}
A_{\lambda }=\left\{ a\in A\colon \alpha ( a) \in C( G) _{\lambda }\otimes
A\right\} =\left\{ a\in A\colon\alpha ( a) \in C( G) _{\lambda }\otimes
A_{\lambda }\right\} \text{.}
\end{equation*}%
Furthermore $A_{\lambda }+A_{\mu }\subseteq A_{\lambda \oplus \mu }$, $%
A_{\lambda }A_{\mu }\subseteq A_{\lambda \otimes \mu }$, and $( A_{\lambda
}) ^{\ast }=A_{\overline{\lambda }}$ for every $\lambda ,\mu \in \mathrm{Rep}%
( G) $.

It is also shown in \cite[Theorem 1.5]{podles_symmetries_1995} that $E^{\lambda }=\sum_{k=1}^{d_{\lambda }}E_{kk}^{\lambda }$ is a projection onto $%
A_{\lambda }$. Moreover, $E_{kk}^{\lambda }$ is a projection onto a closed
subspace $A_{\lambda ,k}$ of $A_{\lambda }$ of dimension $c_{\alpha }\in 
\mathbb{N}\cup \left\{ \infty \right\} $. Such a dimension $c_{\alpha }$ is
called the \emph{multiplicity }of $\lambda $ in the spectrum of $\alpha $.
Let $(a_{1,i}^{\lambda })_{i=1}^{c_{\lambda }}$ be a basis of $A_{\lambda
,1} $. Then $(E_{s1}^{\lambda }(a_{1,i}^{\lambda }))_{i=1}^{c_{\lambda }}$
is a basis of $A_{\lambda ,s}$. For $s=1,\ldots ,d_{\lambda }$, set $%
a_{s,i}^{\lambda }=E_{s1}^{\lambda }(a_{1,i}^{\lambda })$. Then $%
E_{sk}^{\lambda }(a_{k,i}^{\lambda })=a_{s,i}^{\lambda }$, and $\alpha
(a_{k,i}^{\lambda })=\sum_{s=1}^{d_{\lambda }}u_{ks}^{\lambda }\otimes
a_{s,i}^{\lambda }$ for every $k=1,2,\ldots ,d_{\lambda }$. From this it
easily follows that $(\epsilon \otimes \mathrm{id})\alpha (c)=c$ for every $%
c\in A_{\lambda }$. Define $\mathcal{O}(A)=\mbox{span}\left\{ A_{\lambda
}\colon \lambda \in \mathrm{Irr}(A)\right\} $. It is shown in \cite[Theorem
1.5]{podles_symmetries_1995} that $\mathcal{O}(A)$ is a dense selfadjoint
subalgebra of $A$, called \emph{Podle\'{s} subalgebra }or \emph{algebraic
core} of $(A,\alpha )$.

The spectral subspace $A^{\alpha }:=A_{\mathrm{t}}$ associated with the
trivial representation $\mathrm{t}$ of $G$ is a nondegenerate C*-subalgebra
of $A$, called the \emph{fixed point algebra}. The projection $E^{\mathrm{t}%
} $ onto $A^{\alpha }$ is a faithful conditional expectation, and $%
A_{\lambda } $ is an $A^{\alpha }$-bimodule for every $\lambda \in \mathrm{%
Rep}( G) $. The formula $\left\langle a,b\right\rangle =E^{\mathrm{t}}(
a^{\ast }b) $ defines a (right) full Hilbert $A^{\alpha }$-module structure
on $A$ \cite[Lemma 3.8 and Lemma 3.19]{de_commer_actions_2016}, and two
spectral subspaces $A_{\lambda },A_{\mu }$ associated with nonequivalent $%
\lambda ,\mu \in \mathrm{Irr}( G) $ are orthogonal with respect to this
C*-bimodule structure.

\begin{lemma}
\label{Lemma:norm-compact} Let $\lambda \in \mathrm{Rep}( G) $. For $%
a=\sum_{ij}u_{ij}^{\lambda }\otimes a_{ij}\in C( G) _{\lambda }\otimes A$,
one has%
\begin{equation*}
\| a\| _{C( G) \otimes A}=\sup \left\{ \| \sum_{ij}\mu _{ij}a_{ij}\|
_{A}\colon \mu _{ij}\in \mathbb{C},\| \sum_{ij}\mu _{ij}u_{ij}^{\lambda }\|
_{C( G) }\leq 1\right\} \text{.}
\end{equation*}
\end{lemma}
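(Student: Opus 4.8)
The plan is to realize $C(G)_\lambda \otimes A$ inside $C(G)\otimes A$ and to use the fact that $C(G)_\lambda$ is a finite-dimensional subspace whose ``coordinate functionals'' are provided by the elements $\omega_{sm}^\lambda \in \mathcal{O}(\hat G)$ constructed above. Fix $a=\sum_{ij}u_{ij}^\lambda\otimes a_{ij}\in C(G)_\lambda\otimes A$. First I would establish the inequality $\ge$. Given scalars $\mu_{ij}\in\mathbb{C}$ with $\|\sum_{ij}\mu_{ij}u_{ij}^\lambda\|_{C(G)}\le 1$, pick a state $\psi$ on $C(G)$ with $\psi(\sum_{ij}\mu_{ij}u_{ij}^\lambda)$ of modulus close to $1$ (or use the Hahn--Banach/functional-calculus description of the norm), and apply $\psi\otimes\mathrm{id}_A$ to $a$; this is a contraction $C(G)\otimes A\to A$, so $\|a\|\ge \|(\psi\otimes\mathrm{id})(a)\|=\|\sum_{ij}\psi(u_{ij}^\lambda)a_{ij}\|$. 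Running over such $\psi$ shows $\|a\|\ge$ the supremum over all scalar tuples $(\mu_{ij})$ arising as $(\psi(u_{ij}^\lambda))$; a small argument (using that the finitely many $u_{ij}^\lambda$ span $C(G)_\lambda$, so the map $\psi\mapsto(\psi(u_{ij}^\lambda))_{ij}$ has image the whole unit ball of the relevant dual) upgrades this to the full supremum over $\{(\mu_{ij}):\|\sum\mu_{ij}u_{ij}^\lambda\|\le1\}$.

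For the reverse inequality $\le$, the key device is the family of maps $\Phi_{sm}^\lambda:=\omega_{sm}^\lambda\otimes\mathrm{id}_A\colon C(G)\otimes A\to A$ extended to $M(C(G)\otimes A)$, together with the dual family that reconstructs an element of $C(G)_\lambda\otimes A$ from its ``matrix entries''. Concretely, since $\omega_{sm}^\lambda(u_{ln}^\mu)=\delta_{\mu\lambda}\delta_{ls}\delta_{mn}$, applying $\Phi_{sm}^\lambda$ to $a$ returns $a_{sm}$. The point is then to bound $\|a\|_{C(G)\otimes A}$ by writing $a$ as an explicit ``slice reconstruction''. I would use that for any state $\varphi$ on $A$, $(\mathrm{id}_{C(G)}\otimes\varphi)(a)=\sum_{ij}\varphi(a_{ij})\,u_{ij}^\lambda\in C(G)_\lambda$, and that $\|a\|_{C(G)\otimes A}=\sup_\varphi\|(\mathrm{id}\otimes\varphi)(a)\|_{C(G)}$ where $\varphi$ ranges over states of $A$ — this is the standard fact that the minimal tensor norm is recovered by slicing with states on one factor, applied with $C(G)$ on the left (note $C(G)$ is nuclear, being a compact quantum group C*-algebra, so there is no ambiguity in the tensor product anyway). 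Given such a $\varphi$, set $\mu_{ij}=\varphi(a_{ij})$; then $\|\sum_{ij}\mu_{ij}u_{ij}^\lambda\|_{C(G)}=\|(\mathrm{id}\otimes\varphi)(a)\|\le\|a\|$, but more importantly, for the quantity $R:=\sup\{\|\sum_{ij}\nu_{ij}a_{ij}\|_A:\|\sum_{ij}\nu_{ij}u_{ij}^\lambda\|_{C(G)}\le1\}$ on the right-hand side, one checks that after normalizing $(\mu_{ij})$ by $c:=\|\sum\mu_{ij}u_{ij}^\lambda\|\le\|a\|$, the tuple $(\mu_{ij}/c)$ is admissible, whence $\|\sum_{ij}(\mu_{ij}/c)a_{ij}\|_A\le R$, i.e. $\|\sum_{ij}\varphi(a_{ij})a_{ij}\|_A\le cR\le R\|a\|$. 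This is not quite what we want directly; the cleaner route is to observe $\|(\mathrm{id}\otimes\varphi)(a)\|_{C(G)}\le R$ would need $\varphi$ applied once, so instead I would bound $\|a\|$ by testing against a \emph{pair} of functionals and using that the right-hand side already encodes all the scalar data.

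The honest streamlined argument for $\le$: by the slice formula $\|a\|_{C(G)\otimes A}=\sup\{\,|( \chi\otimes\varphi)(a)|: \chi\in C(G)^*,\ \varphi\in A^*,\ \|\chi\|\le1,\ \|\varphi\|\le1\,\}$. Fix such $\chi,\varphi$ and put $\mu_{ij}=\chi(u_{ij}^\lambda)$; then $\|\sum_{ij}\mu_{ij}u_{ij}^\lambda\|_{C(G)}=\sup_{\|\chi'\|\le1}|\chi'(\sum\mu_{ij}u_{ij}^\lambda)|$. We do \emph{not} know a priori that this is $\le1$, but we do know $|(\chi\otimes\varphi)(a)|=|\varphi(\sum_{ij}\mu_{ij}a_{ij})|\le\|\sum_{ij}\mu_{ij}a_{ij}\|_A$; and $\|\sum_{ij}\mu_{ij}u_{ij}^\lambda\|_{C(G)}=\|(\chi\otimes\mathrm{id})(a')\|$ where $a'=\sum u_{ij}^\lambda\otimes u_{ij}^\lambda$ is irrelevant — rather, the correct normalization is that $c_\chi:=\|\sum_{ij}\chi(u_{ij}^\lambda)u_{ij}^\lambda\|_{C(G)}\le$ ? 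This is exactly the subtle point and the main obstacle: one must show that replacing $\chi$ by the functional $u\mapsto(\text{coefficient extraction})$ does not increase norm, i.e. that the map $C(G)\to C(G)$ sending $x\mapsto\sum_{ij}\chi(u_{ij}^\lambda)u_{ij}^\lambda$ restricted appropriately is a contraction — and this follows because $P^\lambda=\sum_k P_{kk}^\lambda$, $P_{kk}^\lambda=(\omega_{kk}^\lambda\otimes\mathrm{id})\circ\Delta$, is a completely bounded projection onto $C(G)_\lambda$, indeed the spectral projection is contractive on the relevant copy. I would therefore phrase the $\le$ direction as: for any $\chi$, the functional $\tilde\chi:=\chi\circ P^\lambda$ (extended via $\mathcal{O}(\hat G)$) satisfies $\|\tilde\chi\|\le\|\chi\|$ after identifying $C(G)_\lambda$ as the range of the contractive idempotent coming from the Haar-state GNS construction, and then $\|a\|=\sup_\chi\|(\chi\otimes\mathrm{id})(a)\|=\sup_\chi\|\sum_{ij}\chi(u_{ij}^\lambda)a_{ij}\|\le R$ because each admissible $\chi$ produces an admissible scalar tuple. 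I expect the genuine work to be precisely this contractivity claim for the spectral projection $P^\lambda$ (equivalently, that $\|\sum\mu_{ij}u_{ij}^\lambda\|_{C(G)}\le\|\chi\|$ when $\mu_{ij}=\chi(u_{ij}^\lambda)$), which one proves using the Schur orthogonality relations (Equation~\eqref{Equation:Schur}) and the structure of $\mathcal{O}(\hat G)$ recorded above; once that is in hand, both inequalities follow by the standard slicing characterization of the minimal tensor norm and the proof is complete.
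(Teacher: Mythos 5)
Your proposal does not close, and the point where it fails is exactly the one you yourself flag as ``the subtle point and the main obstacle.'' The paper's own proof is a one-line appeal to the fact that the minimal C*-norm restricts on $C(G)_{\lambda}\otimes A$ to the injective Banach-space tensor norm of the finite-dimensional space $C(G)_{\lambda}$ with $A$; unwinding that definition gives
\[
\|a\|=\sup\Big\{\big\|\textstyle\sum_{ij}\chi(u_{ij}^{\lambda})a_{ij}\big\|_{A}\colon \chi\in C(G)^{*},\ \|\chi\|\leq 1\Big\},
\]
that is, the supremum is over coefficient tuples of \emph{contractive functionals} --- the unit ball of $C(G)_{\lambda}^{*}$ written in the dual basis $(\omega_{ij}^{\lambda})$. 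Your first paragraph proves precisely this, and that part is correct. The remaining step, in both of your directions, is to identify this set of tuples with $\{(\mu_{ij})\colon \|\sum_{ij}\mu_{ij}u_{ij}^{\lambda}\|_{C(G)}\leq 1\}$: in the $\geq$ direction you defer this to ``a small argument,'' and in the $\leq$ direction you reduce it to the claim that $\|\sum_{ij}\chi(u_{ij}^{\lambda})u_{ij}^{\lambda}\|_{C(G)}\leq\|\chi\|$ for every $\chi$, i.e.\ that the linear bijection $C(G)_{\lambda}^{*}\to C(G)_{\lambda}$, $\omega_{ij}^{\lambda}\mapsto u_{ij}^{\lambda}$, is contractive. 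These two convex bodies are the unit balls of $C(G)_{\lambda}^{*}$ and of $C(G)_{\lambda}$ in two different coordinate systems, and there is no reason for them to coincide.

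Indeed the deferred claim is false, so the route cannot be completed. Take $G=SU(2)$ and $\lambda$ the fundamental representation: $\|\sum_{ij}\mu_{ij}u_{ij}^{\lambda}\|_{C(G)}=\sup_{g\in SU(2)}|\mathrm{Tr}(Mg)|$ equals the trace norm $\|M\|_{1}$ of $M=(\mu_{ij})$ (sum of the singular values), so $C(G)_{\lambda}$ is isometric to the $2\times 2$ trace class and $C(G)_{\lambda}^{*}$ to $M_{2}$ with the operator norm. The state $\mathrm{ev}_{e}$ satisfies $\mathrm{ev}_{e}(u_{ij}^{\lambda})=\delta_{ij}$, yet $\sum_{i}u_{ii}^{\lambda}$ (the character of $\lambda$) has norm $2$, so the map $\omega_{ij}^{\lambda}\mapsto u_{ij}^{\lambda}$ is not contractive. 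The same example with $A=\mathbb{C}$ and $a=\sum_{i}u_{ii}^{\lambda}\otimes 1$ shows the two suprema genuinely differ: $\|a\|=2$ while $\sup\{|\sum_{i}\mu_{ii}|\colon\|M\|_{1}\leq 1\}=1$. Consequently no amount of Schur orthogonality will establish the identity in the primal-ball form; the right-hand side has to be read with $\mu_{ij}=\chi(u_{ij}^{\lambda})$ for $\chi$ ranging over the unit ball of $C(G)^{*}$ (equivalently, by Hahn--Banach, of $C(G)_{\lambda}^{*}$). With that reading your first paragraph is already the entire proof and coincides with what the paper's one-line argument delivers; everything after it in your proposal should be discarded.
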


\begin{proof}
It is enough to observe that $C( G) _{\lambda }\otimes A$ is endowed with
the injective Banach space tensor product, and that $\left\{ u_{ij}^{\lambda
}\colon 1\leq i,j\leq d_{\lambda }\right\} $ is a basis for $C( G) _{\lambda
}$.
\end{proof}

We close this subsection with the following lemma.

\begin{lemma}
\label{Lemma:action-density-compact} Let $\lambda \in \mathrm{Irr}( G)$.

\begin{enumerate}
\item Let $1\leq i,j\leq d_{\lambda }$. Then $( P_{ij}^{\lambda }\otimes 
\mathrm{id}) \circ \alpha =\alpha \circ E_{ij}^{\lambda }$. Thus, $(
P^{\lambda }\otimes \mathrm{id}) \circ \alpha =\alpha \circ E^{\lambda }$.

\item Set $E^{\lambda }=\sum_{j=1}^{d_{\lambda }}E^{\lambda}_{jj}$. For $a\in A$,
one has%
\begin{equation*}
1\otimes E^{\lambda }( a) =\sum_{1\leq s,t\leq d_{\lambda }}( (
u_{st}^{\lambda }) ^{\ast }\otimes 1) ( \alpha \circ E_{st}^{\lambda }) ( a) 
\text{.}
\end{equation*}
\end{enumerate}
\end{lemma}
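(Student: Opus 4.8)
The plan is to compute both sides of each identity against the matrix coefficients $u^\mu_{k\ell}$ and use the Schur-type orthogonality relations recorded in Equation~\eqref{Equation:Schur} together with the bialgebra identities for the functionals $\omega^\lambda_{sm}$.

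For part (1), I would start from the definition $E^\lambda_{ij}=(\omega^\lambda_{ij}\otimes\mathrm{id})\circ\alpha$ and the action condition $(\Delta\otimes\mathrm{id}_A)\circ\alpha=(\mathrm{id}_A\otimes\alpha)\circ\alpha$. Applying $\mathrm{id}\otimes\alpha$ to $\alpha(a)$ and then slicing with $\omega^\lambda_{ij}$ on the first leg, one gets
\begin{equation*}
(\omega^\lambda_{ij}\otimes\mathrm{id}\otimes\mathrm{id})\bigl((\mathrm{id}\otimes\alpha)\alpha(a)\bigr)=(\omega^\lambda_{ij}\otimes\mathrm{id}\otimes\mathrm{id})\bigl((\Delta\otimes\mathrm{id})\alpha(a)\bigr).
\end{equation*}
The right-hand side equals $((\omega^\lambda_{ij}\otimes\mathrm{id})\Delta\otimes\mathrm{id})\alpha(a)=(P^\lambda_{ij}\otimes\mathrm{id})\alpha(a)$ by the definition $P^\lambda_{ij}=(\omega^\lambda_{ij}\otimes\mathrm{id})\circ\Delta$; the left-hand side equals $\alpha\bigl((\omega^\lambda_{ij}\otimes\mathrm{id})\alpha(a)\bigr)=\alpha(E^\lambda_{ij}(a))$ by naturality of the slice map in the second tensor factor. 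This yields $(P^\lambda_{ij}\otimes\mathrm{id})\circ\alpha=\alpha\circ E^\lambda_{ij}$, and summing over $i=j$ gives the statement for $P^\lambda$ and $E^\lambda$. The only point needing care is the interchange of $\alpha$ with the slice $(\omega^\lambda_{ij}\otimes\mathrm{id})$, which is justified because $\alpha$ is a (bounded, linear) $*$-homomorphism and slice maps commute with bounded maps on the untouched leg; alternatively one checks it on the Podle\'s subalgebra $\mathcal O(A)$, where everything is a finite sum, and extends by density and continuity.

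For part (2), write $a\in A$ and use that $\alpha(E^\lambda_{st}(a))=\sum_{p}u^\lambda_{sp}\otimes a^{(t)}_{p}$ for suitable elements, or more directly expand $\alpha$ applied to the spectral component. Using part (1), $(\alpha\circ E^\lambda_{st})(a)=(P^\lambda_{st}\otimes\mathrm{id})\alpha(a)$, so
\begin{equation*}
\sum_{1\le s,t\le d_\lambda}\bigl((u^\lambda_{st})^\ast\otimes 1\bigr)(\alpha\circ E^\lambda_{st})(a)=\sum_{s,t}\bigl((u^\lambda_{st})^\ast\otimes 1\bigr)(P^\lambda_{st}\otimes\mathrm{id})\alpha(a).
\end{equation*}
Now $P^\lambda_{st}=(\omega^\lambda_{st}\otimes\mathrm{id})\circ\Delta$, and the key computation is that $\sum_{s,t}(u^\lambda_{st})^\ast\,(\omega^\lambda_{st}\otimes\mathrm{id})\Delta(x)=\epsilon(x)\,1$ for $x\in\mathcal O(G)$: this follows by writing $\Delta(u^\mu_{k\ell})=\sum_m u^\mu_{km}\otimes u^\mu_{m\ell}$, applying $\omega^\lambda_{st}$ on the first leg (which by the relation $\omega^\lambda_{st}(u^\mu_{k\ell})=\delta_{\mu\lambda}\delta_{ks}\delta_{t\ell}$ from the excerpt picks out $\delta_{\mu\lambda}\delta_{km}\cdots$), then multiplying by $(u^\lambda_{st})^\ast$ and summing, collapsing via $\sum_s (u^\lambda_{st})^\ast u^\lambda_{st'}$-type identities and the definition $\epsilon(u^\lambda_{ij})=\delta_{ij}$. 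Combining with $(\epsilon\otimes\mathrm{id})\alpha(c)=c$ for $c\in A_\lambda$ and the fact that $E^\lambda(a)\in A_\lambda$, one identifies the sum with $1\otimes E^\lambda(a)$.

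The main obstacle I anticipate is bookkeeping the indices in this last collapse — getting the matrix-coefficient and $F_\lambda$ factors to cancel correctly so that exactly $\epsilon$ (and not some twisted functional) appears. To keep this clean I would do the whole computation on the algebraic core $\mathcal O(A)$, where $\alpha(a_{k,i}^\lambda)=\sum_s u^\lambda_{ks}\otimes a^\lambda_{s,i}$ has finitely many terms and $\Delta$, $\epsilon$ are honest algebra maps, and then invoke continuity and density of $\mathcal O(A)$ in $A$ to conclude for general $a$.
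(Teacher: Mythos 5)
Your argument is correct and follows the paper's own route: part (1) is the standard slice-map manipulation of the action condition, and for part (2) your key identity $\sum_{s,t}(u^{\lambda}_{st})^{\ast}P^{\lambda}_{st}(x)=\epsilon(x)1$ on $C(G)_{\lambda}$ is exactly the antipode--counit identity $m\circ(S\otimes\mathrm{id})\circ\Delta=\epsilon(\cdot)1$ (equivalently, unitarity of $u^{\lambda}$) that the paper applies to the first leg of $\alpha(a^{\lambda}_{k,i})=\sum_{s}u^{\lambda}_{ks}\otimes a^{\lambda}_{s,i}$ on the algebraic core. Your plan to verify the index collapse on $\mathcal{O}(A)$ and extend by density is precisely what the paper does, and the $F_{\lambda}$ factors you worry about do not appear because, with the conventions forced by $E^{\lambda}_{sk}(a^{\lambda}_{k,i})=a^{\lambda}_{s,i}$, the collapse reduces to $(u^{\lambda})^{\ast}u^{\lambda}=1$ rather than to the non-unitary conjugate representation.
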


\begin{proof}
Part~(1) is straightforward, so we show (2). By linearity, we can assume
that $a$ has the form $a=a_{k,i}^{\lambda }$ for some $1\leq i\leq
c_{\lambda }$ and $1\leq k\leq d_{\lambda }$. Using the fact that $\mathcal{O%
}( G) $ is a Hopf *-algebra, we have%
\begin{equation*}
( m\circ ( S\otimes \mathrm{id}) \circ \Delta ) ( x) =\varepsilon ( x) 1
\end{equation*}%
for every $x\in \mathcal{O}( G) $. Therefore%
\begin{align*}
1\otimes a_{k,i}^{\lambda }&=( \epsilon \otimes \mathrm{id}) \alpha (
a_{k,i}^{\lambda }) =( m\otimes \mathrm{id}) ( S\otimes \mathrm{id}\otimes 
\mathrm{id}) ( \Delta \otimes \mathrm{id}) \alpha ( a_{k,i}^{\lambda }) \\
&=\sum_{1\leq j\leq d_{\lambda }}( m\otimes \mathrm{id}) ( S\otimes \mathrm{%
id}\otimes \mathrm{id}) ( \Delta \otimes \mathrm{id}) ( u_{kj}\otimes
a_{j,i}^{\lambda }) \\
&=\sum_{1\leq j,s\leq d_{\lambda }}( m\otimes \mathrm{id}) ( S\otimes 
\mathrm{id}\otimes \mathrm{id}) ( u_{ks}^{\lambda }\otimes u_{sj}^{\lambda
}\otimes a_{j,i}^{\lambda }) \\
&=\sum_{1\leq j,s\leq d_{\lambda }}\left( \left( u_{sk}^{\lambda }\right)
^{\ast }u_{sj}^{\lambda }\otimes a_{j,i}^{\lambda }\right) \\
&=\sum_{1\leq s\leq d_{\lambda }}\left( \left( u_{sk}^{\lambda }\right)
^{\ast }\otimes 1\right) \left( \sum_{j=1}^{d_{\lambda }}u_{sj}^{\lambda
}\otimes a_{j,i}^{\lambda }\right) \\
&=\sum_{1\leq s\leq d_{\lambda }}\left( \left( u_{sk}^{\lambda }\right)
^{\ast }\otimes 1\right) \alpha \left( a_{s,i}^{\lambda }\right) \\
&=\sum_{1\leq s\leq d_{\lambda }}\left( \left( u_{sk}^{\lambda }\right)
^{\ast }\otimes 1\right) \left( \alpha \circ E_{sk}^{\lambda }\right) \left(
a_{k,i}^{\lambda }\right) \\
&=\sum_{1\leq s,t\leq d_{\lambda }}\left( \left( u_{st}^{\lambda }\right)
^{\ast }\otimes 1\right) \left( \alpha \circ E_{st}^{\lambda }\right) \left(
a_{k,i}^{\lambda }\right) \text{.}
\end{align*}
This concludes the proof.
\end{proof}

\subsection{Axiomatization\label{Subsection:axiomatization-compact}}

Throughout this subsection, we fix a compact quantum group $G $. Our goal is
to show that there is a natural language $\mathcal{L}_{G}^{\text{C*}}$ in
the logic for metric structures that allows one to regard $G$-C*-algebra as $%
\mathcal{L}_{G}^{\text{C*}}$-structures, in such a way that the class of $G$%
-C*-algebra is axiomatizable. We begin by describing the language, and then
present the axioms for the class of $G$-C*-algebras.

\subsubsection{The language\label{Subsubsection:language-compact}}

The language $\mathcal{L}_{G}^{\text{C*}}$ has the following sorts:

\begin{itemize}
\item a sort $\mathcal{S}$, to be interpreted as the C*-algebra $A$ where $G$
acts;

\item a sort $\mathcal{S}^{( 0) }$ to be interpreted as $C( G) $;

\item a sort $\mathcal{S}^{( 1) }$ to be interpreted as $C( G) \otimes A$;

\item a sort $\mathcal{C}$ to be interpreted as the algebra of complex
numbers.
\end{itemize}

For each of the sorts above, the domains of quantifications are as follows:

\begin{itemize}
\item $\mathcal{S}$ has domains of quantification $D_{n}^{\lambda }$ for $%
\lambda \in \mathrm{Rep}( G) $ and $n\in \mathbb{N}$, to be interpreted as
the ball of radius $n$ of $A_{\lambda }\subseteq A$ centered at the origin;

\item $\mathcal{S}^{( 0) }$ has domains of quantification $D_{n}^{( 0)
,\lambda }$ for $\lambda \in \mathrm{Rep}( G) $ and $n\in \mathbb{N}$, to be
interpreted as the ball of radius $n$ of $C( G) _{\lambda }\subseteq C( G) $
centered at the origin;

\item $\mathcal{S}^{( 1) }$ has domains of quantification $D_{n}^{( 1)
,\lambda }$ for $\lambda \in \mathrm{Rep}( G) $ and $n\in \mathbb{N}$, to be
interpreted as the ball of radius $n$ of $C( G) _{\lambda }\otimes
A_{\lambda }\subseteq C( G) \otimes A$ centered at the origin;

\item $\mathcal{C}$ has domains of quantification $D_{n}$ of $n\in \mathbb{N}
$, to be interpreted as the balls of radius $n$ of $\mathbb{C}$ centered at
the origin.
\end{itemize}

The function and relation symbols consist of:

\begin{itemize}
\item function and relation symbols for the C*-algebra operations and
C*-algebra norm of $A$, $C( G) $, $C( G) \otimes A$, and $\mathbb{C}$;

\item constant symbols for the elements of $\mathbb{C}$ and for the elements
of $\mathcal{O}( G) $;

\item function symbols for the $\mathcal{O}( G) $-bimodule structure of $C(
G) \otimes A$;

\item for every $\lambda \in \mathrm{Rep}( G) $ and $u\in C( G) _{\lambda }$%
, a function symbol $\mathcal{S}\rightarrow \mathcal{S}^{( 1) }$ for the
canonical map $A\rightarrow C( G) \otimes A$;

\item for every bounded linear map $T\colon C( G) \rightarrow C( G) $ that
maps $\mathcal{O}( G) $ to itself, a function symbol $\mathcal{S}^{( 1)
}\rightarrow \mathcal{S}^{( 1) }$ to be interpreted as $T\otimes \mathrm{id}%
_{A}\colon C( G) \otimes A\rightarrow C( G) \otimes A$ and a function symbol 
$\mathcal{S}^{( 0) }\rightarrow \mathcal{S}^{( 0) }$ to be interpreted as $%
T\colon C( G) \rightarrow C( G) $;

\item for every $\omega \in \mathcal{O}(\hat{G})$, a function symbol $%
\mathcal{S}^{( 1) }\rightarrow \mathcal{S}$ to be interpreted as the map $%
\omega \otimes \mathrm{id}_{A}\colon C( G) \otimes A\rightarrow A$ (slice
maps) and a function symbol $\mathcal{S}^{( 0) }\rightarrow \mathcal{C}$ to
be interpreted as $\omega \colon C( G) \rightarrow \mathbb{C}$;

\item a function symbol $\mathcal{S}\rightarrow \mathcal{S}^{(1)}$ for the
*-homomorphisms $\alpha \colon A\rightarrow C( G) \otimes A$ defining the
action.
\end{itemize}

\subsubsection{Axioms\label{Subsubsection:axioms-compact}}

We now describe axioms for the class of $G$-C*-algebras in the language $%
\mathcal{L}_{G}^{\text{C*}}$ described above. We will use the notation
introduced in Subsection \ref{Subsection:groups-compact} and Subsection \ref%
{Subsection:action-compact}. Particularly, we set $E_{ij}^{\lambda }:=(
\omega _{ij}^{\lambda }\otimes \mathrm{id}) \circ \alpha $ for $\lambda \in 
\mathrm{Rep}( G) $ and $1\leq i,j\leq d_{\lambda }$, and $E^{\lambda
}:=\sum_{i=1}^{d_{\lambda }}E_{ii}^{\lambda }$. We also set $P^{\lambda
}\otimes \mathrm{id}:=\sum_{i=1}^{d_{\lambda }}P_{ii}^{\lambda }\otimes 
\mathrm{id}$ and $P^{\lambda }\otimes E^{\lambda }:=( P^{\lambda }\otimes 
\mathrm{id}) \circ E^{\lambda }$. The axioms are designed to guarantee the
following:

\begin{enumerate}
\item the interpretations of the sort $\mathcal{S} $, $\mathcal{S}^{( 0) }$,
and $\mathcal{S}^{( 1) }$ are C*-algebras;

\item the interpretation of the symbol for $\alpha $ is an isometric
*-homomorphism;

\item the sort $\mathcal{C}$ is interpreted as the complex numbers, and its
domains are interpreted correctly;

\item the interpretation of the domains $D_{n}^{\lambda ,\mathcal{S}}$ is
the ball of radius $n$ of the range of $E^{\lambda }$ centered at the origin
(see also \cite[Example 2.2.1]{farah_model_2016});

\item the interpretation of the domain $D_{n}^{\lambda ,\mathcal{S}}$ is the
ball of radius $n$ of the range of $P^{\lambda }\otimes E^{\lambda }$
centered at the origin;

\item the interpretation of the sort $\mathcal{S}^{( 0) }$ is isomorphic to $%
C( G) $ as a C*-algebra;

\item the interpretation of the sort $\mathcal{S}^{( 1) }$ is isomorphic to $%
C( G) \otimes A$ as a $C( G) $-bimodule;

\item the norm on $\mathcal{S}^{( 1) }$ is the minimal (injective) tensor
product norm (see Lemma \ref{Lemma:norm-compact});

\item the function symbols for $T\colon C( G) \rightarrow C( G) $ and $%
T\otimes \mathrm{id}_{A}\colon C( G) \otimes A\rightarrow C( G) \otimes A$,
where $T$ is a bounded linear map sending $\mathcal{O}( G) $ to itself, are
interpreted correctly;

\item the function symbols for the slice maps $\omega _{ij}^{\lambda
}\otimes \mathrm{id}_{A}\colon C( G) \otimes A\rightarrow A$ are interpreted
correctly;

\item the interpretations of the symbols for $E_{ij}^{\lambda }$ satisfy 
\begin{equation*}
E_{ij}^{\lambda }E_{sn}^{\mu }=\delta _{\lambda \mu }\delta _{is}\delta _{jn}
\end{equation*}%
for $\lambda ,\mu \in \mathrm{Irr}( G) $, $1\leq i,j\leq d_{\lambda }$, and $%
1\leq s,n\leq d_{\mu }$;

\item the interpretations of the symbols for $E^{\lambda } $ satisfy $%
E^{\lambda }=E^{\lambda _{1}}+\cdots +E^{\lambda _{n}}$ whenever $\lambda
\in \mathrm{Rep}( G) $ is the direct sum of irreducible unitary
representations $\lambda _{1}\oplus \cdots \oplus \lambda _{n}$, and the
fixed orthonormal basis on $\mathcal{H}_{\lambda }=\mathcal{H}^{\lambda
_{1}}\oplus \cdots \oplus \mathcal{H}^{\lambda _{n}}$ is obtained from the
fixed orthonormal bases of $\mathcal{H}^{\lambda _{1}},\ldots ,\mathcal{H}%
^{\lambda _{n}}$;

\item the interpretations of the symbols for $E^{\lambda }$ and $P^{\lambda
} $ satisfy 
\begin{equation*}
( P_{ij}^{\lambda }\otimes \mathrm{id}) \circ \alpha =\alpha \circ
E_{ij}^{\lambda }
\end{equation*}%
for every $\lambda \in \mathrm{Irr}( G) $ and $1\leq i,j\leq d_{\lambda }$
(see part~(1) of Lemma \ref{Lemma:action-density-compact});

\item for every $\lambda \in \mathrm{Irr}( G) $, we have 
\begin{equation*}
1\otimes E^{\lambda }=\sum_{1\leq s,t\leq d_{\lambda }}( ( u_{st}^{\lambda
}) ^{\ast }\otimes 1) ( \alpha \circ E_{st}^{\lambda }).
\end{equation*}
\end{enumerate}

Next, we show that the axioms (1)--(14) indeed axiomatize the class of $G$%
-C*-algebras.

\begin{proposition}
If $G$ is a compact quantum group, then an $\mathcal{L}_{G}^{\text{C*}}$%
-structure satisfies (1)--(14) above if and only if it is given by a $G$%
-C*-algebra.
\end{proposition}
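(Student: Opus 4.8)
The plan is to prove the two implications separately, the forward one being essentially a catalogue-check and the reverse one carrying the actual content. For the forward direction, suppose $(A,\alpha)$ is a $G$-C*-algebra and consider its canonical $\mathcal{L}_{G}^{\text{C*}}$-structure. Axioms (1), (3), (6), (7), (9), (10) then hold by the very definition of that structure; axiom (8) is Lemma \ref{Lemma:norm-compact}; axioms (4) and (5) hold because, by the spectral subspace theory recalled in Subsection \ref{Subsection:action-compact}, $E^{\lambda}$ is the (bounded, idempotent) projection of $A$ onto $A_{\lambda}$ and the corresponding operator on $C(G)\otimes A$ projects onto $C(G)_{\lambda}\otimes A_{\lambda}$; axiom (11) is the Schur orthogonality relation $E_{sm}^{\lambda}E_{ln}^{\mu}=\delta_{\lambda\mu}\delta_{ml}E_{sn}$ recorded there; axiom (12) holds by our choice of compatible orthonormal bases on direct sums; axiom (13) is part~(1) of Lemma \ref{Lemma:action-density-compact}; and axiom (14) is part~(2) of the same lemma. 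I would simply run through these in order.

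For the reverse direction, let $\mathcal{M}$ be an $\mathcal{L}_{G}^{\text{C*}}$-structure satisfying (1)--(14). By (1), (3), (6), (7), (8) the sorts $\mathcal{S}$, $\mathcal{S}^{(0)}$, $\mathcal{S}^{(1)}$, $\mathcal{C}$ are interpreted as a C*-algebra $A$, a copy of $C(G)$, a copy of $C(G)\otimes A$, and $\mathbb{C}$; here the identification of $\mathcal{S}^{(1)}$ with the minimal tensor product $C(G)\otimes A$ \emph{as a C*-algebra over $C(G)$} uses in addition the interpretations of the $\mathcal{O}(G)$-bimodule symbols, the canonical inclusion symbols, and Lemma \ref{Lemma:norm-compact} (the algebraic tensor product $\mathcal{O}(G)\odot A$ sits inside $\mathcal{S}^{(1)}$ as a dense $*$-subalgebra whose norm is prescribed by (8)). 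Under these identifications, axiom (2) says the symbol $\alpha$ is interpreted as an isometric --- hence injective --- $*$-homomorphism $\alpha\colon A\to C(G)\otimes A$. It then remains to check that $\alpha$ is nondegenerate and satisfies the action and density conditions of Definition \ref{Definition:compact-action}; once this is done, $(A,\alpha)$ is a $G$-C*-algebra and axioms (1)--(14) say precisely that $\mathcal{M}$ is its canonical $\mathcal{L}_{G}^{\text{C*}}$-structure.

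The point is that these three remaining conditions are exactly what axioms (13) and (14) encode. Since $P_{ij}^{\lambda}=(\omega_{ij}^{\lambda}\otimes\mathrm{id})\circ\Delta$ and $E_{ij}^{\lambda}=(\omega_{ij}^{\lambda}\otimes\mathrm{id})\circ\alpha$, axiom (13) unwinds to
\[
(\omega_{ij}^{\lambda}\otimes\mathrm{id}\otimes\mathrm{id})\circ(\Delta\otimes\mathrm{id})\circ\alpha=(\omega_{ij}^{\lambda}\otimes\mathrm{id}\otimes\mathrm{id})\circ(\mathrm{id}\otimes\alpha)\circ\alpha
\]
for all $\lambda\in\mathrm{Irr}(G)$ and $1\le i,j\le d_{\lambda}$; as $h$ is faithful the functionals $\omega_{ij}^{\lambda}$ separate the points of $C(G)$, hence the slice maps $\omega_{ij}^{\lambda}\otimes\mathrm{id}$ are point-separating on $C(G)\otimes(C(G)\otimes A)$, and the action condition $(\Delta\otimes\mathrm{id})\circ\alpha=(\mathrm{id}\otimes\alpha)\circ\alpha$ follows. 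For density, axiom (14) writes $1\otimes E^{\lambda}(a)$ as a finite sum of elements of $(C(G)\otimes1)\alpha(A)$, so $1\otimes\mathrm{ran}(E^{\lambda})\subseteq[(C(G)\otimes1)\alpha(A)]$ for every $\lambda\in\mathrm{Irr}(G)$; since the domains of $\mathcal{S}$ are, by (4), the balls of the subspaces $\mathrm{ran}(E^{\lambda})$ and the union of the domains is dense in the sort, $\overline{\mathrm{span}}\bigcup_{\lambda}\mathrm{ran}(E^{\lambda})=A$, whence $1\otimes A\subseteq[(C(G)\otimes1)\alpha(A)]$ and, multiplying on the left by $C(G)\otimes1$, $[(C(G)\otimes1)\alpha(A)]=C(G)\otimes A$. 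Finally, nondegeneracy of $\alpha$ follows formally: if $(e_{i})$ is an approximate unit for $A$ then $(1\otimes e_{i})\alpha(a)\to\alpha(a)$, so $\alpha(A)\subseteq\overline{(1\otimes A)\alpha(A)}$ and hence $\overline{(C(G)\otimes A)\alpha(A)}\supseteq\overline{(C(G)\otimes1)\alpha(A)}=C(G)\otimes A$; taking adjoints gives $[\alpha(A)(C(G)\otimes A)]=C(G)\otimes A$.

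I expect the genuinely non-bookkeeping point --- and the one worth stating carefully --- to be the reverse direction's reading of the ``indirect'' axioms (13) and (14): that (13), although phrased through the operators $E_{ij}^{\lambda}$ and $P_{ij}^{\lambda}$, is equivalent to the action identity once sliced against the point-separating family $\{\omega_{ij}^{\lambda}\}$, and that (14) is precisely the amount of information about $\alpha$ on spectral subspaces needed to recover the Podle\'{s}-type density of $(C(G)\otimes1)\alpha(A)$ in $C(G)\otimes A$. The other mild friction point is making the identification of $\mathcal{S}^{(1)}$ with $C(G)\otimes A$ \emph{as a C*-algebra} (rather than merely as a normed space or a bimodule) fully rigorous from (1), (7), (8) and the inclusion and bimodule symbols; this is routine but underlies everything else and should be spelled out.
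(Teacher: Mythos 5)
Your proposal is correct and follows essentially the same route as the paper: the forward direction is a catalogue check against the facts recorded in Subsections \ref{Subsection:groups-compact}--\ref{Subsection:action-compact} (in particular Lemmas \ref{Lemma:norm-compact} and \ref{Lemma:action-density-compact}), and the reverse direction recovers the action condition from axiom (13) by slicing against the point-separating family $\{\omega_{ij}^{\lambda}\}$ and the density condition from axiom (14) by exhibiting $1\otimes E^{\lambda}(a)$ inside $[(C(G)\otimes 1)\alpha(A)]$ and summing over $\lambda\in\mathrm{Irr}(G)$. Your explicit treatment of the identification of $\mathcal{S}^{(1)}$ with $C(G)\otimes A$ and of the nondegeneracy of $\alpha$ only spells out bookkeeping the paper leaves implicit.
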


\begin{proof}
The discussion in Subsection \ref{Subsection:action-compact} shows that any $%
G$-C*-algebra $( A,\alpha ) $, when regarded as an $\mathcal{L}_{G}^{\text{C*%
}}$-structure, satisfies the axioms (1)--(14) above. We prove the converse,
by showing that (13) implies that the action condition from Definition \ref%
{Definition:compact-action} holds, and that (14) implies that the density
condition from Definition \ref{Definition:compact-action} holds.

For every $\lambda \in \mathrm{Irr}( G) $ and $1\leq i,j\leq d_{\lambda }$,
in view of the axioms from (13) we have%
\begin{align*}
( \omega _{ij}^{\lambda }\otimes \mathrm{id}\otimes \mathrm{id}) \circ ( 
\mathrm{id}\otimes \alpha ) \circ \alpha &=( \mathrm{id}\circ \alpha ) \circ
( \omega _{ij}^{\lambda }\otimes \mathrm{id}\otimes \mathrm{id}) \circ \alpha
\\
&=( \mathrm{id}\circ \alpha ) \circ ( P_{ij}^{\lambda }\circ \mathrm{id})
\circ \alpha \\
&=( \omega _{ij}^{\lambda }\otimes \mathrm{id}\otimes \mathrm{id}) \circ (
\Delta \otimes \mathrm{id}) \circ \alpha \text{.}
\end{align*}%
Since this is true for every $\lambda \in \mathrm{Irr}( G) $ and $1\leq
i,j\leq d_{\lambda }$, we conclude that $( \mathrm{id}\otimes \alpha ) \circ
\alpha =( \Delta \otimes \mathrm{id}) \circ \alpha $, which is exactly the
action condition from Definition \ref{Definition:compact-action}.

For the density condition, let $\lambda \in \mathrm{Irr}( G) $ and $a\in
A_{\lambda }$. Then 
\begin{equation*}
1\otimes a=\sum_{1\leq s,t\leq d_{\lambda }}( ( u_{st}^{\lambda }) ^{\ast
}\otimes 1) ( \alpha \circ E_{st}^{\lambda }) ( a) \in \lbrack ( C( G)
\otimes 1) \alpha (A_{\lambda })]\text{.}
\end{equation*}%
Since this holds for every $\lambda \in \mathrm{Irr}( G) $, we have that $[
( C( G) \otimes 1) \alpha ( A) ] =C( G) \otimes A$, as desired.
\end{proof}

\begin{remark}
One similarly checks that an $\mathcal{L}_{G}^{\text{C*}}$-morphism between $%
G$-C*-algebra is precisely a $G$-equivariant *-homomorphism, while an $%
\mathcal{L}_{G}^{\text{C*}}$-embedding is an injective $G$-equivariant
*-homomorphism.
\end{remark}

It is easy to see that the axioms above are given by conditions of the form $%
\sigma \leq r$, where $\sigma $ is a positive primitive $\forall \exists $-$%
\mathcal{L}_{G}^{\text{C*}}$-sentence and $r\in \mathbb{R}$. Therefore, the
class of $G$-C*-algebras is positively primitively $\forall \exists $%
-axiomatizable in the language $\mathcal{L}_{G}^{\text{C*}}$, in the sense
of Definition \ref{Definition:pp-axiomatizable}. 

\subsubsection{Freeness}

We continue to fix a compact quantum group $G$. We recall here the notion of
freeness for a $G$-C*-algebra $A$ from \cite{ellwood_new_2000}.

\begin{definition}
\label{Definition:freeness}A $G$-C*-algebra $(A,\alpha )$ is \emph{free }if $%
[(1\otimes A)\alpha (A)]=C(G)\otimes A$.
\end{definition}

It is proved in \cite[Theorem 2.9]{ellwood_new_2000} that such a definition
recovers the usual notion of freeness in topological dynamics when $G$ is a
classical compact group and $A$ is an abelian C*-algebra.

\begin{example}
\label{eg:free} The left translation action of $G$ on $C(G)$ is free. More
generally, if $A$ is any C*-algebra, then the $G$-action on $C(G)\otimes A$
given by $\Delta \otimes \mathrm{id}_{A}$ is free as well.
\end{example}

The following equivalent reformulation of freeness is an easy consequence of
the fact that $\mathcal{O}( A) $ is a dense *-subalgebra of $A$, the fact
that $\mathcal{O}( G) $ is a dense *-subalgebra of $C( G) $, and part~(1) of
Lemma \ref{Lemma:action-density-compact}.

\begin{lemma}
\label{Lemma:freeeness-definable} A $G$-C*-algebra $(A,\alpha )$ is free if
and only if $[(1\otimes \mathcal{O}(A))\alpha (A_{\lambda })]$ is equal to $%
C(G)_{\lambda }\otimes A$, for every $\lambda \in \mathrm{Irr}(G)$. It
follows that the class of free $G$-C*-algebras is definable by a uniform
family of positive existential $\mathcal{L}_{G}^{\text{C*}}$-formulas.
\end{lemma}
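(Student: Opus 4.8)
The plan is to prove the displayed equivalence first, and then to extract the definability statement from it with essentially no extra work.

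\emph{The equivalence.} Recall from Subsection~\ref{Subsection:groups-compact} that $P^{\lambda}=\sum_{k=1}^{d_{\lambda}}P_{kk}^{\lambda}$ is an idempotent on $C(G)$ with range $C(G)_{\lambda}$; it is bounded, since each $P_{kk}^{\lambda}=(\omega_{kk}^{\lambda}\otimes\mathrm{id})\circ\Delta$ is a slice of the contractive map $\Delta$ by the bounded functional $\omega_{kk}^{\lambda}$, and the sum is finite. So $Q_{\lambda}:=P^{\lambda}\otimes\mathrm{id}_{A}$ is a bounded idempotent on $C(G)\otimes A$ whose range is exactly $C(G)_{\lambda}\otimes A$ (a closed subspace, $C(G)_{\lambda}$ being finite dimensional). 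I would use two facts: $Q_{\lambda}$ commutes with left multiplication by $1\otimes b$ for $b\in A$, since these act on different tensor legs; and $Q_{\lambda}\circ\alpha=\alpha\circ E^{\lambda}$ with $E^{\lambda}(A)=A_{\lambda}$, which is part~(1) of Lemma~\ref{Lemma:action-density-compact}. For the nontrivial direction, assume $(A,\alpha)$ is free and fix $\lambda\in\mathrm{Irr}(G)$. Since $\mathcal{O}(A)$ is dense in $A$ and $\alpha$ is isometric, $[(1\otimes\mathcal{O}(A))\alpha(\mathcal{O}(A))]=[(1\otimes A)\alpha(A)]=C(G)\otimes A$. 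Applying $Q_{\lambda}$, pushing it past the closed span (it is bounded), and then using the two facts, one gets
\[
C(G)_{\lambda}\otimes A=Q_{\lambda}\bigl(C(G)\otimes A\bigr)\subseteq\bigl[\{(1\otimes b)\,\alpha(E^{\lambda}c):b,c\in\mathcal{O}(A)\}\bigr]\subseteq[(1\otimes\mathcal{O}(A))\alpha(A_{\lambda})].
\]
The reverse inclusion holds for any $G$-C*-algebra, because $\alpha(A_{\lambda})\subseteq C(G)_{\lambda}\otimes A_{\lambda}$ and left multiplication by $1\otimes\mathcal{O}(A)$ preserves $C(G)_{\lambda}\otimes A$; hence equality. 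Conversely, if $[(1\otimes\mathcal{O}(A))\alpha(A_{\lambda})]=C(G)_{\lambda}\otimes A$ for every $\lambda\in\mathrm{Irr}(G)$, then $[(1\otimes A)\alpha(A)]$ contains every $C(G)_{\lambda}\otimes A$, hence the closure of their algebraic sum, which is all of $C(G)\otimes A$ since $\mathcal{O}(G)=\bigoplus_{\lambda}C(G)_{\lambda}$ is dense in $C(G)$; so $(A,\alpha)$ is free.

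\emph{Definability.} Since the inclusion ``$\subseteq$'' is automatic, the equivalence says that $(A,\alpha)$ is free precisely when, for all $\lambda\in\mathrm{Irr}(G)$, all $1\le i,j\le d_{\lambda}$, and all $a\in A$, the element $u_{ij}^{\lambda}\otimes a$ has distance zero to $[\{(1\otimes b)\,\alpha(c):b\in A,\ c\in A_{\lambda}\}]$ (which, by density of $\mathcal{O}(A)$, coincides with $[(1\otimes\mathcal{O}(A))\alpha(A_{\lambda})]$). For $K,n\in\mathbb{N}$ I would let $\varphi_{\lambda,i,j,K,n}(a)$ be the infimum, over $b_{1},\dots,b_{K}$ in the radius-$n$ domain of $\mathcal{S}$ and $c_{1},\dots,c_{K}$ in the domain $D_{n}^{\lambda}$ of $\mathcal{S}$, of the $\mathcal{S}^{(1)}$-norm of $u_{ij}^{\lambda}\otimes a-\sum_{k=1}^{K}(1\otimes b_{k})\,\alpha(c_{k})$; here $u_{ij}^{\lambda}\otimes a$ and $1\otimes b_{k}$ are given by the language symbols for the canonical maps $\mathcal{S}\to\mathcal{S}^{(1)}$, $\alpha$ is the symbol for the action, and the remaining operations are the C*-operations of $\mathcal{S}^{(1)}$. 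Each $\varphi_{\lambda,i,j,K,n}$ is built from atomic formulas using only infima over domains of quantification, hence is positive existential, and the distance above equals $\inf_{K,n}\varphi_{\lambda,i,j,K,n}(a)$. Letting $a$ range over the domains $D_{m}^{\mu}$ ($\mu\in\mathrm{Rep}(G)$), whose union is dense in $A$, the common vanishing of the formulas $\varphi_{\lambda,i,j,K,n}$ characterizes freeness; this is the desired uniform family of positive existential $\mathcal{L}_{G}^{\text{C*}}$-formulas.

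\emph{Main obstacle.} Part~(1) is routine once $Q_{\lambda}$, its boundedness, and the two commutation facts are in place. The only genuine subtlety lies in part~(2): one cannot bound the number $K$ of summands uniformly (already for non-unital $A$), so freeness is captured by a \emph{family} of positive existential formulas rather than a single one; and one must check that every quantifier in $\varphi_{\lambda,i,j,K,n}$ runs over an honest domain of quantification of $\mathcal{L}_{G}^{\text{C*}}$ and that $u_{ij}^{\lambda}\otimes a$, $1\otimes b_{k}$ and $\alpha(c_{k})$ are all expressible there, which they are by the choice of language in Subsubsection~\ref{Subsubsection:language-compact}.
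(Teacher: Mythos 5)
Your proof of the displayed equivalence is correct, and it uses exactly the ingredients the paper invokes (density of $\mathcal{O}(A)$ in $A$, density of $\mathcal{O}(G)$ in $C(G)$, and part~(1) of Lemma~\ref{Lemma:action-density-compact}); the paper gives no further detail, and your argument via the bounded idempotent $Q_{\lambda}=P^{\lambda}\otimes\mathrm{id}_{A}$, its commutation with $1\otimes b$, and the identity $Q_{\lambda}\circ\alpha=\alpha\circ E^{\lambda}$ is a clean way to fill it in. The boundedness of $Q_{\lambda}$ on the minimal tensor product (as a slice of the contraction $\Delta\otimes\mathrm{id}_{A}$ by the bounded functionals $\omega_{kk}^{\lambda}$) and the closedness of its range $C(G)_{\lambda}\otimes A$ are exactly the points that need to be checked, and you check them.

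The definability half, however, has a genuine flaw in how the family is indexed. In Definition~\ref{Definition:pp-definable-collection} the quantifier pattern is: for every choice of domains and every tuple of elements and every $\varepsilon>0$ there \emph{exists} a formula in the family with value at most $\varepsilon$. You place $\lambda$, $i$, $j$ among the indices of the family $\{\varphi_{\lambda,i,j,K,n}\}$, i.e.\ on the existential side, whereas freeness requires the approximation to hold \emph{for all} $\lambda,i,j$. Concretely, taking $\lambda$ to be the trivial representation gives $u_{11}^{\lambda}=1$ and $\alpha(c)=1\otimes c$ for $c\in A^{\alpha}$, so $\varphi_{\mathrm{t},1,1,K,n}(a)=\inf\|a-\sum_{k}b_{k}c_{k}\|$ can be made arbitrarily small for \emph{every} $G$-C*-algebra, since $A^{\alpha}$ is nondegenerate in $A$; hence condition~(2) of the definition, read against your family, is satisfied by every $G$-C*-algebra, free or not. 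The fix is to move the free variable to the sort $\mathcal{S}^{(1)}$: take $x$ ranging over the domains $D_{n}^{(1),\mu}$ (interpreted as balls of $C(G)_{\mu}\otimes A_{\mu}$, whose union over $\mu\in\mathrm{Rep}(G)$ is dense in $C(G)\otimes A$), and let $\mathcal{F}(D_{n}^{(1),\mu})$ consist of the formulas $\psi_{K,m,\nu,\nu'}(x)=\inf\|x-\sum_{k=1}^{K}(1\otimes b_{k})\alpha(c_{k})\|$ with $b_{k}\in D_{m}^{\nu}$ and $c_{k}\in D_{m}^{\nu'}$. Now the representation label is determined by the universally quantified domain of the free variable, the existential indices $(K,m,\nu,\nu')$ are genuine approximation parameters, and condition~(2) is equivalent to $[(1\otimes\mathcal{O}(A))\alpha(\mathcal{O}(A))]\supseteq C(G)_{\mu}\otimes A_{\mu}$ for all $\mu$, hence to freeness by your equivalence. (This also takes care of your own caveat that there is no single domain equal to the radius-$n$ ball of $A$: the $b_{k}$ must range over $D_{m}^{\nu}$ for $\nu\in\mathrm{Rep}(G)$, which is why $\nu$ appears as an index.)
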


\subsubsection{Ultraproducts\label{Subsubsection:ultraproducts}}

Regarding $G$-C*-algebras as structures in the language $\mathcal{L}_{G}^{%
\text{C*}}$ described above provides a natural notion of \emph{ultraproduct }%
of $G$-C*-algebras, as a particular instance of the notion of ultraproduct
in the logic for metric structures. Since the class of $G$-C*-algebras is
axiomatizable in the language $\mathcal{L}_{G}^{\text{C*}}$, it follows that
the ultraproduct of $G$-C*-algebras is a $G$-C*-algebra.

More generally, one can consider reduced products of $G$-C*-algebras with
respect to an arbitrary filter $\mathcal{F}$ as particular instances of
reduced products of metric structures as defined in \cite%
{ghasemi_reduced_2016}. Concretely, suppose that $\mathcal{F}$ is a filter
on a set $I$, and let $( A_{i},\alpha _{i}) $ be an $I$-sequence of $G$%
-C*-algebras. For $\lambda \in \mathrm{Rep}( G) $, let $A_{\lambda }$ be the
Banach space obtained as the vector space of bounded sequences $( a_{i}) \in
\prod_{i\in I}(A_{i})_{\lambda }$ endowed with the seminorm $\| ( a_{i}) \|
=\limsup_{\mathcal{F}}\| a_{i}\| $. (This is just the reduced product $%
\prod_{\mathcal{F}}(A_{i})_{\lambda }$ of the $I$-sequence of Banach spaces $%
(A_{i})_{\lambda }$.) One can regard $A_{\lambda }$ as a closed subspace of $%
A_{\mu }$ whenever $\lambda ,\mu \in \mathrm{Rep}( G) $ and $\lambda $ is
contained in $\mu $. Therefore, the union $\bigcup_{\lambda\in\mathrm{Rep}%
(G)}A_{\lambda }$ has a natural normed vector space structure. Let $A$
denote its completion. We will write $[ a_{i}] _{\mathcal{F}}$ for the
element of $A$ corresponding to the bounded sequence $( a_{i})_{i\in I} $ in 
$\prod_{i\in I}A_{i}$.

Multiplication and involution on $A$ are induced by pointwise operations (it
is important to notice here that $( a_{i}b_{i}) $ is automatically a bounded
sequence in $\prod_{i\in I}(A_{i})_{\lambda \otimes \mu }$ and $(
a_{i}^{\ast }) $ is a bounded sequence in $\prod_{i\in I}(A_{i})_{\overline{%
\lambda }}$). Finally, since $C( G) _{\lambda }$ is finite-dimensional for
every $\lambda \in \mathrm{Rep}( G) $, one can isometrically identify $C( G)
_{\lambda }\otimes A_{\lambda }$ with the reduced product of Banach spaces $%
\prod_{\mathcal{F}}(C( G) _{\lambda }\otimes (A_{i})_{\lambda })$; see for
instance \cite[Lemma 7.4]{heinrich_ultraproducts_1980}, where the case of
ultraproducts is considered. Therefore, the assignment $( a_{i})_{i\in I}
\mapsto ( \alpha ( a_{i}) )_{i\in I} $, induces a function $\alpha \colon
A_{\lambda }\rightarrow \prod_{\mathcal{F}}(C( G) _{\lambda }\otimes
(A_{i})_{\lambda })=C( G) _{\lambda }\otimes A_{\lambda }$ for every $%
\lambda \in \mathrm{Rep}(G)$. The induced function $\alpha \colon
A\rightarrow C( G) \otimes A$ is easily seen to be an action of $G$ on $A$.
In the following, we will denote such an action by $\alpha _{\mathcal{F}}$.

\begin{remark}
In view of \cite[Proposition 2.13]{barlak_spatial_2017}, considering such an
explicit construction of the reduced product shows that it coincides with
the continuous part of the sequence algebra as defined in \cite[Definition
2.11]{barlak_spatial_2017} when $\mathcal{F}$ is the filter of cofinite
subsets of $\mathbb{N}$ and $G$ is a second countable coexact compact
quantum group (which is the only case considered in \cite[Definition 2.11]%
{barlak_spatial_2017}).
\end{remark}

We now show that, when $G$ is a classical compact group, the reduced product
of $G$-C*-algebras as $\mathcal{L}_{G}^{\text{C*}}$-structures agrees with
the continuous part of the sequence algebra of a $G$-C*-algebra considered
in the C*-algebra literature; see, for example, \cite%
{gardella_equivariant_2016}.

\begin{proposition}
Let $G$ be a compact group. Suppose that $\mathcal{F}$ is a filter on an
index set $I$, and let $\{(A_{i},\alpha ^{(i)})\colon i\in I\}$ be $G$%
-C*-algebras. Denote by $\alpha _{\mathcal{F}}$ the (not necessarily
continuous) action of $G$ on $\prod_{\mathcal{F}}A_{i}$ given by pointwise
application of the actions $\alpha ^{(i)}$. Then the reduced ultraproduct $%
\prod_{\mathcal{F}}^{G}A_{i}$ agrees with the subalgebra of $\prod_{\mathcal{%
F}}A_{i}$ where $\alpha _{\mathcal{F}}$ is continuous.
\end{proposition}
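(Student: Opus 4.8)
The plan is to show both inclusions between $\prod_{\mathcal{F}}^{G}A_{i}$ (the reduced product taken in the language $\mathcal{L}_{G}^{\text{C*}}$, i.e.\ the completion of $\bigcup_{\lambda}\prod_{\mathcal{F}}(A_{i})_{\lambda}$) and the subalgebra $B\subseteq \prod_{\mathcal{F}}A_{i}$ of elements on which $\alpha_{\mathcal{F}}$ is norm-continuous. Since $G$ is a classical compact group, for each irreducible representation $\lambda$ the spectral subspace $(A_{i})_{\lambda}$ consists exactly of those $a\in A_{i}$ whose orbit under $\alpha^{(i)}$ spans a finite-dimensional space transforming according to $\lambda$; equivalently, $(A_{i})_{\lambda}=E^{\lambda}(A_{i})$ where $E^{\lambda}$ is the spectral projection $\int_{G} \overline{\chi_{\lambda}(g)}\,\alpha^{(i)}_{g}\,dg$ (suitably normalized), and $\|E^{\lambda}\|\le d_{\lambda}$ uniformly in $i$.

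For the inclusion $\prod_{\mathcal{F}}^{G}A_{i}\subseteq B$: a bounded sequence $(a_{i})$ with each $a_{i}\in (A_{i})_{\lambda}$ for a \emph{fixed} $\lambda\in\mathrm{Rep}(G)$ has $\alpha_{\mathcal{F}}([a_{i}]_{\mathcal{F}})$ lying in $C(G)_{\lambda}\otimes \prod_{\mathcal{F}}^{G}A_{i}$, a space on which the translation action $g\mapsto (\lambda_{g}\otimes\mathrm{id})$ is norm-continuous because $C(G)_{\lambda}$ is finite-dimensional and the matrix coefficients $u^{\lambda}_{ij}$ are continuous functions on $G$. Hence every element of $\bigcup_{\lambda}\prod_{\mathcal{F}}(A_{i})_{\lambda}$ lies in $B$, and since $B$ is norm-closed in $\prod_{\mathcal{F}}A_{i}$ (a uniform limit of continuous orbit maps is continuous), the completion $\prod_{\mathcal{F}}^{G}A_{i}$ is contained in $B$.

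For the reverse inclusion $B\subseteq \prod_{\mathcal{F}}^{G}A_{i}$: given $b=[b_{i}]_{\mathcal{F}}\in B$, I would use the Peter--Weyl decomposition. For each $\lambda$, the spectral projections $E^{\lambda}$ act compatibly on $\prod_{\mathcal{F}}A_{i}$ (by pointwise application, using the uniform bound $\|E^{\lambda}\|\le d_{\lambda}$), and $E^{\lambda}(b)=[E^{\lambda}(b_{i})]_{\mathcal{F}}$ visibly lies in $\prod_{\mathcal{F}}(A_{i})_{\lambda}\subseteq \prod_{\mathcal{F}}^{G}A_{i}$. It remains to show that $b$ is the norm-limit of finite sums of its spectral components, which is exactly where continuity of $\alpha_{\mathcal{F}}$ on $b$ is used: for a classical compact group, $a\mapsto a$ being in the closure of $\operatorname{span}\{E^{\lambda}a\}$ is equivalent to the orbit map $g\mapsto \alpha_{g}(a)$ being norm-continuous — this is a standard Fej\'er-type / Cesàro-mean argument averaging $\alpha_{g}(a)$ against approximate identities built from characters. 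Applying this to $b\in B$ gives $b\in \overline{\operatorname{span}}\{E^{\lambda}(b):\lambda\in\mathrm{Irr}(G)\}\subseteq \prod_{\mathcal{F}}^{G}A_{i}$.

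The main obstacle is the second inclusion, specifically justifying that the Cesàro/Fej\'er averaging that proves ``continuous orbit $\Rightarrow$ approximable by spectral components'' for a single $G$-C*-algebra survives passage to the reduced product — one must check that the averaging operators (finite convex combinations of the $\alpha^{(i)}_{g}$ with continuous weights) act on $\prod_{\mathcal{F}}A_{i}$ in a way compatible with $\alpha_{\mathcal{F}}$, which follows since they are fixed continuous functions of $g$ and the relevant estimates are uniform in $i$. I would also remark that $\mathrm{Irr}(G)$ need not be countable, but this causes no difficulty: the argument only ever uses finitely many $\lambda$ at a time, and the completion is taken with respect to the norm.
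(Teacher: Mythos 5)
Your two-inclusion strategy is the same as the paper's: $\prod_{\mathcal{F}}^{G}A_{i}\subseteq B$ via the uniform-in-$i$ continuity of orbits of elements of a fixed spectral subspace, and the reverse inclusion via Peter--Weyl density of $\bigcup_\lambda B_\lambda$ in $B$ together with the identification $B_\lambda=\prod_{\mathcal{F}}(A_i)_\lambda$. The first inclusion is fine. The genuine gap is exactly at the point you flag and then dismiss: the claim that the Ces\`aro/Fej\'er averaging survives passage to the reduced product because ``the relevant estimates are uniform in $i$''. They are not. Continuity of the orbit of $b=[b_i]_{\mathcal{F}}$ only says that for each \emph{fixed} $g$ the quantity $\limsup_{\mathcal{F}}\|\alpha^{(i)}_g(b_i)-b_i\|$ is small; it gives no control of $\limsup_{\mathcal{F}}\sup_{g\in U}\|\alpha^{(i)}_g(b_i)-b_i\|$, and the individual orbit maps $g\mapsto\alpha^{(i)}_g(b_i)$ need not be equicontinuous in $i$. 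To identify the Bochner integral $E^{\lambda}(b)=d_\lambda\int_G\overline{\chi_\lambda(g)}\,\alpha_{\mathcal{F},g}(b)\,dg$ with the pointwise element $[E^{\lambda}(b_i)]_{\mathcal{F}}$ you must interchange $\limsup_{\mathcal{F}}$ with $\int_G$ applied to uniformly bounded continuous functions of $g$ that tend to $0$ only pointwise in $g$ along $\mathcal{F}$. This is a dominated-convergence argument, valid when $\mathcal{F}$ is countably generated (in particular for the Fr\'echet filter on $\mathbb{N}$, i.e.\ the sequence algebra, which is the case matching the C*-literature), but false for general filters.

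In fact no repair is possible in the stated generality, because the statement itself fails for suitable filters. Take $G=\mathbb{T}$, $A_n=C(\mathbb{T})$ with the translation action, $x_n=z^n$, and let $\mathcal{U}$ be a free ultrafilter on $\mathbb{N}$ containing every Bohr set $\{n\colon \mathrm{dist}(ng,\mathbb{Z})<\varepsilon\}$; such $\mathcal{U}$ exists since these sets, together with the cofinite sets, have the finite intersection property by simultaneous Dirichlet approximation. Then $\lim_{\mathcal{U}}|e^{2\pi ing}-1|=0$ for every $g$, so $[z^n]_{\mathcal{U}}$ is a fixed point of $\alpha_{\mathcal{U}}$ and in particular lies in the continuous part $B$; but if $(y_n)$ is a bounded sequence of trigonometric polynomials with frequencies in a fixed finite set, then $\|z^n-y_n\|_\infty\geq\|z^n-y_n\|_{L^2}\geq 1$ for all but finitely many $n$, so $[z^n]_{\mathcal{U}}$ lies at distance at least $1$ from $\prod_{\mathcal{U}}^{G}A_n$. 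You should therefore either restrict to countably generated filters, where your argument goes through once the dominated-convergence step is spelled out, or note that the proposition cannot hold for arbitrary filters. (The paper's own proof conceals the same issue in the assertion that $B_\lambda=\prod_{\mathcal{F}}(A_i)_\lambda$ ``can be seen directly''.)
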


\begin{proof}
For convenience, denote by $B$ the subalgebra of $\prod_{\mathcal{F}}A_{i}$
where $G$ acts continuously. Regard $\prod_{\mathcal{F}}^{G}A_{i}$ as a
C*-subalgebra of $\prod_{\mathcal{F}}A_{i}$, in such a way that the
inclusion $\prod_{\mathcal{F}}^{G}A_{i}\subseteq \prod_{\mathcal{F}}A_{i}$
is $G$-equivariant. Since the action of $G$ on $\prod_{\mathcal{F}}^{G}A_{i}$
is continuous, one has $\prod_{\mathcal{F}}^{G}A_{i}\subseteq B$. On the
other hand, since the action of $G$ on $B$ is continuous, the *-subalgebra $%
\mathcal{O}( B) =\bigcup_{\lambda \in \mathrm{Rep}( G) }B_{\lambda }$
consisting of the spectral subspaces for $B$, is dense in $B$. For every $%
\lambda \in \mathrm{Rep}( G) $, and using the projections onto the spectral
subspaces, one can see directly that 
\begin{equation*}
B_{\lambda }=\prod\nolimits_{\mathcal{F}}(A_{i})_{\lambda }\subseteq
\prod\nolimits_{\mathcal{F}}^{G}A_{i}.
\end{equation*}%
Since this holds for every $\lambda \in \mathrm{Rep}( G) $, we have $%
\mathcal{O}( B) \subseteq \prod\nolimits_{\mathcal{F}}^{G}A_{i}$ and hence $%
B\subseteq \prod\nolimits_{\mathcal{F}}^{G}A_{i}$.
\end{proof}

In particular, it follows from these observations that the (positive) \emph{%
existential theory }of a $G$-C*-algebras as defined in \cite[Subsection 2.2]%
{gardella_equivariant_2016} coincides with the (positive) existential theory
of a $G$-C*-algebras as an $\mathcal{L}_{G}^{\text{C*}}$-structure.

\subsubsection{Other languages}

As for the case of discrete quantum group actions, it is sometimes useful to
consider C*-algebras and $G$-C*-algebras as structures in a language that is
different from the standard language for C*-algebras. These other languages
allow one to capture properties that are preserved under not necessarily
multiplicative maps. Several such languages are considered in \cite[Section 3%
]{gardella_equivariant_2016}. If $\mathcal{L}$ is such a language, one can
define its corresponding $G$-equivariant analogue $\mathcal{L}_{G}$ as
above, so that $G$-C*-algebras can also be regarded as $\mathcal{L}_{G}$%
-structures. 

\section{Crossed products and reduced products\label{Section:crossed}}

\subsection{The dual of a compact quantum group}

The \emph{dual} $\hat{G}$ of a compact quantum group $G$ is the discrete
quantum group defined as follows. By assumption, the GNS representation
associated with the Haar state $h$ on $C( G) $ defines a faithful
representation of $C( G) $. We denote by $L^{2}( G) $ the corresponding
Hilbert space, and we identify $C( G) $ with a subalgebra of $B( L^{2}( G) ) 
$. We let $x\mapsto \left\vert x\right\rangle $ be the canonical map from $%
C( G) $ to $L^{2}( G) $, so that $x\left\vert y\right\rangle =\left\vert
xy\right\rangle $, and $\left\langle x,y\right\rangle =h( x^{\ast }y) $ for
all $x,y\in C( G) $. For $\lambda \in \mathrm{Rep}( G) $, set $%
L^{2}(G)_{\lambda }=\left\{ u_{\xi ,\eta }^{\lambda }\colon \xi ,\eta \in 
\mathcal{H}_{\lambda }\right\} \subseteq L^{2}( G) $.

Recall that $\mathcal{O}(\hat{G})$ denotes the space of linear functionals
on $C( G) $ of the form $x\mapsto h( xa) $ for some $a\in \mathcal{O}( G) $.
It coincides with the space of linear functionals on $C( G) $ of the form $%
x\mapsto h( ax) $ for some $a\in \mathcal{O}( G) $, and it is also equal to
the linear span of $\omega _{ij}^{\lambda }$ for $\lambda \in \mathrm{Irr}(
G) $ and $1\leq i,j\leq d_{\lambda }$. This is an algebra with respect to
convolution. The antipode map $S$ of $\mathcal{O}( G) $ defines an
involution on $\mathcal{O}(\hat{G})$ by setting $\phi ^{\ast }=\overline{%
\phi ( S( x) ^{\ast }) }$. For $\lambda \in \mathrm{Rep}( G) $, set 
\begin{equation*}
c_{0}(\hat{G})_{\lambda }=\mathrm{span}\left\{ \omega _{\xi ,\eta }^{\lambda
}\colon \xi ,\eta \in \mathcal{H}_{\lambda }\right\} \text{,}
\end{equation*}%
and observe that $\omega _{\xi ,\eta }^{\lambda }\mapsto T_{\xi ,\eta }$ is
a *-homomorphism from $c_{0}(\hat{G})_{\lambda }$ onto $\mathbb{K}( \mathcal{%
H}_{\lambda }) $, where $T_{\xi ,\eta }$ is the usual rank-one operator.
Furthermore $\mathcal{O}(\hat{G})$ is isomorphic to the algebraic direct sum
of $c_{0}(\hat{G})_{\lambda }$ for $\lambda \in \mathrm{Irr}( G) $. One can
define an injective *-representation of $\mathcal{O}(\hat{G})$ on $L^{2}( G) 
$ by setting $\omega \left\vert x\right\rangle =\left\vert ( \mathrm{id}%
\otimes \omega ) \Delta ( x) \right\rangle $ for $x\in C( G) $ and $\omega
\in \mathcal{O}(\hat{G})$. We will identify $\mathcal{O}(\hat{G})$ with its
image inside $B( L^{2}( G) ) $, and let $C^{\ast }( G) =c_{0}(\hat{G})$ be
the closure of $\mathcal{O}(\hat{G})$ inside $B(L^{2}( G)) $. For $\lambda
\in \mathrm{Rep}( G) $, the subspace $L^{2}(G)_{\lambda }$ is invariant
under $c_{0}(\hat{G})_{\lambda }$, and the inclusion $c_{0}(\hat{G}%
)_{\lambda }\subseteq B( L^{2}(G)_{\lambda }) $ is isometric. The C*-algebra 
$C^{\ast }( G) =c_{0}(\hat{G})$ is also called the \emph{group C*-algebra }%
of the compact quantum group $G$.

The multiplication operation on $C(G)$ defines a nondegenerate
*-homomorphism $\hat{\Delta}\colon \mathcal{O}(\hat{G})\rightarrow M(%
\mathcal{O}(\hat{G})\odot \mathcal{O}(\hat{G}))$ such that $(\omega
_{1}\otimes 1)\hat{\Delta}(\omega _{2})$ belongs to $\mathcal{O}(\hat{G}%
)\odot \mathcal{O}(\hat{G})$ for every $\omega _{1},\omega _{2}\in \mathcal{O%
}(\hat{G})$, and%
\begin{eqnarray*}
(\omega _{1}\otimes 1)\hat{\Delta}(\omega _{2}) &:&\ x\otimes y\mapsto
(\omega _{1}\otimes \omega _{2})(\Delta (x)(1\otimes y)) \\
\hat{\Delta}(\omega _{2})(1\otimes \omega _{1}) &:&x\otimes y\mapsto (\omega
_{2}\otimes \omega _{1})((x\otimes 1)\Delta (y))\text{.}
\end{eqnarray*}%
Such a nondegenerate *-homomorphism extends to $c_{0}(\hat{G})$ and defines
a *-homomorphism $\hat{\Delta}\colon c_{0}(\hat{G})\rightarrow
M(c_{0}(\hat{G})\otimes c_{0}(\hat{G}))$ which is also nondegenerate. This defines a discrete quantum
group $\hat{G}$, which is the dual of $G$.

For the purpose of defining the dual of a given action, it is convenient to
consider the \emph{opposite} discrete group $\check{G}$ of $\hat{G}$. 

\begin{definition}
\label{def:Gcheck} Let $G$ be a compact quantum group. Following the
notation of \cite{barlak_spatial_2017}, we define $\check{G}$ to be the
discrete group such that $c_{0}(\check{G})$ is equal to $c_{0}(\hat{G})$ as
a C*-algebra, but endowed with the \emph{opposite comultiplication} $\check{%
\Delta}(\omega)=\Sigma \circ \hat{\Delta}(\omega)\circ \Sigma $.
\end{definition}

The Hopf *-algebra $\mathcal{O}(\check{G})$ is equal to $\mathcal{O}(\hat{G}%
) $ but endowed with the opposite comultiplication.


\subsection{The crossed product of a compact quantum group action}

We fix a compact quantum group $G$ and a $G$-C*-algebra $(A,\alpha )$. We
proceed to define the associated (reduced) crossed product.

\begin{definition}
(See \cite[Definition 5.28]{de_commer_actions_2016} and \cite[Proposition
5.32]{de_commer_actions_2016}). Fix a nondegenerate faithful
*-re\-pre\-sen\-ta\-tion $A\rightarrow B(\mathcal{H})$, under which we regard $%
\alpha (A)\subseteq C(G)\otimes A$ as a subalgebra of $B(L^{2}(G)\otimes 
\mathcal{H})$ and similarly for $c_{0}(\hat{G})\otimes 1$. The \emph{reduced
crossed product }$G\ltimes _{\alpha ,\mathrm{r}}A$ is defined as 
\begin{equation*}
\lbrack (c_{0}(\hat{G})\otimes 1)\alpha (A)]\subseteq B(L^{2}(G)\otimes 
\mathcal{H}).
\end{equation*}%
The crossed product is canonically endowed with an action $\check{\alpha}$
of $\check{G}$, called the \emph{dual action}, which is defined by setting%
\begin{equation*}
\check{\alpha}((\omega \otimes 1)\alpha (a))=(\check{\Delta}(\omega )\otimes
1)(1\otimes \alpha (a))\in M(c_{0}(\check{G})\otimes G\ltimes _{\alpha
}A)\subseteq B(L^{2}(G)\otimes L^{2}(G)\otimes \mathcal{H}).
\end{equation*}
\end{definition}

We will regard $G\ltimes _{\alpha ,\mathrm{r}}A$ as a $\check{G}$-C*-algebra
endowed with such an action of $\check{G}$. Given $\omega \in c_{0}(\check{G}%
)$ and $a\in A$, we denote by $\omega \ltimes a$ the element $(\omega
\otimes 1)\alpha (a)$ of $G\ltimes _{\alpha ,\mathrm{r}}A$. It is shown in 
\cite[Theorem 5.31]{de_commer_actions_2016} that the reduced crossed product 
$G\ltimes _{\alpha ,\mathrm{r}}A$ as defined above coincides with the \emph{%
full }crossed product in the sense of \cite[Definition 5.27]%
{de_commer_actions_2016}, and therefore can also be denoted by $G\ltimes
_{\alpha }A$. It follows from the universal property of the full crossed
products that, if $(B,\beta )$ and $(C,\gamma )$ are $G$-C*-algebras, then a 
$G$-equivariant *-homomorphism $\phi \colon B\rightarrow C$ induces a $%
\check{G}$-equivariant *-homomorphism $G\ltimes \phi \colon G\ltimes _{\beta
,\mathrm{r}}B\rightarrow G\ltimes _{\gamma ,\mathrm{r}}C$ by setting $%
(G\ltimes \phi )(\omega \ltimes a)=\omega \ltimes \phi (a)$. Furthermore, if 
$\phi $ is nondegenerate, then $G\ltimes \phi $ is nondegenerate \cite[%
Theorem 9.4.8]{timmermann_invitation_2008}. Recall that a completely
positive order zero map $\phi :A\rightarrow B$ between C*-algebra is \emph{%
order zero }if, whenever $a,b$ are positive elements of $A$ satisfying $%
ab=ba=0$, then $\phi \left( a\right) \phi \left( b\right) =\phi \left(
b\right) \phi \left( a\right) =0$ \cite[Definition 2.3]%
{winter_completely_2009}. One can then easily deduce from this and the
structure theorem for completely positive order zero maps the following
lemma; see also \cite[Proposition 2.3]{gardella_regularity_?}.

\begin{lemma}
\label{Lemma:crossed-oz}Suppose that $( A,\alpha ) $, $( B,\beta ) $, $(
C,\gamma ) $ are $G$-C*-algebras. Assume that $A\subseteq B$ and $A\subseteq
C$ are $G$-invariant subalgebras. Let $\theta \colon B\rightarrow C$ be a $G$%
-equivariant completely positive order zero $A$-bimodule map. Then the
assignment%
\begin{equation*}
\omega \ltimes b\mapsto \omega \ltimes \theta ( b)
\end{equation*}%
defines a completely positive $\check{G}$-equivariant order zero $(G\ltimes
_{\alpha }A)$-bimodule map $G\ltimes \theta \colon G\ltimes _{\beta
}B\rightarrow G\ltimes _{\gamma }C$. If furthermore $\theta $ is a
(nondegenerate) *-homomorphism, then $G\ltimes \theta $ is a (nondegenerate)
*-homomorphism.
\end{lemma}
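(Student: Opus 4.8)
The plan is to reduce the order zero case to the *-homomorphism case by means of the structure theorem for completely positive order zero maps, exactly as suggested in the text. First I would recall that, by Winter--Zacharias, a completely positive order zero map $\theta\colon B\to C$ gives rise to a *-homomorphism $\pi_\theta\colon C_0((0,1])\otimes B\to C$ (or, after passing to multiplier algebras, to $M(C)$), determined by $\pi_\theta(\iota\otimes b)=\theta(b)$ where $\iota\in C_0((0,1])$ is the canonical generator; equivalently one has a positive element $h=\pi_\theta(\iota\otimes 1)\in M(C)$ commuting with the range, a *-homomorphism $\rho\colon B\to M(C)$ with range commuting with $h$, and $\theta(b)=h\rho(b)$. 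I would then observe that since $\theta$ is $G$-equivariant and an $A$-bimodule map, both $h$ and $\rho$ inherit these properties: $h$ is $G$-invariant and central with respect to (the image of) $A$, and $\rho$ is a $G$-equivariant $A$-bimodule *-homomorphism. Equivalently, $\pi_\theta$ is a $G$-equivariant (trivial $G$-action on the $C_0((0,1])$ factor) $(C_0((0,1])\otimes A)$-bimodule *-homomorphism from $C_0((0,1])\otimes B$ to $C_0((0,1])\otimes C$ (or to its multiplier algebra).

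Next I would apply the functoriality of the crossed product already recorded in the paragraph preceding the lemma: a $G$-equivariant *-homomorphism $\phi\colon B\to C$ induces a $\check G$-equivariant *-homomorphism $G\ltimes\phi\colon G\ltimes_{\beta}B\to G\ltimes_{\gamma}C$ with $(G\ltimes\phi)(\omega\ltimes b)=\omega\ltimes\phi(b)$, and this preserves nondegeneracy by \cite[Theorem 9.4.8]{timmermann_invitation_2008}. Applying this to $\pi_\theta$ — after noting that $C_0((0,1])\otimes B$ carries the $G$-action $\mathrm{id}\otimes\beta$, that $C_0((0,1])\otimes A$ sits inside it $G$-invariantly, and that $G\ltimes_{\mathrm{id}\otimes\beta}(C_0((0,1])\otimes B)\cong C_0((0,1])\otimes (G\ltimes_\beta B)$ canonically (the $G$-action being trivial on the cone factor) — yields a $\check G$-equivariant *-homomorphism $C_0((0,1])\otimes(G\ltimes_\beta B)\to C_0((0,1])\otimes(G\ltimes_\gamma C)$, hence, reading off the image of $\iota\otimes(\omega\ltimes b)$, a completely positive order zero map $G\ltimes\theta$ sending $\omega\ltimes b\mapsto\omega\ltimes\theta(b)$. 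That this map is a $(G\ltimes_\alpha A)$-bimodule map and $\check G$-equivariant follows because the corresponding structural data ($h$ and $\rho$, now transported to the crossed products) are $\check G$-invariant/equivariant and commute with the image of $G\ltimes_\alpha A$; concretely, $h$ and $\rho$ restrict from the multiplier level and the bimodule identities are checked on elements of the form $\omega\ltimes a$, which span a dense subalgebra. The final sentence of the lemma is then immediate: if $\theta$ is already a (nondegenerate) *-homomorphism, there is no cone to carry along, and one is simply invoking the functoriality statement quoted above directly.

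The main obstacle I anticipate is the bookkeeping needed to make precise the identification $G\ltimes_{\mathrm{id}\otimes\beta}(C_0((0,1])\otimes B)\cong C_0((0,1])\otimes(G\ltimes_\beta B)$ and, more subtly, to ensure that $\pi_\theta$ (which a priori lands in $M(C)$, not $C$) interacts well with the crossed product construction — one must work at the level of multiplier algebras of crossed products and verify strict continuity, or else argue via the hereditary subalgebra generated by $h$. A clean way around this is to phrase everything in terms of the supporting *-homomorphism $\rho\colon B\to M(C)$ and positive central multiplier $h$, apply the (multiplier-algebra) functoriality of the crossed product to $\rho$ to get $G\ltimes\rho\colon G\ltimes_\beta B\to M(G\ltimes_\gamma C)$ together with the central positive element $\omega\ltimes$-multiplier coming from $h$, and then define $G\ltimes\theta$ as their product; checking it is order zero, $\check G$-equivariant, and a $(G\ltimes_\alpha A)$-bimodule map then reduces to the already-established facts about $G\ltimes\rho$ together with centrality of $h$. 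One may also simply cite \cite[Proposition 2.3]{gardella_regularity_?}, where an analogous statement is proved in the classical setting, and indicate that the same argument applies verbatim using the quantum-group functoriality recalled above.
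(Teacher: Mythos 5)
Your proposal is correct and follows exactly the route the paper indicates: the paper gives no detailed argument but states that the lemma "can easily be deduced" from the functoriality of the crossed product for $G$-equivariant (nondegenerate) *-homomorphisms together with the structure theorem for completely positive order zero maps, citing the classical analogue in \cite[Proposition 2.3]{gardella_regularity_?}. Your fleshing out of the cone/multiplier bookkeeping is a faithful expansion of that same sketch.
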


Suppose now that $\mathcal{F}$ is a filter on some index set $I$. For every $%
\ell \in I$, let $( A_{\ell },\alpha _{\ell }) $ be a $G$-C*-algebra. Fix
also, for every $\ell \in I$, a nondegenerate faithful *-representation $%
A_{\ell }\to B( \mathcal{H}_{\ell }) $. Consider the corresponding reduced
product of $G$-C*-algebras $\prod_{\mathcal{F}}^{G}A_{\ell }\subseteq
\prod\nolimits_{\mathcal{F}}B( \mathcal{H}_{\ell }) \subseteq B(
\prod\nolimits_{\mathcal{F}}\mathcal{H}_{\ell }) $, which is a $G$%
-C*-algebra endowed with the action $\alpha _{\mathcal{F}}$. Then the
crossed product $G\ltimes _{\alpha _{\mathcal{F}}}\prod\nolimits_{\mathcal{F}%
}^{G}A_{\ell }$ can be naturally represented on $L^{2}( G) \otimes
\prod\nolimits_{\mathcal{F}}\mathcal{H}_{\ell }$. On the other hand, one can
also consider, for every $\ell \in I$, the crossed product $G\ltimes
_{\alpha _{\ell }}A_{\ell }\subseteq B( L^{2}( G) \otimes \mathcal{H}) $,
and then the reduced product of $\check{G}$-C*-algebras 
\begin{equation*}
\prod\nolimits_{\mathcal{F}}^{\check{G}}( G\ltimes _{\alpha _{\ell }}A_{\ell
}) \subseteq \prod\nolimits_{\mathcal{F}}B( L^{2}( G) \otimes \mathcal{H}%
_{\ell }) \subseteq B( \prod\nolimits_{\mathcal{F}}( L^{2}( G) \otimes 
\mathcal{H}_{\ell }) ) \text{.}
\end{equation*}

These algebras do not coincide in general, but there is always a canonical
map in one direction, as we show next.

\begin{proposition}
\label{Proposition:ultra-crossed-compact} Let the notation be as in the
discussion above. Then the assignment $\theta \ltimes \lbrack a^{(\ell )}]_{%
\mathcal{F}}\mapsto \lbrack \theta \ltimes a^{(\ell )}]_{\mathcal{F}}$, for $%
\lambda \in \mathrm{Rep}( G) $, for $\theta \in c_{0}(\check{G})_{\lambda }$%
, and $a\in ( \prod\nolimits_{\mathcal{F}}^{G}A_{\ell }) ^{\lambda }$,
determines an injective $\check{G}$-equivariant *-homomorphism 
\begin{equation*}
G\ltimes _{\alpha _{\mathcal{F}}}\prod\nolimits_{\mathcal{F}}^{G}A_{\ell
}\rightarrow \prod\nolimits_{\mathcal{F}}^{\check{G}}( G\ltimes _{\alpha
_{\ell }}A_{\ell }) \text{.}
\end{equation*}
\end{proposition}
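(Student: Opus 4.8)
The plan is to build the homomorphism on a dense $*$-subalgebra and extend by continuity. Write $B=\prod_{\mathcal F}^{G}A_\ell$ with its action $\alpha_{\mathcal F}$. By the construction of the reduced product recalled above, the Podle\'{s} subalgebra of $B$ is $\mathcal O(B)=\bigcup_{\lambda\in\mathrm{Rep}(G)}B_\lambda$ with $B_\lambda=\prod_{\mathcal F}(A_\ell)_\lambda$, so its elements are exactly the $[a^{(\ell)}]_{\mathcal F}$ with $(a^{(\ell)})$ a bounded sequence lying in a single spectral subspace $(A_\ell)_\lambda$, and $\alpha_{\mathcal F}$ is \emph{$\mathcal F$-pointwise}: $\alpha_{\mathcal F}([a^{(\ell)}]_{\mathcal F})\in C(G)_\lambda\otimes B_\lambda=\prod_{\mathcal F}\bigl(C(G)_\lambda\otimes(A_\ell)_\lambda\bigr)$ is represented by $(\alpha_\ell(a^{(\ell)}))_\ell$. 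Since $c_0(\check G)=\overline{\mathcal O(\check G)}$ and $\alpha_{\mathcal F}$ is isometric, the linear span $\mathcal D$ of the elements $\theta\ltimes[a^{(\ell)}]_{\mathcal F}=(\theta\otimes 1)\alpha_{\mathcal F}([a^{(\ell)}]_{\mathcal F})$, with $\theta\in c_0(\check G)_\lambda$, $[a^{(\ell)}]_{\mathcal F}\in B_\lambda$ and $\lambda\in\mathrm{Rep}(G)$, is dense in $G\ltimes_{\alpha_{\mathcal F}}B$; moreover it is a $*$-subalgebra, by the standard covariance relation for crossed products by quantum-group actions, which rewrites $\alpha(a)(\theta\otimes 1)$ as a finite sum $\sum(\theta'\otimes 1)\alpha(a')$ with $\theta'\in\mathcal O(\check G)$ and $a'$ in the Podle\'{s} subalgebra (see \cite{de_commer_actions_2016}). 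The same remarks apply to each $G\ltimes_{\alpha_\ell}A_\ell$. One then sets $\Phi\bigl(\sum_k\theta_k\ltimes[a_k^{(\ell)}]_{\mathcal F}\bigr)=\bigl[\sum_k\theta_k\ltimes a_k^{(\ell)}\bigr]_{\mathcal F}$ and must check that $\Phi$ is well defined, isometric, multiplicative, $*$-preserving and $\check G$-equivariant on $\mathcal D$; it then extends uniquely to an isometric, hence injective, $\check G$-equivariant $*$-homomorphism $G\ltimes_{\alpha_{\mathcal F}}B\to\prod_{\mathcal F}^{\check G}(G\ltimes_{\alpha_\ell}A_\ell)$.

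The crux is the identity $\lVert x\rVert=\limsup_{\mathcal F}\lVert x^{(\ell)}\rVert$ for $x=\sum_k\theta_k\ltimes[a_k^{(\ell)}]_{\mathcal F}\in\mathcal D$, where $x^{(\ell)}=\sum_k\theta_k\ltimes a_k^{(\ell)}\in G\ltimes_{\alpha_\ell}A_\ell$. To establish it, use that each $a_k^{(\ell)}$ lies in a spectral subspace $(A_\ell)_{\lambda_k}$, so $\alpha_\ell(a_k^{(\ell)})\in C(G)_{\lambda_k}\otimes(A_\ell)_{\lambda_k}$ with $C(G)_{\lambda_k}$ finite dimensional; hence all the $x^{(\ell)}$, and $x$ itself, lie in $V\odot(\,\cdot\,)$, where $V\subseteq B(L^2(G))$ is the \emph{fixed, $\ell$-independent} finite-dimensional subspace spanned by the products $\theta_k u$ with $u\in C(G)_{\lambda_k}$. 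Fixing a basis $v_1,\dots,v_N$ of $V$ and expanding, $x^{(\ell)}=\sum_{p=1}^N v_p\otimes b_p^{(\ell)}$ and $x=\sum_{p=1}^N v_p\otimes[b_p^{(\ell)}]_{\mathcal F}$, where $b_p^{(\ell)}\in A_\ell$ is obtained from the tuple $(a_k^{(\ell)})_k$ through a fixed linear map built only from the quantum-group data (the matrix coefficients $u^\lambda_{ij}$, the slice maps, and the change of basis in $V$), independent of $\ell$. Choosing the faithful nondegenerate representation of $B$ to factor through $\prod_{\mathcal F}B(\mathcal H_\ell)$ — legitimate since the reduced crossed product coincides with the full one and so does not depend on the representation — one gets $\lVert x\rVert_{G\ltimes_{\alpha_{\mathcal F}}B}=\lVert x\rVert_{B(L^2(G))\otimes_{\min}\prod_{\mathcal F}B(\mathcal H_\ell)}$ and $\lVert x^{(\ell)}\rVert_{G\ltimes_{\alpha_\ell}A_\ell}=\lVert x^{(\ell)}\rVert_{B(L^2(G))\otimes_{\min}B(\mathcal H_\ell)}$. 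Since $V$ is finite dimensional, the canonical map $V\otimes_{\min}\prod_{\mathcal F}B(\mathcal H_\ell)\to\prod_{\mathcal F}\bigl(V\otimes_{\min}B(\mathcal H_\ell)\bigr)$ is an isometric isomorphism (cf. \cite[Lemma~7.4]{heinrich_ultraproducts_1980} for ultraproducts; the reduced-product case is identical), and it carries $x$ to $[x^{(\ell)}]_{\mathcal F}$. The norm identity follows, proving that $\Phi$ is well defined and isometric on $\mathcal D$.

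That $\Phi$ is multiplicative, $*$-preserving and $\check G$-equivariant on $\mathcal D$ is then a routine ``$\mathcal F$-pointwise, uniformly in $\ell$'' verification. For products, rewrite $\bigl(\theta\ltimes[a^{(\ell)}]_{\mathcal F}\bigr)\bigl(\eta\ltimes[b^{(\ell)}]_{\mathcal F}\bigr)$ by moving $\alpha_{\mathcal F}([a^{(\ell)}]_{\mathcal F})$ past $\eta\otimes 1$ via the covariance relation; the rewriting rule involves only the quantum-group data, hence applies verbatim to each $\alpha_\ell$, giving $\Phi(xy)=[x^{(\ell)}y^{(\ell)}]_{\mathcal F}=[x^{(\ell)}]_{\mathcal F}[y^{(\ell)}]_{\mathcal F}=\Phi(x)\Phi(y)$; the involution is handled by $(\theta\ltimes a)^{*}=\alpha(a^{*})(\theta^{*}\otimes 1)$. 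For equivariance, note that the underlying C*-algebra of $\prod_{\mathcal F}^{\check G}(G\ltimes_{\alpha_\ell}A_\ell)$ is the ordinary reduced product $\prod_{\mathcal F}(G\ltimes_{\alpha_\ell}A_\ell)$, since $\check G$ is discrete, and that its dual action is the $\mathcal F$-pointwise one; then on $\mathcal D$ both $(\mathrm{id}\otimes\Phi)\circ\check\gamma$ and $\check\beta\circ\Phi$, where $\check\gamma,\check\beta$ are the respective dual actions, send $\theta\ltimes[a^{(\ell)}]_{\mathcal F}$ to the element represented by $\bigl((\check\Delta(\theta)\otimes 1)(1\otimes\alpha_\ell(a^{(\ell)}))\bigr)_\ell$, because $\check\Delta(\theta)$ is a fixed element of $c_0(\check G)\otimes c_0(\check G)$. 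Extending $\Phi$ by continuity completes the proof.

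The serious step is the norm identity of the second paragraph: a priori $G\ltimes_{\alpha_{\mathcal F}}B$ is far larger and less transparent than the $\mathcal F$-pointwise picture, and it is only the finite dimensionality of the operator subspace $V\subseteq B(L^2(G))$ — available precisely because one restricts to spectral subspaces — together with a judicious choice of representation, that forces the two norms to agree; everything else is bookkeeping with the fixed quantum-group data. (As observed just before the statement, $\Phi$ is in general not surjective, so injectivity is the strongest conclusion to be expected.)
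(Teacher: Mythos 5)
Your overall architecture coincides with the paper's: restrict to the dense span of elements $\theta\ltimes[a^{(\ell)}]_{\mathcal F}$ with $a$ in a single spectral subspace, prove the norm identity $\|x\|=\limsup_{\mathcal F}\|x^{(\ell)}\|$ there by exploiting finite dimensionality coming from the spectral decomposition, and then check multiplicativity, the involution and $\check G$-equivariance pointwise (the paper dispatches these last points with the same one-line remark you do). The difference, and the problem, lies in how you justify the crux. You reduce it to the assertion that for a finite-dimensional subspace $V\subseteq \mathbb{B}(L^{2}(G))$ the canonical map $V\otimes_{\min}\prod_{\mathcal F}\mathbb{B}(\mathcal H_{\ell})\to\prod_{\mathcal F}\bigl(V\otimes_{\min}\mathbb{B}(\mathcal H_{\ell})\bigr)$ is isometric, citing Heinrich's Lemma 7.4. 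That lemma concerns the \emph{injective Banach-space} tensor product, whereas the norm relevant to the crossed product is the operator norm on $\mathbb{B}(L^{2}(G)\otimes\mathcal H_{\ell})$, i.e.\ the minimal \emph{operator-space} tensor norm on $V\odot(\cdot)$, which on a subspace $V\subseteq\mathbb{B}(K)$ is in general strictly larger. Worse, the operator-space statement is false for general finite-dimensional $V$: the inequality $\|x\|\leq\limsup_{\mathcal F}\|x^{(\ell)}\|$ always holds (contractivity of the quotient map), but the reverse inequality is precisely the assertion that $V$ is $1$-exact in Pisier's sense, and there exist finite-dimensional operator spaces that are not $1$-exact. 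So ``$\dim V<\infty$'' alone cannot carry the argument.

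The gap is fixable, and the fix is essentially the device the paper uses. The operators $\theta u$ with $\theta\in c_{0}(\check G)_{\lambda}$ and $u\in C(G)_{\lambda}$ are not merely contained in a finite-dimensional subspace of $\mathbb{B}(L^{2}(G))$: they are finite-rank operators whose ranges lie in $L^{2}(G)_{\lambda}$ and which vanish on $L^{2}(G)_{\mu}$ unless $\mu$ is contained in $\bar\lambda\otimes\lambda$ (since $\theta=p_{\lambda}\theta$ and $u$ maps $L^{2}(G)_{\mu}$ into $L^{2}(G)_{\lambda\otimes\mu}$). Hence your $V$ sits inside a corner $p\,\mathbb{B}(L^{2}(G))\,p$ for a single finite-rank projection $p$ independent of $\ell$, so $V\otimes_{\min}C\subseteq M_{d}(C)$ with $d=\mathrm{rank}(p)$, and matrix amplifications do commute isometrically with reduced products of C*-algebras. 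With this observation inserted your proof closes and becomes essentially the paper's, which compresses $z$ and the $z^{(\ell)}$ by the finite-rank projection $p_{\lambda}\otimes 1$ and identifies the compressed corner of $\mathbb{B}(\prod_{\mathcal F}(L^{2}(G)\otimes\mathcal H_{\ell}))$ with $\mathbb{B}(L^{2}(G)_{\lambda}\otimes\prod_{\mathcal F}\mathcal H_{\ell})$. When you patch the argument, note that compressing on the domain side by $p_{\lambda}$ alone is not enough: $z(p_{\lambda}\otimes 1)$ can vanish identically (already for $G=\mathbb{Z}/2$ and $\lambda$ the sign representation), so the projection must also absorb $L^{2}(G)_{\mu}$ for every $\mu$ contained in $\bar\lambda\otimes\lambda$.
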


\begin{proof}
To simplify the notation, we drop the subscript $\mathcal{F}$ when denoting
elements of a reduced product via their representative sequences. Suppose
that 
\begin{equation*}
a_{ij}=[a_{ij}^{(\ell )}]\in ( \prod\nolimits_{\mathcal{F}}^{G}A_{\ell })
_{\lambda }
\end{equation*}%
for $1\leq i,j\leq d_{\lambda }$, and set%
\begin{equation*}
z=\sum_{ij}\omega _{ij}^{\lambda }\ltimes \lbrack a_{ij}^{(\ell )}]\in
G\ltimes _{\alpha _{\mathcal{F}}}\prod\nolimits_{\mathcal{F}}^{G}A_{\ell
}\subseteq B( L^{2}( G) \otimes \prod\nolimits_{\mathcal{F}}\mathcal{H}) 
\text{.}
\end{equation*}%
Let $p_{\lambda }\in B( L^{2}( G) ) $ be the orthogonal projection onto $%
L^{2}(G)_{\lambda }$. Then $z( p_{\lambda }\otimes 1) =( p_{\lambda }\otimes
1) z( p_{\lambda }\otimes 1)$ and $\| z\| =\| ( p_{\lambda }\otimes 1)
zp_{\lambda }\|$.

For $\ell \in I$, set 
\begin{equation*}
z^{(\ell )}=\sum_{ij}\omega _{ij}^{\lambda }\ltimes a_{ij}^{(\ell )}\in
G\ltimes _{\alpha }A\subseteq B(L^{2}(G)\otimes \mathcal{H})\text{,}
\end{equation*}%
and 
\begin{equation*}
\lbrack z^{(\ell )}]\in \prod\nolimits_{\mathcal{U}}^{\check{G}}(G\ltimes
_{\alpha _{\ell }}A_{\ell })\subseteq \prod\nolimits_{\mathcal{F}%
}B(L^{2}(G)\otimes \mathcal{H})\subseteq B(\prod\nolimits_{\mathcal{F}%
}(L^{2}(G)\otimes \mathcal{H}))\text{.}
\end{equation*}%
Then $[z^{(\ell )}]_{\mathcal{F}}(p_{\lambda }\otimes 1)=(p_{\lambda
}\otimes 1)[z^{(\ell )}](p_{\lambda }\otimes 1)$ and $\Vert \lbrack z^{(\ell
)}]\Vert =\Vert (p_{\lambda }\otimes 1)[z^{(\ell )}](p_{\lambda }\otimes
1)\Vert $. Since $p_{\lambda }$ is a finite-rank projection with range $%
L^{2}(G)_{\lambda }$, we have%
\begin{equation*}
(p_{\lambda }\otimes 1)[z^{(\ell )}](p_{\lambda }\otimes 1)\in (p_{\lambda
}\otimes 1)B(\prod\nolimits_{\mathcal{F}}(L^{2}(G)\otimes \mathcal{H}%
))(p_{\lambda }\otimes 1)=B(L^{2}(G)_{\lambda }\otimes \prod\nolimits_{%
\mathcal{F}}\mathcal{H})\text{.}
\end{equation*}%
We conclude that%
\begin{align*}
\Vert \lbrack z^{(\ell )}]\Vert & =\Vert (p_{\lambda }\otimes 1)[z^{(\ell
)}](p_{\lambda }\otimes 1)\Vert =\limsup_{\mathcal{F}}\Vert (p_{\lambda
}\otimes 1)z^{(\ell )}(p_{\lambda }\otimes 1)\Vert \\
& =\Vert (p_{\lambda }\otimes 1)z(p_{\lambda }\otimes 1)\Vert =\Vert z\Vert 
\text{.}
\end{align*}%
This shows that the assignment $\theta \ltimes \lbrack a^{(\ell )}]\mapsto
\lbrack \theta \ltimes a^{(\ell )}]$ yields a well-defined isometric linear
map 
\begin{equation*}
\Phi \colon G\ltimes _{\alpha _{\mathcal{F}}}\prod\nolimits_{\mathcal{F}%
}^{G}A_{\ell }\rightarrow \prod\nolimits_{\mathcal{F}}^{\check{G}}(G\ltimes
_{\alpha }A_{\ell })\text{.}
\end{equation*}%
The fact that $\Phi $ is a $\check{G}$-equivariant *-homomorphism can be
verified directly by means of the expression for the multiplication and
involution in the crossed product, together with the definition of the dual
action. This finishes the proof.
\end{proof}

Suppose now that $\alpha $ is a continuous action of $\check{G}$ on a
C*-algebra $A$. Fix a nondegenerate faithful *-representation $A\to B( 
\mathcal{H}) $. One defines the \emph{reduced crossed product }$\check{G}%
\ltimes _{\alpha ,\mathrm{r}}A$ to be%
\begin{equation*}
\check{G}\ltimes _{\alpha ,\mathrm{r}}A=\lbrack (C(G)\otimes 1)\alpha ( A)
]\subseteq B( L^{2}( G) \otimes \mathcal{H}) \text{,}
\end{equation*}%
As before, we denote the element $( x\otimes 1) \alpha ( a) $ of $\check{G}%
\ltimes _{\alpha ,\mathrm{r}}A$ by $x\ltimes a $. The reduced crossed
product is endowed with a canonical action of $G$ (\emph{dual action})
defined by the *-homomorphism $\check{\alpha}\colon \check{G}\ltimes
_{\alpha ,\mathrm{r}}A\rightarrow C(G)\otimes (\check{G}\ltimes _{\alpha ,%
\mathrm{r}}A)$, $x\ltimes a\mapsto ( \Delta ( x) \otimes 1) ( 1\otimes a) $.
The reduced crossed product construction is functorial, and a
(nondegenerate) $\check{G}$-equivariant *-homomorphism $\phi $ induces a
(nondegenerate) $G$-equivariant *-homomorphism $\check{G}\ltimes \phi $
between the reduced crossed products. This allows one to prove the analogue
of Lemma \ref{Lemma:crossed-oz} in this context. It is clear that, for $%
\lambda \in \mathrm{Rep}(G)$, the corresponding spectral subspace of $\check{%
G}\ltimes _{\alpha ,\mathrm{r}}A$ is just%
\begin{equation*}
(\check{G}\ltimes _{\alpha ,\mathrm{r}}A)^{\lambda }=\mathrm{span}\{\omega
\ltimes a\colon \omega \in C(G)_{\lambda },a\in A\}\text{.}
\end{equation*}

\begin{lemma}
\label{Lemma:ultraproduct-discrete}Suppose that $G$ is a compact quantum
group, $\mathcal{F}$ is a filter on a set $I$, and $( A_{\ell },\alpha
_{\ell }) $ is a $\check{G}$-C*-algebra for every $\ell \in I$. The
assignment of 
\begin{equation*}
u\ltimes \lbrack a^{(\ell )}]_{\mathcal{F}}\mapsto \lbrack u\ltimes a^{(\ell
)}]_{\mathcal{F}}
\end{equation*}%
for $\lambda \in \mathrm{Rep}( G) $, $\theta \in C(G)_{\lambda }$, and $a\in
\prod\nolimits_{\mathcal{F}}^{\check{G}}A_{\ell }$ defines an injective $G$%
-equivariant *-homomorphism 
\begin{equation*}
\check{G}\ltimes _{\alpha _{\mathcal{F},\mathrm{r}}}\prod\nolimits_{\mathcal{%
F}}^{\check{G}}A_{\ell }\rightarrow \prod\nolimits_{\mathcal{F}}^{G}(\check{G%
}\ltimes _{\alpha _{\ell },\mathrm{r}}A_{\ell })\text{.}
\end{equation*}
\end{lemma}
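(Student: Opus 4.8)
The plan is to follow the scheme of the proof of Proposition \ref{Proposition:ultra-crossed-compact}, interchanging the roles of the compact quantum group $G$ and its discrete dual. Write $B=\check{G}\ltimes_{\alpha_{\mathcal{F}},\mathrm{r}}\prod_{\mathcal{F}}^{\check{G}}A_{\ell}$, viewed as a $G$-C*-algebra via the dual action, and $C=\prod_{\mathcal{F}}^{G}(\check{G}\ltimes_{\alpha_{\ell},\mathrm{r}}A_{\ell})$. The spectral subspaces $B^{\lambda}=\mathrm{span}\{u\ltimes b\colon u\in C(G)_{\lambda},\ b\in\prod_{\mathcal{F}}^{\check{G}}A_{\ell}\}$, for $\lambda\in\mathrm{Rep}(G)$, span a dense $*$-subalgebra of $B$, and on representative sequences the assignment $u\ltimes[a^{(\ell)}]_{\mathcal{F}}\mapsto[u\ltimes a^{(\ell)}]_{\mathcal{F}}$ is visibly compatible with products, adjoints and the dual action of the crossed product, once one uses that $\alpha_{\mathcal{F}}$ is the pointwise application of the $\alpha_{\ell}$; moreover it carries $B^{\lambda}$ into the spectral subspace $\prod_{\mathcal{F}}\big((\check{G}\ltimes_{\alpha_{\ell},\mathrm{r}}A_{\ell})_{\lambda}\big)$ of $C$. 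Hence the whole statement reduces to showing that this assignment is \emph{isometric on each} $B^{\lambda}$: granting that, it is in particular well defined, and it then extends to an isometric --- hence injective --- $G$-equivariant $*$-homomorphism $B\to C$, with $G$-equivariance verified on generators exactly as at the end of the proof of Proposition \ref{Proposition:ultra-crossed-compact}.

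So fix $\lambda\in\mathrm{Rep}(G)$, a basis $\{u_{ij}^{\lambda}\}$ of $C(G)_{\lambda}$ and bounded families $(a_{ij}^{(\ell)})_{\ell}$, and put $z=\sum_{ij}u_{ij}^{\lambda}\ltimes[a_{ij}^{(\ell)}]_{\mathcal{F}}\in B^{\lambda}$ and $z^{(\ell)}=\sum_{ij}u_{ij}^{\lambda}\ltimes a_{ij}^{(\ell)}\in\check{G}\ltimes_{\alpha_{\ell},\mathrm{r}}A_{\ell}$; one must prove $\|z\|=\limsup_{\mathcal{F}}\|z^{(\ell)}\|$. Represent the crossed products spatially, $\check{G}\ltimes_{\alpha_{\ell},\mathrm{r}}A_{\ell}\subseteq\mathbb{B}(L^{2}(G)\otimes\mathcal{H}_{\ell})$ and $B\subseteq\mathbb{B}(L^{2}(G)\otimes\prod_{\mathcal{F}}\mathcal{H}_{\ell})$, and decompose $L^{2}(G)=\bigoplus_{\mu\in\mathrm{Irr}(G)}L^{2}(G)_{\mu}$. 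The key structural point is that, because $\check{G}$ is discrete, $\alpha_{\ell}(A_{\ell})\subseteq M(c_{0}(\check{G})\otimes A_{\ell})=\prod_{\mu}M(c_{0}(\check{G})_{\mu}\otimes A_{\ell})$ leaves each summand $L^{2}(G)_{\mu}\otimes\mathcal{H}_{\ell}$ invariant, while left multiplication by $u_{ij}^{\lambda}\in C(G)_{\lambda}$ carries $L^{2}(G)_{\mu}$ into $L^{2}(G)_{\lambda\otimes\mu}$; hence $z^{(\ell)}$ --- and likewise $z$ --- is a \emph{band operator} for the $\mathrm{Irr}(G)$-grading of $L^{2}(G)$, with band width bounded in terms of $\lambda$ alone.

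For a finite set $S\subseteq\mathrm{Irr}(G)$ let $q_{S}\in\mathbb{B}(L^{2}(G))$ be the finite-rank projection onto $\bigoplus_{\mu\in S}L^{2}(G)_{\mu}$, and let $S'$ denote the finite set of irreducible constituents of the $\lambda\otimes\mu$, $\mu\in S$. Then $z^{(\ell)}(q_{S}\otimes 1)=(q_{S'}\otimes 1)z^{(\ell)}(q_{S}\otimes 1)$, and similarly for $z$; since $q_{S}L^{2}(G)$ and $q_{S'}L^{2}(G)$ are finite-dimensional one has $\prod_{\mathcal{F}}(q_{S}L^{2}(G)\otimes\mathcal{H}_{\ell})=q_{S}L^{2}(G)\otimes\prod_{\mathcal{F}}\mathcal{H}_{\ell}$ (and the same with $S'$), and the finite corner $(q_{S'}\otimes 1)z(q_{S}\otimes 1)$ is represented by the sequence $\big((q_{S'}\otimes 1)z^{(\ell)}(q_{S}\otimes 1)\big)_{\ell}$ --- the same ``finite-dimensional ampliations commute with reduced products'' principle (cf. \cite[Lemma 7.4]{heinrich_ultraproducts_1980}) used in Proposition \ref{Proposition:ultra-crossed-compact}. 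Therefore $\|z(q_{S}\otimes 1)\|=\limsup_{\mathcal{F}}\|z^{(\ell)}(q_{S}\otimes 1)\|$ for every finite $S$, and since $q_{S}\uparrow 1$ strongly this yields $\|z\|=\sup_{S}\|z(q_{S}\otimes 1)\|=\sup_{S}\limsup_{\mathcal{F}}\|z^{(\ell)}(q_{S}\otimes 1)\|\le\limsup_{\mathcal{F}}\|z^{(\ell)}\|$.

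The reverse inequality $\limsup_{\mathcal{F}}\|z^{(\ell)}\|\le\|z\|$ is the step I expect to be the real obstacle, since a priori $\sup_{S}$ and $\limsup_{\mathcal{F}}$ do not commute; indeed, unlike in the compact case of Proposition \ref{Proposition:ultra-crossed-compact}, a degree-$\lambda$ element of a reduced crossed product by the discrete $\check{G}$ is \emph{not} supported on a finite-rank corner of $L^{2}(G)$ (it behaves like $\lambda_{g}\pi(b)$ in a classical reduced crossed product, a unitary times $b$, spread over all of $\ell^{2}(\Gamma)$). What has to be shown is that the norm of $z^{(\ell)}$ is, up to any prescribed $\varepsilon$, already attained on a \emph{single fixed} finite corner $q_{S_{0}}\otimes 1$, with $S_{0}$ depending only on $\lambda$ and on a uniform bound for the sequences $(a_{ij}^{(\ell)})_{\ell}$ --- not on $\ell$. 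This is the analogue of the elementary fact that, for a classical discrete group $\Gamma$ and $g\in\Gamma$, $\|\lambda_{g}\pi(b)\|=\sup_{h\in\Gamma}\|\alpha_{h}(b)\|$ with the supremum attained on every single block of $\ell^{2}(\Gamma)$. To make it uniform in $\ell$ one would analyse the band operator $z^{(\ell)}$ block by block, using that the counit property forces $\|\alpha_{\ell,\mu}(a_{ij}^{(\ell)})\|=\|a_{ij}^{(\ell)}\|$ at the trivial representation (so the ``weights'' cannot escape to infinity) and that the growth of $\|z^{(\ell)}(q_{S}\otimes 1)\|$ with $S$ is governed by the fixed matrix coefficients $u_{ij}^{\lambda}$ and the uniform bound on the $(a_{ij}^{(\ell)})_{\ell}$. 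Once this is in hand, taking $S=S_{0}$ in the equality above gives $\limsup_{\mathcal{F}}\|z^{(\ell)}\|\le\limsup_{\mathcal{F}}\|z^{(\ell)}(q_{S_{0}}\otimes 1)\|+\varepsilon=\|z(q_{S_{0}}\otimes 1)\|+\varepsilon\le\|z\|+\varepsilon$, and letting $\varepsilon\to 0$ finishes the proof.
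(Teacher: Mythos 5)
Your first inequality, $\|z\|\leq\limsup_{\mathcal{F}}\|z^{(\ell)}\|$, is fine, but the proposal does not actually prove the reverse inequality --- you flag it yourself as ``the step I expect to be the real obstacle'' and only sketch a strategy --- and that strategy does not work. The claim that $\|z^{(\ell)}\|$ is approximately attained on a fixed finite corner $q_{S_{0}}\otimes 1$ fails as soon as $d_{\lambda}>1$: take $G=SU(2)$, $A_{\ell}=\mathbb{C}$, so that $\check{G}\ltimes_{\mathrm{r}}\mathbb{C}=C(G)$ acting on $L^{2}(G)$ by multiplication, and let $z=\chi_{\lambda}=u_{11}^{\lambda}+u_{22}^{\lambda}$ be the character of the two-dimensional representation. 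Then $\|z\|=\|\chi_{\lambda}\|_{\infty}=2$, while $\|z(q_{\mathrm{t}}\otimes 1)\|=\|\chi_{\lambda}\|_{L^{2}}=1$, and more generally $\|z(q_{S}\otimes 1)\|<2$ for \emph{every} finite $S\subseteq\mathrm{Irr}(G)$, since $|\chi_{\lambda}|$ attains its maximum only on a null set and $q_{S}L^{2}(G)$ consists of matrix coefficients. So the classical fact you invoke ($\|\lambda_{g}\pi(b)\|$ is attained on every single block) is special to one-dimensional spectral subspaces and the counit-block estimate $\|\alpha_{\ell,\mathrm{t}}(a)\|=\|a\|$ only controls a corner that does not see the norm. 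There is also a secondary gap earlier: isometry on each $B^{\lambda}$ separately does not yield isometry, or even boundedness, on the linear span $\sum_{\lambda}B^{\lambda}$, which is what must be controlled before passing to the closure.

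The route that actually closes both gaps uses the canonical conditional expectations rather than finite corners. Both reduced crossed products carry faithful conditional expectations onto the fixed-point algebras of their dual $G$-actions, namely $E_{B}((x\otimes 1)\alpha_{\mathcal{F}}(a))=h(x)\,\alpha_{\mathcal{F}}(a)$ on the domain and $E_{C}=[E_{\ell}]_{\mathcal{F}}$ on the target (faithfulness of $E^{\mathrm{t}}$ is recorded in Subsection \ref{Subsection:action-compact}), and a direct check on generators shows that your map $\Phi$, which is well defined and bounded on each spectral subspace because $C(G)_{\lambda}$ is finite-dimensional and the $u_{ij}^{\lambda}$ are linearly independent, intertwines them: $E_{C}\circ\Phi=\Phi_{0}\circ E_{B}$ with $\Phi_{0}\colon\alpha_{\mathcal{F}}(\prod_{\mathcal{F}}^{\check{G}}A_{\ell})\to C$ isometric. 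For any faithful positive map $E$ and any $w\geq 0$ one has $\|w\|=\lim_{n}\|E(w^{n})\|^{1/n}$ (if $f\geq 0$ is supported near the top of the spectrum of $w$ with $f(w)\neq 0$, then $E(w^{n})\geq(\|w\|-\varepsilon)^{n}E(f(w)^{2})$ and $E(f(w)^{2})\neq 0$). Applying this to $w=x^{*}x$ with $x$ in the algebraic span of the spectral subspaces, where $\Phi$ is multiplicative, gives $\|\Phi(x)\|^{2}=\lim_{n}\|E_{C}(\Phi(x^{*}x)^{n})\|^{1/n}=\lim_{n}\|E_{B}((x^{*}x)^{n})\|^{1/n}=\|x\|^{2}$, so $\Phi$ is isometric on a dense $*$-subalgebra and extends as required. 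The paper itself only asserts the lemma is ``easy to see directly'' from finite-dimensionality of $C(G)_{\lambda}$, which covers well-definedness on each spectral subspace but not the isometry; your write-up correctly isolates where the real work lies, but the argument offered there is not salvageable as stated.
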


\begin{proof}
This is easy to see directly, using the fact that $C( G) _{\lambda }$ is
finite-dimensional for every $\lambda \in \mathrm{Rep}( G) $.
\end{proof}

\begin{remark}
\label{Remark:ultraproduct-discrete}Consider the particular case of Lemma %
\ref{Lemma:ultraproduct-discrete} when $( A_{\ell },\alpha _{\ell }) $ is
equal to a fixed $\check{G}$-C*-algebra $( A,\alpha ) $ for every $\ell \in
I $. Then the $G$-equivariant *-isomorphism $\theta \colon \check{G}\ltimes
_{\alpha _{\mathcal{F},\mathrm{r}}}\prod\nolimits_{\mathcal{F}}^{\check{G}%
}A\rightarrow \prod\nolimits_{\mathcal{F}}^{G}(\check{G}\ltimes _{\alpha ,%
\mathrm{r}}A)$ has the property that 
\begin{equation*}
\theta \circ (\check{G}\ltimes \Delta _{A})=\Delta _{\check{G}\ltimes
_{\alpha ,\mathrm{r}}A}\text{,}
\end{equation*}%
where $\Delta _{A}\colon A\rightarrow \prod_{\mathcal{F}}^{\check{G}}A$ and $%
\Delta _{\check{G}\ltimes _{\alpha ,\mathrm{r}}A}\colon \check{G}\ltimes
_{\alpha ,\mathrm{r}}A\rightarrow \prod_{\mathcal{F}}^{G}(\check{G}\ltimes
_{\alpha ,\mathrm{r}}A)$ are the diagonal embeddings.
\end{remark}

\subsection{Stabilizations\label{Subsection:stabilizations}}

We continue to fix a compact quantum group $G$. Let $( A,\alpha ) $ is a $G$%
-C*-algebra. Fix a nondegenerate faithful representation $A\to B( \mathcal{H}%
) $. Let $V\in M(c_{0}(\hat{G})\otimes C( G) )\subseteq B( L^{2}( G) \otimes
L^{2}( G) ) $ denote the multiplicate unitary associated with $G$, and set $%
X=\Sigma \circ V\circ \Sigma \in M(C( G) \otimes c_{0}(\hat{G}))\subseteq B(
L^{2}( G) \otimes L^{2}( G) ) $.

\begin{definition}
(\cite[Section 2]{nest_equivariant_2010}). Adopt the notation from the
discussion above. The \emph{stabilization }of $(A,\alpha )$ is the $G$%
-C*-algebra $(\mathbb{K}_{G}\otimes A,\alpha _{\mathbb{K}})$, where $\alpha
_{\mathbb{K}}\colon \mathbb{K}_{G}\otimes A\rightarrow C(G)\otimes \mathbb{K}%
_{G}\otimes A$ is the *-homomorphism given by%
\begin{equation*}
\alpha _{\mathbb{K}}(T\otimes a)=X_{12}^{\ast }(1\otimes T\otimes 1)\alpha
(a)_{13}X_{12}\in C(G)\otimes \mathbb{K}_{G}\otimes A\subseteq
B(L^{2}(G)\otimes L^{2}(G)\otimes \mathcal{H})\text{.}
\end{equation*}

In the following, given a $G$-C*-algebra $( A,\alpha ) $, we regard $\mathbb{%
K}_{G}\otimes A$ as a $G$-C*-algebra with respect to $\alpha _{\mathbb{K}}$.
\end{definition}

When $G$ is a classical compact group, $\alpha_{\mathbb{K}}$ is just the
diagonal action, where $\mathbb{K}_{G}$ is endowed with the action of $G$ by
conjugation induced by the left regular representation.

The assignment $( A,\alpha ) \mapsto ( \mathbb{K}_{G}\otimes A,\alpha _{%
\mathbb{K}}) $ is functorial, in the sense that any *-homomorphism $\phi
\colon ( A,\alpha ) \rightarrow ( B,\beta ) $ induces a *-homomorphism 
\textrm{id}$\otimes \phi \colon ( \mathbb{K}_{G}\otimes A,\alpha _{\mathbb{K}%
}) \rightarrow ( \mathbb{K}_{G}\otimes B,\beta _{\mathbb{K}}) $.

\begin{proposition}
\label{Proposition:stabilization} Let $\mathcal{F}$ be a filter on a set $I$%
, and let $( A_{\ell },\alpha ^{(\ell) })_{\ell\in I} $ be $G$-C*-algebras.
Then the assignment $T\otimes \lbrack a^{(\ell )}]_{\mathcal{F}}\mapsto
\lbrack T\otimes a^{(\ell )}]_{\mathcal{F}}$ defines a $G$-equivariant
*-homomorphism%
\begin{equation*}
\mathbb{K}_{G}\otimes \prod\nolimits_{\mathcal{F}}^{G}A_{\ell }\rightarrow
\prod\nolimits_{\mathcal{F}}^{G}( \mathbb{K}_{G}\otimes A_{\ell }) \text{.}
\end{equation*}
\end{proposition}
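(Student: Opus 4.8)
The plan is to show that the indicated assignment is a well-defined isometric $G$-equivariant $*$-homomorphism, following the same strategy used in the proof of Proposition~\ref{Proposition:ultra-crossed-compact}. First I would recall that, since $\mathbb{K}_G$ is the algebra of compact operators on the separable Hilbert space $L^2(G)$, the spectral subspaces of the stabilization decompose nicely: for $\lambda\in\mathrm{Rep}(G)$, the $\lambda$-spectral subspace of $(\mathbb{K}_G\otimes A,\alpha_{\mathbb{K}})$ is contained in $\mathbb{K}_G\otimes A_\lambda$ (indeed the projection onto the $\lambda$-isotypical component is $E^\lambda_{\mathbb{K}_G\otimes A}=\mathrm{id}_{\mathbb{K}_G}\otimes E^\lambda_A$ after conjugating by $X_{12}$, which only shuffles the $C(G)$-leg). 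Since $\mathbb{K}_G\otimes B_\lambda$ is, as a Banach space, just $\mathbb{K}_G$ tensored with a closed subspace, and $\mathbb{K}_G$ is a fixed $C^*$-algebra, one can use the commutation of reduced products of Banach spaces with tensoring by a fixed space — exactly as invoked in Subsubsection~\ref{Subsubsection:ultraproducts} via \cite[Lemma~7.4]{heinrich_ultraproducts_1980} — to identify $\mathbb{K}_G\otimes \prod_{\mathcal F}(A_\ell)_\lambda$ isometrically with $\prod_{\mathcal F}(\mathbb{K}_G\otimes (A_\ell)_\lambda)$. This gives well-definedness and isometry on each spectral subspace, hence on the dense Podle\'s subalgebra, and then by continuity on all of $\mathbb{K}_G\otimes\prod_{\mathcal F}^G A_\ell$.

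Once the map is seen to be a well-defined isometric linear map on the algebraic core, checking that it is a $*$-homomorphism is a routine pointwise computation: multiplication and involution on both sides are induced coordinatewise, and $T\otimes[a^{(\ell)}]$ multiplies and involutes exactly as $[T\otimes a^{(\ell)}]$ does, so this is immediate. For $G$-equivariance, I would compute directly with the formula $\alpha_{\mathbb{K}}(T\otimes a)=X_{12}^*(1\otimes T\otimes 1)\alpha(a)_{13}X_{12}$: applying $\alpha_{\mathbb{K}}$ on the reduced product $\prod_{\mathcal F}^G(\mathbb{K}_G\otimes A_\ell)$ to the image element produces $X_{12}^*(1\otimes T\otimes 1)[\alpha_\ell(a^{(\ell)})]_{13}X_{12}$, which is visibly the image under $\mathrm{id}_{C(G)}\otimes\Phi$ (with $\Phi$ the map in question) of $X_{12}^*(1\otimes T\otimes 1)\alpha_{\mathcal F}([a^{(\ell)}])_{13}X_{12}=(\alpha_{\mathbb{K}})_{\mathcal F}(T\otimes[a^{(\ell)}])$, since the fundamental unitary $X$ and the operator $T$ do not depend on $\ell$ and are applied identically in each coordinate. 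Hence $(\mathrm{id}\otimes\Phi)\circ(\alpha_{\mathbb{K}})_{\mathcal F}=\alpha_{\mathbb{K},\mathcal F}\circ\Phi$, which is equivariance.

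The main obstacle I anticipate is making precise the identification of $\mathbb{K}_G\otimes\prod_{\mathcal F}(A_\ell)_\lambda$ with $\prod_{\mathcal F}(\mathbb{K}_G\otimes(A_\ell)_\lambda)$ in a way that is genuinely isometric for the \emph{minimal} $C^*$-tensor norm, not merely for some Banach space cross norm: unlike the situation in Proposition~\ref{Proposition:ultra-crossed-compact}, where the relevant tensor factor $C(G)_\lambda$ was finite-dimensional (so all reasonable norms on the tensor product agree and the reduced product commutes with it trivially), here $\mathbb{K}_G$ is infinite-dimensional. The resolution is that $\mathbb{K}_G$ is nuclear, so the minimal and maximal $C^*$-tensor norms coincide, and more to the point one can approximate: any element of $\mathbb{K}_G\otimes A_\lambda$ is within $\varepsilon$ of one of the form $(p\otimes 1)(\,\cdot\,)(p\otimes 1)$ for a finite-rank projection $p$, reducing to the finite-dimensional $M_n(\mathbb{C})\otimes A_\lambda$ case where the argument of Proposition~\ref{Proposition:ultra-crossed-compact} applies verbatim. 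I would therefore phrase the well-definedness and isometry step via this finite-rank truncation argument rather than citing a tensor-product lemma directly, and note that the resulting map is clearly unitarily implemented on $L^2(G)\otimes\prod_{\mathcal F}\mathcal H_\ell$ as in the previous proposition, which also makes the $*$-homomorphism and equivariance claims transparent. The remaining verifications are mechanical and I would relegate them to a brief sentence.
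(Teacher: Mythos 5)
Your core idea for the norm estimate is sound and is essentially a hands-on version of what the paper does: where you truncate by finite-rank projections $p$ and reduce to $M_n(\mathbb{C})\otimes\prod_{\mathcal F}(A_\ell)_\lambda$, the paper simply observes that, $\mathbb{K}_G$ being nuclear, the tensor product $\mathbb{K}_G\otimes\prod^G_{\mathcal F}A_\ell$ coincides with the maximal one, and then invokes the universal property of the maximal tensor product to get a well-defined *-homomorphism; equivariance is then dispatched, exactly as in your second paragraph, as a direct coordinatewise computation with $\alpha_{\mathbb K}(T\otimes a)=X_{12}^*(1\otimes T\otimes 1)\alpha(a)_{13}X_{12}$ and the $\ell$-independence of $X$. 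The statement does not ask for isometry, so that part of your argument is extra but harmless.

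There is, however, one genuinely false step. You assert that the $\lambda$-spectral subspace of the stabilization $(\mathbb{K}_G\otimes A,\alpha_{\mathbb K})$ is contained in $\mathbb{K}_G\otimes A_\lambda$, with spectral projection $\mathrm{id}_{\mathbb{K}_G}\otimes E^\lambda_A$, on the grounds that conjugation by $X_{12}$ ``only shuffles the $C(G)$-leg.'' It does not: $X\in M(C(G)\otimes c_0(\hat G))$ has a nontrivial second leg acting on $L^2(G)$, so $\mathrm{Ad}(X_{12})$ mixes the $C(G)$- and $\mathbb{K}_G$-legs. Already for $A=\mathbb{C}$ with the trivial action, $\alpha_{\mathbb K}$ is the adjoint-type action $T\mapsto X^*(1\otimes T)X$ on $\mathbb{K}_G$ (for classical $G$, conjugation by the regular representation), which has many nontrivial isotypic components, whereas your formula would force every nontrivial spectral subspace of $\mathbb{K}_G\otimes\mathbb{C}$ to vanish. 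This is not a cosmetic slip, because it is precisely the step by which you argue that a bounded sequence $(T\otimes a^{(\ell)})_\ell$ with $a^{(\ell)}\in(A_\ell)_\lambda$ determines an element of the reduced product of $G$-C*-algebras $\prod^G_{\mathcal F}(\mathbb{K}_G\otimes A_\ell)$, rather than merely of the reduced product of C*-algebras. What is actually needed, and true, is a uniform statement: for $T$ ranging in a dense subspace of $\mathbb{K}_G$ adapted to the Peter--Weyl decomposition of $L^2(G)$ and for $a\in A_\lambda$, the element $T\otimes a$ lies in (or within $\varepsilon\|a\|$ of) a spectral subspace $(\mathbb{K}_G\otimes A)_\mu$ for a representation $\mu$ depending only on $T$ and $\lambda$ --- this follows from the fact that both legs-$12$ factors $X_{12}^*(1\otimes T\otimes 1)$ and $X_{12}$ contribute first-leg components controlled by $T$ alone, so the first leg of $\alpha_{\mathbb K}(T\otimes a)$ is controlled by products of $C(G)_\bullet$-spaces indexed by $T$ and $\lambda$ only. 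With that substitution (or by routing well-definedness through the maximal tensor product as the paper does), the rest of your argument goes through.
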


\begin{proof}
Observe that tensor product $\mathbb{K}_{G}\otimes \prod\nolimits_{\mathcal{F%
}}^{G}A_{\ell }$ coincides with the maximal tensor product. Therefore the
assignment in the statement gives a well-defined *-homomorphism $\psi \colon 
\mathbb{K}_{G}\otimes \prod\nolimits_{\mathcal{F}}^{G}A_{\ell }\rightarrow
\prod\nolimits_{\mathcal{F}}^{G}( \mathbb{K}_{G}\otimes A_{\ell }) $ in view
of the universal property of the maximal tensor product. A straightforward
computation shows that such a map is $G$-equivariant when $\mathbb{K}%
_{G}\otimes \prod\nolimits_{\mathcal{F}}^{G}A_{\ell }$ is endowed with the
stabilization of the reduced product action $\alpha _{\mathcal{F}}$, and $%
\prod\nolimits_{\mathcal{F}}^{G}( \mathbb{K}_{G}\otimes A_{\ell }) $ is
endowed with the reduced product of the stabilizations $(\alpha _{\mathcal{F}%
}^{(\ell )})_{\ell \in I}$.
\end{proof}

Denote by $1\in C( G) $ the unit, and let $\left\vert 1\right\rangle \in
L^{2}( G) $ be the corresponding vector, and $\left\vert 1\right\rangle
\left\langle 1\right\vert \in \mathbb{K}_{G}$ the corresponding rank one
projection.

\begin{lemma}
The C*-subalgebra $\left\vert 1\right\rangle \left\langle 1\right\vert
\otimes A\subseteq \mathbb{K}_{G}\otimes A$ is a $G$-invariant C*-subalgebra
of $( \mathbb{K}_{G}\otimes A,\alpha _{\mathbb{K}}) $, and the injective
*-homomorphism $a\mapsto \left\vert 1\right\rangle \left\langle 1\right\vert
\otimes a$ is $G$-equivariant.
\end{lemma}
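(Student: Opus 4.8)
The plan is to verify the two assertions of the statement by direct computation with the formula defining $\alpha_{\mathbb{K}}$. Write $p = \lvert 1\rangle\langle 1\rvert \in \mathbb{K}_G$ for the rank-one projection, and recall that $X = \Sigma V \Sigma \in M(C(G)\otimes c_0(\hat{G}))$, so that on the second and third legs of $B(L^2(G)\otimes L^2(G)\otimes \mathcal{H})$ we have $X_{12}^\ast(1\otimes T\otimes 1)\alpha(a)_{13}X_{12}$. The key point is to understand how $X_{12}$ interacts with the projection $p$ on the middle leg, i.e. to compute $(1\otimes p\otimes 1)X_{12}(1\otimes p\otimes 1)$ and, dually, $X_{12}(1\otimes p\otimes 1)X_{12}^\ast$ restricted appropriately. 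Since $\lvert 1\rangle\in L^2(G)$ is the image of the unit of $C(G)$ and the Haar state is the vacuum state for this vector, the vector $\lvert 1\rangle$ is the cyclic vector fixed by the comultiplication, and one expects $X$ to fix $1\otimes\lvert 1\rangle$ in the appropriate sense; concretely $V(\lvert 1\rangle\otimes\xi) = \lvert 1\rangle\otimes\xi$ or the analogous identity for $W$, coming from the fact that $(\mathrm{id}\otimes h)\Delta = h$ and the definition of the fundamental unitaries via $x\otimes y\mapsto \Delta(x)(1\otimes y)$.

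First I would establish the invariance: I must show $\alpha_{\mathbb{K}}(p\otimes a) \in C(G)\otimes (p\otimes A)$ for every $a\in A$, equivalently $\alpha_{\mathbb{K}}(p\otimes a) = (1\otimes p)\,\alpha_{\mathbb{K}}(p\otimes a)\,(1\otimes p)$ where the compression is on the middle leg. Expanding,
\[
\alpha_{\mathbb{K}}(p\otimes a) = X_{12}^\ast(1\otimes p\otimes 1)\alpha(a)_{13}X_{12},
\]
and using the identity $X_{12}(1\otimes p\otimes 1) = (1\otimes p\otimes 1)$ (or its adjoint form) that follows from $X$ fixing the vacuum vector on the relevant leg, we get $\alpha_{\mathbb{K}}(p\otimes a) = (1\otimes p\otimes 1)\alpha(a)_{13}(1\otimes p\otimes 1)$. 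Since $p$ is rank one, $(1\otimes p\otimes 1)\alpha(a)_{13}(1\otimes p\otimes 1)$ is precisely an element of $C(G)\otimes \mathbb{C}p\otimes A$ whose "$p$-coefficient" is $\alpha(a)$ itself; in other words $\alpha_{\mathbb{K}}(p\otimes a)$ corresponds under the identification $p\otimes A \cong A$ to $\alpha(a)\in C(G)\otimes A$. This simultaneously shows that $p\otimes A$ is $G$-invariant and that the restriction of $\alpha_{\mathbb{K}}$ to it is carried to $\alpha$ by the isomorphism $a\mapsto p\otimes a$, which is exactly the $G$-equivariance of that inclusion. That the map $a\mapsto p\otimes a$ is an injective $*$-homomorphism is immediate.

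The main obstacle is pinning down the precise identity relating $X$ (equivalently $V$ or $W$) to the vacuum vector $\lvert 1\rangle$ and getting the leg bookkeeping exactly right — which leg of $X_{12}$ sits in $C(G)$ and which in $c_0(\hat{G})$, and whether it is $V$ or $W$ that fixes $\lvert 1\rangle\otimes\xi$, together with the corresponding adjoint statement needed for the compression argument. This is a standard fact (the counit/Haar-state vector is fixed by the fundamental unitary on one side), and once it is stated cleanly the computation above is a two-line verification; I would cite the defining property of $W$, namely that $W^\ast$ is induced by $x\otimes y\mapsto \Delta(y)(x\otimes 1)$, together with $(\mathrm{id}\otimes h)\circ\Delta = h$, to derive it. No deeper input is needed.
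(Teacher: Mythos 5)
Your proposal follows essentially the same route as the paper: the paper's proof consists precisely of establishing the identity $(1\otimes\left\vert 1\right\rangle \left\langle 1\right\vert)X=1\otimes\left\vert 1\right\rangle \left\langle 1\right\vert$ by a direct computation using the invariance of the Haar state and the definition of $V$ (via $X=\Sigma V\Sigma$), and then compressing to get $\alpha _{\mathbb{K}}(\left\vert 1\right\rangle \left\langle 1\right\vert \otimes a)=(1\otimes \left\vert 1\right\rangle \left\langle 1\right\vert \otimes 1)\alpha (a)_{13}$, exactly as you describe. The ``vacuum is fixed'' identity you flag as the remaining bookkeeping is indeed the whole content of the paper's argument and is derived there exactly as you propose, from $(\mathrm{id}\otimes h)\circ \Delta =h$ and the defining formula for the fundamental unitary.
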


\begin{proof}
The invariance property of the Haar state $h$ can be written as $%
h(y)1=\sum_{i}h(y_{0,i})y_{1,i}=\sum_{i}h(y_{1,i})y_{0,i}$ where $\Delta
(y)=\sum_{i}y_{0,i}\otimes y_{1,i}$. We claim that $(1\otimes \left\vert
1\right\rangle \left\langle 1\right\vert )X=1\otimes \left\vert
1\right\rangle \left\langle 1\right\vert $. Indeed for every $x,y\in C(G)$
we have that%
\begin{align*}
(1\otimes \left\vert 1\right\rangle \left\langle 1\right\vert )X\left\vert
x\otimes y\right\rangle & =(1\otimes \left\vert 1\right\rangle \left\langle
1\right\vert )\Sigma V\left\vert y\otimes x\right\rangle =(1\otimes
\left\vert 1\right\rangle \left\langle 1\right\vert )\Sigma \left\vert
\Delta (y)(1\otimes x)\right\rangle \\
& =(1\otimes \left\vert 1\right\rangle \left\langle 1\right\vert )\left\vert
\Delta ^{\mathrm{op}}(y)(x\otimes 1)\right\rangle =\sum_{i}(1\otimes
\left\vert 1\right\rangle \left\langle 1\right\vert )\left\vert
y_{1,i}x\otimes y_{0,i}\right\rangle \\
& =\sum_{i}h(y_{0,i})\left\vert y_{1,i}x\otimes 1\right\rangle
=h(y)\left\vert x\otimes 1\right\rangle =(1\otimes \left\vert 1\right\rangle
\left\langle 1\right\vert )\left\vert x\otimes y\right\rangle \text{.}
\end{align*}%
Henceforth 
\begin{align*}
\alpha _{\mathbb{K}}(\left\vert 1\right\rangle \left\langle 1\right\vert
\otimes a)& =X_{12}(1\otimes \left\vert 1\right\rangle \left\langle
1\right\vert \otimes 1)\alpha (a)_{13}X_{12}=X_{12}(1\otimes \left\vert
1\right\rangle \left\langle 1\right\vert \otimes 1)\alpha (a)_{13}(1\otimes
\left\vert 1\right\rangle \left\langle 1\right\vert \otimes 1)X_{12} \\
& =(1\otimes \left\vert 1\right\rangle \left\langle 1\right\vert \otimes
1)\alpha (a)_{13}(1\otimes \left\vert 1\right\rangle \left\langle
1\right\vert \otimes 1)=(1\otimes \left\vert 1\right\rangle \left\langle
1\right\vert \otimes 1)\alpha (a)_{13}\text{.}
\end{align*}%
This concludes the proof.
\end{proof}

In the following lemma, we consider the ordered selfadjoint operator space
language $\mathcal{L}^{\mathrm{osos}}$ as introduced in \cite[Subsection 3.1]%
{gardella_equivariant_2016}, as well as its $A$-bimodule version $\mathcal{L}%
^{\mathrm{osos}\text{,}A\text{-}A}$ as introduced in \cite[Subsection 3.5]%
{gardella_equivariant_2016}. Recall that the language $\mathcal{L}^{\mathrm{%
osos}\text{,}A\text{-}A}$ does not have a distinguished relation symbol for
the metric. Instead, for every finite subset $F$ of $A$, the language $%
\mathcal{L}^{\mathrm{osos}\text{,}A\text{-}A}$ contains a distinguished
pseudometric symbol $d_{F}$, to be interpreted in $\mathbb{K}_{G}\otimes A$
as the pseudometric%
\begin{equation*}
d_{F}^{A}(x,y)=\sup \left\{ \Vert (1\otimes a)(x-y)\Vert ,\Vert
(x-y)(1\otimes a)\Vert \colon a\in F\right\} \text{.}
\end{equation*}%
Given a compact or discrete quantum group $G$, one can add symbols for the $%
G $-action to obtain languages $\mathcal{L}_{G}^{\mathrm{osos}}$ and $%
\mathcal{L}_{G}^{\mathrm{osos}\text{,}A\text{-}A}$. If $A$ is a $G$%
-C*-algebra, then $\mathbb{K}_{G}\otimes A$ can be regarded as structure in
the language of $\mathcal{L}_{G}^{\text{C*,}\mathbb{K}_{G}\otimes A\text{-}%
\mathbb{K}_{G}\otimes A}$.

Fix an increasing approximate unit $(u_{j})_{j\in J}$ for $A$ contained in $%
A^{\alpha }$. Suppose now that $B$ is a $G$-C*-algebra containing $A$ as a
nondegenerate $G$-C*-subalgebra. In particular the approximate unit $%
(u_{j})_{j\in J}$ for $A$ is also an approximate unit for $B$. The notion of
positively quantifier-free definable substructure in a language $\mathcal{L}$
is recalled in Definition \ref{Definition:definable-substructure}.

\begin{lemma}
\label{Lemma:definable-stabilization} Adopt the notation and assumptions
from the discussion above. The $G$-C*-subalgebra $\left\vert 1\right\rangle
\left\langle 1\right\vert \otimes B$ of $\mathbb{K}_{G}\otimes B$ is a
positively quantifier-free $\mathcal{L}_{G}^{\mathrm{osos}\text{,}\mathbb{K}%
_{G}\otimes A\text{-}\mathbb{K}_{G}\otimes A}$-definable substructure
relative to the class of $G$-C*-algebras of the form $\mathbb{K}_{G}\otimes
B $.
\end{lemma}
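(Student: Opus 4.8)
The plan is to realise $|1\rangle\langle1|\otimes B$ as the zero set of a uniform family of positive quantifier-free predicates which, in the sense of Definition~\ref{Definition:definable-substructure}, controls the distance to it uniformly over all $G$-C*-algebras of the form $\mathbb{K}_G\otimes B$. Write $e=|1\rangle\langle1|\in\mathbb{K}_G$; since $e$ is a rank-one projection we have $e\mathbb{K}_Ge=\mathbb{C}e$, so $(e\otimes1)(\mathbb{K}_G\otimes B)(e\otimes1)=e\mathbb{K}_Ge\otimes B=|1\rangle\langle1|\otimes B$. The dynamical input is: because $(u_j)_{j\in J}$ is an increasing approximate unit for $A$ contained in $A^{\alpha}$ and $A\subseteq B$ is nondegenerate, Cohen factorization shows $(u_j)_{j\in J}$ is also an approximate unit for $B$, so $(e\otimes u_j)_{j\in J}$ is an increasing net in the positive unit ball of $\mathbb{K}_G\otimes A$ converging strictly to $e\otimes1$ in $M(\mathbb{K}_G\otimes B)$. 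Moreover, using the identity $(1\otimes e)X=1\otimes e$ established in the preceding lemma, together with $\alpha(u_j)=1\otimes u_j$, one computes $\alpha_{\mathbb{K}}(e\otimes u_j)=1\otimes(e\otimes u_j)$; thus each $e\otimes u_j$ is $\alpha_{\mathbb{K}}$-fixed, so that $t_j\colon\mathbb{K}_G\otimes B\to\mathbb{K}_G\otimes B$, $t_j(x)=(e\otimes u_j)\,x\,(e\otimes u_j)$, is a $G$-equivariant term of $\mathcal{L}_G^{\mathrm{osos},\mathbb{K}_G\otimes A\text{-}\mathbb{K}_G\otimes A}$ (left then right multiplication by the coefficient $e\otimes u_j\in\mathbb{K}_G\otimes A$). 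For every $x\in\mathbb{K}_G\otimes B$ the elements $t_j(x)$ lie in $|1\rangle\langle1|\otimes B$ and converge in norm to $\Phi(x):=(e\otimes1)x(e\otimes1)$, a $G$-equivariant idempotent completely positive contraction of $\mathbb{K}_G\otimes B$ onto $|1\rangle\langle1|\otimes B$; in particular $|1\rangle\langle1|\otimes B$ is a norm-closed, selfadjoint, $G$-invariant subspace of $\mathbb{K}_G\otimes B$, hence a substructure in the language at hand.

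Next I would establish the membership criterion and the distance estimates. Since $|1\rangle\langle1|\otimes B$ is norm-closed and each $t_j(x)$ lies in it, for $x\in\mathbb{K}_G\otimes B$ one has: $x\in|1\rangle\langle1|\otimes B$ iff $\Phi(x)=x$ iff $\inf_{j\in J}\lVert x-t_j(x)\rVert=0$; and likewise $x\in|1\rangle\langle1|\otimes B$ iff $\inf_{j\in J}d_F(x,t_j(x))=0$ for every finite $F\subseteq\mathbb{K}_G\otimes A$, where for the forward implication in the second equivalence one uses that $d_F(x,t_j(x))\to d_F(x,\Phi(x))$ and that $\mathbb{K}_G\otimes A$ is nondegenerate in $\mathbb{K}_G\otimes B$, so that $d_F(x,\Phi(x))=0$ for all $F$ forces $\Phi(x)=x$. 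For the distance control, $t_j(x)\in|1\rangle\langle1|\otimes B$ gives $\operatorname{dist}(x,|1\rangle\langle1|\otimes B)\le\inf_j\lVert x-t_j(x)\rVert$, while for any $y\in|1\rangle\langle1|\otimes B$ one has $t_j(y)\to y$ and $\lVert t_j(x)-t_j(y)\rVert\le\lVert x-y\rVert$, whence
\[
\operatorname{dist}\bigl(x,|1\rangle\langle1|\otimes B\bigr)\;\le\;\inf_{j\in J}\lVert x-t_j(x)\rVert\;\le\;2\operatorname{dist}\bigl(x,|1\rangle\langle1|\otimes B\bigr),
\]
and an analogous two-sided estimate holds for each $d_F$, with constants depending only on $\max_{c\in F}\lVert c\rVert$. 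Crucially, all of these estimates and all of the data ($e$, the $u_j$, the $t_j$) depend only on $A$ and on absolute constants, not on $B$, so the family $\{\inf_{j\in J}\lVert\,\cdot\,-t_j(\,\cdot\,)\rVert\}\cup\{\inf_{j\in J}d_F(\,\cdot\,,t_j(\,\cdot\,)):F\text{ finite}\subseteq\mathbb{K}_G\otimes A\}$ of positive quantifier-free predicates works uniformly over the whole class.

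The step requiring the most care is matching this to the precise notion of Definition~\ref{Definition:definable-substructure}: the candidate predicates are infima indexed by the (possibly uncountable) net $J$ of the approximate unit, so one has to verify that such expressions are admissible there and that they genuinely witness $|1\rangle\langle1|\otimes B$ as a positively quantifier-free definable substructure. The points that make this work are exactly those isolated above: the coefficients $e\otimes u_j$ are honest elements of the bimodule-coefficient algebra $\mathbb{K}_G\otimes A$ (because $u_j\in A^{\alpha}\subseteq A$) and are $\alpha_{\mathbb{K}}$-fixed, so the $t_j$ are $G$-equivariant terms of $\mathcal{L}_G^{\mathrm{osos},\mathbb{K}_G\otimes A\text{-}\mathbb{K}_G\otimes A}$; the common zero set of the predicates is exactly $|1\rangle\langle1|\otimes B$; and the two-sided distance estimates are uniform across the class. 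One then concludes via Definition~\ref{Definition:definable-substructure}.
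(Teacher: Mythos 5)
Your proof is correct and takes essentially the same route as the paper: the paper's (very terse) proof witnesses definability by exactly the formulas $\varphi_{j,F}(x)=\sup_{a\in F}\Vert a((\left\vert 1\right\rangle \left\langle 1\right\vert\otimes u_{j})x(\left\vert 1\right\rangle \left\langle 1\right\vert\otimes u_{j})-x)\Vert$, which are precisely your $d_{F}(x,t_{j}(x))$. Your additional verifications --- that $u_{j}\in A^{\alpha}$ makes the cut-down maps $t_{j}$ equivariant terms of the bimodule language, and the two-sided distance estimate showing these formulas genuinely control the distance to $\left\vert 1\right\rangle \left\langle 1\right\vert\otimes B$ --- just supply the details the paper leaves implicit.
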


\begin{proof}
We have already observed that $\left\vert 1\right\rangle \left\langle
1\right\vert \otimes B$ is indeed a $G$-C*-subalgebra of $\mathbb{K}%
_{G}\otimes B$. The fact that it is positively quantifier-free $\mathcal{L}%
_{G}^{\mathrm{osos}\text{,}\mathbb{K}_{G}\otimes A\text{-}\mathbb{K}%
_{G}\otimes A}$-definable is witnessed by the formulas $\varphi _{j,F}(x)$
defined by%
\begin{equation*}
\sup_{a\in F}\Vert a((\left\vert 1\right\rangle \left\langle 1\right\vert
\otimes u_{j})x(\left\vert 1\right\rangle \left\langle 1\right\vert \otimes
u_{i})-x)\Vert
\end{equation*}%
where $(u_{j})_{j\in J}$ is the fixed approximate unit for $A^{\alpha }$,
and $F$ ranges among the finite subsets of $\mathbb{K}_{G}\otimes A$.
\end{proof}

In the statement of Lemma \ref{Lemma:definable-stabilization}, it is
important that the $G$-C*-algebra $\mathbb{K}_{G}\otimes B$ is regarded as a
structure in the language $\mathcal{L}_{G}^{\mathrm{osos}\text{,}\mathbb{K}%
_{G}\otimes A\text{-}\mathbb{K}_{G}\otimes A}$, rather than a structure in
the language $\mathcal{L}_{G}^{\mathrm{osos}}$.

\begin{remark}
A similar discussion as above can be done when $( A,\alpha ) $ is a $\hat{G}$%
-C*-algebra. Fix a nondegenerate faithful representation $A\to B( \mathcal{H}%
) $. Set $\hat{X}=V^{\ast }\in M(c_{0}(\hat{G})\otimes C( G) )$. We define
the stabilization of $( A,\alpha ) $ as the $\hat{G}$-C*-algebra $( \mathbb{K%
}_{G}\otimes A,\alpha _{\mathbb{K}}) $, where the action $\alpha _{\mathbb{K}%
}$ is defined by%
\begin{equation*}
\alpha _{\mathbb{K}}( T\otimes a) =\hat{X}_{12}^{\ast }( 1\otimes T\otimes
1) \alpha ( a) _{13}\hat{X}_{12}\in c_{0}(\hat{G})\in \mathbb{K}_{G}\otimes
A\subseteq B( L^{2}( G) \otimes L^{2}( G) \otimes \mathcal{H}) \text{.}
\end{equation*}
\end{remark}

\section{Existential embeddings and the Rokhlin property\label%
{Section:existential}}

\subsection{Existential embeddings}

Let $G$ be either a compact or discrete quantum group. Considering $G$%
-C*-algebras as $\mathcal{L}_{G}^{\text{C*}}$-structures gives the notion of
positively $\mathcal{L}_{G}^{\text{C*}}$-existential *-homomorphism between $%
G$-C*-algebras; see Definition \ref{Definition:existential-embeddings}. When
the algebras are separable, and the group is second countable and either
compact and coexact or discrete and exact (which is the only case considered
in \cite{barlak_spatial_2017}), a $G$-equivariant homomorphism is positively 
$\mathcal{L}_{G}^{\text{C*}}$-existential if and only if it is sequentially
split in the sense of \cite[Definition 3.1]{barlak_spatial_2017}; see \ref%
{Proposition:existential-characterize}. In this section, we show that the
results from \cite{barlak_spatial_2017}, phrased in terms of positive
existential embeddings, can be obtained without any assumptions on the
algebras or the group.


Positive existential *-homomorphisms are preserved by functors under general
assumptions. If $G$ is a compact or discrete quantum group, we regard $G$%
-C*-algebras as the objects of a category with $G$-equivariant
*-homomorphisms as morphisms. In the following proposition, we denote by $%
\Delta _{A}\colon A\rightarrow \prod_{\mathcal{U}}^{G}A$ the canonical \emph{%
diagonal }$\mathcal{L}_{G}^{\text{C*}}$-embedding of a $G$-C*-algebra into
the corresponding ultrapower.

\begin{proposition}
\label{Proposition:functor} Let $G_{0}$ and $G_{1}$ be either compact or
discrete quantum groups, let $F$ be a functor from the category of $G_{0}$%
-C*-algebras to the category of $G_{1}$-C*-algebras. Assume that for any
index set $I$, for any countably incomplete ultrafilter $\mathcal{U}$ over $%
I $, and for any $G_{0}$-C*-algebra $A$, there exists a $G_{1}$-equivariant
*-homomorphism $\theta \colon F(\prod_{\mathcal{U}}^{G_{0}}A)\rightarrow
\prod_{\mathcal{U}}^{G_{1}}F(A)$ such that $\theta \circ F(\Delta
_{A})=\Delta _{F(A)}$. Then $F$ maps $\mathcal{L}_{G_{0}}^{\text{C*}}$%
-existential *-homomorphisms to $\mathcal{L}_{G_{1}}^{\text{C*}}$%
-existential *-homomorphisms.
\end{proposition}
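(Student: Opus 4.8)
The plan is to reduce the statement to a diagram chase built on the standard characterization of positively existential maps via factorizations through ultrapowers. Recall (compare Definition \ref{Definition:existential-embeddings} and the treatment of ultraproducts in the appendix) that a $G$-equivariant *-homomorphism $\phi\colon A\to B$ between $G$-C*-algebras is positively $\mathcal{L}_{G}^{\text{C*}}$-existential if and only if there exist an index set $I$, a countably incomplete ultrafilter $\mathcal{U}$ on $I$, and a $G$-equivariant *-homomorphism $\psi\colon B\to\prod_{\mathcal{U}}^{G}A$ with $\psi\circ\phi=\Delta_{A}$. The converse implication is immediate, since the value of a positive existential condition is non-increasing along the morphism $\psi$ and the diagonal embedding $\Delta_{A}$ is elementary by \L os' theorem; the forward implication is where saturation of the ultrapower is used, and one may take $I=\mathbb{N}$ with $\mathcal{U}$ free when the algebras are separable, and a larger index set in general.

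Granting this, let $\phi\colon A\to B$ be a positively $\mathcal{L}_{G_{0}}^{\text{C*}}$-existential $G_{0}$-equivariant *-homomorphism, and fix $I$, $\mathcal{U}$, and a $G_{0}$-equivariant *-homomorphism $\psi\colon B\to\prod_{\mathcal{U}}^{G_{0}}A$ with $\psi\circ\phi=\Delta_{A}$. Applying $F$, and using that $F$ carries morphisms of $G_{0}$-C*-algebras to morphisms of $G_{1}$-C*-algebras, we obtain a $G_{1}$-equivariant *-homomorphism $F(\psi)\colon F(B)\to F(\prod_{\mathcal{U}}^{G_{0}}A)$ with $F(\psi)\circ F(\phi)=F(\Delta_{A})$ by functoriality. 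The hypothesis on $F$ supplies a $G_{1}$-equivariant *-homomorphism $\theta\colon F(\prod_{\mathcal{U}}^{G_{0}}A)\to\prod_{\mathcal{U}}^{G_{1}}F(A)$ with $\theta\circ F(\Delta_{A})=\Delta_{F(A)}$. Hence $\theta\circ F(\psi)\colon F(B)\to\prod_{\mathcal{U}}^{G_{1}}F(A)$ is $G_{1}$-equivariant and satisfies $(\theta\circ F(\psi))\circ F(\phi)=\theta\circ F(\Delta_{A})=\Delta_{F(A)}$, so by the converse direction of the characterization $F(\phi)$ is positively $\mathcal{L}_{G_{1}}^{\text{C*}}$-existential, as required.

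The argument is essentially formal, so there is no genuine analytic obstacle; the one point that must be handled with care is that the ultrapower characterization of positive existentiality has to be available over an \emph{arbitrary} countably incomplete ultrafilter on an arbitrary index set, so that the ultrafilter produced by existentiality of $\phi$ is of the form to which the hypothesis on $F$ applies. Since that hypothesis is quantified uniformly over all such $I$ and $\mathcal{U}$, this causes no difficulty, and the whole proof collapses to the two-square diagram chase above. Note that, unlike in \cite{barlak_spatial_2017}, no separability of the algebras and no coexactness assumption on $G_{0}$ or $G_{1}$ is needed.
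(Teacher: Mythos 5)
Your proof is correct and is essentially the paper's own argument: the paper simply cites the semantic characterization of positively existential morphisms (Proposition \ref{Proposition:existential-characterize}) and leaves the two-square diagram chase implicit, which is exactly what you carry out. The only point worth noting is that the forward direction of that characterization requires a countably incomplete $\kappa$-good ultrafilter for $\kappa$ exceeding the relevant density characters, but since such an ultrafilter is in particular countably incomplete, the hypothesis on $F$ applies to it and your argument goes through as written.
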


\begin{proof}
This is immediate using the semantic characterization of positively
existential morphisms from Proposition \ref%
{Proposition:existential-characterize}.
\end{proof}

Let $\phi \colon (A,\alpha )\rightarrow (B,\beta )$ be an injective
nondegenerate *-homomorphism. We identify $A$ with a $G$-invariant
subalgebra of $B$ via $\phi $. Suppose that $A_{0}$ is a $G$-C*-subalgebra
of $A$. Both $A$ and $B$ have a natural $A_{0}$-bimodule structure, and
hence can be regarded as structures in the language $\mathcal{L}_{G}^{\text{%
C*,}A_{0}\text{-}A_{0}}$ \cite[Subsection 3.5]{gardella_equivariant_2016}.
Recall that this is obtained from the language of $G$-C*-algebras by adding
function symbols for the $A_{0}$-bimodule structure, and replacing the
distinguished relation symbol for the metric with pseudometric function
symbols $d_{F}$ where $F$ ranges among the finite subsets of $A_{0}$. Then $%
d_{F}$ is interpreted in $B$ as the pseudometric%
\begin{equation*}
d_{F}(x,y)=\sup \left\{ \Vert a(x-y)\Vert ,\Vert (x-y)a\Vert \colon a\in
F\right\} \text{.}
\end{equation*}

\begin{proposition}
\label{Proposition:existential-bimodule}Let $G$ be a compact or discrete
quantum group, let $( A,\alpha )$ and $( B,\beta ) $ be $G$-C*-algebras, and
let $\phi \colon A\rightarrow B$ be a nondegenerate injective $G$%
-equivariant *-homomorphism. Fix a $G$-C*-subalgebra $A_{0}$ of $A$
containing an approximate unit for $A$. Then $\phi $ is $\mathcal{L}_{G}^{%
\text{C*}}$-existential if and only if it is $\mathcal{L}_{G}^{\text{C*,}%
A_{0}\text{-}A_{0}}$-existential.
\end{proposition}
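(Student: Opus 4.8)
The plan is to derive both implications from the semantic characterization of positively existential morphisms (Proposition~\ref{Proposition:existential-characterize}): for a language $\mathcal{L}$ equal to $\mathcal{L}_{G}^{\text{C*}}$ or to $\mathcal{L}_{G}^{\text{C*,}A_{0}\text{-}A_{0}}$, the morphism $\phi$ is $\mathcal{L}$-existential precisely when, for some (equivalently, every) countably incomplete ultrafilter $\mathcal{U}$, there is an $\mathcal{L}$-embedding $\psi\colon B\rightarrow\prod_{\mathcal{U}}A$ into the $\mathcal{L}$-ultrapower of $A$ with $\psi\circ\phi=\Delta_{A}$, the diagonal embedding. It is also worth recording at the outset that, since $\phi$ is nondegenerate and $A_{0}$ contains an approximate unit $(u_{j})_{j}$ for $A$, that net is also an approximate unit for $B$; hence for all $x,y$ in $B$, and likewise in $A$, one has $\|x-y\|=\sup_{j}\|u_{j}(x-y)\|=\sup_{j}d_{\{u_{j}\}}(x,y)$, so that the C*-norm is a supremum of pseudometric symbols of $\mathcal{L}_{G}^{\text{C*,}A_{0}\text{-}A_{0}}$.

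The forward implication, from $\mathcal{L}_{G}^{\text{C*}}$-existentiality to $\mathcal{L}_{G}^{\text{C*,}A_{0}\text{-}A_{0}}$-existentiality, is the more elementary one, and I would prove it at the level of formulas. A positive existential $\mathcal{L}_{G}^{\text{C*,}A_{0}\text{-}A_{0}}$-formula $\varphi(\bar{x})$ involves only finitely many elements of $A_{0}$, each entering either as a constant in a bimodule multiplication or as an index of a pseudometric symbol $d_{F}$; since $d_{F}(x,y)=\max_{a\in F}\{\|a(x-y)\|,\|(x-y)a\|\}$ and $A_{0}\subseteq A\subseteq B$, replacing those finitely many elements by fresh variables in the sort $\mathcal{S}$ converts $\varphi$ into a positive existential $\mathcal{L}_{G}^{\text{C*}}$-formula $\widetilde{\varphi}(\bar{x},\bar{z})$ with $\varphi^{M}(\bar{b})=\widetilde{\varphi}^{M}(\bar{b},\bar{a}_{0})$ in every $G$-C*-algebra $M$ containing $A_{0}$, where $\bar{a}_{0}$ enumerates the named elements of $A_{0}$. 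Because $\phi$ restricts to the identity on $A_{0}$, applying $\mathcal{L}_{G}^{\text{C*}}$-existentiality of $\phi$ to the enlarged tuple $(\bar{a},\bar{a}_{0})\in A$ gives $\varphi^{B}(\phi\bar{a})=\widetilde{\varphi}^{B}(\phi\bar{a},\bar{a}_{0})=\widetilde{\varphi}^{A}(\bar{a},\bar{a}_{0})=\varphi^{A}(\bar{a})$ for all $\bar{a}\in A$, which is exactly $\mathcal{L}_{G}^{\text{C*,}A_{0}\text{-}A_{0}}$-existentiality.

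For the converse I would use the semantic characterization in the bimodule language to produce an $\mathcal{L}_{G}^{\text{C*,}A_{0}\text{-}A_{0}}$-embedding $\psi\colon B\rightarrow\prod_{\mathcal{U}}A$ with $\psi\circ\phi=\Delta_{A}$, and then exploit the approximate-unit identity above: such a $\psi$ is a $G$-equivariant $\ast$-homomorphism that is isometric for every $d_{F}$, hence isometric for the C*-norm, hence injective; moreover its image is isometrically $\ast$-isomorphic to $B$ and contains $\Delta_{A}(A)$. Combining this with the semantic characterization for $\mathcal{L}_{G}^{\text{C*}}$ yields that $\phi$ is $\mathcal{L}_{G}^{\text{C*}}$-existential. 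The step I expect to demand the most care — and the main obstacle — is matching up the two ultrapowers in this last move: a priori $\prod_{\mathcal{U}}A$ in the bimodule language is only an $\mathcal{L}_{G}^{\text{C*,}A_{0}\text{-}A_{0}}$-structure, and the canonical comparison map from the honest C*-ultrapower need not be injective, so one must use the approximate-unit identity together with the fact that the $A_{0}$-module action maps $A_{0}$ into itself (hence is uniformly continuous for the $d_{F}$) to see that on the image of $\psi$ this structure recovers the C*-norm, and that $\psi$ can accordingly be realized as, or lifted to, an $\mathcal{L}_{G}^{\text{C*}}$-embedding of $B$ into a genuine C*-ultrapower of $A$ still commuting with the diagonal embeddings; here one invokes the analysis of the bimodule language carried out in \cite[Subsection~3.5]{gardella_equivariant_2016}.
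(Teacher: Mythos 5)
Your proposal is correct and follows essentially the same route as the paper, whose proof is only two sentences long: the forward direction is dismissed as obvious (your formula-translation argument, turning the named elements of $A_{0}$ into parameters of a positive existential $\mathcal{L}_{G}^{\text{C*}}$-condition, is exactly what makes it so), and the converse is "easily shown using an increasing approximate unit for $A$ contained in $A_{0}$, which is also an approximate unit for $B$" by nondegeneracy — precisely the identity $\|x-y\|=\sup_{j}d_{\{u_{j}\}}(x,y)$ and the identification of the bimodule-language ultrapower with $\overline{A_{0}\cdot\prod_{\mathcal{U}}^{G}A\cdot A_{0}}$ that you invoke (cf.\ Remark~\ref{Remark:nondegenerate}). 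Your write-up is in fact more careful than the paper's about the one genuinely delicate point, namely that the comparison between the two ultrapowers only recovers the C*-norm on the hereditary subalgebra generated by $A_{0}$.
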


\begin{proof}
As remarked above, one can assume that $\phi \colon A\rightarrow B$ is the
inclusion map. The forward implication is obvious. The converse implication
is easily shown using an increasing approximate unit for $A$ contained in $%
A_{0}$, which is also an approximate unit for $B$ since the inclusion $%
A\subseteq B$ is nondegenerate by assumption.
\end{proof}

A reason to consider the notion of positive $\mathcal{L}_{G}^{\text{C*}}$%
-existential *-homomorphism is that it allows one to conclude that several
properties pass from the target algebra to the domain algebra; see also \cite%
{barlak_sequentially_2016}. The following proposition is just a special
instance of Proposition \ref{Proposition:existential-preservation}.

\begin{proposition}
\label{Proposition:existential-preservation-algebras}Let $G$ be a compact or
discrete quantum group. Suppose that $\mathcal{C}$ is a class of $G$%
-C*-algebras that is definable by a uniform family of positive existential $%
\mathcal{L}_{G}^{\text{C*}}$-formulas. Suppose that $( A,\alpha ) $ and $(
B,\beta ) $ are $G$-C*-algebras, and $\phi \colon ( A,\alpha ) \rightarrow (
B,\beta ) $ is an $\mathcal{L}_{G}^{\text{C*}}$-existential *-homomorphism.
If $( B,\beta ) $ belongs to $\mathcal{C}$, then $( A,\alpha ) $ belongs to $%
\mathcal{C}$.
\end{proposition}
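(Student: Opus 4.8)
The plan is to obtain this as the case $\mathcal{L} = \mathcal{L}_G^{\text{C*}}$ of the general preservation statement Proposition~\ref{Proposition:existential-preservation}, which is formulated for an arbitrary one of the ($G$-equivariant) languages considered above. So I will simply spell out the model-theoretic mechanism behind that general result.

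First I would unwind the hypothesis on $\mathcal{C}$: being definable by a uniform family of positive existential $\mathcal{L}_G^{\text{C*}}$-formulas means there is a family $(\varphi_i)_{i\in I}$ of positive existential $\mathcal{L}_G^{\text{C*}}$-formulas such that, for every $G$-C*-algebra $(M,\gamma)$, one has $(M,\gamma)\in\mathcal{C}$ if and only if $\varphi_i^{(M,\gamma)}(\bar{a}) = 0$ for every $i\in I$ and every tuple $\bar{a}$ ranging over the relevant domains of quantification of $M$. Next I would record the defining property of a positively $\mathcal{L}_G^{\text{C*}}$-existential $*$-homomorphism $\phi\colon (A,\alpha)\to(B,\beta)$: for every positive existential $\mathcal{L}_G^{\text{C*}}$-formula $\psi(\bar{x})$ and every tuple $\bar{a}$ from the appropriate domains of $A$ one has $\psi^{(A,\alpha)}(\bar{a}) = \psi^{(B,\beta)}(\phi(\bar{a}))$; here the inequality $\psi^{(A,\alpha)}(\bar{a})\ge\psi^{(B,\beta)}(\phi(\bar{a}))$ holds for any $\mathcal{L}_G^{\text{C*}}$-morphism, because $\phi$ carries each domain of $A$ into the correspondingly indexed domain of $B$ and does not increase a positive quantifier-free formula, while the reverse inequality is exactly what being positively existential provides.

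The conclusion then follows in a single step. Fix $i\in I$ and a tuple $\bar{a}$ in the relevant domain of $A$; since $\phi(\bar{a})$ lies in the corresponding domain of $B$ and $(B,\beta)\in\mathcal{C}$, we get $\varphi_i^{(A,\alpha)}(\bar{a}) = \varphi_i^{(B,\beta)}(\phi(\bar{a})) = 0$. Letting $i$ and $\bar{a}$ vary shows that all the defining conditions hold in $(A,\alpha)$, so $(A,\alpha)\in\mathcal{C}$. I do not expect a genuine obstacle: the argument is soft, and the only delicate point is matching the precise bookkeeping of ``uniform family of positive existential formulas'' and of the domains of quantification against the defining property of positively existential morphisms. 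One could alternatively route the proof through the semantic characterisation of positive existentiality (Proposition~\ref{Proposition:existential-characterize}) together with \L os' theorem for ultrapowers, but the syntactic argument above is the most direct, and is presumably how Proposition~\ref{Proposition:existential-preservation} itself is proved.
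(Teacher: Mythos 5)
Your proposal is correct and follows the paper's route exactly: the paper also disposes of this proposition by declaring it a special instance of the general preservation result (Proposition \ref{Proposition:existential-preservation}) for an arbitrary language $\mathcal{L}$, whose direct verification is left to the reader and is precisely the transfer argument you spell out. The only small caveat is that your unwinding of ``definable by a uniform family'' (``$\varphi_i^M(\bar a)=0$ for all $i$ and all $\bar a$'') is not literally Definition \ref{Definition:pp-definable-collection}, which instead asks that for every tuple and every $\varepsilon>0$ \emph{some} formula of the family take value at most $\varepsilon$; your key identity $\psi^{(A,\alpha)}(\bar a)=\psi^{(B,\beta)}(\phi(\bar a))$ for positive existential $\psi$ handles that version just as well, so nothing breaks.
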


The following result is established in \cite[Proposition 3.3]%
{barlak_spatial_2017} when $G$ is compact and coexact or discrete and exact,
and second countable. Here, we remove these assumptions.

\begin{proposition}
\label{Proposition:crossed}Let $G$ be a compact or discrete quantum group,
and let $(A,\alpha )$ and $(B,\beta )$ be $G$-C*-algebras. Suppose that $%
\phi \colon A\rightarrow B$ is a nondegenerate $G$-equivariant
*-homomorphism. If $\phi $ is positively $\mathcal{L}_{G}^{\text{C*}}$%
-existential, then $G\ltimes \phi \colon (G\ltimes _{\alpha ,\mathrm{r}}A,%
\check{\alpha})\rightarrow (G\ltimes _{\beta ,\mathrm{r}}B,\check{\beta})$
is positively $\mathcal{L}_{\check{G}}^{\text{C*}}$-existential.
\end{proposition}

\begin{proof}
When $G$ is compact, this is a consequence of Proposition \ref%
{Proposition:functor} and Proposition \ref{Proposition:ultra-crossed-compact}%
. When $G$ is discrete, this follows from Lemma \ref%
{Lemma:ultraproduct-discrete} and Remark \ref{Remark:ultraproduct-discrete},
together with the semantic characterization of positively existential
embeddings from Proposition \ref{Proposition:existential-characterize}.
\end{proof}

The following result is established in \cite[Proposition 3.6, Proposition
3.7, Proposition 3.8]{barlak_spatial_2017} when $G$ is coexact, and second
countable. Here, we remove these assumptions, and provide a simpler proof.

\begin{proposition}
\label{Proposition:crossed2}Let $G$ be a compact quantum group, let $%
(A,\alpha )$ and $(B,\beta )$ be $G$-C*-algebras, and let $\phi \colon
A\rightarrow B$ be a nondegenerate $G$-equivariant *-homomorphism.

\begin{enumerate}
\item If $\phi $ is positively $\mathcal{L}_{G}^{\text{C*}}$-existential,
then $\phi |_{A^{\alpha }}\colon A^{\alpha }\rightarrow B^{\beta }$ is
positively $\mathcal{L}^{\text{C*}}$-existential.

\item $\phi \colon (A,\alpha )\rightarrow (B,\beta )$ is positively $%
\mathcal{L}_{G}^{\text{C*}}$-existential if and only if $\mathrm{id}\otimes
\phi \colon (\mathbb{K}_{G}\otimes A,\alpha _{\mathbb{K}})\rightarrow (%
\mathbb{K}_{G}\otimes B,\beta _{\mathbb{K}})$ is positively $\mathcal{L}%
_{G}^{\text{C*}}$-existential.

\item $\phi $ is positively $\mathcal{L}_{G}^{\text{C*}}$-existential if and
only if $G\ltimes \phi \colon (G\ltimes _{\alpha ,\mathrm{r}}A,\check{\alpha}%
)\rightarrow (G\ltimes _{\beta ,\mathrm{r}}B,\check{\beta})$ is positively $%
\mathcal{L}_{\check{G}}^{\text{C*}}$-existential.
\end{enumerate}
\end{proposition}

\begin{proof}
(1): This is an immediate consequence of Proposition \ref%
{Proposition:definable-substructure}, after observing that the fixed point
algebra of a $G$-C*-algebra is an $\mathcal{L}_{G}^{\text{C*}}$-definable $G$%
-C*-subalgebra.

(2): We can assume that $A\subseteq B$ and $\phi \colon A\rightarrow B$ is
the inclusion map. The forward implication is a consequence of Proposition %
\ref{Proposition:existential-preservation-algebras} and Proposition \ref%
{Proposition:stabilization}. For the converse, observe that we can identify $%
A$ with the $G$-C*-subalgebra $\left\vert 1\right\rangle \left\langle
1\right\vert \otimes A$ of $\mathbb{K}_{G}\otimes A$. 
A similar observation applies to $B$, so we identify $\mathrm{id}\otimes
\phi $ with the inclusion map $\mathbb{K}_{G}\otimes A\subseteq \mathbb{K}%
_{G}\otimes B$, and $\phi $ with the restriction of $\mathrm{id}\otimes \phi 
$ to $\left\vert 1\right\rangle \left\langle 1\right\vert \otimes A$. We can
regard $\mathbb{K}_{G}\otimes A$ and $\mathbb{K}_{G}\otimes B$ as $\mathbb{K}%
_{G}\otimes A$-bimodules, and hence as structures in the language $\mathcal{L%
}_{G}^{\text{C*,}\mathbb{K}_{G}\otimes A\text{-}\mathbb{K}_{G}\otimes A}$.
Observe that $\mathrm{id}\otimes \phi $ is nondegenerate. It follows from
Proposition \ref{Proposition:existential-bimodule} that $\mathrm{id}\otimes
\phi $ is positively $\mathcal{L}_{G}^{\text{C*,}\mathbb{K}_{G}\otimes A%
\text{-}\mathbb{K}_{G}\otimes A}$-existential. Furthermore, by Lemma \ref%
{Lemma:definable-stabilization}, $\left\vert 1\right\rangle \left\langle
1\right\vert \otimes A\subseteq \mathbb{K}_{G}\otimes A$ and $\left\vert
1\right\rangle \left\langle 1\right\vert \otimes B\subseteq \mathbb{K}%
_{G}\otimes B$ are positively quantifier-free $\mathcal{L}_{G}^{\text{C*,}%
\mathbb{K}_{G}\otimes A\text{-}\mathbb{K}_{G}\otimes A}$-definable.
Therefore $\phi $ is positively $\mathcal{L}_{G}^{\text{C*,}\mathbb{K}%
_{G}\otimes A\text{-}\mathbb{K}_{G}\otimes A}$-existential by Proposition %
\ref{Proposition:definable-substructure}. In particular, $\phi $ is
positively $\mathcal{L}_{G}^{\text{C*,}A\text{-}A}$-existential. Since $\phi 
$ is nondegenerate, a further application of Proposition \ref%
{Proposition:existential-bimodule} shows that $\phi $ is positively $%
\mathcal{L}_{G}^{\text{C*}}$-existential. This concludes the proof.

(3): The forward implication is a consequence of Proposition \ref%
{Proposition:crossed}. The converse implication follows from the other
implication and Item (2) above, in view of the
Baaj--Skandalis--Takesaki--Takai duality for compact and discrete quantum
groups; see \cite[Theorem 1.20]{barlak_spatial_2017}, \cite[Chapter 9]%
{timmermann_invitation_2008}, \cite[Theorem 5.33]{de_commer_actions_2016}.
\end{proof}

\subsection{The Rokhlin property}

A generalization of the Rokhlin property for actions of classical compact
groups on separable C*-algebras has been considered in \cite[Section 4]%
{barlak_spatial_2017} for coexact second countable compact quantum groups.
Here, we remove all separability and coexactness assumptions. We fix a
compact quantum group $G$ throughout the rest of this section.

\begin{definition}
\label{Definition:Rokhlin-property} A $G$-C*-algebra $( A,\alpha ) $ is said
to have the (spatial) \emph{Rokhlin property }if the map $\alpha \colon (
A,\alpha ) \rightarrow ( C( G) \otimes A,\Delta \otimes \mathrm{id}_{A}) $
is positively $\mathcal{L}_{G}^{\text{C*}}$-existential.
\end{definition}

It follows from \cite[Lemma 1.24]{barlak_spatial_2017} and Proposition \ref%
{Proposition:existential-characterize} that Definition \ref%
{Definition:Rokhlin-property} agrees with \cite[Definition 4.1]%
{barlak_spatial_2017} in the particular case when $G$ is coexact and second
countable, and $A$ is separable (which is the only case considered in \cite[%
Definition 4.1]{barlak_spatial_2017}).

The following result generalizes \cite[Proposition 4.5]{barlak_spatial_2017}%
, with a simple and conceptual proof.

\begin{proposition}
Suppose that $G$ is a compact quantum group, and $( A,\alpha ) $ is a $G$%
-C*-algebra with the Rokhlin property. Then $( A,\alpha ) $ is free.
\end{proposition}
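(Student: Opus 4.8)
The plan is to exploit the fact established earlier that freeness is a property witnessed by positive existential formulas (Lemma~\ref{Lemma:freeeness-definable}), combined with the Rokhlin property being a positive existential embedding into the canonical free $G$-C*-algebra $(C(G)\otimes A, \Delta\otimes\mathrm{id}_A)$ (Example~\ref{eg:free}). The key point is that the class of free $G$-C*-algebras is definable by a uniform family of positive existential $\mathcal{L}_G^{\text{C*}}$-formulas, so it is closed under passing to domains of positively existential $\mathcal{L}_G^{\text{C*}}$-existential $*$-homomorphisms.

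First I would recall that, by Definition~\ref{Definition:Rokhlin-property}, the Rokhlin property for $(A,\alpha)$ means precisely that $\alpha\colon (A,\alpha)\to (C(G)\otimes A,\Delta\otimes\mathrm{id}_A)$ is positively $\mathcal{L}_G^{\text{C*}}$-existential. By Example~\ref{eg:free}, the target $(C(G)\otimes A,\Delta\otimes\mathrm{id}_A)$ is free. By Lemma~\ref{Lemma:freeeness-definable}, the class of free $G$-C*-algebras is definable by a uniform family of positive existential $\mathcal{L}_G^{\text{C*}}$-formulas. Now I would invoke Proposition~\ref{Proposition:existential-preservation-algebras}: if $\mathcal{C}$ is a class of $G$-C*-algebras definable by a uniform family of positive existential $\mathcal{L}_G^{\text{C*}}$-formulas, $\phi\colon (A,\alpha)\to (B,\beta)$ is an $\mathcal{L}_G^{\text{C*}}$-existential $*$-homomorphism, and $(B,\beta)\in\mathcal{C}$, then $(A,\alpha)\in\mathcal{C}$. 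Applying this with $\phi=\alpha$, $(B,\beta)=(C(G)\otimes A,\Delta\otimes\mathrm{id}_A)$, and $\mathcal{C}$ the class of free $G$-C*-algebras, we conclude that $(A,\alpha)$ is free.

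The one subtlety to be careful about is whether $\alpha$ as a map $(A,\alpha)\to(C(G)\otimes A,\Delta\otimes\mathrm{id}_A)$ is actually $G$-equivariant, so that it makes sense to speak of it being an $\mathcal{L}_G^{\text{C*}}$-existential morphism; but this is exactly the action condition $(\Delta\otimes\mathrm{id}_A)\circ\alpha=(\mathrm{id}_A\otimes\alpha)\circ\alpha$ from Definition~\ref{Definition:compact-action}, which says precisely that $\alpha$ intertwines $\alpha$ with $\Delta\otimes\mathrm{id}_A$. So the argument is essentially a one-line application of the preservation machinery, and there is no real obstacle: the work has all been done in setting up the model-theoretic framework and the two cited lemmas. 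The only place requiring mild attention is confirming that the uniform family of positive existential formulas from Lemma~\ref{Lemma:freeeness-definable}, asserting $[(1\otimes\mathcal{O}(A))\alpha(A_\lambda)]=C(G)_\lambda\otimes A$ for each $\lambda\in\mathrm{Irr}(G)$, is genuinely expressible with the sorts and domains of quantification of $\mathcal{L}_G^{\text{C*}}$ — but this is precisely the content of that lemma, which I am entitled to assume.
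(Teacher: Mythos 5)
Your proposal is correct and follows exactly the paper's own argument: the target $(C(G)\otimes A,\Delta\otimes\mathrm{id}_A)$ is free by Example~\ref{eg:free}, freeness is definable by a uniform family of positive existential $\mathcal{L}_G^{\text{C*}}$-formulas by Lemma~\ref{Lemma:freeeness-definable}, and Proposition~\ref{Proposition:existential-preservation-algebras} transfers freeness back along the positively existential map $\alpha$. Your additional remarks on the equivariance of $\alpha$ are accurate but not needed beyond what the cited results already guarantee.
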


\begin{proof}
Observe that $( C( G) \otimes A,\Delta \otimes \mathrm{id}_{A}) $ is a free $%
G$-C*-algebra. If $( A,\alpha ) $ has the Rokhlin property, then $( A,\alpha
) $ is free in view of this observation and Proposition \ref%
{Proposition:existential-preservation-algebras}.
\end{proof}

The main result of \cite{barlak_spatial_2017} asserts that, whenever $G$ is
a coexact second countable compact quantum groups and $(A,\alpha )$ is a
separable $G$-C*-algebra, then several properties of $A$ are preserved under
taking crossed products or passing to the fixed point algebra. One can
deduce the natural generalization of such a statement to arbitrary compact
quantum groups from the properties of positively existential embeddings
established above.

\begin{theorem}
\label{Theorem:preservation-rokhlin} Let $(A,\alpha )$ be a $G$-C*-algebra
with the Rokhlin property. Then the canonical nondegenerate inclusion $%
A^{\alpha }\hookrightarrow A$ is positively $\mathcal{L}^{\text{C*}}$%
-existential, and the canonical $\check{G}$-equivariant nondegenerate
inclusion $G\ltimes _{\alpha }A\hookrightarrow \mathbb{K}_{\check{G}}\otimes
A$ is positively $\mathcal{L}_{\check{G}}^{\text{C*}}$-existential. Here $%
\mathbb{K}_{\check{G}}\otimes A$ is regarded as a $\check{G}$-C*-algebra
endowed with the stabilization of the trivial action of $\check{G}$ on $A$.
\end{theorem}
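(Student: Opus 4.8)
The plan is to derive both assertions by applying Proposition~\ref{Proposition:crossed} to a single map, namely the comultiplication-type *-homomorphism
\[
\alpha\colon (A,\alpha)\longrightarrow (C(G)\otimes A,\ \Delta\otimes\mathrm{id}_A).
\]
This map is nondegenerate (being an action) and $G$-equivariant --- indeed, $G$-equivariance of $\alpha$ with respect to $\alpha$ on the domain and $\Delta\otimes\mathrm{id}_A$ on the codomain is exactly the action condition $(\mathrm{id}\otimes\alpha)\circ\alpha=(\Delta\otimes\mathrm{id})\circ\alpha$ from Definition~\ref{Definition:compact-action}. Moreover, by Definition~\ref{Definition:Rokhlin-property}, the Rokhlin property of $(A,\alpha)$ says precisely that this $\alpha$ is positively $\mathcal{L}_G^{\text{C*}}$-existential. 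Hence all that is left is to identify the fixed point algebra and the crossed product of the target $(C(G)\otimes A,\Delta\otimes\mathrm{id}_A)$ with the algebras appearing in the statement.

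For the first assertion, apply part~(1) of Proposition~\ref{Proposition:crossed} to the positively $\mathcal{L}_G^{\text{C*}}$-existential map $\alpha$: the restriction $\alpha|_{A^\alpha}\colon A^\alpha\to (C(G)\otimes A)^{\Delta\otimes\mathrm{id}_A}$ is positively $\mathcal{L}^{\text{C*}}$-existential. Now the fixed point algebra of the left translation action on $C(G)$ is $\{c\in C(G):\Delta(c)=1\otimes c\}$, and applying $\mathrm{id}\otimes\epsilon$ to this equation gives $c=\epsilon(c)1$; thus $(C(G))^{\Delta}=\mathbb{C}1$ and therefore $(C(G)\otimes A)^{\Delta\otimes\mathrm{id}_A}=\mathbb{C}1\otimes A$, which we canonically identify with $A$. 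Under this identification one has $\alpha(a)=1\otimes a$ for $a\in A^\alpha$, using $(\epsilon\otimes\mathrm{id})\circ\alpha=\mathrm{id}$ on the algebraic core; hence $\alpha|_{A^\alpha}$ is precisely the canonical nondegenerate inclusion $A^\alpha\hookrightarrow A$, which is therefore positively $\mathcal{L}^{\text{C*}}$-existential.

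For the second assertion, apply part~(3) of Proposition~\ref{Proposition:crossed} to $\alpha$: since $\alpha$ is positively $\mathcal{L}_G^{\text{C*}}$-existential, so is the induced $\check{G}$-equivariant nondegenerate *-homomorphism
\[
G\ltimes\alpha\colon G\ltimes_\alpha A\longrightarrow G\ltimes_{\Delta\otimes\mathrm{id}_A}(C(G)\otimes A),
\]
the target being equipped with its dual $\check{G}$-action. It then suffices to produce a $\check{G}$-equivariant *-isomorphism
\[
\bigl(G\ltimes_{\Delta\otimes\mathrm{id}_A}(C(G)\otimes A),\ \text{dual action}\bigr)\ \cong\ \bigl(\mathbb{K}_{\check{G}}\otimes A,\ \tau_{\mathbb{K}}\bigr),
\]
where $\tau$ denotes the trivial action of $\check{G}$ on $A$ and $\tau_{\mathbb{K}}$ its stabilization. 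Composing $G\ltimes\alpha$ with this isomorphism then exhibits a positively $\mathcal{L}_{\check{G}}^{\text{C*}}$-existential nondegenerate $\check{G}$-equivariant *-homomorphism $G\ltimes_\alpha A\to\mathbb{K}_{\check{G}}\otimes A$, and tracing through the definitions of the reduced crossed product and of the stabilization shows that this coincides with the canonical inclusion determined by $c_0(\hat{G})\subseteq\mathbb{K}_{\check{G}}$ and $\alpha(A)\subseteq C(G)\otimes A\subseteq B(L^2(G))\otimes A$.

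The main obstacle is precisely the displayed $\check{G}$-equivariant isomorphism: it is the quantum-group analogue of the classical fact that the crossed product of a translation action --- which is free, cf.\ Example~\ref{eg:free} --- is a stabilization of its fixed point algebra, with the dual action being the stabilization of the trivial one. I would establish it by a direct computation with the fundamental unitary $W$: writing $\Delta(x)=W^{\ast}(1\otimes x)W$ and using the leg/density relations for $W$ to obtain $[(c_0(\hat{G})\otimes 1)\Delta(C(G))]=\mathbb{K}_{\check{G}}$, then tensoring with $A$ and matching the dual action with the conjugation action defining $\tau_{\mathbb{K}}$; alternatively, this identification can be extracted from the Baaj--Skandalis--Takesaki--Takai duality for compact and discrete quantum groups, see \cite[Theorem 1.20]{barlak_spatial_2017}, \cite[Chapter 9]{timmermann_invitation_2008}, and \cite[Theorem 5.33]{de_commer_actions_2016}.
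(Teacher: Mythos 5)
Your proof is correct and follows essentially the same route as the paper: both parts are obtained by applying Proposition~\ref{Proposition:crossed} to the positively $\mathcal{L}_G^{\text{C*}}$-existential map $\alpha\colon (A,\alpha)\to (C(G)\otimes A,\Delta\otimes\mathrm{id}_A)$, identifying the fixed point algebra of the target with $1\otimes A$, and identifying its crossed product with $\mathbb{K}_{\check G}\otimes A$ via the Baaj--Skandalis--Takesaki--Takai duality. The extra details you supply (the counit computation showing $C(G)^{\Delta}=\mathbb{C}1$, and the suggested direct verification of the duality isomorphism via the fundamental unitary) are consistent with, and slightly more explicit than, the paper's argument.
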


\begin{proof}
If $( A,\alpha ) $ has the Rokhlin property, then by definition $\alpha
\colon ( A,\alpha ) \rightarrow ( C( G) \otimes A,\Delta \otimes \mathrm{id}%
_{A}) $ is positively $\mathcal{L}_{G}^{\text{C*}}$-existential. By part (1)
of Proposition \ref{Proposition:crossed2}, the restriction of $\alpha $ to
the fixed point algebras is positively $\mathcal{L}^{\text{C*}}$%
-existential. Observing that the fixed point algebra of $( C( G) \otimes
A,\Delta \otimes \mathrm{id}_{A}) $ is equal to $1\otimes A\subseteq C( G)
\otimes A$, we deduce that the embedding $A^{\alpha }\hookrightarrow
1\otimes A$ is positively $\mathcal{L}^{\text{C*}}$-existential. This
concludes the proof of the first assertion.

By Proposition \ref{Proposition:crossed}, the positively $\mathcal{L}_{G}^{%
\text{C*}}$-existential *-homomorphism $\alpha \colon (A,\alpha )\rightarrow
(C(G)\otimes A,\Delta \otimes \mathrm{id}_{A})$ induces a positively $%
\mathcal{L}_{\check{G}}^{\text{C*}}$-existential *-homomorphism $G\ltimes
\alpha \colon G\ltimes _{\alpha }A\rightarrow G\ltimes _{\Delta \otimes 
\mathrm{id}_{A}}(C(G)\otimes A)$. A particular instance of the
Baaj--Skandalis--Takesaki--Takai duality for compact quantum groups---see 
\cite[Theorem 1.20]{barlak_spatial_2017}, \cite[Chapter 9]%
{timmermann_invitation_2008}, \cite[Theorem 5.33]{de_commer_actions_2016}%
---gives that there exists a $\check{G}$-equivariant *-isomorphism $\rho
\colon G\ltimes _{\Delta \otimes \mathrm{id}_{A}}(C(G)\otimes A)\rightarrow 
\mathbb{K}_{G}\otimes A$ such that $\rho \circ (G\ltimes \alpha )\colon
G\ltimes _{\alpha }A\rightarrow \mathbb{K}_{G}\otimes A$ is the canonical
inclusion. Therefore we conclude that the latter *-homomorphism is
positively $\mathcal{L}_{\check{G}}^{\text{C*}}$-existential as well.
\end{proof}

As an application, we extend several preservations results for crossed
products by actions with the Rokhlin property; see \cite%
{gardella_crossed_2014} and \cite{barlak_sequentially_2016}. When $G$ is
coexact and second countable, this recovers the main result of \cite%
{barlak_spatial_2017}, although the assertions concerning real rank and
stable rank are new even in this case.

\begin{corollary}
\label{Corollary:preservation-rokhlin} Let $(A,\alpha )$ be a $G$-C*-algebra
with the Rokhlin property. If $A$ satisfies any of the following properties,
then so do the fixed point algebra $A^{\alpha }$ and the crossed product $%
G\ltimes _{\alpha }A$:

\begin{enumerate}
\item being simple;

\item being separable, nuclear, and satisfying the UCT;

\item being separable and $\mathcal{D}$-absorbing for a given strongly
self-absorbing C*-algebra $\mathcal{D}$ or for $\mathcal{D}=\mathbb{K}( 
\mathcal{H}) $;

\item being expressible as a direct limit of certain weakly semiprojective
C*-algebras (see Theorem 3.10 in \cite{gardella_crossed_2014} for the
precise statement). This includes UHF-algebras (or matroid algebras),
AF-algebras, AI-algebras, AT-algebras, countable inductive limits of
one-dimensional NCCW-complexes, and several other classes.

\item having nuclear dimension at most $n$;

\item having decomposition rank at most $n$;

\item having real rank at most $n$;

\item having stable rank at most $n$.
\end{enumerate}
\end{corollary}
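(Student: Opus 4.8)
The plan is to deduce the corollary entirely from Theorem~\ref{Theorem:preservation-rokhlin}. That result supplies two positively existential embeddings: the inclusion $A^{\alpha}\hookrightarrow A$ is positively $\mathcal{L}^{\text{C*}}$-existential, and the inclusion $G\ltimes _{\alpha }A\hookrightarrow \mathbb{K}_{\check{G}}\otimes A$ is positively $\mathcal{L}_{\check{G}}^{\text{C*}}$-existential, hence also positively $\mathcal{L}^{\text{C*}}$-existential once the $\check{G}$-action is forgotten. So for each property $\mathrm{P}$ in the list it suffices to establish two facts: \emph{(i)} $\mathrm{P}$ passes from the codomain to the domain along any positively $\mathcal{L}^{\text{C*}}$-existential *-homomorphism of C*-algebras; and \emph{(ii)} $\mathrm{P}$ passes from $A$ to its stabilization $\mathbb{K}_{\check{G}}\otimes A$. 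Granting \emph{(i)} and \emph{(ii)}, applying \emph{(i)} to $A^{\alpha }\hookrightarrow A$ yields $\mathrm{P}$ for $A^{\alpha }$, while applying first \emph{(ii)} to get $\mathrm{P}$ for $\mathbb{K}_{\check{G}}\otimes A$ and then \emph{(i)} to $G\ltimes _{\alpha }A\hookrightarrow \mathbb{K}_{\check{G}}\otimes A$ yields $\mathrm{P}$ for $G\ltimes _{\alpha }A$.

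For step \emph{(i)} I would split the properties into two groups. Properties (5)--(8) --- finite nuclear dimension, finite decomposition rank, finite real rank, and finite stable rank, each with the prescribed bound --- should all be handled by exhibiting the corresponding class of C*-algebras as one definable by a uniform family of positive existential $\mathcal{L}^{\text{C*}}$-formulas and then invoking Proposition~\ref{Proposition:existential-preservation-algebras}. For nuclear dimension and decomposition rank this is carried out in \cite{gardella_equivariant_2016}; for real rank at most $n$ and stable rank at most $n$ --- the cases that are new here --- one rewrites the defining conditions (density of the set of self-adjoint $(n{+}1)$-tuples whose sum of squares is invertible, respectively density of the set of $n$-tuples generating the unitization as a left ideal) as a uniform family of positive existential formulas and argues as before. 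Properties (1)--(4) --- simplicity; being separable, nuclear and satisfying the UCT; $\mathcal{D}$-absorption for a strongly self-absorbing $\mathcal{D}$ or for $\mathcal{D}=\mathbb{K}(\mathcal{H})$; and being a direct limit of the indicated weakly semiprojective building blocks --- are not of this form, but they are preserved under positively existential *-homomorphisms by the arguments of \cite{barlak_sequentially_2016} (and \cite[Theorem~3.10]{gardella_crossed_2014} for (4)); I would check that these arguments, although originally phrased using sequence algebras and under separability hypotheses, carry over verbatim to the ultrapower setting via the semantic characterization of positively existential *-homomorphisms in Proposition~\ref{Proposition:existential-characterize}, so that no countability assumption is needed.

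For step \emph{(ii)} the verifications are routine facts about stabilization: $\mathbb{K}_{\check{G}}$ is separable, nuclear, satisfies the UCT and absorbs $\mathbb{K}(\mathcal{H})$, and tensoring with a fixed C*-algebra commutes with direct limits, so properties (1)--(4) pass from $A$ to $\mathbb{K}_{\check{G}}\otimes A$; nuclear dimension and decomposition rank are unchanged under stabilization, and the real rank and stable rank of $\mathbb{K}_{\check{G}}\otimes A$ are controlled by those of $A$. Combining \emph{(i)} and \emph{(ii)} with Theorem~\ref{Theorem:preservation-rokhlin} then gives the corollary.

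The step I expect to be the main obstacle is \emph{(i)} for real rank and stable rank: these are not manifestly first-order invariants in $\mathcal{L}^{\text{C*}}$, and the work lies in packaging their usual approximate characterizations into genuine uniform families of positive existential formulas so that Proposition~\ref{Proposition:existential-preservation-algebras} applies. A secondary point of care is that, when $\check{G}$ is infinite, $\mathbb{K}_{\check{G}}$ is the full algebra of compact operators on an infinite-dimensional Hilbert space, so in step \emph{(ii)} one must invoke the (standard but non-trivial) stability of the relevant invariants under genuine stabilization rather than merely under matrix amplification; for the statements in (2)--(4) one also uses that the crossed product of a separable $G$-C*-algebra involves only countably many spectral subspaces and is therefore again separable.
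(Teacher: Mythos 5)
Your proposal is correct and follows essentially the same route as the paper: deduce everything from Theorem \ref{Theorem:preservation-rokhlin} together with the fact that each listed class is definable by a uniform family of positive existential formulas, and then apply Proposition \ref{Proposition:existential-preservation-algebras} to the two positively existential inclusions $A^{\alpha }\hookrightarrow A$ and $G\ltimes _{\alpha }A\hookrightarrow \mathbb{K}_{\check{G}}\otimes A$. The only substantive differences are that the paper disposes of all eight items --- including real rank and stable rank, which you single out as the main obstacle, and items (1)--(4), for which you propose falling back on the sequentially-split arguments of \cite{barlak_sequentially_2016} --- by citing existing definability results (\cite[Theorems 2.5.1 and 2.5.2]{farah_model_2016} and \cite[Proposition 4.3]{gardella_equivariant_2016}, with some items treated in the nuclear language $\mathcal{L}^{\text{C*,nuc}}$ rather than $\mathcal{L}^{\text{C*}}$), so no new formula-building is needed; and that your explicit stabilization step \emph{(ii)}, together with the separability caveat for the crossed product, is a genuine ingredient that the paper's proof leaves implicit.
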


\begin{proof}
In view of Theorem \ref{Theorem:preservation-rokhlin}, the canonical
inclusions $A^{\alpha }\hookrightarrow A$ and $G\ltimes _{\alpha
}A\hookrightarrow \mathbb{K}_{G}\otimes A$ are positively $\mathcal{L}_{G}^{%
\text{C*}}$-existential. Items (1),(3),(4),(5),(8),(9) can be obtained by
observing that the corresponding properties are definable by a uniform
family of $\mathcal{L}_{G}^{\text{C*}}$-formulas as shown in \cite[Theorem
2.5.1, Theorem 2.5.2]{farah_model_2016} and \cite[Proposition 4.3]%
{gardella_equivariant_2016}; see also \cite[Remark 3.2]%
{gardella_equivariant_2016}. The items (2), (6), (7) can alternatively be
obtained by applying the fact that the corresponding properties are
definable by a uniform family of $\mathcal{L}_{G}^{\text{C*,nuc}}$-formulas
as shown in \cite[Section 5]{farah_model_2016}; see also \cite[Remark 3.5]%
{gardella_equivariant_2016}. The language $\mathcal{L}_{G}^{\text{C*,nuc}}$
is the \emph{nuclear language }for C*-algebras introduced in \cite[%
Subsection 3.3]{gardella_equivariant_2016}.
\end{proof}

Finally, the K-theory formula for fixed point algebras of Rokhlin actions of
finite groups from \cite{izumi_finite_2004} generalizes to the setting of
Rokhlin actions of compact quantum groups. This has been shown in \cite[%
Theorem 5.11]{barlak_spatial_2017} for separable coexact compact quantum
groups, but the proof applies equally well in general.

\begin{theorem}
\label{Theorem:k-theory}Let $G$ be a compact quantum group, and $( A,\alpha
) $ be a $G$-C*-algebra.\ If $( A,\alpha ) $ has the Rokhlin property, then
the canonical inclusion $A^{\alpha }\hookrightarrow A$ induces an injective
morphism $K_{\ast }( A^{\alpha }) \hookrightarrow K_{\ast }( A) $ in $K$%
-theory, whose range is%
\begin{equation*}
\left\{ x\in K_{\ast }( A) \colon K_{\ast }( \alpha ) ( x) =K_{\ast }( \iota
_{A}) ( x) \right\} \text{,}
\end{equation*}%
where $\iota _{A}\colon A\rightarrow C( G) \otimes A$, $a\mapsto 1\otimes a$
is the trivial action of $G$ on $A$.
\end{theorem}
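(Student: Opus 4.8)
The plan is to follow the proof of \cite[Theorem~5.11]{barlak_spatial_2017}, modelled on Izumi's computation for finite group actions \cite{izumi_finite_2004}; all of its ingredients are available in the present generality by the earlier sections. Write $\iota_{0}\colon A^{\alpha}\hookrightarrow A$ for the canonical inclusion, fix a countably incomplete ultrafilter $\mathcal{U}$, abbreviate $A_{\mathcal{U}}:=\prod_{\mathcal{U}}^{G}A$ and $(A^{\alpha})_{\mathcal{U}}:=\prod_{\mathcal{U}}A^{\alpha}$, and let $\Delta_{A}\colon A\to A_{\mathcal{U}}$ and $\Delta_{A^{\alpha}}\colon A^{\alpha}\to(A^{\alpha})_{\mathcal{U}}$ be the diagonal embeddings. \emph{Injectivity of $K_{\ast}(\iota_{0})$.} By Theorem~\ref{Theorem:preservation-rokhlin} the inclusion $\iota_{0}$ is positively $\mathcal{L}^{\text{C*}}$-existential, so by the semantic characterisation of positively existential morphisms (Proposition~\ref{Proposition:existential-characterize}) there is a $\ast$-homomorphism $\psi\colon A\to(A^{\alpha})_{\mathcal{U}}$ with $\psi\circ\iota_{0}=\Delta_{A^{\alpha}}$. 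Since the diagonal embedding of any C*-algebra into its ultrapower is injective on $K$-theory (a projection or unitary over $B$ that is trivial over $B_{\mathcal{U}}$ is already trivial over $B$, by lifting the witnessing partial isometries, resp.\ exponentials, and perturbing), and $K_{\ast}(\psi)\circ K_{\ast}(\iota_{0})=K_{\ast}(\Delta_{A^{\alpha}})$, the map $K_{\ast}(\iota_{0})$ is injective. \emph{Range $\subseteq$ equalizer.} If $a\in A^{\alpha}$ then $\alpha(a)=1\otimes a=\iota_{A}(a)$, so $\alpha\circ\iota_{0}=\iota_{A}\circ\iota_{0}$ as $\ast$-homomorphisms, whence $K_{\ast}(\iota_{0})$ maps into $E:=\{x\in K_{\ast}(A):K_{\ast}(\alpha)(x)=K_{\ast}(\iota_{A})(x)\}$; this also identifies $E$ with the group displayed in the statement.

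For the reverse inclusion $E\subseteq\mathrm{im}\,K_{\ast}(\iota_{0})$ we use the Rokhlin property. By Definition~\ref{Definition:Rokhlin-property} and Proposition~\ref{Proposition:existential-characterize} choose a $G$-equivariant $\ast$-homomorphism $\varphi\colon(C(G)\otimes A,\Delta\otimes\mathrm{id}_{A})\to(A_{\mathcal{U}},\alpha_{\mathcal{U}})$ with $\varphi\circ\alpha=\Delta_{A}$. The trivial action $\iota_{A}\colon A\to C(G)\otimes A$ is $G$-equivariant into $(C(G)\otimes A,\Delta\otimes\mathrm{id}_{A})$, so $\bar\varphi:=\varphi\circ\iota_{A}$ is $G$-equivariant out of the trivially acted $A$ and therefore lands in the fixed point algebra of $(A_{\mathcal{U}},\alpha_{\mathcal{U}})$; by the explicit description of reduced products in Subsubsection~\ref{Subsubsection:ultraproducts} this fixed point algebra is the trivial spectral subspace $(A_{\mathcal{U}})_{\mathrm{t}}=\prod_{\mathcal{U}}A_{\mathrm{t}}=(A^{\alpha})_{\mathcal{U}}$. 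Thus $\bar\varphi\colon A\to(A^{\alpha})_{\mathcal{U}}$, and since $\iota_{A}$ and $\alpha$ agree on $A^{\alpha}$ we get $\bar\varphi\circ\iota_{0}=\varphi\circ\alpha\circ\iota_{0}=\Delta_{A}\circ\iota_{0}=j\circ\Delta_{A^{\alpha}}$, where $j\colon(A^{\alpha})_{\mathcal{U}}\hookrightarrow A_{\mathcal{U}}$ is the inclusion (in particular $\bar\varphi$ is a splitting for $\iota_{0}$).

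Now let $x\in E$, say $x\in K_{0}(A)$ (the $K_{1}$ case is identical with unitaries). Applying $K_{\ast}(\varphi)$ to $K_{\ast}(\alpha)(x)=K_{\ast}(\iota_{A})(x)$ and using $\varphi\circ\alpha=\Delta_{A}$ and $\varphi\circ\iota_{A}=j\circ\bar\varphi$ gives $K_{0}(\Delta_{A})(x)=K_{0}(j)\bigl(K_{0}(\bar\varphi)(x)\bigr)$. After the usual reduction, represent $x$ by $[p]-[s]$ with $p$ a projection over $\widetilde{A}$ of scalar part $s$, and extend $\varphi,\bar\varphi$ unitally to unitisations; since $s$ is $\alpha$-fixed, $\bar\varphi(s)=s$. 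By the preceding identity, $\Delta_{A}(p)\oplus 1_{k}$ and $j(\bar\varphi(p))\oplus 1_{k}$ are Murray--von Neumann equivalent over $\widetilde{A}_{\mathcal{U}}$ for some $k$ (same $K_{0}$-class, same scalar part), say via a partial isometry $W$; lift $W$ to a bounded sequence $(W_{m})$ over $\widetilde{A}$. The crucial point is that $\bar\varphi(p)$ is a projection over $(A^{\alpha})_{\mathcal{U}}=\prod_{\mathcal{U}}A^{\alpha}$, hence is represented by a bounded sequence $(b_{m})$ over $\widetilde{A^{\alpha}}$ with $b_{m}$ asymptotically a projection along $\mathcal{U}$; rounding $b_{m}$ to an honest projection $q_{m}$ over $\widetilde{A^{\alpha}}$ (of scalar part $s$) and $W_{m}$ to a partial isometry $\widetilde{W}_{m}$ with $\widetilde{W}_{m}^{\ast}\widetilde{W}_{m}=p\oplus 1_{k}$, a routine perturbation estimate gives, for all $m$ in some member of $\mathcal{U}$, that $\widetilde{W}_{m}\widetilde{W}_{m}^{\ast}$ lies within distance $<1$ of $q_{m}\oplus 1_{k}$; hence $p\oplus 1_{k}\sim q_{m}\oplus 1_{k}$ over $\widetilde{A}$, so $x=[p]-[s]=[\iota_{0}(q_{m})]-[s]\in\mathrm{im}\,K_{0}(\iota_{0})$. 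Together with injectivity, this proves the theorem.

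The step I expect to be the main obstacle is exactly this last one. Because coexactness is \emph{not} assumed one cannot argue, as in the classical case, with ``central'' Rokhlin projections inside a sequence algebra; one must use the equivariant map $\varphi$ directly. What makes the argument go through is that $\bar\varphi$ takes values in $(A^{\alpha})_{\mathcal{U}}$ rather than merely in the a priori larger fixed point algebra of $A_{\mathcal{U}}$ — an instance of the fact, read off from the explicit construction of reduced products, that spectral subspaces of a reduced product are the reduced products of the spectral subspaces — and this is precisely what lets one extract an honest projection (resp.\ unitary) over $A^{\alpha}$ from a representative sequence of $\bar\varphi(p)$ (resp.\ $\bar\varphi(v)$). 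The remaining steps (the reduction to the unital case and to a single projection or unitary, the scalar corrections, the passage through matrix algebras, and the $K_{1}$ version) are entirely routine and are handled as in \cite{barlak_spatial_2017,izumi_finite_2004}.
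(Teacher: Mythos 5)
Your proof is correct and is essentially the paper's own argument: the paper simply asserts that the proof of \cite[Theorem 5.11]{barlak_spatial_2017} applies verbatim in this generality, and your reconstruction --- including the key observation that the fixed point algebra of $\prod_{\mathcal{U}}^{G}A$ is $\prod_{\mathcal{U}}A^{\alpha}$ (read off from the explicit construction of reduced products), which is exactly what replaces the coexactness hypothesis --- is precisely that adaptation. The only cosmetic adjustment for the stated generality is to take the ultrafilter countably incomplete and $\kappa$-good for $\kappa$ exceeding the density characters of $A$ and $L^{2}(G)$, as Proposition \ref{Proposition:existential-characterize} requires; since you only evaluate $\psi$ and $\varphi$ on finitely many elements at a time, even the approximate (syntactic) form of positive existentiality would suffice here.
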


In fact, under the assumptions of Theorem \ref{Theorem:k-theory}, one can
conclude that the inclusion of $K_{0}( A^{\alpha }) $ into $K_{0}( A) $ is
positively existential. Here we regard $K_{0}$-group as structures in the
language of dimension groups $( G,+,u) $ \emph{endowed with domains of
quantifications }to be interpreted as the subsets $\left\{ x\in G\colon
-nu\leq x\leq nu\right\} $ for $n\in \mathbb{N}$. This corresponds to the
notion of ultrapower of dimension groups considered in \cite%
{rainone_crossed_2016}.

\subsection{Rigidity}

Suppose that $G$ is a compact quantum group, and $(B,\beta )$ is a $G$%
-C*-algebra. Let $\tilde{B}$ be the minimal unitization of $B$. Then the $G$%
-action $\beta $ has a unique extension to a $G$-action $\beta $ on $\tilde{B%
}$. It is easy to see that if a *-homomorphism $\phi \colon A\rightarrow B$
is positively $\mathcal{L}_{G}^{\text{C*}}$-existential, then its unique
extension to a unital *-homomorphism $\phi \colon \tilde{A}\rightarrow 
\tilde{B}$ is positively $\mathcal{L}_{G}^{\text{C*,1}}$-existential, where $%
\mathcal{L}_{G}^{\text{C*,1}}$ is the language obtained from $\mathcal{L}%
_{G}^{\text{C*}}$ by adding a constant symbols for the multiplicative unit.
The following definition is introduced in the setting of compact quantum
group actions in \cite[Definition 5.1]{barlak_spatial_2017}.

\begin{definition}
\label{Definition:appr-eq}Let $G$ be a compact quantum group, $( A,\alpha )
,( B,\beta ) $ be two $G$-C*-algebras, and $\phi _{1},\phi _{2}\colon
A\rightarrow B$ be $G$-equivariant *-homomorphisms. Then $\phi _{1},\phi
_{2} $ are approximately $G$-unitarily equivalent, in formulas $\phi
_{1}\thickapprox _{\mathrm{u},G}\phi _{2}$, if there exists a net $( v_{i}) $
of unitary elements of the fixed point algebra $\tilde{B}^{\beta }$ such
that $\phi _{2}$ is the limit of $\mathrm{Ad}( v_{i}) \circ \phi _{1}$ is
the topology of pointwise norm convergence.
\end{definition}

When $B$ is separable, one can replace nets with sequences in Definition \ref%
{Definition:appr-eq}. In the case when $G$ is the trivial group, Definition %
\ref{Definition:appr-eq} recovers the usual notion of approximate unitary
equivalence $\phi _{1}\thickapprox _{\mathrm{u}}\phi _{2} $ for the
*-homomorphisms $\phi _{1},\phi _{2}$.

A \emph{rigidity result }for Rokhlin actions of coexact second countable
compact quantum groups, generalizing results for finite and compact from 
\cite%
{izumi_finite_2004,nawata_finite_2016,gardella_equivariant_2016,gardella_classification_2014}
and for finite quantum groups from \cite{kodaka_rohlin_2015}, has been
obtained in \cite[Theorem 5.10]{barlak_spatial_2017}. In the rest of this
section, we observe here that such a result holds for arbitrary (not
necessarily coexact) second countable compact quantum groups.

\begin{theorem}
\label{Theorem:rigidity}Let $G$ be a second countable compact quantum group,
let $A$ be a separable C*-algebra, and let $\alpha ^{(0)},\alpha ^{(1)}$ be $%
G$-actions on $A$. If $(A,\alpha ^{(0)})$ and $(A,\alpha ^{(1)})$ have the
Rokhlin property, then $\alpha ^{(0)}\thickapprox _{\mathrm{u}}\alpha ^{(1)}$
if and only if $\alpha ^{(0)}\thickapprox _{\mathrm{u},G}\alpha ^{(1)}$.
\end{theorem}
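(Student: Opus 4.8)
The plan is to prove the two implications separately. The implication ``$\alpha\thickapprox _{\mathrm{u},G}\beta\Rightarrow\alpha\thickapprox _{\mathrm{u}}\beta$'' is immediate, since a unitary in the fixed point algebra of the extension of $\Delta\otimes\mathrm{id}_{A}$ to $\widetilde{C(G)\otimes A}$ is in particular a unitary of $\widetilde{C(G)\otimes A}$. The content of the theorem is therefore the converse, and the strategy is to use the Rokhlin property of $(A,\beta)$ to replace the (a priori not $G$-equivariant) unitaries witnessing $\alpha\thickapprox _{\mathrm{u}}\beta$ by unitaries in the fixed point algebra, by means of a version of Izumi's averaging argument carried out \emph{inside the $G$-C*-algebra ultrapower} of Subsection~\ref{Subsubsection:ultraproducts}. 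Working in that ultrapower, rather than in a naive sequence algebra carrying a possibly discontinuous pointwise action, is exactly what allows one to dispense with the coexactness hypothesis present in \cite{barlak_spatial_2017}.

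First I would reformulate both notions in an ultrapower. Since $A$ is separable and $G$ is second countable, $\mathrm{Rep}(G)$ is countable and the language $\mathcal{L}_{G}^{\text{C*}}$ is separable for the class of $G$-C*-algebras; fix a countably incomplete ultrafilter $\mathcal{U}$ on $\mathbb{N}$. A standard reindexing argument shows that $\alpha\thickapprox _{\mathrm{u}}\beta$ holds precisely when there is a unitary $v$ in the ultrapower $\prod\nolimits_{\mathcal{U}}\widetilde{C(G)\otimes A}$ with $\mathrm{Ad}(v)$ carrying the diagonal image of $\alpha$ onto the diagonal image of $\beta$, and that $\alpha\thickapprox _{\mathrm{u},G}\beta$ holds precisely when such a $v$ can be chosen with each component in the fixed point algebra of $\Delta\otimes\mathrm{id}_{A}$ on $\widetilde{C(G)\otimes A}$, i.e.\ of the form $1\otimes w$ with $w$ a unitary of $\widetilde{A}$. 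Next, by the Rokhlin property of $(A,\beta)$ (Definition~\ref{Definition:Rokhlin-property}) together with the semantic characterization of positively existential embeddings (Proposition~\ref{Proposition:existential-characterize}), there is a $G$-equivariant $*$-homomorphism $\theta\colon(C(G)\otimes A,\Delta\otimes\mathrm{id}_{A})\to\prod\nolimits_{\mathcal{U}}^{G}(A,\beta)$ with $\theta\circ\beta$ equal to the diagonal embedding $\iota_{\beta}\colon A\hookrightarrow\prod\nolimits_{\mathcal{U}}^{G}(A,\beta)$. Restricting $\theta$ to $C(G)\otimes 1$ produces a unital $G$-equivariant $*$-homomorphism $\pi\colon(C(G),\Delta)\to\prod\nolimits_{\mathcal{U}}^{G}(A,\beta)$ whose image commutes with $\theta(1\otimes A)$; this is the ``quantum Rokhlin tower''. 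Applying $\theta$ componentwise to $v$ and collapsing the resulting double ultrapower by a further reindexing, one transports the implementing relation into $\prod\nolimits_{\mathcal{U}}^{G}(A,\beta)$; one should simultaneously invoke the Rokhlin property of $(A,\alpha)$ to obtain the analogous retraction $\theta'\colon(C(G)\otimes A,\Delta\otimes\mathrm{id}_{A})\to\prod\nolimits_{\mathcal{U}}^{G}(A,\alpha)$ with $\theta'\circ\alpha=\iota_{\alpha}$.

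The heart of the argument is then to convert this (still generic) implementing unitary into one lying in the fixed point algebra. For this I would run an intertwining-and-averaging argument using the two retractions $\theta,\theta'$: the tower $\pi$ plays the role that the Rokhlin projections $\{e_{g}\}_{g\in G}$ play in Izumi's finite-group proof, where the implementing unitary is replaced by $\sum_{g}u_{g}e_{g}$, while the step that classically amounts to evaluating an implementing unitary-valued function at the identity element of $G$ — which requires applying the counit of $C(G)$, hence coamenability, and is the origin of the coexactness hypothesis in \cite{barlak_spatial_2017} — is replaced by a slicing of $\theta$-images inside the ultrapower, combined with the action conditions (coassociativity) for $\alpha$ and $\beta$ and the defining relations $\theta\circ\beta=\iota_{\beta}$, $\theta'\circ\alpha=\iota_{\alpha}$. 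The outcome is a unitary $w\in\widetilde{\prod\nolimits_{\mathcal{U}}^{G}(A,\beta)}$ such that $\mathrm{Ad}(1\otimes w)$ carries the diagonal image of $\alpha$ onto that of $\beta$; since $1\otimes w$ is then automatically $(\Delta\otimes\mathrm{id})$-fixed and $w$ admits a representing sequence of unitaries of $\widetilde{A}$, a final reindexing yields nets of fixed-point unitaries $1\otimes w_{i}\in\widetilde{C(G)\otimes A}$ witnessing $\alpha\thickapprox _{\mathrm{u},G}\beta$.

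I expect the main obstacle to be precisely this averaging step. One must verify that the quantum Rokhlin tower $\pi$ produced from the positively existential retraction has enough freeness and approximate-centrality to run the construction inside the model-theoretic ultrapower, and — the genuinely delicate point — that the averaged unitary $w$ both belongs to $\widetilde{\prod\nolimits_{\mathcal{U}}^{G}(A,\beta)}$ (so that $1\otimes w$ is $G$-fixed) and still implements the equivalence; it is here that the action conditions for $\alpha$ and $\beta$ enter essentially, and here that the classical appeal to the counit of $C(G)$ has to be circumvented. By contrast, the passages to and from the ultrapower, the separability of $\mathcal{L}_{G}^{\text{C*}}$, and the behaviour of the reduced product are routine given the hypotheses and the results of Sections~\ref{Section:axiomatization-compact}--\ref{Section:existential}.
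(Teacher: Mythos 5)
There is a genuine gap, and you have in fact located it yourself: the entire content of the hard implication is the step you describe as ``an intertwining-and-averaging argument using the two retractions'' and then flag as ``the main obstacle.'' No mechanism is given for converting a generic implementing unitary into one in the fixed point algebra via a quantum Rokhlin tower $\pi\colon C(G)\to\prod_{\mathcal{U}}^{G}(A,\beta)$. For a genuine (non-Kac, non-coamenable) compact quantum group there is no analogue of Izumi's sum $\sum_{g}u_{g}e_{g}$ or of evaluating a unitary-valued function at the identity; as you note, that classical step passes through the counit, and simply saying it ``is replaced by a slicing of $\theta$-images'' does not produce an argument. So the proposal, as written, reduces the theorem to an unproved claim.

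The paper takes a different and averaging-free route, which is worth internalizing because it shows why the model-theoretic formulation of the Rokhlin property suffices. First (Lemma \ref{Lemma:equivariant-modulo}) one produces from $\alpha\thickapprox_{\mathrm{u}}\beta$ a unitary $v$ in $\widetilde{C(G)\otimes A}$ such that $\mathrm{Ad}(v)\circ(1\boxtimes\phi)$ is approximately $(\Delta\boxtimes\beta)$-equivariant, with quantitative control on $\|[(1\boxtimes\phi)(x),v]\|$. The crucial observation (Lemma \ref{Lemma:equivariant-modulo-2}) is that these are \emph{positive quantifier-free} $\mathcal{L}_{G}^{\text{C*,1}}$-conditions with parameters from $\tilde{A}$, so the Rokhlin property --- i.e.\ positive existentiality of $1\boxtimes\mathrm{id}\colon(\tilde{A},\beta)\to(C(G)\boxtimes\tilde{A},\Delta\boxtimes\beta)$ --- lets one pull the witness $v$ back to a unitary in $\tilde{A}$ itself; no tower is ever constructed and nothing is averaged. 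A one-sided intertwining (Lemma \ref{Lemma:equivariant-modulo4}) then replaces $\phi$ by an exactly equivariant $\psi$ with $\psi\thickapprox_{\mathrm{u}}\phi$; a separate uniqueness step (Lemma \ref{Lemma:equivariant-modulo-4} and Proposition \ref{Proposition:equivariant-modulo}) shows that for equivariant maps into a Rokhlin algebra $\thickapprox_{\mathrm{u}}$ upgrades to $\thickapprox_{\mathrm{u},G}$; and a standard two-sided Elliott intertwining, run as in the proof of Theorem 5.10 of \cite{barlak_spatial_2017}, combines the two. If you want to salvage your ultrapower framing, the fix is to abandon the tower-and-average picture and instead use the syntactic side of Proposition \ref{Proposition:existential-characterize}: the Rokhlin property is used only to transfer approximate solutions of positive quantifier-free conditions from $C(G)\otimes A$ down to $A$, which is exactly what replaces both the counit and coexactness.
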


One can deduce from Theorem \ref{Theorem:rigidity} the following corollary,
similarly as Proposition \cite[Proposition 6.4]{barlak_spatial_2017} is
deduced from \cite[Theorem 5.10]{barlak_spatial_2017}.

\begin{corollary}
\label{Corollary:Rokhlin-ssa}Let $G$ be a second countable compact quantum
group, and $D$ be a strongly self-absorbing C*-algebra. Then there exists at
most one conjugacy class of $G$-actions on $D$ with the Rokhlin property.
\end{corollary}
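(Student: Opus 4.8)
The plan is to deduce the statement from Theorem~\ref{Theorem:rigidity} along the lines by which \cite[Proposition~6.4]{barlak_spatial_2017} is deduced from \cite[Theorem~5.10]{barlak_spatial_2017}. Recall that a strongly self-absorbing C*-algebra is by definition separable and unital, and satisfies $D\cong D\otimes D$. Let $\alpha,\beta$ be two $G$-actions on $D$, both with the Rokhlin property; the goal is to produce a $*$-automorphism $\theta$ of $D$ with $(\mathrm{id}_{C(G)}\otimes\theta)\circ\alpha=\beta\circ\theta$, that is, a conjugacy from $\alpha$ to $\beta$.

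The first step is non-equivariant. Regarded as $*$-homomorphisms $D\to C(G)\otimes D$, both $\alpha$ and $\beta$ are unital, since $D$ is unital and the actions are nondegenerate. Because $G$ is second countable, $C(G)$ is separable, so $C(G)\otimes D$ is separable; and because $D\otimes D\cong D$ we have $C(G)\otimes D\cong (C(G)\otimes D)\otimes D$, so that $C(G)\otimes D$ is $D$-stable. By the Toms--Winter rigidity for strongly self-absorbing C*-algebras---in the form that any two unital $*$-homomorphisms from $D$ into a separable $D$-absorbing C*-algebra are approximately unitarily equivalent---we conclude that $\alpha\thickapprox_{\mathrm{u}}\beta$.

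The second step makes the equivalence equivariant. Applying Theorem~\ref{Theorem:rigidity} with underlying separable C*-algebra $D$ and the two Rokhlin $G$-actions $\alpha$ and $\beta$, the relation $\alpha\thickapprox_{\mathrm{u}}\beta$ upgrades to $\alpha\thickapprox_{\mathrm{u},G}\beta$. Unwinding Definition~\ref{Definition:appr-eq}---and using separability of $D$ together with second countability of $G$ to replace nets by sequences---this provides a sequence of unitaries in the fixed point algebra of $(C(G)\otimes D,\Delta\otimes\mathrm{id}_D)$, which is canonically $1\otimes D$, along which $\beta$ is a point-norm limit of perturbations of $\alpha$ by these (fixed) unitaries. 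The third and final step feeds this data into the standard two-sided Elliott approximate intertwining argument---exactly as in the deduction of \cite[Proposition~6.4]{barlak_spatial_2017} from \cite[Theorem~5.10]{barlak_spatial_2017}: fixing an increasing sequence of finite subsets of $D$ with dense union and a summable sequence of tolerances, one chooses the correcting unitaries stage by stage, telescopes them into a point-norm convergent sequence of inner automorphisms of $D$, and verifies that the limit $\theta\in\mathrm{Aut}(D)$ is invertible and satisfies $(\mathrm{id}_{C(G)}\otimes\theta)\circ\alpha=\beta\circ\theta$. Thus $\alpha$ and $\beta$ are conjugate, proving the corollary.

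The routine but delicate point is the last step. The bare relation $\alpha\thickapprox_{\mathrm{u},G}\beta$ only says that $\beta$ is \emph{approximated} by unitary perturbations of $\alpha$; turning this into a single automorphism of $D$ that intertwines the two actions forces the construction to be two-sided, alternating between approximating $\beta$ from $\alpha$ and $\alpha$ from $\beta$, while keeping careful track of the cancellations among the correcting unitaries so that their partial products converge and the limit is a genuine equivariant isomorphism. I expect this bookkeeping to be the main obstacle; however, it introduces nothing beyond Theorem~\ref{Theorem:rigidity} and the classical rigidity of strongly self-absorbing C*-algebras, all of the compact-quantum-group content having already been absorbed into Theorem~\ref{Theorem:rigidity}.
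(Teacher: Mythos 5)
Your proposal is correct and follows exactly the deduction the paper has in mind: classical rigidity of strongly self-absorbing algebras gives $\alpha\thickapprox_{\mathrm{u}}\beta$ as unital $*$-homomorphisms into the separable $D$-absorbing algebra $C(G)\otimes D$, Theorem \ref{Theorem:rigidity} upgrades this to $\alpha\thickapprox_{\mathrm{u},G}\beta$ with unitaries in the fixed point algebra $1\otimes D$, and the two-sided Elliott intertwining produces the conjugating automorphism. This is precisely the route the paper indicates by reference to the deduction of \cite[Proposition 6.4]{barlak_spatial_2017} from \cite[Theorem 5.10]{barlak_spatial_2017}, so no further comment is needed.
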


The rest of this subsection is dedicated to the proof of Theorem \ref%
{Theorem:rigidity}.

We fix separable $G$-C*-algebras $( A,\alpha ) $ and $( B,\beta ) $, and
homomorphisms $\phi ,\phi _{1},\phi _{2}\colon (A,\alpha)\rightarrow
(B,\beta)$. We will use tacitly the fact that the unitary group of a unital
C*-algebras is positively quantifier-free $\mathcal{L}^{\text{C*,1}}$%
-definable with respect to the class of unital C*-algebras. Indeed, it is
the zeroset of the stable positive quantifier-free $\mathcal{L}^{\text{C*,1}%
} $-formula $\max \left\{ \| xx^{\ast }-1\| ,\| x^{\ast }x-1\| \right\} $.

\begin{lemma}
\label{Lemma:equivariant-modulo-2}Suppose that $\beta \circ \phi
\thickapprox _{\mathrm{u}}( \mathrm{id}\otimes \phi ) \circ \alpha $, and
that $\beta $ has the Rokhlin property. Then for every finite subset $F$ of $%
A$ and every $\varepsilon >0$, there exists a unitary $v$ in the unitization
of $C( G) \otimes B$ such that%
\begin{equation*}
\sup_{x\in F}\| ( \beta \circ \mathrm{\mathrm{Ad}}( v) \circ \phi -( \mathrm{%
id}\otimes ( \mathrm{\mathrm{Ad}}( v) \circ \phi ) ) \circ \alpha ) ( x) \|
<\varepsilon
\end{equation*}%
and%
\begin{equation*}
\sup_{x\in F}( \| [ \phi ( x) ,v] \| -\| ( \beta \circ \phi -( \mathrm{id}%
\otimes \phi ) \circ \alpha ) ( x) \| ) <\varepsilon \text{.}
\end{equation*}
\end{lemma}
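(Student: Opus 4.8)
The plan is to deduce the statement from Lemma~\ref{Lemma:equivariant-modulo}, using the Rokhlin property of $\beta $ to ``deflate'' the correcting unitary it produces from the minimal unitization of $C(G)\otimes B$ down to $\tilde B$.

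First I would fix $\varepsilon _{0}\in(0,\varepsilon )$, to be specified at the end, and apply Lemma~\ref{Lemma:equivariant-modulo} to $F$ and $\varepsilon _{0}$. This produces a unitary $v$ in the minimal unitization of $C(G)\otimes B$ such that $\mathrm{Ad}(v)\circ (1\boxtimes \phi )\colon (A,\alpha )\to (C(G)\otimes B,\Delta \boxtimes \beta )$ is equivariant to within $\varepsilon _{0}$ on $F$ and such that $\sup_{x\in F}\bigl(\|[(1\boxtimes \phi )(x),v]\|-\|(\beta \circ \phi -(\mathrm{id}\otimes \phi )\circ \alpha )(x)\|\bigr)<\varepsilon _{0}$.

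Next I would push $v$ forward through a $G$-equivariant $*$-homomorphism supplied by the Rokhlin property. By Definition~\ref{Definition:Rokhlin-property} the inclusion $\beta \colon (B,\beta )\hookrightarrow (C(G)\otimes B,\Delta \otimes \mathrm{id}_{B})$ is positively $\mathcal{L}_{G}^{\text{C*}}$-existential, and by the discussion following that definition (cf.\ \cite[Lemma~1.24]{barlak_spatial_2017} and Proposition~\ref{Proposition:existential-characterize}) this amounts to the existence of a unital $G$-equivariant $*$-homomorphism $\kappa $ from $(C(G),\Delta )$ into a suitable $\beta $-equivariant central sequence algebra of $B$; assembling $\kappa $ with the diagonal embedding one obtains a $G$-equivariant $*$-homomorphism
\[
\Psi \colon (C(G)\otimes B,\Delta \boxtimes \beta )\longrightarrow \textstyle\prod_{\mathcal{U}}^{G}(B,\beta )
\]
into a reduced power of $(B,\beta )$, for $\mathcal{U}$ a countably incomplete ultrafilter, with $\Psi (1\otimes b)=[b]_{\mathcal{U}}$ for every $b\in B$, so that $\Psi \circ (1\boxtimes \phi )=\Delta _{B}\circ \phi $. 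Extending $\Psi $ to minimal unitizations and setting $\widetilde{v}:=\Psi (v)$, a unitary in the unitization of $\prod_{\mathcal{U}}^{G}B$, I would apply $\Psi $ and $\mathrm{id}_{C(G)}\otimes \Psi $ to the two estimates above; since $\Psi $ is an equivariant, contractive $*$-homomorphism with $\Psi \circ (1\boxtimes \phi )=\Delta _{B}\circ \phi $, this yields that $\mathrm{Ad}(\widetilde{v})\circ \Delta _{B}\circ \phi $ is equivariant to within $\varepsilon _{0}$ on $F$ and that $\sup_{x\in F}\bigl(\|[\Delta _{B}(\phi (x)),\widetilde{v}]\|-\|(\beta \circ \phi -(\mathrm{id}\otimes \phi )\circ \alpha )(x)\|\bigr)<\varepsilon _{0}$. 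Finally, since the unitary group of a unital C*-algebra is the zero set of a stable quantifier-free $\mathcal{L}^{\text{C*,1}}$-formula, $\widetilde{v}$ lifts to a $\mathcal{U}$-sequence $(v_{i})$ of unitaries of $\tilde B$; by {\L}o\'{s}' theorem (Corollary~\ref{Corollary:Los}), for $\mathcal{U}$-almost every $i$ the unitary $v_{i}$ satisfies both inequalities in the statement once $\varepsilon _{0}$ has been chosen small enough in terms of $\varepsilon $, and any such $v_{i}$ is the required unitary.

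The delicate point will be obtaining the $*$-homomorphism $\Psi $ out of $(C(G)\otimes B,\Delta \boxtimes \beta )$ with its \emph{diagonal} coaction (rather than $\Delta \otimes \mathrm{id}_{B}$), equivalently, the reformulation of the Rokhlin property via an equivariant central sequence algebra adapted to the noncommutativity of $C(G)$. In the classical case this is immediate: $\kappa $ is just the restriction to $C(G)\otimes 1$ of the sequentially split homomorphism $\sigma \colon (C(G)\otimes B,\Delta \otimes \mathrm{id}_{B})\to \prod_{\mathcal{U}}^{G}B$ given by Proposition~\ref{Proposition:existential-characterize}, which lands in the ordinary relative commutant of $B$ because $C(G)\otimes 1$ then commutes with $\beta (B)$, and $\Psi (x\otimes b)=\kappa (x)\,[b]_{\mathcal{U}}$. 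In the quantum setting $C(G)\otimes 1$ no longer commutes with $\beta (B)$, so this step has to be adjusted—this is essentially the content of \cite[Lemma~1.24]{barlak_spatial_2017}. Once it is in place, the rest of the argument is a routine adaptation of the proof of \cite[Lemma~5.5]{barlak_spatial_2017}.
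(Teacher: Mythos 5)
Your proposal is correct and follows essentially the same route as the paper: both proofs start from Lemma \ref{Lemma:equivariant-modulo} and both hinge on the Rokhlin property making $1\boxtimes \mathrm{id}_{B}\colon (B,\beta )\to (C(G)\boxtimes B,\Delta \boxtimes \beta )$ positively $\mathcal{L}_{G}^{\text{C*}}$-existential (the ``delicate point'' you flag is handled identically in the paper, via the same appeal to \cite[Lemma 1.24]{barlak_spatial_2017}). The only difference is that the paper transfers the unitary syntactically---observing that the two estimates are positive quantifier-free $\mathcal{L}_{G}^{\text{C*,1}}$-conditions and invoking the definition of positive existentiality directly---whereas you unfold the same step semantically through the splitting morphism $\Psi$ into the reduced power, Łoś' theorem, and weak stability of unitaries; by Proposition \ref{Proposition:existential-characterize} these are equivalent formulations of one argument.
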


\begin{proof}
Fix a finite subset $F$ of $A$ and $\varepsilon >0$. By \cite[Lemma 5.4]%
{barlak_spatial_2017}, there exists a unitary $v$ in the unitization of $%
C(G)\otimes B$ such that%
\begin{equation}
\sup_{x\in F}\Vert ((\Delta \boxtimes \beta )\circ \mathrm{\mathrm{Ad}}%
(v)\circ (1\boxtimes \phi )-(\mathrm{id}\otimes (\mathrm{\mathrm{Ad}}%
(v)\circ (1\boxtimes \phi )))\circ \alpha )(x)\Vert <\varepsilon \text{\label%
{Equation:equivariant-modulo-1}}
\end{equation}%
and%
\begin{equation}
\sup_{x\in F}(\Vert \lbrack (1\boxtimes \phi )(x),v]\Vert -\Vert (\beta
\circ \phi -(\mathrm{id}\circ \phi )\circ \alpha )(x)\Vert )<\varepsilon 
\text{\label{Equation:equivariant-modulo-2}.}
\end{equation}%
Since $\beta $ has the Rokhlin property, the map 
\begin{equation*}
1\boxtimes \mathrm{id}_{B}\colon (B,\beta )\rightarrow (C(G)\boxtimes
B,\Delta \boxtimes \beta )
\end{equation*}%
is positively $\mathcal{L}_{G}^{\text{C*}}$-existential. Therefore its
unique unital extension 
\begin{equation*}
1\boxtimes \mathrm{id}_{B}\colon (\tilde{B},\beta )\rightarrow
(C(G)\boxtimes \tilde{B},\Delta \boxtimes \beta )
\end{equation*}%
is positively $\mathcal{L}_{G}^{\text{C*,1}}$-existential. Since the
conditions from Equation \ref{Equation:equivariant-modulo-1} and Equation %
\ref{Equation:equivariant-modulo-2} can be expressed by positive
quantifier-free $\mathcal{L}_{G}^{\text{C*,1}}$-formulas, we conclude that
there exists a unitary $v\in \tilde{B}$ as wanted.
\end{proof}

\begin{lemma}
\label{Lemma:equivariant-modulo4}Suppose that $\beta \circ \phi \thickapprox
_{\mathrm{u}}( \mathrm{id}\otimes \phi ) \circ \alpha $, and $\beta $ has
the Rokhlin property. Then there exists a $G$-equivariant *-homomorphism $%
\psi \colon ( A,\alpha ) \rightarrow ( B,\beta ) $ such that $\psi
\thickapprox _{\mathrm{u}}\phi $.
\end{lemma}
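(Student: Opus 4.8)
The plan is to run a one-sided approximate intertwining argument of Elliott type, feeding Lemma \ref{Lemma:equivariant-modulo-2} into the inductive step. Fix a dense sequence $(x_{k})_{k\geq 1}$ in $A$ and put $F_{n}=\{x_{1},\dots ,x_{n}\}$. I will construct inductively unitaries $v_{n}\in \tilde{B}$ and $*$-homomorphisms $\phi _{n}\colon A\to B$ with $\phi _{0}=\phi $ and $\phi _{n}=\mathrm{Ad}(v_{n})\circ \phi _{n-1}$, so that $\phi _{n}=\mathrm{Ad}(v_{n}v_{n-1}\cdots v_{1})\circ \phi $ stays unitarily equivalent to $\phi $, and so that both the equivariance defect $\delta _{n}(x):=\Vert (\beta \circ \phi _{n}-(\mathrm{id}\otimes \phi _{n})\circ \alpha )(x)\Vert $ and the increments $\Vert \phi _{n}(x)-\phi _{n-1}(x)\Vert $ become controlled by a summable sequence on each $F_{N}$.

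The key preliminary observation is that the hypothesis $\beta\circ\phi\thickapprox_{\mathrm{u}}(\mathrm{id}\otimes\phi)\circ\alpha$ is stable under inner perturbation. Since $\sigma \thickapprox _{\mathrm{u}}\mathrm{Ad}(u)\circ \sigma $ for any unitary $u$ (witnessed by the constant net) and $\thickapprox _{\mathrm{u}}$ is an equivalence relation, for any unitary $v\in \tilde{B}$ one has
\begin{align*}
\beta \circ (\mathrm{Ad}(v)\circ \phi )&=\mathrm{Ad}(\beta (v))\circ \beta \circ \phi \thickapprox _{\mathrm{u}}\beta \circ \phi \thickapprox _{\mathrm{u}}(\mathrm{id}\otimes \phi )\circ \alpha \\
&\thickapprox _{\mathrm{u}}\mathrm{Ad}(1\otimes v)\circ (\mathrm{id}\otimes \phi )\circ \alpha =(\mathrm{id}\otimes (\mathrm{Ad}(v)\circ \phi ))\circ \alpha ,
\end{align*}
where $\beta $ also denotes its extension to $\tilde{B}$. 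Hence every $\phi _{n}$ again satisfies the hypotheses of Lemma \ref{Lemma:equivariant-modulo-2}, so the induction proceeds: given $\phi _{n}$, apply Lemma \ref{Lemma:equivariant-modulo-2} with finite set $F_{n+1}$ and tolerance $2^{-(n+1)}$ to obtain a unitary $v_{n+1}\in \tilde{B}$ with $\delta _{n+1}(x)<2^{-(n+1)}$ and $\Vert [\phi _{n}(x),v_{n+1}]\Vert <\delta _{n}(x)+2^{-(n+1)}$ for all $x\in F_{n+1}$, and set $\phi _{n+1}=\mathrm{Ad}(v_{n+1})\circ \phi _{n}$.

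For the convergence, fix $x=x_{N}$. For every $n\geq N$ one has $x\in F_{n}$, hence $\delta _{n}(x)<2^{-n}$, and since $\Vert \phi _{n+1}(x)-\phi _{n}(x)\Vert =\Vert [v_{n+1},\phi _{n}(x)]\Vert <\delta _{n}(x)+2^{-(n+1)}<2^{-n+1}$, the sequence $(\phi _{n}(x))_{n}$ is Cauchy. As the $x_{k}$ are dense and each $\phi _{n}$ is contractive, the pointwise limit extends to a $*$-homomorphism $\psi \colon A\to B$. The unitaries $w_{n}=v_{n}v_{n-1}\cdots v_{1}\in \tilde{B}$ satisfy $\mathrm{Ad}(w_{n})\circ \phi =\phi _{n}\to \psi $ pointwise, so $\psi \thickapprox _{\mathrm{u}}\phi $. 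Finally $\delta _{n}(x)\to 0$ for every $x=x_{N}$; since $\beta $ is continuous and the $\phi _{n}$ are uniformly bounded (so $\mathrm{id}\otimes \phi _{n}\to \mathrm{id}\otimes \psi $ pointwise on $C(G)\otimes A$), passing to the limit yields $(\beta \circ \psi )(x)=((\mathrm{id}\otimes \psi )\circ \alpha )(x)$ on the dense set $(x_{k})$, hence on all of $A$, so $\psi $ is $G$-equivariant.

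This is largely bookkeeping; the one point requiring care is the interaction between the two estimates supplied by Lemma \ref{Lemma:equivariant-modulo-2}. The first drives $\delta _{n}$ to zero, while the second converts the defect \emph{at the previous stage} into a bound on the size of the next inner perturbation, so that the perturbations become summable and the limit exists — it is this feedback loop, rather than any single estimate, that makes the scheme converge. One must be attentive that the defect entering the increment estimate is $\delta _{n}$ and not $\delta _{n+1}$, and that the chain of approximate unitary equivalences above is used only to certify that the hypothesis persists, not to manufacture the perturbing unitaries, which come from the Rokhlin property of $\beta$ via Lemma \ref{Lemma:equivariant-modulo-2}.
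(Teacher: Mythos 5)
Your proof is correct and is essentially the same argument the paper uses: the paper's proof simply instructs the reader to run the one-sided approximate intertwining of \cite[Proposition 5.6]{barlak_spatial_2017} with Lemma \ref{Lemma:equivariant-modulo-2} substituted for \cite[Lemma 5.5]{barlak_spatial_2017}, and what you have written is exactly that intertwining carried out in detail. Your explicit verification that the hypothesis $\beta \circ \phi \thickapprox _{\mathrm{u}}( \mathrm{id}\otimes \phi ) \circ \alpha $ persists under inner perturbation, which is what allows Lemma \ref{Lemma:equivariant-modulo-2} to be reapplied at each stage, is a point the cited proof also relies on.
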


\begin{proof}
One replaces \cite[Lemma 5.5]{barlak_spatial_2017} with Lemma \ref%
{Lemma:equivariant-modulo-2} in the proof \cite[Proposition 5.6]%
{barlak_spatial_2017}.
\end{proof}

We prove the following lemma directly using the definition of positively $%
\mathcal{L}_{G}^{\text{C*}}$-existential $G$-equivariant *-homomorphism.

\begin{lemma}
\label{Lemma:equivariant-modulo-4}Suppose that $( C,\gamma ) $ is a $G$%
-C*-algebra, and $\psi \colon ( B,\beta ) \rightarrow ( C,\gamma ) $ is
positively $\mathcal{L}_{G}^{\text{C*}}$-existential $G $-equivariant
*-homomorphism. If $\psi \circ \phi _{1}\thickapprox _{\mathrm{u},G}\psi
\circ \phi _{2}$ then $\phi _{1}\thickapprox _{\mathrm{u}}\phi _{2} $.
\end{lemma}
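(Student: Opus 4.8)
The plan is to deduce the statement directly from the semantic characterization of positively existential morphisms (Proposition~\ref{Proposition:existential-characterize}), together with some routine bookkeeping between an algebra and its minimal unitization. First note that the hypothesis $\psi\circ\phi_1\thickapprox_{\mathrm u,G}\psi\circ\phi_2$ implies the weaker statement $\psi\circ\phi_1\thickapprox_{\mathrm u}\psi\circ\phi_2$, since a net of unitaries in the fixed point algebra $\widetilde{C}^{\gamma}$ is in particular a net of unitaries in $\widetilde{C}$. So fix a sequence $(v_n)$ of unitaries in $\widetilde{C}$ (a sequence rather than a net suffices because $A$ is separable) with $\mathrm{Ad}(v_n)\circ\psi\circ\phi_1\to\psi\circ\phi_2$ in the point--norm topology. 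It then suffices to prove that for every finite $F\subseteq A$ and every $\varepsilon>0$ there is a unitary $w\in\widetilde{B}$ with $\sup_{a\in F}\|w\phi_1(a)w^{\ast}-\phi_2(a)\|<\varepsilon$; exhausting $A$ by finite subsets of a dense sequence then yields a sequence of such unitaries witnessing $\phi_1\thickapprox_{\mathrm u}\phi_2$.

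Since $\psi$ is positively $\mathcal{L}_{G}^{\text{C*}}$-existential, Proposition~\ref{Proposition:existential-characterize} provides a countably incomplete ultrafilter $\mathcal{U}$ and a $G$-equivariant $\ast$-homomorphism $\rho\colon (C,\gamma)\to\prod_{\mathcal{U}}^{G}(B,\beta)$ with $\rho\circ\psi=\Delta_{B}$, the diagonal embedding. Composing with the canonical inclusion $\prod_{\mathcal{U}}^{G}B\hookrightarrow\prod_{\mathcal{U}}B$ into the ordinary C*-algebra ultrapower, we may regard $\rho$ as a $\ast$-homomorphism $C\to\prod_{\mathcal{U}}B$ still satisfying $\rho\circ\psi=\Delta_{B}$. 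I would then extend $\rho$ to a unital $\ast$-homomorphism $\widetilde{\rho}\colon\widetilde{C}\to\prod_{\mathcal{U}}\widetilde{B}$ by $\widetilde{\rho}(c+\mu 1)=\rho(c)+\mu 1$, using only that $\prod_{\mathcal{U}}\widetilde{B}$ is unital and contains $\prod_{\mathcal{U}}B$. Applying $\widetilde{\rho}$ to the $v_n$ produces unitaries $\widetilde{\rho}(v_n)\in\prod_{\mathcal{U}}\widetilde{B}$, and since $\widetilde{\rho}$ is a contractive $\ast$-homomorphism with $\widetilde{\rho}\circ\psi=\Delta_{B}$,
\[
\mathrm{Ad}\big(\widetilde{\rho}(v_n)\big)\circ\Delta_{B}\circ\phi_1=\widetilde{\rho}\circ\mathrm{Ad}(v_n)\circ\psi\circ\phi_1\;\longrightarrow\;\widetilde{\rho}\circ\psi\circ\phi_2=\Delta_{B}\circ\phi_2
\]
in the point--norm topology.

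To finish, fix $F\subseteq A$ finite and $\varepsilon>0$, and choose $n$ with $\sup_{a\in F}\|\widetilde{\rho}(v_n)\,\Delta_{B}(\phi_1(a))\,\widetilde{\rho}(v_n)^{\ast}-\Delta_{B}(\phi_2(a))\|<\varepsilon$. Represent the unitary $\widetilde{\rho}(v_n)$ of $\prod_{\mathcal{U}}\widetilde{B}$ by a bounded family $(w^{(\iota)})_{\iota}$ in $\widetilde{B}$, which we may take to consist of unitaries because the unitary group of a unital C*-algebra is the zeroset of a stable positive quantifier-free $\mathcal{L}^{\text{C*,1}}$-formula, as recalled just before Lemma~\ref{Lemma:equivariant-modulo}. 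By \L o\'{s}'s theorem, $\sup_{a\in F}\|w^{(\iota)}\phi_1(a)(w^{(\iota)})^{\ast}-\phi_2(a)\|<\varepsilon$ for $\mathcal{U}$-almost all $\iota$, and in particular such a unitary $w=w^{(\iota)}\in\widetilde{B}$ exists, as required. There is no essential difficulty here beyond two points: the unitization bookkeeping in the second paragraph (the approximating unitaries live in $\widetilde{C}$, while $\rho$ is a priori only defined on $C$), and the final extraction of a single index from the ultrapower; everything else is a formal consequence of Proposition~\ref{Proposition:existential-characterize}. One could moreover run the same argument entirely inside $\prod_{\mathcal{U}}^{G}\widetilde{B}$, using that the fixed point algebra of this reduced product equals $\prod_{\mathcal{U}}(\widetilde{B}^{\beta})$, to obtain the formally stronger conclusion $\phi_1\thickapprox_{\mathrm u,G}\phi_2$.
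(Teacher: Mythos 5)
Your proof is correct and is essentially the paper's argument: both reduce the claim to a single finite set $F\subseteq A$ and tolerance $\varepsilon>0$, and both rest on exactly one fact, namely that the unital extension $\psi\colon(\tilde{B},\beta)\to(\tilde{C},\gamma)$ is positively $\mathcal{L}_{G}^{\text{C*,1}}$-existential, which is used to transfer the conjugating unitary from $\tilde{C}$ to $\tilde{B}$. The only difference is packaging: the paper applies the syntactic definition of positive existentiality directly to the positive quantifier-free condition ``there is a unitary $v$ with $\sup_{x\in F}\|v\,\psi(\phi_1(x))\,v^{\ast}-\psi(\phi_2(x))\|<\varepsilon$'' (which also keeps the resulting unitary inside $\tilde{B}^{\beta}$, giving the $G$-equivariant conclusion for free), whereas you unwind the same property through the ultrapower characterization of Proposition~\ref{Proposition:existential-characterize}, \L o\'{s}'s theorem, and the lifting of unitaries along representative sequences --- all legitimate, and your closing remark about running the argument inside $\prod_{\mathcal{U}}^{G}\tilde{B}$ correctly recovers the stronger $\thickapprox_{\mathrm{u},G}$ statement that the paper's version yields directly.
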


\begin{proof}
Observe that the unital extension $\psi \colon (\tilde{B},\beta )\rightarrow
(\tilde{C},\gamma )$ is a positively $\mathcal{L}_{G}^{\text{C*,1}}$%
-existential $G$-equivariant *-homomorphism. Fix $\varepsilon >0$ and a
finite subset $F$ of $A$. Since by assumption $\psi \circ \phi
_{1}\thickapprox _{\mathrm{u},G}\psi \circ \phi _{2}$, there exists $v\in 
\tilde{C}^{\gamma }$ such that%
\begin{equation*}
\sup_{x\in F}\| v(\psi \circ \phi _{1})(x)v^{\ast }-(\psi \circ \phi
_{2})(x)\| <\varepsilon \text{.}
\end{equation*}%
Since $\psi $ is a positively $\mathcal{L}_{G}^{\text{C*}}$-existential $G$%
-equivariant *-homomorphism, there exists a unitary $u\in \tilde{B}^{\beta }$
such that $\sup_{x\in F}\| v{}\phi _{1}(x){}v^{\ast }-\phi _{2}(x)\|
<\varepsilon$. This concludes the proof.
\end{proof}

The following proposition in the case when $G$ is coexact is \cite[Corollary
5.9]{barlak_spatial_2017}. The proof is analogous, where one replaces \cite[%
Proposition 5.8]{barlak_spatial_2017} with Lemma \ref%
{Lemma:equivariant-modulo-4}.

\begin{proposition}
\label{Proposition:equivariant-modulo}Suppose that $G$ is a second countable
compact quantum group, and $( A,\alpha ) $ and $( B,\beta ) $ are separable $%
G$-C*-algebras. Let $\phi _{1},\phi _{2}\colon ( A,\alpha ) \rightarrow (
B,\beta ) $ be $G$-equivariant *-homomorphisms. If $\phi _{1}\thickapprox _{%
\mathrm{u}}\phi _{2}$ and $( B,\beta ) $ has the Rokhlin property, then $%
\phi _{1}\thickapprox _{\mathrm{u},G}\phi _{2}$.
\end{proposition}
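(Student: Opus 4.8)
The plan is to compose $\phi_{1}$ and $\phi_{2}$ with the Rokhlin map of $(B,\beta)$ and then descend, exploiting that this map is simultaneously positively existential and $G$-equivariant. By Definition \ref{Definition:Rokhlin-property}, the assumption that $(B,\beta)$ has the Rokhlin property says precisely that $\beta\colon (B,\beta)\to (C(G)\otimes B,\Delta\otimes\mathrm{id}_{B})$ is positively $\mathcal{L}_{G}^{\text{C*}}$-existential. I would first record two elementary facts about this map: it is $G$-equivariant when regarded as a map into $(C(G)\otimes B,\Delta\otimes\mathrm{id}_{B})$, since the required identity $(\mathrm{id}\otimes\beta)\circ\beta=(\Delta\otimes\mathrm{id}_{B})\circ\beta$ is exactly the coassociativity condition~(1) of Definition \ref{Definition:compact-action}; and, since $(\Delta\otimes\mathrm{id}_{B})(1\otimes b)=1\otimes(1\otimes b)$ for every $b\in B$, the C*-subalgebra $1\otimes B$ is contained in the fixed point algebra of $(C(G)\otimes B,\Delta\otimes\mathrm{id}_{B})$. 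Hence for any unitary $u$ in $\tilde{B}$, the element $1\otimes u$ is a unitary in the minimal unitization of that fixed point algebra.

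The key step is to prove that $\beta\circ\phi_{1}\thickapprox_{\mathrm{u},G}\beta\circ\phi_{2}$. Since $\phi_{1}$ and $\phi_{2}$ are $G$-equivariant, we have $\beta\circ\phi_{j}=(\mathrm{id}\otimes\phi_{j})\circ\alpha$ for $j=1,2$. As $A$ and $B$ are separable and $\phi_{1}\thickapprox_{\mathrm{u}}\phi_{2}$, fix a sequence $(u_{n})_{n}$ of unitaries in $\tilde{B}$ with $u_{n}\phi_{1}(a)u_{n}^{\ast}\to\phi_{2}(a)$ for every $a\in A$. For each $a\in A$,
\[
(1\otimes u_{n})\,(\beta\circ\phi_{1})(a)\,(1\otimes u_{n})^{\ast}=(\mathrm{id}\otimes(\mathrm{Ad}(u_{n})\circ\phi_{1}))(\alpha(a)),
\]
and a routine estimate---approximating $\alpha(a)$ in norm by a finite sum $\sum_{k}c_{k}\otimes b_{k}\in C(G)\odot A$ and using that $\mathrm{id}\otimes\theta$ is contractive for every $\ast$-homomorphism $\theta$---shows that the right-hand side converges to $(\mathrm{id}\otimes\phi_{2})(\alpha(a))=(\beta\circ\phi_{2})(a)$. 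By the first paragraph each $1\otimes u_{n}$ is a unitary in the minimal unitization of the fixed point algebra of $(C(G)\otimes B,\Delta\otimes\mathrm{id}_{B})$, so this is precisely the statement that $\beta\circ\phi_{1}\thickapprox_{\mathrm{u},G}\beta\circ\phi_{2}$.

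Finally, I would apply Lemma \ref{Lemma:equivariant-modulo-4} with $\psi=\beta$ and $(C,\gamma)=(C(G)\otimes B,\Delta\otimes\mathrm{id}_{B})$: the map $\psi$ is a positively $\mathcal{L}_{G}^{\text{C*}}$-existential $G$-equivariant $\ast$-homomorphism, and we have just shown that $\psi\circ\phi_{1}\thickapprox_{\mathrm{u},G}\psi\circ\phi_{2}$, so the lemma yields $\phi_{1}\thickapprox_{\mathrm{u},G}\phi_{2}$, as desired. I expect the only real obstacle to be the key step: the actual content is the observation that precomposition with the Rokhlin map turns ordinary approximate unitary equivalence into \emph{equivariant} approximate unitary equivalence, because the conjugating unitaries $1\otimes u_{n}$ genuinely lie in the fixed point algebra of the translation action on the $C(G)$-leg; the remainder is a direct combination of the definitions with the theory of positively existential embeddings developed earlier. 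This mirrors, in the present generality, the deduction of \cite[Corollary 5.9]{barlak_spatial_2017}, with Lemma \ref{Lemma:equivariant-modulo-4} in the role of \cite[Proposition 5.8]{barlak_spatial_2017}.
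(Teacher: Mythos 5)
Your proof is correct and takes essentially the same route as the paper, which transports the argument of \cite[Corollary 5.9]{barlak_spatial_2017}: conjugating by the unitaries $1\otimes u_{n}$, which lie in the fixed point algebra of $(C(G)\otimes B,\Delta\otimes\mathrm{id}_{B})$, upgrades $\beta\circ\phi_{1}\thickapprox_{\mathrm{u}}\beta\circ\phi_{2}$ to $\thickapprox_{\mathrm{u},G}$, and one then descends along the positively $\mathcal{L}_{G}^{\text{C*}}$-existential map $\beta$ via Lemma \ref{Lemma:equivariant-modulo-4}. The only caveat is that the stated conclusion of that lemma reads $\phi_{1}\thickapprox_{\mathrm{u}}\phi_{2}$, but its proof produces unitaries in $\tilde{B}^{\beta}$ and hence yields the $G$-equivariant equivalence you need, which is clearly the intended statement.
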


Finally, using Proposition \ref{Proposition:equivariant-modulo} instead of 
\cite[Corollary 5.9]{barlak_spatial_2017}, and Lemma \ref%
{Lemma:equivariant-modulo4} instead of \cite[Corollary 5.6]%
{barlak_spatial_2017}, one can prove Theorem \ref{Theorem:rigidity}
reasoning as in the proof of \cite[Theorem 5.10]{barlak_spatial_2017}.

As fruitful as the Rokhlin property is, it is also very rare. In fact, there
are many very interesting C*-algebras that do not admit any Rokhlin action
of a nontrivial compact quantum group. For example, we have the following
result. For $\theta \in \mathbb{R}\setminus \mathbb{Q}$, we denote by $%
A_{\theta }$ the irrational rotation algebra. Also, we write $\mathcal{O}%
_{\infty }$ for the Cuntz algebra on infinitely many generators.

\begin{proposition}
Let $G$ be a nontrivial finite quantum group and let $\theta\in\mathbb{R}%
\setminus\mathbb{Q}$. There do not exist any actions of $G$ on either $%
A_{\theta }$ or $\mathcal{O}_{\infty }$ with the Rokhlin property.
\end{proposition}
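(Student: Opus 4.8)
The plan is to argue by contradiction: suppose $(A,\alpha)$ is a $G$-C*-algebra with the Rokhlin property, where $G$ is a nontrivial finite quantum group and $A\in\{A_\theta,\mathcal O_\infty\}$. Write $n:=\dim_{\mathbb C}C(G)$; nontriviality of $G$ means exactly that $n\ge 2$, and then $\check G$ is again a finite quantum group with $\dim_{\mathbb C}c_0(\check G)=n$, so that $\mathbb K_G\cong\mathbb K_{\check G}\cong M_n(\mathbb C)$.

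The main point is to show that the Rokhlin property of such an action forces $A\cong M_n(A)$. Granting this, one concludes as follows. Under the canonical identification $K_0(M_n(A))\cong K_0(A)$ the class $[1_{M_n(A)}]$ corresponds to $n[1_A]$, so $A\cong M_n(A)$ would require a group automorphism of $K_0(A)$ matching $[1_A]$ with $n[1_A]$. But $[1_{\mathcal O_\infty}]$ generates $K_0(\mathcal O_\infty)\cong\mathbb Z$, and $[1_{A_\theta}]$ belongs to a $\mathbb Z$-basis of $K_0(A_\theta)\cong\mathbb Z^2$ (for instance $\{[1_{A_\theta}],[p_\theta]\}$ with $p_\theta$ a Rieffel projection of trace $\theta$); in neither case is $[1_A]$ divisible by $n\ge 2$ in $K_0(A)$, so no such automorphism exists, and we reach a contradiction.

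To prove that $A\cong M_n(A)$, I would use that the Rokhlin property provides, in the appropriate $G$-equivariant central sequence algebra built from $\prod_{\mathcal U}^{G}A$ (cf.\ the semantic description of positively existential morphisms, and \cite{barlak_spatial_2017}), a unital $G$-equivariant $*$-homomorphism from $C(G)$, equipped with the translation action, into that central sequence algebra. Since $G$ is finite, the left translation action of $G$ on $C(G)$ has crossed product $G\ltimes C(G)\cong\mathbb K_G\cong M_n(\mathbb C)$, and transporting the equivariant central copy of $C(G)$ through this identification yields a unital $*$-homomorphism $M_n(\mathbb C)\to F_\infty(A)$ into the ordinary central sequence algebra of $A$; a standard McDuff-type absorption argument for separable C*-algebras then gives $A\cong M_n(A)$. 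This is the analogue for finite quantum groups of the classical fact that a Rokhlin action of a finite group of order $n$ forces $A$ to be $M_n$-stable in the strong form $A\cong M_n(A)$. The step I expect to be the main obstacle is precisely this one: the Rokhlin data directly furnishes only a unital equivariant copy of $C(G)$ (hence only orthogonal central projections summing to $1$), and some care is needed to promote this to a central copy of $\mathbb K_G=M_n(\mathbb C)$.

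As an alternative to the McDuff-type argument, one can instead invoke Theorem \ref{Theorem:preservation-rokhlin} — which, since $\mathbb K_{\check G}\cong M_n(\mathbb C)$, provides a unital positively existential embedding $G\ltimes_\alpha A\hookrightarrow M_n(A)$ — together with the isomorphism $G\ltimes_\alpha A\cong M_n(A^\alpha)$ valid for Rokhlin actions of finite quantum groups, and with the fact (provable as in the proof of Theorem \ref{Theorem:k-theory}) that positively existential embeddings of separable C*-algebras are injective on $K_0$; tracking the class of the unit through these maps again forces $[1_A]$ to be divisible by $n$ in $K_0(A)$, which is impossible for $A\in\{A_\theta,\mathcal O_\infty\}$.
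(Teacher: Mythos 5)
There is a genuine gap, and it sits exactly where you flagged ``the main obstacle'': the Rokhlin property does \emph{not} force $A\cong M_n(A)$, and no amount of care will promote the equivariant central copy of $C(G)$ to a central copy of $M_n(\mathbb{C})$. What the Rokhlin property gives (via Proposition \ref{Proposition:existential-characterize}) is a unital $G$-equivariant embedding of $(C(G),\Delta)$ into the relative commutant of $A$ in $\prod_{\mathcal{U}}^{G}A$. To convert this into a unital copy of $G\ltimes C(G)\cong M_n(\mathbb{C})$ inside the \emph{non-equivariant} central sequence algebra you would need the induced action on that copy of $C(G)$ to be implemented by a unitary representation living in the central sequence algebra, i.e.\ a covariant representation there; that is an approximate-representability-type condition, which is \emph{dual} to the Rokhlin property (cf.\ Theorem \ref{Theorem:duality}) and not implied by it. Indeed your intermediate claim is already false classically: the flip action of $\mathbb{Z}/2$ on $\mathcal{O}_{\infty}\oplus \mathcal{O}_{\infty}$ has the Rokhlin property (the two summand units are exact Rokhlin projections), yet $\mathcal{O}_{\infty}\oplus \mathcal{O}_{\infty}\not\cong M_2(\mathcal{O}_{\infty}\oplus \mathcal{O}_{\infty})$ because $M_2(\mathcal{O}_{\infty})\not\cong \mathcal{O}_{\infty}$; simple Kirchberg examples with $K_0=\mathbb{Z}^2$, $[1]=(1,1)$ and the regular $\mathbb{Z}[\mathbb{Z}/2]$-module structure behave the same way. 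A unital central copy of $M_n(\mathbb{C})$ would even force $A\cong A\otimes M_{n^{\infty}}$ by the McDuff argument, which these examples rule out decisively. Since your derivation of $A\cong M_n(A)$ uses nothing specific to $A_{\theta}$ or $\mathcal{O}_{\infty}$, it cannot be repaired as stated. The alternative route has the same problem in disguise: the isomorphism $G\ltimes_{\alpha}A\cong M_n(A^{\alpha})$ is asserted without proof, and even granting it, tracking $[1]$ through the unital embeddings only yields $n[1_A]=n\,y$ in $K_0(A)$ for some $y$, not $[1_A]=n\,y$; torsion-freeness then gives $[1_A]=y$, which is no contradiction.

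The correct mechanism extracts divisibility in a weaker, algebra-dependent form. Classically one obtains $[1_A]=\sum_{g\in G}K_0(\alpha_g)(x)$ from the Rokhlin tower, and only \emph{then} uses that automorphisms of $A_{\theta}$ and of $\mathcal{O}_{\infty}$ act trivially on $K_0$ (uniqueness of the trace, resp.\ $[1]$ generating $\mathbb{Z}$) to conclude $[1_A]=|G|\cdot x$; it is this last step, absent from your argument, through which the specific algebras enter. The paper's quantum argument instead uses positive existentiality of $\alpha\colon A\to C(G)\otimes A$ directly to pull back a family of mutually orthogonal, unitarily equivalent projections from $C(G)\otimes 1_A$ to $A$, and then invokes the trace on $A_{\theta}$, resp.\ $K_0(\mathcal{O}_{\infty})=\mathbb{Z}$. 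Any repair of your approach should replace ``$A\cong M_n(A)$'' by a statement of this kind, established using properties of $A_{\theta}$ and $\mathcal{O}_{\infty}$ beyond the non-divisibility of $[1_A]$.
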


\begin{proof}
For classical finite groups, this is well known; see, for example, Section~3
in~\cite{gardella_rokhlin_2014}. Suppose that $G$ is not classical finite
quantum group. Then $C(G)$ is a finite dimensional C*-algebra which is not
commutative. Find $n\in \mathbb{N}$, with $n>1$, and orthogonal projections $%
p_{1},\ldots ,p_{n}\in C(G)$ which add up to $1_{C(G)}$ and are unitarily
equivalent in $C(G)$.

Let $A$ be either $A_{\theta }$ or $\mathcal{O}_{\infty }$. Suppose that $%
\alpha $ is an action of $G$ on $A$, and assume by contradiction that $%
\alpha $ has the Rokhlin property. Therefore $\alpha :\left( A,\alpha
\right) \rightarrow \left( C\left( G\right) \otimes A,\Delta \otimes \mathrm{%
id}_{A}\right) $ is positively $\mathcal{L}_{G}^{\text{C*}}$-existential. By
considering the projections $p_{j}\otimes 1_{A}\in C(G)\otimes A$, we deduce
that there exist orthogonal projections $q_{1},\ldots ,q_{n}\in A$ which add
up to $1_{A}$ and are unitarily equivalent in $A$. In the case of $\mathcal{O%
}_{\infty }$, this would imply that the class of unit of $\mathcal{O}%
_{\infty }$ in its $K_{0}$-group is divisible by $n>1$, which is not true.
For the case of $A_{\theta }$, and denoting its unique trace by $\tau $, we
would get $1=\tau (1_{A_{\theta }})=\sum_{j=1}^{n}\tau (q_{j})=n\tau (q_{1})$%
, since unitarily equivalent projections have the same value on traces. The
range of $\tau $ on traces is known not to contain any rational which is not
an integer, so this is again a contradiction. This finishes the proof.
\end{proof}

Nonexistence results like the one just explained are the main motivation for
introducing a more flexible notion, the Rokhlin dimension, which is the
content of the next section. Despite not admitting any Rokhlin action, the
algebras $\mathcal{O}_{\infty }$ and $A_{\theta }$ have many actions with
finite Rokhlin dimension; see, for instance, \cite{gardella_rokhlin_2014}.

\section{Order zero dimension and Rokhlin dimension\label{Section:order-zero}%
}

\subsection{Order zero dimension}

The notion of positive $\mathcal{L}_{G}^{\text{C*}}$-existential $G$%
-equivariant *-homomorphism admits a natural generalization, which has been
introduced in the classical setting in \cite[Section 5]%
{gardella_equivariant_2016}. We consider here its natural extension to
compact quantum groups. In the following definition, given a *-homomorphism $%
\theta \colon A\rightarrow B$, we consider $B$ as an $A$-bimodule, with
respect to the $A$-bimodule structure defined by $a\cdot b=\theta (a)\ b$
and $b\cdot a=b\ \theta (a)$ for every $a\in A$ and $b\in B$.

\begin{definition}
\label{Definition:oz-dimension} Let $G$ be either a discrete or a compact
quantum group, let $(A,\alpha )$ and $(B,\beta )$ be $G$-C*-algebras. Fix a
cardinal number $\kappa $ larger than the density characters of $A$, $B$,
and $L^{2}(G)$, and a countably incomplete $\kappa $-good filter $\mathcal{F}
$. We say that a *-homomorphism $\theta \colon (A,\alpha )\rightarrow
(B,\beta )$ has \emph{$G$-equivariant order zero dimension at most $d$},
written $\dim _{\mathrm{\mathrm{oz}}}^{G}(\theta )\leq d$, if there exist $G$%
-equivariant completely positive contractive order zero $A$-bimodule maps $%
\psi _{0},\ldots ,\psi _{d}\colon (B,\beta )\rightarrow (\prod_{\mathcal{F}%
}^{G}A,\alpha _{\mathcal{F}})$ such that the sum $\psi =\psi _{0}+\cdots
+\psi _{d}$ is a contractive linear map such that the following diagram
commutes 
\begin{equation*}
\xymatrix{ A\ar[dr]_-{\theta}\ar[rr]^-{\Delta_A} & &
\prod_{\mathcal{F}}^{G}A\\ & B\ar[ur]_-{\psi} & }
\end{equation*}
\end{definition}

The $G$-equivariant order zero dimension $\dim _{\mathrm{\mathrm{oz}}}^{G}(
\theta ) $ of $\theta $ is the least $d\in \mathbb{N}$ such that $\dim _{%
\mathrm{\mathrm{oz}}}^{G}( \theta )\leq d$, if such a $d$ exists, and $%
\infty $ otherwise. The order zero dimension of a *-homomorphism between
C*-algebras can be obtained as the particular instance of Definition \ref%
{Definition:oz-dimension} when $G$ is the trivial group.

\begin{remark}
\label{Remark:independent}Definition \ref{Definition:oz-dimension} does not
depend on the choice of the countably incomplete $\kappa $-good ultrafilter $%
\mathcal{F}$. This can be seen, for instance, by considering the syntactic
characterization presented in Remark \ref{Remark:semantic} below.
Furthermore, when $G$ is second countable, and $A,B$ are separable, one can
choose $\mathcal{F}$ to be any countably incomplete filter, such as the
filter of cofinite subsets of $\mathbb{N}$.
\end{remark}

\begin{remark}
\label{Remark:semantic}One can give a syntactic reformulation of the notion
of $G$-equivariant order zero dimension. To this purpose, one can consider
the ordered operator space language $\mathcal{L}^{\mathrm{osos}}$ introduced
in \cite[Subsection 3.1]{gardella_equivariant_2016}, and the order zero
language $\mathcal{L}^{\mathrm{oz}}$ introduced in \cite[Subsection 3.2]%
{gardella_equivariant_2016}. Adding function symbols for the $A$-bimodule
operations give the $A$-bimodule ordered operator space language $\mathcal{L}%
^{\mathrm{osos}\text{,}A\text{-}A}$ and the $A$-bimodule order zero language 
$\mathcal{L}^{\mathrm{oz}\text{,}A\text{-}A}$. In both these languages, the
distinguished symbol for the metric is replaced by pseudometric symbols $%
d_{F}$ for $F$ ranging among the finite subsets of $A$, to be interpreted as
the pseudometric%
\begin{equation*}
d_{F}( x,y) =\max \left\{ \| a( x-y) \| ,\| ( x-y) a\| \colon a\in F\right\} 
\text{.}
\end{equation*}

One can consider the $G$-equivariant version $\mathcal{L}_{G}^{\mathrm{osos}%
\text{,}A\text{-}A}$ and $\mathcal{L}_{G}^{\mathrm{oz}\text{,}A\text{-}A}$
for each of these languages, which are defined starting from these languages
by adding symbols for a $G$-action as in Subsection \ref%
{Subsection:axiomatization-compact}. The $\mathcal{L}_{G}^{\mathrm{osos}%
\text{,}A\text{-}A}$-morphisms between $G$-C*-algebras are precisely the $G$%
-equivariant completely positive contractive $A$-bimodule maps, while the $%
\mathcal{L}_{G}^{\mathrm{oz}\text{,}A\text{-}A}$-morphisms are precisely the 
$G$-equivariant completely positive contractive order zero $A$-bimodule
maps. One can then rephrase Definition \ref{Definition:oz-dimension} by
asserting that the order zero dimension of a nondegenerate *-homomorphism $%
\theta \colon ( A,\alpha ) \rightarrow ( B,\beta ) $ is at most $d$ if and
only if for every positive quantifier-free $\mathcal{L}_{G}^{\mathrm{oz},A%
\text{-}A}$-formula $\varphi ( \overline{z},\overline{y}) $, for every
positive quantifier-free $\mathcal{L}_{G}^{\mathrm{osos}\text{,}A\text{-}A}$%
-formula $\psi ( \overline{x},\overline{z},\overline{y}) $, where the
variables $\overline{z}$ have finite-dimensional C*-algebras as sorts, for
every tuples $\overline{a}$ in $A$, $\overline{b}$ in $B$, and $\overline{w}$
in finite-dimensional C*-algebras, and for every $\varepsilon >0$, there
exist tuples $\overline{c}_{0},\ldots ,\overline{c}_{d}$ in $A$ such that%
\begin{equation*}
\psi ( \overline{a},\overline{w},\overline{c}_{0}+\cdots +\overline{c}_{d})
\leq \psi ( \theta ( \overline{a}) ,\overline{w},\overline{b}) +\varepsilon
\quad \text{and}\quad \varphi ( \overline{w},\overline{c}_{j}) \leq \varphi
( \overline{w},\overline{b}) +\varepsilon \text{ for }j=0,1,\ldots ,d\text{.}
\end{equation*}%
If $A_{0}$ is a $G$-C*-subalgebra of $A$ containing an approximate unit for $%
A$, then one can replace $\mathcal{L}_{G}^{\mathrm{osos}\text{,}A\text{-}A}$
and $\mathcal{L}_{G}^{\mathrm{\mathrm{oz}}\text{,}A\text{-}A}$ with $%
\mathcal{L}_{G}^{\mathrm{osos}\text{,}A_{0}\text{-}A_{0}}$ and $\mathcal{L}%
_{G}^{\mathrm{\mathrm{oz}}\text{,}A_{0}\text{-}A_{0}}$ in the discussion
above.
\end{remark}

\begin{remark}
\label{Remark:nondegenerate}Suppose that $\theta \colon (A,\alpha
)\rightarrow (B,\beta )$ is a nondegenerate *-homomorphism. Fix a countably
incomplete $\kappa $-good ultrafilter $\mathcal{U}$, where $\kappa $ is
larger than the density character of $A$, $B$, and $L^{2}(G)$. Let also $%
A_{0}$ be a $G$-C*-subalgebra of $A$ containing an approximate unit for $A$.
In the case when $G$ is compact, assume furthermore that $A_{0}$ is
contained in the fixed point algebra (which is a nondegenerate C*-subalgebra
of $A$). Following \cite[Section 1]{kirchberg_central_2006} and \cite[Remark
1.3]{barlak_sequentially_2016}, we consider the $G$-C*-subalgebra $\overline{%
A_{0}\cdot \prod_{\mathcal{U}}^{G}A\cdot A_{0}}$ of $\prod_{\mathcal{U}%
}^{G}A $. It is easy to see that $\overline{A_{0}\cdot \prod_{\mathcal{U}%
}^{G}A\cdot A_{0}}$ can be identified with the ultrapower of $(A,\alpha )$
regarded as an $\mathcal{L}_{G}^{\text{C*,}A_{0}\text{-}A_{0}}$-structure.
It follows from the observations above that if $\mathrm{dim}_{\mathrm{oz}%
}^{G}(\theta )\leq d$, then there exist $G$-equivariant completely positive
contractive order zero $A$-bimodule maps $\psi _{0},\ldots ,\psi _{d}\colon
B\rightarrow \overline{A_{0}\cdot \prod_{\mathcal{U}}^{G}A\cdot A_{0}}$ such
that $\psi :=\psi _{0}+\cdots +\psi _{d}$ is contractive and $\psi \circ
\theta $ is the diagonal embedding from $A$ to $\overline{A_{0}\cdot \prod_{%
\mathcal{U}}^{G}A\cdot A_{0}}$.
\end{remark}

Recall that a \emph{unital} completely positive order zero map is a
*-homomorphism. Next, we prove a similar result with `unital' being replaced
by `nondegenerate'. If $\alpha $ is a $G$-action on $A$, then $\alpha $
admits a unique extension to a $G$-action on $\tilde{A}$, which we still
denote by $\alpha $.

\begin{lemma}
\label{Lemma:oz-homomorphism} Let $\phi \colon A\rightarrow B$ be a
completely positive order zero map between C*-algebras. If $\phi$ is
nondegenerate, then it is a *-homomorphism.
\end{lemma}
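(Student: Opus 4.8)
**Proof plan for Lemma (nondegenerate completely positive order zero map is a *-homomorphism).**

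The plan is to use the structure theorem for completely positive order zero maps. Recall that if $\phi\colon A\to B$ is a completely positive contractive order zero map, then there is a *-homomorphism $\pi\colon A\to M(C^*(\phi(A)))$ and a positive element $h\in M(C^*(\phi(A)))$ (the "supporting positive element"), commuting with the range of $\pi$, such that $\phi(a)=h\pi(a)$ for all $a\in A$; moreover $h$ lies in the center of $M(C^*(\phi(A)))$ in an appropriate sense, and $\|h\|=\|\phi\|\le 1$. (If $\phi$ is not contractive one rescales.) So the first step is to invoke this structure theorem and write $\phi(a)=h\,\pi(a)=\pi(a)\,h$.

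The key step is then to show that nondegeneracy of $\phi$ forces $h=1$ in $M(C^*(\phi(A)))$, after which $\phi=\pi$ is a *-homomorphism. To see this, let $(u_i)$ be an approximate unit for $A$. Then $\pi(u_i)\to 1$ strictly in $M(C^*(\phi(A)))$ (since $\pi$ is nondegenerate onto $C^*(\phi(A))$ — this itself needs a small argument: $C^*(\phi(A))$ is generated by $\phi(A)=h\pi(A)$, and one checks $h\pi(A)\subseteq \pi(A)C^*(\phi(A))$ using that $h$ commutes with $\pi(A)$ and lies in $M(C^*(\phi(A)))$, hence $\pi$ is nondegenerate as a map into $M(C^*(\phi(A)))$). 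Therefore $\phi(u_i)=h\pi(u_i)\to h$ strictly. On the other hand, $\phi$ being nondegenerate means $[\phi(A)B]=B$, equivalently $\phi(u_i)\to 1_{M(B)}$ strictly in $M(B)$. Combining these (using that the strict topology on $M(C^*(\phi(A)))$ is the restriction of the strict topology on $M(B)$, or more simply working with $b\in \phi(A)B$ dense), we get $hb=b$ for all $b$ in a dense subset of $C^*(\phi(A))$, hence $h$ acts as the identity, i.e. $h=1_{M(C^*(\phi(A)))}$. Then $\phi(a)=\pi(a)$ for all $a$, so $\phi$ is a *-homomorphism.

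Concretely, the cleanest route: for $a,b\in A$, write $\phi(ab)=h\pi(ab)=h\pi(a)\pi(b)$ and $\phi(a)\phi(b)=h\pi(a)h\pi(b)=h^2\pi(a)\pi(b)$ (using that $h$ is central relative to $\pi(A)$). So multiplicativity of $\phi$ is equivalent to $h\pi(ab)=h^2\pi(ab)$ for all $a,b$, i.e. $h(1-h)$ annihilates $\pi(A)\pi(A)$, hence annihilates $C^*(\phi(A))$ (since $h$ commutes with everything relevant and $\phi(a)\phi(b)=h^2\pi(ab)$ spans a dense set together with their adjoints and products). Thus it suffices to show $h(1-h)=0$, i.e. $h$ is a projection; combined with nondegeneracy forcing $h$ to act as a unit, we conclude $h=1$. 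I would present the argument in the order: (i) quote the structure theorem; (ii) observe $\pi$ is nondegenerate into $M(C^*(\phi(A)))$ and identify the strict limit of $\phi(u_i)$ as $h$; (iii) use nondegeneracy of $\phi$ to conclude $h=1$; (iv) deduce $\phi=\pi$.

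The main obstacle I anticipate is the bookkeeping in step (ii)–(iii): one must be careful about which multiplier algebra one works in and that the strict convergences are compatible, since $C^*(\phi(A))$ need not be an ideal in $B$. The safe way around this is to avoid $M(C^*(\phi(A)))$ altogether and instead extend $\pi$ and $h$ to $M(B)$: since $\phi$ is nondegenerate, $C^*(\phi(A))$ is a nondegenerate subalgebra of $B$, so $M(C^*(\phi(A)))$ embeds canonically into $M(B)$ compatibly with strict topologies, and then $\phi(u_i)\to h$ strictly in $M(B)$ while also $\phi(u_i)\to 1_{M(B)}$ strictly in $M(B)$ by nondegeneracy, giving $h=1_{M(B)}$ directly. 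Everything else is routine.
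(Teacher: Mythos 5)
Your overall strategy is essentially the one the paper uses: both proofs invoke the Winter--Zacharias structure theory to produce a supporting positive element ($h$ in your notation; $g=\sup_{j}\phi(u_{j})\in B^{\ast\ast}$ in the paper, which unitizes $A$ and works in the second dual rather than in $M(C^{\ast}(\phi(A)))$ as you do), and both reduce the lemma to showing that nondegeneracy forces this element to be the unit, after which $\phi$ coincides with the *-homomorphism $\pi$. That difference of venue is inessential.

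The gap is in your step (iii). You assert that nondegeneracy, i.e.\ $[\phi(A)B]=B$, is ``equivalently'' the statement that $\phi(u_{i})\to 1_{M(B)}$ strictly. That equivalence holds for *-homomorphisms but fails for general completely positive order zero maps, and deriving it would require exactly the multiplicativity you are trying to prove: from $\phi=h\pi$ one computes $\phi(u_{i})\phi(a)c=h^{2}\pi(u_{i}a)c\to h\phi(a)c$, so on the dense subset $\phi(A)B$ the strict limit of $\phi(u_{i})$ is $h$, not $1$, and concluding $h=1$ from this is circular. Indeed, under the span-density reading of nondegeneracy the lemma itself is false: $\phi(z)=z/2$ on $A=B=\mathbb{C}$ is completely positive, order zero, and satisfies $[\phi(A)B]=B$, yet is not multiplicative. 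What is needed is the strictly stronger reading of nondegeneracy as $\phi(u_{i})\to 1_{M(B)}$ strictly, equivalently $\sup_{j}\phi(u_{j})=1$ in $B^{\ast\ast}$ --- which is what the paper's proof implicitly uses when it declares $g=1$. With that as the hypothesis your argument does close up: $\phi(u_{i})\to h$ strictly in $M(B)$ by the structure theorem together with the compatibility of strict topologies you describe, whence $h=1$ and $\phi=\pi$. So the fix is to the definition you are invoking at the key step, not to the architecture of the proof.
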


\begin{proof}
If $A$ is unital, then $\phi \colon A\rightarrow B\subset M(B)$ is unital,
and the conclusion follows from the structure theorem for completely
positive order zero maps from \cite{winter_completely_2009}. Suppose that $A$
is not unital, and let $\tilde{A}$ be its unitization. We let $B^{\ast \ast
} $ be the second dual of $B$, which we identify with the enveloping von
Neumann algebra of $B$. Fix an increasing approximate unit $(u_{j})_{j\in J}$
for $A$, and set $g=\sup_{j\in J}\phi (u_{j})\in B^{\ast \ast }$. By \cite[%
Proposition 3.2]{winter_completely_2009}, the (unique) linear map $\tilde{%
\phi}\colon \tilde{A}\rightarrow B^{\ast \ast }$ extending $\phi $ with $%
\phi (1)=g$, is completely positive of order zero. Since $\phi $ is
nondegenerate, we must have $g=1\in B^{\ast \ast }$. Therefore $\tilde{\phi}$
is a *-homomorphism, and hence so is $\phi $.
\end{proof}

\begin{lemma}
\label{Lemma:inclusion-unitization}Suppose that $G$ is a compact or discrete
quantum group, and $( A,\alpha ) $ is a $G$-C*-algebra. The inclusion map $(
A,\alpha ) \hookrightarrow (\tilde{A},\alpha )$ is positively $\mathcal{L}%
_{G}^{\text{C*,}A\text{-}A}$-existential.
\end{lemma}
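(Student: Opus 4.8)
The plan is to unwind the definition of positively $\mathcal{L}_G^{\text{C*},A\text{-}A}$-existential embedding (Definition \ref{Definition:existential-embeddings}) and reduce the statement to an elementary approximation fact. Write $\iota\colon(A,\alpha)\hookrightarrow(\tilde A,\alpha)$ for the inclusion. It is an $\mathcal{L}_G^{\text{C*},A\text{-}A}$-embedding: it is injective, it is isometric for every pseudometric $d_F$ with $F\subseteq A$ finite, and it sends the domain of quantification $D^{\lambda}_n$ of the structure $A$ (the ball of radius $n$ of $A_\lambda$, resp.\ of $C(G)_\lambda\otimes A_\lambda$) into the corresponding domain of $\tilde A$ (the ball of $\tilde A_\lambda$, resp.\ of $C(G)_\lambda\otimes\tilde A_\lambda$), while the sorts interpreted as $C(G)$ and $\mathbb{C}$ are interpreted identically in both structures. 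Consequently $\varphi^{\tilde A}(\iota\bar a)\le\varphi^{A}(\bar a)$ for every positive existential $\mathcal{L}_G^{\text{C*},A\text{-}A}$-formula $\varphi$ and every tuple $\bar a$ from $A$, and it remains to prove the reverse inequality. Since a positive existential formula may, for this purpose, be taken of the form $\inf_{\bar y}\psi(\bar x,\bar y)$ with $\psi$ positive and quantifier-free and $\bar y$ ranging over a product of domains of quantification, it suffices to prove the following density statement.

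\emph{Claim.} For every $\lambda\in\mathrm{Rep}(G)$, $n\in\mathbb{N}$, finite $F\subseteq A$ and $\delta>0$: every $b\in\tilde A_\lambda$ with $\|b\|\le n$ satisfies $d_F(b,b')<\delta$ for some $b'\in A_\lambda$ with $\|b'\|\le n$; likewise with $\tilde A_\lambda,A_\lambda$ replaced by $C(G)_\lambda\otimes\tilde A_\lambda,C(G)_\lambda\otimes A_\lambda$. To prove the Claim, fix an increasing approximate unit $(u_j)_{j\in J}$ for $A$ with $u_j\in A^\alpha$ for every $j$ (such an approximate unit exists, as used in Subsection \ref{Subsection:stabilizations}); being an approximate unit for the essential ideal $A\subseteq\tilde A$, it is an approximate unit for $\tilde A$ and $u_j\to 1$ strictly in $M(A)=M(\tilde A)$. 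Given $b\in\tilde A_\lambda$, set $b'=u_jb$. Then $b'\in A$ since $A$ is an ideal, $\|b'\|\le\|b\|\le n$, and, because $u_j\in\tilde A^\alpha$ forces $\alpha(u_j)=1\otimes u_j$, we get $\alpha(b')=(1\otimes u_j)\alpha(b)\in C(G)_\lambda\otimes\tilde A$; by the spectral description $A_\lambda=\{x:\alpha(x)\in C(G)_\lambda\otimes A\}$ this gives $b'\in A_\lambda$. Finally, for $c\in F$ one has $c(b-b')=c(1-u_j)b$ and $(b-b')c=(1-u_j)(bc)$ with $bc\in A$, and since $\|c(1-u_j)\|\to0$, $\|(1-u_j)(bc)\|\to0$ and $\|b\|$ is bounded, $d_F(b,b')\to0$; a large enough $j$ works. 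For the sort $\mathcal{S}^{(1)}$ one argues identically using $(\mathrm{id}_{C(G)}\otimes u_j)$ in place of left multiplication by $u_j$, the $A$-bimodule structure on $C(G)\otimes\tilde A$ acting through $1\otimes(\cdot)$.

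Granting the Claim, let $\bar a$ be a tuple from $A$ and $\varepsilon>0$, and pick $\bar b$ in the appropriate domains of $\tilde A$ with $\psi^{\tilde A}(\bar a,\bar b)\le(\inf_{\bar y}\psi)^{\tilde A}(\bar a)+\varepsilon/2$. Choosing $F\subseteq A$ finite, containing the entries of $\bar a$ and large enough to witness the modulus of uniform continuity of $\psi$ in $\bar y$, the Claim yields $\bar b'$ in the corresponding domains of $A$ with $d_F(\bar b,\bar b')$ so small that $\psi^{\tilde A}(\bar a,\bar b')\le\psi^{\tilde A}(\bar a,\bar b)+\varepsilon/2$; since $\iota$ is an embedding, $\psi^{A}(\bar a,\bar b')=\psi^{\tilde A}(\bar a,\bar b')$, so $(\inf_{\bar y}\psi)^{A}(\bar a)\le(\inf_{\bar y}\psi)^{\tilde A}(\bar a)+\varepsilon$. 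Letting $\varepsilon\to0$ and combining with the automatic inequality proves that $\iota$ is positively $\mathcal{L}_G^{\text{C*},A\text{-}A}$-existential. The case of discrete $G$ is analogous and simpler: there the domains of quantification of $\mathcal{L}_G^{\text{C*},A\text{-}A}$ are norm-balls of the sorts $M_m(\mathbb{C})\otimes C$, and the Claim reduces to noting that $(\mathrm{id}_{M_m}\otimes u_j)z\to z$ in every $d_F$ ($F\subseteq A$) for bounded $z\in M_m(\mathbb{C})\otimes\tilde A$, with approximants in $M_m(\mathbb{C})\otimes A$, for any approximate unit $(u_j)$ of $A$.

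The one step requiring care is keeping $u_jb$ inside $A_\lambda$ rather than in a larger isotypical component, which is exactly why the approximate unit must be chosen in the fixed point algebra $A^\alpha$; the rest is bookkeeping over the sorts and domains of quantification. Conceptually, the lemma holds because the $A$-bimodule pseudometrics $d_F$ cannot separate $A$ from $\tilde A$ — in fact $A$ is $d_F$-dense in $\tilde A$ — so adjoining a unit is invisible in the language $\mathcal{L}_G^{\text{C*},A\text{-}A}$.
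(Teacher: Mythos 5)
Your proof is correct and follows essentially the same route as the paper's: the paper's one‑line argument replaces a tuple $(b_k+\lambda_k 1)$ in $\tilde A$ by $(b_k+\lambda_k u_j)$ for an approximate unit $(u_j)$ of $A$, which is exactly your observation that $A$ is $d_F$-dense in $\tilde A$ for the bimodule pseudometrics. Your write-up is more careful than the paper's (choosing the approximate unit in $A^\alpha$ so that the approximants stay in the correct spectral subspaces/domains of quantification, and treating the sort $\mathcal{S}^{(1)}$), but the underlying idea is identical.
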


\begin{proof}
Consider an approximate unit $(u_{j})_{j\in J}$ for $A$. Fix a
quantifier-free $\mathcal{L}_{G}^{\text{C*,}A\text{-}A}$-formula, $\varphi
\left( \bar{x},\bar{y}\right) $, and a tuple $\overline{a}$ in $A$. If $%
\left( b_{k}+\lambda _{k}1\right) _{k=1}^{n}$ is a tuple in $\tilde{A}$
satisfying the condition $\varphi (\overline{a},\bar{y})<r$, then, for a
suitable $j\in J$, $\left( b_{k}+\lambda _{k}u_{j}\right) _{k=1}^{n}$ is a
tuple in $A$ satisfying the same condition.
\end{proof}

\begin{proposition}
\label{Proposition:properties-oz-dim} Let $G$ be either a compact or
discrete quantum group, and let $\theta\colon ( A,\alpha ) \to( B,\beta ) $
be an injective *-homomorphism between $G$-C*-algebras.

\begin{enumerate}
\item If $( C,\gamma ) $ is a $G$-C*-algebra, and $\psi \colon ( B,\beta )
\rightarrow ( C,\gamma ) $ is a *-homomorphism, then 
\begin{equation*}
\dim _{\mathrm{\mathrm{oz}}}^{G}( \psi \circ \theta ) +1\leq (\dim _{\mathrm{%
\mathrm{oz}}}^{G}( \psi ) +1)(\dim _{\mathrm{\mathrm{oz}}}^{G}( \theta ) +1).
\end{equation*}

\item If $D$ is any C*-algebra, then $\dim _{\mathrm{oz}}^G(\theta \otimes 
\mathrm{id}_{D})\leq \dim _{\mathrm{\mathrm{oz}}}^G(\theta)$.

\item Let $I$ be a directed set, and let $( ( A_{i},\alpha _{i}) ,\theta
_{ij}) _{i,j\in I}$ be a direct system of $G$-C*-algebras. For every $i\in I$%
, let $\theta _{i\infty }\colon ( A_{i},\alpha ) \rightarrow (%
\underrightarrow{\lim }A_{i},\underrightarrow{\lim }\alpha _{i})$ be the
canonical *-homomorphism. Then $\dim _{\mathrm{oz}}^{G}( \theta _{i\infty })
\leq \limsup_{j}\dim _{\mathrm{oz}}^{G}( \theta _{ij}) $.

\item Let $I$ be a directed set, and for $k=0,1$, let $((A_{i}^{(k)},\alpha
_{i}^{(k)}),\theta _{ij}^{(k)})_{i,j\in I}$ be a direct system of $G$%
-C*-algebras. Let $(\eta _{i}\colon (A_{i}^{(0)},\alpha
_{i}^{(0)})\rightarrow (A_{i}^{(1)},\alpha _{i}^{(1)}))$ be a compatible
family of *-homomorphism. Then $\dim _{\mathrm{oz}}^{G}(\underrightarrow{%
\lim }\eta _{i})\leq \limsup_{i\in I}\dim _{\mathrm{oz}}^{G}( \eta _{i})$.

\item Suppose that $S$ is a positively quantifier-free $\mathcal{L}_{G}^{%
\mathrm{osos}\text{,}A\text{-}A}$-definable $G$-C*-subalgebra relative the
class $\mathcal{C}$ of $G$-C*-algebras that contain $(A,\alpha )$. Then $%
\theta $ maps $S^{(A,\alpha )}$ to $S^{(B,\beta )}$, and $\dim _{\mathrm{oz}%
}^{G}(\theta |_{S^{(A,\alpha )}})\leq \dim _{\mathrm{\mathrm{oz}}%
}^{G}(\theta )$.
\end{enumerate}
\end{proposition}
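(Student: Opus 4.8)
The plan is to derive the six items from the corresponding facts for the trivial group in \cite{gardella_equivariant_2016}, switching freely between the semantic Definition~\ref{Definition:oz-dimension} and the syntactic reformulation of Remark~\ref{Remark:semantic} depending on which is more convenient. In all of the constructions the maps that get produced are built from the given $G$-equivariant data by composition, restriction, or tensoring with an identity, so $G$-equivariance is automatic and will only be noted in passing. Throughout I fix one countably incomplete $\kappa$-good filter $\mathcal{F}$ as in Definition~\ref{Definition:oz-dimension} and use it for every reduced product; that the resulting notion is independent of this choice is Remark~\ref{Remark:independent}.

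For item~(1), suppose $\dim_{\mathrm{oz}}^{G}(\theta)\le d$ and $\dim_{\mathrm{oz}}^{G}(\psi)\le e$ are witnessed by $G$-equivariant contractive completely positive bimodule maps $\psi_0,\dots,\psi_d\colon B\to\prod_{\mathcal{F}}^{G}A$ (contractive sum, composing with $\theta$ to $\Delta_A$) and $\chi_0,\dots,\chi_e\colon C\to\prod_{\mathcal{F}}^{G}B$ (contractive sum, composing with $\psi$ to $\Delta_B$). I would extend each $\psi_j$ coordinatewise to a map $\prod_{\mathcal{F}}^{G}B\to\prod_{\mathcal{F}}^{G}\!\bigl(\prod_{\mathcal{F}}^{G}A\bigr)$ and then collapse the iterated reduced product by a $G$-equivariant $\mathcal{L}_{G}^{\text{C*}}$-embedding $\iota\colon\prod_{\mathcal{F}}^{G}\!\bigl(\prod_{\mathcal{F}}^{G}A\bigr)\hookrightarrow\prod_{\mathcal{F}}^{G}A$ restricting to the identity on the diagonal copy of $\prod_{\mathcal{F}}^{G}A$; the existence of such an $\iota$ (on the relevant small part) rests on the $\kappa$-saturation of $\prod_{\mathcal{F}}^{G}A$ afforded by $\kappa$-goodness of $\mathcal{F}$, and this is the one genuinely technical point, which I would isolate as a lemma (its classical counterpart is implicit in \cite{gardella_equivariant_2016}). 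Writing $\widetilde{\psi}_j=\iota\circ(\text{coordinatewise }\psi_j)$, the $(d+1)(e+1)$ composites $\widetilde{\psi}_j\circ\chi_k\colon C\to\prod_{\mathcal{F}}^{G}A$ are $G$-equivariant contractive completely positive bimodule maps with contractive sum $\bigl(\sum_j\widetilde{\psi}_j\bigr)\circ\bigl(\sum_k\chi_k\bigr)$, and since $\iota\circ\Delta_{\prod_{\mathcal{F}}^{G}A}=\mathrm{id}$ one has $\widetilde{\psi}_j\circ\Delta_B=\psi_j$, so $\bigl(\sum_{j,k}\widetilde{\psi}_j\chi_k\bigr)\circ(\psi\circ\theta)=\bigl(\sum_j\widetilde{\psi}_j\bigr)\circ\Delta_B\circ\theta=\bigl(\sum_j\psi_j\bigr)\circ\theta=\Delta_A$, giving $\dim_{\mathrm{oz}}^{G}(\psi\circ\theta)+1\le(d+1)(e+1)$. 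Item~(2) is similar but easier: tensor $\psi_0,\dots,\psi_d$ with $\mathrm{id}_D$ and post-compose with the canonical contractive $\ast$-homomorphism $\bigl(\prod_{\mathcal{F}}^{G}A\bigr)\otimes D\to\prod_{\mathcal{F}}^{G}(A\otimes D)$, $[a_\ell]\otimes d\mapsto[a_\ell\otimes d]$ (well-definedness of which is standard for the minimal tensor product); the resulting $d+1$ maps have contractive sum and compose with $\theta\otimes\mathrm{id}_D$ to $\Delta_{A\otimes D}$, so $\dim_{\mathrm{oz}}^{G}(\theta\otimes\mathrm{id}_D)\le d$.

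For items~(3) and~(4) I would argue via the syntactic characterization of Remark~\ref{Remark:semantic} (in its $\mathcal{L}_{G}^{\mathrm{osos}\text{,}A\text{-}A}$/$\mathcal{L}_{G}^{\mathrm{oz}\text{,}A\text{-}A}$ form, which applies to arbitrary, not just nondegenerate, $\ast$-homomorphisms). For~(3), put $d=\limsup_j\dim_{\mathrm{oz}}^{G}(\theta_{ij})$; since these are integers (or $\infty$) there is $j_0\succeq i$ with $\dim_{\mathrm{oz}}^{G}(\theta_{ij})\le d$ for all $j\succeq j_0$. Given the relevant positive quantifier-free formulas, tuples $\overline{a}$ in $A_i$ and $\overline{b}$ in $\varinjlim A_j$, finite parameter sets $F\subseteq A_i$, and $\varepsilon>0$, I would use density of $\bigcup_j\theta_{j\infty}(A_j)$ to replace $\overline{b}$ by $\theta_{j\infty}(\overline{b}')$ with $\overline{b}'$ in some $A_j$, $j\succeq j_0$, and $F$ by $\theta_{ij}(F)$, apply the characterization for $\theta_{ij}$ to get $\overline{c}_0,\dots,\overline{c}_d$ in $A_i$, and push the inequalities forward along $\theta_{j\infty}$, which is an $\mathcal{L}_{G}^{\mathrm{osos}\text{,}A_i\text{-}A_i}$- and $\mathcal{L}_{G}^{\mathrm{oz}\text{,}A_i\text{-}A_i}$-morphism (an $A_i$-bimodule $\ast$-homomorphism contracting the pseudometrics $d_F$), so positive quantifier-free formulas cannot increase under it. Item~(4) is the same argument applied to both the source and target systems, approximating $\overline{a}$, $\overline{b}$ and the parameter sets by images of tuples from $A_i^{(0)}$ and $A_i^{(1)}$ with $i$ large enough that $\dim_{\mathrm{oz}}^{G}(\eta_i)\le\limsup_i\dim_{\mathrm{oz}}^{G}(\eta_i)$, and using $\bigl(\varinjlim\eta_i\bigr)\circ\theta_{i\infty}^{(0)}=\theta_{i\infty}^{(1)}\circ\eta_i$.

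For items~(5) and~(6) I would use the theory of positively quantifier-free definable substructures, as in the proof of Proposition~\ref{Proposition:crossed}(1). For~(5): since $\theta$ is injective, $(B,\beta)$ contains $(A,\alpha)$ (via $\theta$), so $S^{(B,\beta)}$ is defined, and $\theta(S^{(A,\alpha)})\subseteq S^{(B,\beta)}$ because $\theta$ is an $\mathcal{L}_{G}^{\mathrm{osos}\text{,}A\text{-}A}$-morphism and positive quantifier-free formulas decrease under it. If $\psi_0,\dots,\psi_d\colon B\to\prod_{\mathcal{F}}^{G}A$ witness $\dim_{\mathrm{oz}}^{G}(\theta)\le d$, then each $\psi_j$, being a $G$-equivariant contractive completely positive $A$-bimodule map, is an $\mathcal{L}_{G}^{\mathrm{osos}\text{,}A\text{-}A}$-morphism, hence carries $S^{(B,\beta)}$ into $S^{\prod_{\mathcal{F}}^{G}A}=\prod_{\mathcal{F}}^{G}S^{(A,\alpha)}$ (definable sets commute with reduced products), and the restrictions $\psi_j|_{S^{(B,\beta)}}$ are $G$-equivariant contractive completely positive $S^{(A,\alpha)}$-bimodule maps whose contractive sum composes with $\theta|_{S^{(A,\alpha)}}$ to the diagonal embedding of $S^{(A,\alpha)}$; thus $\dim_{\mathrm{oz}}^{G}(\theta|_{S^{(A,\alpha)}})\le d$. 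Item~(6) is then the special case $S=A^{\alpha}$: the fixed point algebra is the zero set of the positive quantifier-free $\mathcal{L}_{G}^{\mathrm{osos}\text{,}A\text{-}A}$-formula $\Vert E^{\mathrm{t}}(x)-x\Vert$, where $E^{\mathrm{t}}=(\omega_{11}^{\mathrm{t}}\otimes\mathrm{id}_A)\circ\alpha$ is the canonical (faithful) conditional expectation onto $A^{\alpha}$ and is a term of the language; this formula is weakly stable precisely because $E^{\mathrm{t}}$ is a conditional expectation, so $A^{\alpha}$ with the trivial $G$-action (whose inclusion into $A$ is $G$-equivariant since $\alpha(a)=1\otimes a$ for $a\in A^{\alpha}$) is a positively quantifier-free $\mathcal{L}_{G}^{\mathrm{osos}\text{,}A\text{-}A}$-definable $G$-C*-subalgebra, and $\theta(A^{\alpha})\subseteq B^{\beta}$ by $G$-equivariance. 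I expect item~(1)—more precisely, the construction and identity-on-the-diagonal property of the collapsing embedding $\iota$—to be the only step requiring real work; everything else is routine transport of the classical arguments with $G$-equivariance carried along.
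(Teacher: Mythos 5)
Your proposal is correct in substance, but it is worth noting that the paper itself gives essentially no argument here: items (1)--(4) are delegated to the classical case in \cite{gardella_equivariant_2016}, and (5)--(6) are dispatched as consequences of the definition, so what you have written is a genuine reconstruction rather than a paraphrase. For (1) your route differs from the one the cited classical proof suggests: you work semantically, extending the $\psi_j$ coordinatewise into the iterated reduced product $\prod_{\mathcal{F}}^{G}\bigl(\prod_{\mathcal{F}}^{G}A\bigr)$ and collapsing via an embedding $\iota$, whereas the intended argument chains the two approximate-realization conditions of Remark \ref{Remark:semantic} directly, which avoids iterated reduced products altogether. Your route works, but be careful with the statement ``$\iota\circ\Delta_{\prod_{\mathcal{F}}^{G}A}=\mathrm{id}$'': the diagonal copy of $\prod_{\mathcal{F}}^{G}A$ has density character at least $\kappa$, so no saturation argument produces $\iota$ on all of it; what you actually need (and what your final computation uses) is only that $\iota$ be defined on the subalgebra generated by the images $\widetilde{\psi}_j(\chi_k(C))$ together with the \emph{double-diagonal} copy of $A$, sending the latter to the single-diagonal copy of $A$ and respecting the $A$-bimodule structure there --- and that version does follow from \L os' theorem applied twice plus $\kappa$-saturation of $\prod_{\mathcal{F}}^{G}A$, or can be sidestepped entirely by identifying the iterated reduced product with a reduced product over a product filter and invoking Remark \ref{Remark:independent}. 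The remaining items ((2) via the canonical map $\bigl(\prod_{\mathcal{F}}^{G}A\bigr)\otimes D\to\prod_{\mathcal{F}}^{G}(A\otimes D)$, (3)--(4) via the syntactic characterization and density of the finite stages, (5)--(6) via definable substructures and the conditional expectation $E^{\mathrm{t}}$) match what the paper intends, with the definability of $A^{\alpha}$ in (6) being exactly the paper's stated reduction of (6) to (5).
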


\begin{proof}
(1)--(4) can be proved similarly as for classical groups; see \cite[%
Proposition 5.4]{gardella_equivariant_2016}. (5) is an easy consequence of
the definition.
\end{proof}

The following is one of our main results regarding order zero dimension. In
the proof of part (5), we will use the fact that if $\psi \colon
A\rightarrow B$ is a completely positive contractive order zero map, then $%
\psi (a)\psi (bc)=\psi (ab)\psi (c)$ for all $a,b,c\in A$. This fact follows
easily by considering the induced *-homomorphism $C_{0}((0,1])\otimes
A\rightarrow B$ \cite[Corollary 4.1]{winter_completely_2009}.

\begin{theorem}
\label{Theorem:properties-oz-dim-2} Let $G$ be either a compact or discrete
quantum group, and let $\theta \colon (A,\alpha )\rightarrow (B,\beta )$ be
a \emph{nondegenerate} injective *-homomorphism. Then:

\begin{enumerate}
\item $\dim _{\mathrm{oz}}^{G}( \theta ) =0$ if and only if $\theta $ is
positively $\mathcal{L}_{G}^{\text{C*}}$-existential.

\item The $G$-equivariant order zero dimension of $\tilde{\theta}\colon (%
\tilde{A},\alpha )\rightarrow (\tilde{B},\beta )$ is equal to $\dim _{%
\mathrm{oz}}^{G}(\theta )$.

\item The $\check{G}$-equivariant order zero dimension of $G\ltimes \theta
\colon (G\ltimes _{\alpha ,\mathrm{r}}A,\check{\alpha})\rightarrow (G\ltimes
_{\beta ,\mathrm{r}}B,\check{\beta}))$ is less than or equal to $\dim _{%
\mathrm{oz}}^{G}(\theta )$.

\item If $(B,\beta )$ is free, and $\dim _{G}^{\mathrm{\mathrm{oz}}}(\theta
)<+\infty $, then $(A,\alpha )$ is free.
\end{enumerate}
\end{theorem}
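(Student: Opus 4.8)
The plan is to treat the five parts in turn, using the structure theorem for completely positive order zero maps \cite{winter_completely_2009} together with the functorial constructions and definability results established above.

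\textbf{Part (1).} If $\dim_{\mathrm{oz}}^G(\theta)=0$ there is a single $G$-equivariant completely positive contractive order zero $A$-bimodule map $\psi_0\colon B\to\prod_{\mathcal{F}}^G A$ with $\psi_0\circ\theta=\Delta_A$. Since $\theta$ is nondegenerate, Remark~\ref{Remark:nondegenerate} lets me replace the codomain by the corner $\overline{A_0\cdot\prod_{\mathcal{F}}^G A\cdot A_0}$, into which $\Delta_A$ is nondegenerate; hence $\psi_0$ is nondegenerate and, by Lemma~\ref{Lemma:oz-homomorphism}, a $*$-homomorphism. This is exactly the semantic witness for $\theta$ being positively $\mathcal{L}_G^{\text{C*}}$-existential (Proposition~\ref{Proposition:existential-characterize}). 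Conversely, a $*$-homomorphism is completely positive contractive order zero and an $A$-bimodule map, so a positively existential $\theta$ has $\dim_{\mathrm{oz}}^G(\theta)=0$.

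\textbf{Part (2).} For $\dim_{\mathrm{oz}}^G(\theta)\le\dim_{\mathrm{oz}}^G(\tilde\theta)$ I restrict witnesses $\tilde\psi_i\colon\tilde B\to\prod_{\mathcal{F}}^G\tilde A$ of $\tilde\theta$ to $B$; since $B=[\theta(A)B]$ and the $\tilde A$-bimodule identity gives $\tilde\psi_i(\theta(a)b)=\Delta_A(a)\,\tilde\psi_i(b)$ with $\Delta_A(a)$ in the ideal $\prod_{\mathcal{F}}^G A$ of $\prod_{\mathcal{F}}^G\tilde A$, the restrictions land in $\prod_{\mathcal{F}}^G A$ and witness $\dim_{\mathrm{oz}}^G(\theta)\le d$. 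For the reverse inequality I extend witnesses $\psi_i$ of $\theta$ (taken into the corner via Remark~\ref{Remark:nondegenerate}) to order zero maps $\tilde\psi_i$ on $\tilde B$ by the canonical extension $\tilde\psi_i(1)=g_i:=\lim_j\psi_i(u_j)$ of \cite[Proposition 3.2]{winter_completely_2009}; because $\psi=\sum_i\psi_i$ is nondegenerate with $\psi\circ\theta=\Delta_A$, the finiteness of the sum gives $\sum_i g_i=\lim_j\sum_i\psi_i(u_j)=\lim_j\psi(u_j)=1$, so $\tilde\psi=\sum_i\tilde\psi_i$ is unital, hence contractive, and $\tilde\psi\circ\tilde\theta=\Delta_{\tilde A}$.

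\textbf{Parts (3) and (4).} In each case the inequality ``$\le$'' is obtained by transporting the witnesses through the relevant functor. For (3) I set $\Psi_i=\kappa\circ(\mathrm{id}_{\mathbb{K}_G}\otimes\psi_i)$, where $\kappa$ is the map of Proposition~\ref{Proposition:stabilization}; tensoring with the identity and composing with a $*$-homomorphism preserves equivariant order zero bimodule maps, and $\kappa\circ(\mathrm{id}\otimes\Delta_A)=\Delta_{\mathbb{K}_G\otimes A}$. For (4) I apply Lemma~\ref{Lemma:crossed-oz} to each $\psi_i$ and compose with the embedding $\Phi$ of Proposition~\ref{Proposition:ultra-crossed-compact}, using $\Phi\circ(G\ltimes\Delta_A)=\Delta_{G\ltimes_\alpha A}$. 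The reverse inequality in (3) follows from Proposition~\ref{Proposition:properties-oz-dim}(5): by Lemma~\ref{Lemma:definable-stabilization} the corner $|1\rangle\langle1|\otimes A$ is a positively quantifier-free definable $G$-C*-subalgebra of $\mathbb{K}_G\otimes A$, and the restriction of $\mathrm{id}\otimes\theta$ to it is $\theta$. The reverse inequality in (4) is obtained by duality exactly as in the ``if'' direction of Proposition~\ref{Proposition:crossed}(3): applying the already-proven ``$\le$'' to $G\ltimes\theta$ and invoking Baaj--Skandalis--Takesaki--Takai duality identifies $\check G\ltimes(G\ltimes\theta)$ with $\mathrm{id}_{\mathbb{K}_G}\otimes\theta$, whence part (3) gives $\dim_{\mathrm{oz}}^G(\theta)=\dim_{\mathrm{oz}}^G(\mathrm{id}\otimes\theta)\le\dim_{\mathrm{oz}}^{\check G}(G\ltimes\theta)$.

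\textbf{Part (5).} Because freeness is definable by a uniform family of positive existential formulas (Lemma~\ref{Lemma:freeeness-definable}) and the diagonal embedding $\Delta_A\colon A\to\prod_{\mathcal{F}}^G A$ is positively existential, Proposition~\ref{Proposition:existential-preservation-algebras} reduces the claim to showing that $\prod_{\mathcal{F}}^G A$ is free. Writing each order zero witness as $\psi_i=h_i\rho_i$ with $\rho_i\colon B\to M(\prod_{\mathcal{F}}^G A)$ a $G$-equivariant $*$-homomorphism and $h_i$ a positive contraction in $\rho_i(B)'$, equivariance transports the Ellwood condition of $(B,\beta)$ to each image $\rho_i(B)$, and the order zero identity $\psi_i(x)\psi_i(yz)=\psi_i(xy)\psi_i(z)$ yields, for $b'\in B_\lambda$, the factorization $(\mathrm{id}\otimes\psi_i)((1\otimes b)\beta(b'))=(1\otimes h_i\rho_i(b))\,\alpha_{\mathcal{F}}(\rho_i(b'))$. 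Summing over $i$ and using $\sum_i h_i=1$ expresses $(\mathrm{id}\otimes\psi)$ of any freeness witness of $B$ as a sum of freeness-type terms which, after absorbing the multiplier-valued factors into $\prod_{\mathcal{F}}^G A$ by nondegeneracy (Remark~\ref{Remark:nondegenerate}), verifies Ellwood's condition for $\prod_{\mathcal{F}}^G A$. The main obstacle is precisely this last step: since $\psi$ is not multiplicative, passing from the freeness witnesses of $B$ to genuine elements of $\prod_{\mathcal{F}}^G A$ (rather than of its multiplier algebra) requires careful use of the order zero relations and of approximate units, and is where the argument is most delicate.
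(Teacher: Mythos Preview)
Your treatment of parts (1), (3), and (4) is essentially the same as the paper's, and correct. Two points deserve comment.

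\textbf{Part (2).} Your extension argument has a gap: the canonical order zero extension of \cite[Proposition~3.2]{winter_completely_2009} sends $1_{\tilde B}$ to $g_i=\sup_j\psi_i(u_j)$ in the \emph{bidual} of the codomain, not in $\prod_{\mathcal F}^G\tilde A$. The identity $\sum_i g_i=1$ holds only in that bidual, and there is no reason for the individual $g_i$ to lie in $\prod_{\mathcal F}^G\tilde A$; so you have not produced witnesses for $\dim_{\mathrm{oz}}^G(\tilde\theta)\le d$ in the sense of Definition~\ref{Definition:oz-dimension}. The paper avoids this by invoking Lemma~\ref{Lemma:inclusion-unitization} together with the syntactic characterization of Remark~\ref{Remark:semantic}: since the inclusion $(A,\alpha)\hookrightarrow(\tilde A,\alpha)$ is positively $\mathcal L_G^{\text{C*},A\text{-}A}$-existential, one obtains the order zero $A$-bimodule witnesses directly as maps $\tilde B\to\overline{A^\alpha\cdot\prod_{\mathcal U}^G A\cdot A^\alpha}$, and then checks they are automatically $\tilde A$-bimodule maps.

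\textbf{Part (5).} Here your approach diverges substantially from the paper's, and the gap you yourself flag is real and not easily closed. Writing $\psi_i=h_i\rho_i$ via the structure theorem, the supporting $*$-homomorphism $\rho_i$ takes values in $M(C_i)$ for $C_i=C^*(\psi_i(B))$, so $\alpha_{\mathcal F}(\rho_i(b'))$ lives only in $C(G)\otimes M(\prod_{\mathcal F}^G A)$; there is no mechanism to ``absorb'' these multiplier-valued factors into $\prod_{\mathcal F}^G A$, and Remark~\ref{Remark:nondegenerate} does not provide one. Moreover the $G$-equivariance of $\rho_i$ is not automatic. The paper sidesteps all of this with a different and decisive idea: it uses the order zero \emph{functional calculus} to set $\pi_j=\psi_j^{1/2}$, which is again a $G$-equivariant cpc order zero $A$-bimodule map with values in $\prod_{\mathcal U}^G A$ itself. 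Then for a freeness witness $\sum_k\beta(b_k)(1\otimes c_k)$ of $x\in C(G)\otimes A$ one puts $b_{j,k}=\pi_j(b_k)$, $c_{j,k}=\pi_j(c_k)$ and computes, using the order zero identity $\phi_j(xy)\phi_j(z)=\phi_j(x)\phi_j(yz)$ with $\phi_j=\mathrm{id}_{C(G)}\otimes\pi_j$ and an auxiliary approximate unit element $u\in A$, that
\[
\sum_{j,k}\alpha_{\mathcal U}(b_{j,k})(1\otimes c_{j,k})
\approx\sum_j\phi_j(x)\phi_j(1\otimes u)
= x\sum_j\phi_j^2(1\otimes u)
= x\,(\mathrm{id}\otimes\psi)(1\otimes u)
= x(1\otimes u)\approx x.
\]
Every term here lies in $C(G)\otimes\prod_{\mathcal U}^G A$, and the final pull-back to $A$ uses that $\Delta_A$ is positively existential. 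The square-root trick is precisely what your argument is missing.
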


\begin{proof}
We prove the theorem for compact $G$, since the case of discrete $G$ is
analogous.

(1): It is obvious that if $\theta $ is positively $\mathcal{L}_{G}^{\text{C*%
}}$-existential, then $\dim _{\mathrm{oz}}^{G}(\theta )=0$. Conversely,
suppose that $\dim _{\mathrm{oz}}^{G}(\theta )=0$. Fix a countably
incomplete $\kappa $-good ultrafilter $\mathcal{U}$, where $\kappa $ is
larger than the density character of $A$, $B$, and $L^{2}(G)$. Then by
Remark \ref{Remark:nondegenerate}, there exists a completely positive
contractive order zero $A$-bimodule map $\psi :B\rightarrow \overline{A\cdot
\prod_{\mathcal{U}}^{G}A\cdot A}$ such that $\psi \circ \phi \colon
A\rightarrow \overline{A\cdot \prod_{\mathcal{U}}^{G}A\cdot A}$ is the
diagonal inclusion. Since $\phi $ is nondegenerate and $\psi $ is an $A$%
-bimodule map, $\psi $ is nondegenerate. Therefore $\psi $ is a
*-homomorphism, witnessing that $\theta $ is positively $\mathcal{L}_{G}^{%
\text{C*}}$-existential.

(2): We can assume that $\theta \colon A\rightarrow B$ is the inclusion map.
Suppose that $\dim _{\mathrm{oz}}^{G}( \theta ) \leq d$. By Lemma \ref%
{Lemma:inclusion-unitization} and Remark \ref{Remark:nondegenerate}, there
exist $G$-equivariant completely positive order zero $A$-bimodule maps 
\begin{equation*}
\psi _{0},\ldots ,\psi _{d}\colon \tilde{B}\rightarrow \overline{A^{\alpha
}\cdot \prod\nolimits_{\mathcal{U}}^{G}A\cdot A^{\alpha }}\subseteq 
\overline{A^{\alpha }\cdot \prod\nolimits_{\mathcal{U}}^{G}\tilde{A}\cdot
A^{\alpha }}
\end{equation*}%
such that $\psi =\psi _{0}+\cdots +\psi _{d}$ is contractive and $\psi \circ 
\tilde{\theta}|_{A}\colon A\rightarrow \overline{A^{\alpha }\cdot
\prod\nolimits_{\mathcal{U}}^{G}A\cdot A^{\alpha }}$ is the diagonal
embedding. Since $\psi _{0},\ldots ,\psi _{d}$ are $A$-bimodule maps, given $%
a\in A^{\alpha }$ we have 
\begin{equation*}
a\psi ( 1) =\psi ( a1) =\psi ( a) =a,
\end{equation*}
and similarly $\psi ( 1) a=a$. This shows that $\psi ( 1) =1$ and that 
\begin{equation*}
\psi \circ \tilde{\theta}\colon \tilde{A}\rightarrow \overline{A^{\alpha
}\cdot \prod\nolimits_{\mathcal{U}}^{G}\tilde{A}\cdot A^{\alpha }}\subseteq
\prod\nolimits_{\mathcal{U}}^{G}\tilde{A}
\end{equation*}%
is the diagonal embedding. We also have, for $j=0,1,\ldots ,d$. 
\begin{equation*}
( a+\lambda 1) \psi _{j}( b+\mu 1) =a\psi _{j}( b+\mu 1) +\lambda \psi _{j}(
b+\mu 1) =\psi _{j}( ab+\mu a+\lambda b+\lambda \mu 1) =\psi _{j}( (
a+\lambda 1) ( b+\mu 1) ) \text{.}
\end{equation*}%
Therefore $\psi _{j}$ is an $\tilde{A}$-bimodule map. Hence $\psi
_{0},\ldots ,\psi _{d}$ witness that $\dim _{\mathrm{oz}}^{G}(\tilde{\theta}%
)\leq d$, so $\dim _{\mathrm{oz}}^{G}(\tilde{\theta})\leq \dim _{\mathrm{oz}%
}^{G}(\theta)$.

Conversely, suppose that $\dim _{\mathrm{oz}}^{G}(\tilde{\theta})\leq d$.
Observe that $A^{\alpha }$ is a closed two-sided ideal of the fixed point
algebra of $(\tilde{A},\alpha )$.\ Therefore, by Remark \ref%
{Remark:nondegenerate}, there exist $G$-equivariant completely positive
contractive order zero $A$-bimodule maps $\psi _{0},\ldots ,\psi _{d}\colon 
\tilde{B}\rightarrow \overline{A^{\alpha }\cdot \prod_{\mathcal{U}}^{G}%
\tilde{A}\cdot A^{\alpha }}$. Note that $\overline{A^{\alpha }\cdot \prod_{%
\mathcal{U}}^{G}\tilde{A}\cdot A^{\alpha }}=\overline{A^{\alpha }\cdot
\prod_{\mathcal{U}}^{G}A\cdot A^{\alpha }}$. Therefore the restriction of
the maps $\psi _{0},\ldots ,\psi _{d}$ to $B$ witness that $\dim _{\mathrm{oz%
}}^{G}( \theta ) \leq d$, so $\dim _{\mathrm{oz}}^{G}(\theta)\leq \dim _{%
\mathrm{oz}}^{G}(\tilde{\theta})$, as desired.

(3): Suppose that $\dim _{\mathrm{\mathrm{oz}}}^{G}(\theta )=d$. Let us
first consider the case when $A,B$ are unital C*-algebras, in which case $%
\theta $ is a unital *-homomorphism. Fix a cardinal $\kappa $ larger than
the density characters of $A$, $B$, and $L^{2}(G)$, and a countably
incomplete $\kappa $-good ultrafilter $\mathcal{U}$. Let $\psi _{0},\ldots
,\psi _{d}\colon B\rightarrow \prod_{\mathcal{U}}^{G}A$ be maps as in the
definition of $\dim _{\mathrm{oz}}^{G}(\theta )\leq d$. Consider the
following maps:

\begin{itemize}
\item the $\check{G}$-equivariant completely positive contractive $A$%
-bimodule maps $G\ltimes \psi _{j}\colon G\ltimes _{\beta }B\rightarrow
G\ltimes _{\alpha }(\prod_{\mathcal{U}}^{G}A)$ for $j=0,\ldots ,d$ obtained
as in Lemma \ref{Lemma:crossed-oz};

\item the $\check{G}$-equivariant *-homomorphism $G\ltimes \theta \colon
G\ltimes _{\alpha }A\rightarrow G\ltimes _{\beta }B$; and

\item the $\check{G}$-equivariant injective *-homomorphism $\Psi \colon
G\ltimes _{\alpha }(\prod_{\mathcal{U}}^{G}A)\rightarrow \prod_{\mathcal{U}%
}^{\check{G}}A$ from Proposition \ref{Proposition:ultra-crossed-compact}.
\end{itemize}

Then $G\ltimes \psi =\sum_{j=0}^{d}G\ltimes \psi _{j}$ is a $\check{G}$%
-equivariant completely positive $A$-bimodule map satisfying 
\begin{equation*}
\Psi \circ (G\ltimes \psi )\circ (G\ltimes \theta )=\Delta _{A}\colon
A\rightarrow \prod\nolimits_{\mathcal{U}}^{\check{G}}A.
\end{equation*}%
Since $G\ltimes \psi $ is completely positive, we have $\Vert G\ltimes \psi
\Vert =\Vert (G\ltimes \psi )(1)\Vert =1$. Therefore $\dim _{\mathrm{oz}}^{%
\check{G}}(G\ltimes \theta )\leq d$.

Consider now the case when $A$ and $B$ are not necessarily unital. Then $%
\dim _{\mathrm{oz}}^{G}(\tilde{\theta})\leq d$ by (2). By applying the
result to the unital case, we have $\dim _{\mathrm{oz}}^{\hat{G}}(G\ltimes 
\tilde{\theta})\leq d$. Observe now that $G\ltimes _{\alpha }A\subseteq
G\ltimes _{\alpha }\tilde{A}$ and $G\ltimes _{\beta }B\subseteq G\ltimes 
\tilde{B}$ are positively quantifier-free $\mathcal{L}_{\check{G}%
}^{A^{\alpha }\text{-}A^{\alpha }}$-definable with respect to the class of $%
\check{G}$-C*-algebras of the form $G\ltimes _{\gamma }C$ for some $G$%
-C*-algebra $(C,\gamma )$ such that $(A,\alpha )$ embeds equivariantly into $%
(C,\gamma )$. Therefore, by part (5) of Proposition \ref%
{Proposition:properties-oz-dim}, the restriction $G\ltimes \theta \colon
G\ltimes _{\alpha }A\rightarrow G\ltimes _{\beta }B$ also has $\check{G}$%
-equivariant order zero dimension at most $d$ in view of the semantic
characterization of $\check{G}$-equivariant order zero dimension from\
Remark \ref{Remark:semantic}. We conclude that $\dim _{\mathrm{\mathrm{oz}}%
}^{\check{G}}(G\ltimes \theta )\leq \dim _{\mathrm{\mathrm{oz}}}^{G}(\theta
) $.

(4): We can assume that $A\subseteq B$ is a nondegenerate $G$-C*-subalgebra,
and $\theta :A\rightarrow B$ is the inclusion map. Let $x\in C(G)\otimes A$,
and $\varepsilon >0$. Using freeness for $\beta $, find $n\in \mathbb{N}$,
and tuples $b_{1},\ldots ,b_{n},c_{1},\ldots ,c_{n}\in B$ with%
\begin{equation*}
\Vert x-\sum_{k=1}^{n}\beta (b_{k})(1\otimes c_{k})\Vert <\varepsilon /2%
\text{.}
\end{equation*}%
Set $M=\max_{k=1,\ldots ,n}\Vert b_{k}\Vert $ and $d=\dim _{\mathrm{oz}%
}^{G}(\theta )<\infty $. Since $A\subseteq B$ is nondegenerate, choose $u\in
A_{+}$ satisfying 
\begin{equation*}
\Vert x(1\otimes u)-x\Vert <\varepsilon /3,\ \mbox{ and }\ \ \Vert
c_{k}u-c_{k}\Vert <\varepsilon /(3(d+1)nM)
\end{equation*}%
for all $k=1,\ldots ,n$.

Let $\kappa $ be a cardinal larger than the density characters of $A$, $B$,
and $L^{2}(G)$. Find $G$-equivariant completely positive order zero maps $%
\psi _{0},\ldots ,\psi _{d}\colon B\rightarrow \prod_{\mathcal{U}}^{G}A$ as
in the definition of order zero dimension. For $j=0,\ldots,d$, let $%
\pi_j\colon B\rightarrow \prod_{\mathcal{U}}^{G}A$ be the completely
positive contractive order zero map given by $\pi_j=\psi_j^{1/2}$, using
functional calculus for order zero maps (see \cite[Corollary 3.2]%
{winter_completely_2009}). It is easily checked that $\pi_j$ is again a $G$%
-equivariant $A$-bimodule map.

For $j=0,\ldots ,d$ and $k=1,\ldots ,n$, set $b_{j,k}=\pi _{j}(b_{k})$ and $%
c_{j,k}=\psi _{j}(c_{k})$. For $j=0,\ldots ,d$, denote by $\phi _{j}\colon
C(G)\otimes B\rightarrow C(G)\otimes \prod_{\mathcal{U}}^{G}A$ the $G$%
-equivariant completely positive order zero $A$-bimodule map $\phi _{j}=%
\mathrm{id}_{C(G)}\otimes \pi _{j}$. In the rest of this proof, for elements 
$e$ and $f$ in some C*-algebra and $\delta >0$, we write $e\approx _{\delta
}f$ to mean $\Vert e-f\Vert <\delta $. By the comments before this
proposition, we have%
\begin{align*}
\phi _{j}(\beta (b_{k})(1\otimes c_{k}))\phi _{j}(1\otimes u)& =\phi
_{j}(\beta (b_{k}))\phi _{j}(1\otimes c_{k}u) \\
& =\alpha _{\mathcal{U}}(b_{j,k})(1\otimes c_{j,k}u) \\
& \approx _{\varepsilon /(3(d+1)n)}\alpha _{\mathcal{U}}(b_{j,k})(1\otimes
c_{j,k}).
\end{align*}%
In the following computation, we use the observation above at the first
step; the definition of the elements $b_{k}$ and $c_{k}$ at the third; the
fact that $u$ has a square root at the fourth, together with the comments
before this proposition regarding order zero maps; the fact that $\phi _{j}$
is a $(C(G)\otimes A)$-bimodule map at the fifth; the definition of $\phi
_{j}$ at the seventh; and the properties of the maps $\psi _{j}$ at the
eighth: 
\begin{align*}
\sum_{j=0}^{d}\sum_{k=1}^{n}\alpha _{\mathcal{U}}(b_{j,k})(1\otimes
c_{j,k})& \approx _{\varepsilon /3}\sum_{j=0}^{d}\sum_{k=1}^{n}\phi
_{j}(\beta (b_{k})(1\otimes c_{k}))\phi _{j}(1\otimes u)=\sum_{j=0}^{d}\phi
_{j}\left( \sum_{k=1}^{n}\beta (b_{k})(1\otimes c_{k})\right) \phi
_{j}(1\otimes u) \\
& \approx _{\varepsilon /3}\sum_{j=0}^{d}\phi _{j}(x)\phi _{j}(1\otimes
u)=\sum_{j=0}^{d}\phi _{j}(x(1\otimes u^{1/2}))\phi _{j}(1\otimes u^{1/2}) \\
& =x\sum_{j=0}^{d}\phi _{j}(1\otimes u^{1/2})\phi _{j}(1\otimes
u^{1/2})=x\sum_{j=0}^{d}\phi _{j}^{2}(1\otimes u) \\
& =x\sum_{j=0}^{d}(\mathrm{id}_{C(G)}\otimes \psi )(1\otimes u)=x(1\otimes
u)\approx _{\varepsilon /3}x\text{.}
\end{align*}%
We conclude that $\Vert x-\sum_{j=0}^{d}\sum_{k=1}^{n}\alpha _{\mathcal{U}%
}(b_{j,k})(1\otimes c_{j,k})\Vert <\varepsilon $. Since the diagonal
embedding of $A\rightarrow \prod_{\mathcal{U}}^{G}A$ is positively $\mathcal{%
L}_{G}^{\text{C*}}$-existential, we conclude that there exist $\tilde{b}%
_{j,k},\tilde{c}_{j,k}\in A$ satisfying%
\begin{equation*}
\Vert x-\sum_{k=1}^{n}\sum_{j=0}^{d}\alpha (\tilde{b}_{j,k})(1\otimes \tilde{%
c}_{j,k}){}||{}<\varepsilon \text{.}
\end{equation*}%
This shows that $(A,\alpha )$ is free.
\end{proof}

\begin{theorem}
\label{Theorem:properties-oz-dim-3} Let $G$ be a compact quantum group, and
let $\theta \colon (A,\alpha )\rightarrow (B,\beta )$ be a \emph{%
nondegenerate} injective *-homomorphism. Then:

\begin{enumerate}
\item The $G$-equivariant order zero dimension of $\mathrm{id}_{\mathbb{K}%
_{G}}\otimes \theta \colon (\mathbb{K}_{G}\otimes A,\alpha _{\mathbb{K}%
})\rightarrow (\mathbb{K}_{G}\otimes B,\beta _{\mathbb{K}})$ is equal to $%
\dim _{\mathrm{oz}}^{G}(\theta )$.

\item The $\check{G}$-equivariant order zero dimension of $G\ltimes \theta
\colon (G\ltimes _{\alpha ,\mathrm{r}}A,\check{\alpha})\rightarrow (G\ltimes
_{\beta ,\mathrm{r}}B,\check{\beta})$ is equal to $\dim _{\mathrm{oz}%
}^{G}(\theta )$.

\item The $G$-equivariant order zero dimension of $\theta |_{A^{\alpha }}$
is less than or equal to $\dim _{\mathrm{oz}}^{G}(\theta )$.
\end{enumerate}
\end{theorem}

\begin{proof}
(1): The proof of this fact is identical to the proof of part (2) of
Proposition \ref{Proposition:crossed2}.

(2): One inequality follows from Theorem \ref{Theorem:properties-oz-dim-2}.
As in the proof of part (3) of Proposition \ref{Proposition:crossed2}, one
can now deduce that in fact equality holds using Item (1) above and the
Baaj--Skandalis--Takesaki--Takai duality for compact quantum groups.

(3): This is a particular instance of part (5) of Proposition \ref%
{Proposition:properties-oz-dim} in the case of the fixed point subalgebra,
which is an $\mathcal{L}_{G}^{\mathrm{osos}}$-definable $G$-C*-subalgebra
relative to the class of $G$-C*-algebras when $G$ is a compact quantum group.
\end{proof}

The following preservation result for *-homomorphisms with finite order zero
dimension has been established in \cite[Proposition 5.22]%
{gardella_equivariant_2016}. Given a C*-algebra $A$, we let $\dim _{\mathrm{%
nuc}}( A) $ be the nuclear dimension of $A$ \cite{winter_nuclear_2010}, and $%
\mathrm{dr}( A) $ be the decomposition rank of $A$ \cite[Definition 3.1]%
{kirchberg_covering_2004}.

\begin{proposition}
\label{Proposition:oz-nuclear} Let $A,B$ be C*-algebras, and $\theta \colon
A\rightarrow B$ be a *-homomorphism. Then%
\begin{equation*}
\dim _{\mathrm{nuc}}( A) +1\leq ( \dim _{\mathrm{oz}}( \theta ) +1) ( \dim _{%
\mathrm{nuc}}( B) +1)
\end{equation*}%
and%
\begin{equation*}
\mathrm{dr}( A) +1\leq ( \dim _{\mathrm{oz}}( \theta ) +1) ( \mathrm{dr}( B)
+1) \text{.}
\end{equation*}
\end{proposition}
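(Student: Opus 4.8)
The plan is to derive both inequalities by composing an order-zero-dimension witness for $\theta$ with a nuclear dimension (resp.\ decomposition rank) witness for $B$. We may assume $d:=\dim_{\mathrm{oz}}(\theta)<\infty$ and $n:=\dim_{\mathrm{nuc}}(B)<\infty$ (resp.\ $n:=\mathrm{dr}(B)<\infty$), as otherwise there is nothing to prove. Fix a finite subset $F\subseteq A$ and $\varepsilon>0$. First I would pass from the ultrapower formulation of Definition~\ref{Definition:oz-dimension} to the evident local form via a standard reindexing argument (compare Remark~\ref{Remark:semantic}): after rescaling the maps if necessary, this produces completely positive contractive order zero maps $\psi_0,\dots,\psi_d\colon B\to A$ such that $\psi:=\psi_0+\cdots+\psi_d$ is contractive and $\|\psi(\theta(a))-a\|<\varepsilon/2$ for every $a\in F$.

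Next I would invoke $\dim_{\mathrm{nuc}}(B)\le n$ at the finite set $\theta(F)\subseteq B$ and tolerance $\varepsilon/(2(d+1))$ to obtain a finite-dimensional C*-algebra $E=E^{(0)}\oplus\cdots\oplus E^{(n)}$, a completely positive contractive map $\varphi=(\varphi^{(0)},\dots,\varphi^{(n)})\colon B\to E$, and completely positive contractive order zero maps $\mu^{(i)}\colon E^{(i)}\to B$ with $\|\sum_{i}\mu^{(i)}(\varphi^{(i)}(b))-b\|<\varepsilon/(2(d+1))$ for all $b\in\theta(F)$. The one structural input is the observation that a composition of completely positive contractive order zero maps is again completely positive contractive order zero: order zero maps preserve orthogonality of positive elements, and this property is manifestly stable under composition (equivalently, one checks it from the structure theorem \cite{winter_completely_2009}, since the supporting positive contraction of the outer map commutes with the range of the inner supporting $*$-homomorphism). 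Hence each $\Xi^{(i,j)}:=\psi_j\circ\mu^{(i)}\colon E^{(i)}\to A$, for $0\le i\le n$ and $0\le j\le d$, is completely positive contractive order zero. With $\mathcal{E}:=\bigoplus_{i,j}E^{(i)}$, a finite-dimensional C*-algebra with $(n+1)(d+1)$ summands, and the completely positive contractive map $\Phi\colon A\to\mathcal{E}$, $\Phi(a)=(\varphi^{(i)}(\theta(a)))_{i,j}$, one computes for $a\in F$
\begin{equation*}
\Big\|\sum_{i,j}\Xi^{(i,j)}(\varphi^{(i)}(\theta(a)))-a\Big\|=\Big\|\sum_{j}\psi_j\Big(\sum_{i}\mu^{(i)}(\varphi^{(i)}(\theta(a)))-\theta(a)\Big)+\psi(\theta(a))-a\Big\|<(d+1)\cdot\tfrac{\varepsilon}{2(d+1)}+\tfrac{\varepsilon}{2}=\varepsilon ,
\end{equation*}
using $\|\psi_j\|\le1$. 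As $F$ and $\varepsilon$ were arbitrary, this witnesses $\dim_{\mathrm{nuc}}(A)\le(n+1)(d+1)-1$, which is the first inequality.

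For the decomposition rank statement I would run the identical argument, now taking the witness for $B$ so that moreover the map $\bigoplus_i E^{(i)}\to B$, $(z_i)_i\mapsto\sum_i\mu^{(i)}(z_i)$, is contractive. The only additional verification is that the total approximating map $\mathcal{E}\to A$ is contractive; it factors as $\mathcal{E}=\bigoplus_j\big(\bigoplus_iE^{(i)}\big)\to\bigoplus_j B\xrightarrow{\ \Psi\ }A$, where the first arrow is contractive by the decomposition rank witness for $B$ and $\Psi(y_0,\dots,y_d)=\sum_j\psi_j(y_j)$ is completely positive, hence has norm attained along an approximate unit; since $\Psi(u_\lambda,\dots,u_\lambda)=\psi(u_\lambda)$ for an approximate unit $(u_\lambda)$ of $B$ and $\|\psi\|\le1$, we get $\|\Psi\|\le1$. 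This yields $\mathrm{dr}(A)\le(n+1)(d+1)-1$. The only genuinely delicate point in the whole argument is the composition observation (and, at a lower level, the passage between the ultrapower and local forms of $\dim_{\mathrm{oz}}$); the remaining steps are routine bookkeeping.
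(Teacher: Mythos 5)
Your overall architecture --- compose the order zero dimension witness for $\theta$ with a nuclear dimension (resp.\ decomposition rank) witness for $B$, and count colours multiplicatively --- is the right one; it is the argument of \cite[Proposition 5.22]{gardella_equivariant_2016}, to which the paper simply defers without reproducing a proof. Your composition observation (orthogonality of positive elements is preserved by order zero maps, hence by their compositions), the norm bookkeeping giving $(n+1)(d+1)$ colours, and the approximate-unit argument for contractivity of $\Psi$ in the decomposition rank case are all correct.

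The gap is in your very first step. Definition \ref{Definition:oz-dimension} gives completely positive contractive order zero maps $\psi_0,\dots,\psi_d\colon B\to\prod_{\mathcal{F}}A$, and you assert that a ``standard reindexing argument'' converts these into completely positive contractive order zero maps defined on all of $B$ with values in $A$ that approximately invert $\theta$ on $F$. No such reindexing exists in general: a cpc order zero map $B\to\prod_{\mathcal{F}}A$ is the same thing as a $*$-homomorphism $C_0((0,1])\otimes B\to\prod_{\mathcal{F}}A$, and producing from it --- even approximately, in point-norm on a finite set --- an order zero map into $A$ itself is a lifting problem for the cone over $B$, which fails for general (infinite-dimensional) $B$; the relations defining an order zero map on an infinite-dimensional domain are not weakly stable, and the syntactic characterization in Remark \ref{Remark:semantic} only produces finite tuples of elements of $A$, not maps on $B$. (Choi--Effros would give a cpc lift when $B$ is nuclear, but not an order zero one, and your argument genuinely needs order zero-ness of the local $\psi_j$ to conclude that $\psi_j\circ\mu^{(i)}$ is order zero.) The repair is to reverse the order of operations: keep the $\psi_j$ with values in $\prod_{\mathcal{F}}A$, form the compositions $\Xi^{(i,j)}=\psi_j\circ\mu^{(i)}\colon E^{(i)}\to\prod_{\mathcal{F}}A$ and verify the approximation identity there (where $\psi\circ\theta=\Delta_A$ holds exactly, so only the $B$-side tolerance enters), and only then discretize. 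At that point the domains are finite-dimensional, so the cpc order zero maps $\Xi^{(i,j)}$ do lift to families of cpc order zero maps into $A$ by projectivity of $C_0((0,1])\otimes E^{(i)}$ (equivalently, weak stability of the order zero relations on finitely many matrix units), and the finitely many norm conditions --- the approximation on $F$ and, for decomposition rank, $\bigl\|\sum_{i,j}\Xi^{(i,j)}(1_{E^{(i)}})\bigr\|\le 1$ up to a rescalable $\varepsilon$ --- survive to a suitable index of the filter. With that reordering the rest of your proof goes through essentially verbatim.
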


More generally, Proposition \ref{Proposition:oz-nuclear} applies to any
dimension function for (nuclear) C*-algebras that is (nuclearly) positively $%
\forall \exists $-axiomatizable in the sense of \cite[Definition 5.15,
Definition 5.16.]{gardella_equivariant_2016}.

\subsection{Rokhlin dimension}

In this subsection we fix a compact quantum group $G$. We consider $%
C(G)\otimes A$ as a $G$-C*-algebra with respect to the action given by $%
\Delta \otimes \mathrm{id}_{A}$.

\begin{definition}
\label{Definition:rokhlin-dimension} The \emph{Rokhlin dimension} $\dim_{%
\mathrm{Rok}}(A,\alpha)$ of a $G$-C*-algebra $( A,\alpha )$, is the $G$%
-equivariant order zero dimension of $\alpha \colon ( A,\alpha ) \rightarrow
( C( G) \otimes A,\Delta \otimes \mathrm{id}_{A}) $.
\end{definition}

It follows from \cite[Lemma 1.24]{barlak_spatial_2017} and \cite[Lemma 5.13]%
{gardella_equivariant_2016} that when $G$ is classical, Definition \ref%
{Definition:rokhlin-dimension} recovers the usual notion of Rokhlin
dimension for $G$-C*-algebras from \cite[Definition 3.2]%
{gardella_rokhlin_2016}.

The following is the main technical fact for actions with finite Rokhlin
dimension.

\begin{theorem}
\label{Theorem:preservation-rokhlin-dim} Let $(A,\alpha )$ be a $G$%
-C*-algebra. Then 
\begin{equation*}
\dim _{\mathrm{oz}}(A^{\alpha }\hookrightarrow A)\leq \dim _{\mathrm{Rok}%
}(A,\alpha )=\dim _{\mathrm{oz}}^{\check{G}}(G\ltimes _{\alpha
}A\hookrightarrow \mathbb{K}_{\check{G}}\otimes A)\text{.}
\end{equation*}%
Here $\mathbb{K}_{\check{G}}\otimes A$ is regarded as a $\check{G}$%
-C*-algebra endowed with the stabilization of the trivial action of $\check{G%
}$ on $A$.
\end{theorem}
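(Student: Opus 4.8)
The plan is to deduce both the inequality and the equality purely from the structural properties of equivariant order zero dimension already established in Proposition \ref{Proposition:properties-oz-dim} and Theorem \ref{Theorem:properties-oz-dim-2}, together with the Baaj--Skandalis--Takesaki--Takai duality exactly as it was invoked in the proof of Theorem \ref{Theorem:preservation-rokhlin}; no new estimate is needed. Recall that by definition $\dim_{\mathrm{Rok}}(A,\alpha)=\dim_{\mathrm{oz}}^{G}(\alpha)$, where $\alpha\colon(A,\alpha)\to(C(G)\otimes A,\Delta\otimes\mathrm{id}_{A})$ is a nondegenerate injective $G$-equivariant *-homomorphism, so everything reduces to statements about this single map.

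For the inequality $\dim_{\mathrm{oz}}(A^{\alpha}\hookrightarrow A)\le\dim_{\mathrm{Rok}}(A,\alpha)$, first I would apply Proposition \ref{Proposition:properties-oz-dim}(6) to $\alpha$, obtaining $\dim_{\mathrm{oz}}^{G}(\alpha|_{A^{\alpha}})\le\dim_{\mathrm{oz}}^{G}(\alpha)$. The remaining point is to identify $\alpha|_{A^{\alpha}}$: the fixed point algebra of $(C(G)\otimes A,\Delta\otimes\mathrm{id}_{A})$ is $1\otimes A$ (as already used in the proof of Theorem \ref{Theorem:preservation-rokhlin}), and for $a\in A^{\alpha}$ one has $\alpha(a)=1\otimes a$, so under the canonical identification $1\otimes A\cong A$ the restriction $\alpha|_{A^{\alpha}}$ is precisely the inclusion $A^{\alpha}\hookrightarrow A$. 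Since both $A^{\alpha}$ and $1\otimes A$ carry the trivial $G$-action, the $G$-equivariant order zero dimension between them coincides with the ordinary order zero dimension (for a trivial-action algebra the reduced product is the ordinary ultrapower with trivial action, and equivariance of the witnessing maps is automatic), which yields the claimed inequality.

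For the equality $\dim_{\mathrm{Rok}}(A,\alpha)=\dim_{\mathrm{oz}}^{\check{G}}(G\ltimes_{\alpha}A\hookrightarrow\mathbb{K}_{\check{G}}\otimes A)$, I would apply Theorem \ref{Theorem:properties-oz-dim-2}(4) to the nondegenerate injective *-homomorphism $\alpha$ to get $\dim_{\mathrm{oz}}^{G}(\alpha)=\dim_{\mathrm{oz}}^{\check{G}}(G\ltimes\alpha)$, where $G\ltimes\alpha\colon G\ltimes_{\alpha}A\to G\ltimes_{\Delta\otimes\mathrm{id}_{A}}(C(G)\otimes A)$. By the Baaj--Skandalis--Takesaki--Takai duality for compact quantum groups, exactly as in the proof of Theorem \ref{Theorem:preservation-rokhlin}, there is a $\check{G}$-equivariant *-isomorphism $\rho\colon G\ltimes_{\Delta\otimes\mathrm{id}_{A}}(C(G)\otimes A)\to\mathbb{K}_{\check{G}}\otimes A$, the target carrying the stabilization of the trivial $\check{G}$-action on $A$, such that $\rho\circ(G\ltimes\alpha)$ is the canonical nondegenerate inclusion $G\ltimes_{\alpha}A\hookrightarrow\mathbb{K}_{\check{G}}\otimes A$. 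Since a $\check{G}$-equivariant *-isomorphism is positively $\mathcal{L}_{\check{G}}^{\text{C*}}$-existential, hence has $\check{G}$-equivariant order zero dimension $0$ by Theorem \ref{Theorem:properties-oz-dim-2}(1), Proposition \ref{Proposition:properties-oz-dim}(1) applied to $\rho$ and to $\rho^{-1}$ shows that composing with $\rho$ leaves $\dim_{\mathrm{oz}}^{\check{G}}$ unchanged; therefore $\dim_{\mathrm{oz}}^{\check{G}}(G\ltimes\alpha)=\dim_{\mathrm{oz}}^{\check{G}}(G\ltimes_{\alpha}A\hookrightarrow\mathbb{K}_{\check{G}}\otimes A)$.

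This assembly is essentially bookkeeping, so there is no genuinely hard step; the points deserving care are (i) the observation that equivariant and non-equivariant order zero dimension agree for trivial actions, needed to make the superscript-free quantity on the left match the output of Proposition \ref{Proposition:properties-oz-dim}(6), and (ii) the verification that the duality isomorphism $\rho$ really intertwines the dual action with the stabilization of the trivial $\check{G}$-action on $A$ --- this is precisely the content of the duality statement used for Theorem \ref{Theorem:preservation-rokhlin}, and is what forces $\mathbb{K}_{\check{G}}\otimes A$ (identified there with $\mathbb{K}_{G}\otimes A$) to appear rather than a bare crossed product.
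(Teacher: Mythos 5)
Your proposal is correct and follows essentially the same route as the paper: Proposition \ref{Proposition:properties-oz-dim}(6) for the inequality, then Theorem \ref{Theorem:properties-oz-dim-2}(4) combined with Baaj--Skandalis--Takesaki--Takai duality for the equality. The two points you flag as deserving care (that $G$-equivariant and plain order zero dimension agree on trivially-acted algebras, and that composing with the duality isomorphism $\rho$ preserves $\dim_{\mathrm{oz}}^{\check{G}}$) are in fact glossed over in the paper's own proof, so your write-up is, if anything, slightly more complete.
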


\begin{proof}
Let $d$ be the Rokhlin dimension of $\left( A,\alpha \right) $. We consider $%
\alpha $ as a $G$-equivariant *-homomorphism $\alpha :\left( A,\alpha
\right) \rightarrow \left( C\left( G\right) \otimes A,\Delta \otimes \mathrm{%
id}_{A}\right) $. We thus have that $d$ is equal to the $G$-equivariant
order zero dimension of $\alpha $. By part (3) of Theorem \ref%
{Theorem:properties-oz-dim-3}, $\dim _{\mathrm{oz}}^{G}\left( \alpha
|_{A^{\alpha }}\right) \leq \dim _{\mathrm{\mathrm{oz}}}^{G}\left( \alpha
\right) =\dim _{\mathrm{Rok}}\left( A,\alpha \right) $. The restriction $%
\alpha |_{A^{\alpha }}$ is the embedding $A^{\alpha }\hookrightarrow
1\otimes A\subseteq C(G)\otimes A$, so this proves the first equality.

By part~(2) of Theorem~\ref{Theorem:properties-oz-dim-3}, the $\check{G}$%
-equivariant *-homomorphism $G\ltimes \alpha \colon G\ltimes _{\alpha
}A\rightarrow G\ltimes _{\Delta \otimes \mathrm{id}_{A}}C(G)\otimes A$
induced by $\alpha $ is equal to $d$. A particular instance of the
Baaj--Skandalis--Takesaki--Takai duality for compact quantum groups yields a 
$\check{G}$-equivariant *-isomorphism $\rho \colon G\ltimes _{\Delta \otimes 
\mathrm{id}_{A}}(C(G)\otimes A)\rightarrow \mathbb{K}_{G}\otimes A$ such
that $\rho \circ (G\ltimes \alpha )\colon G\ltimes _{\alpha }A\rightarrow 
\mathbb{K}_{G}\otimes A$ is the canonical inclusion. This shows the $\check{G%
}$-equivariant *-homomorphism $G\ltimes _{\alpha }A\hookrightarrow \mathbb{K}%
_{G}\otimes A$ also has $\check{G}$-equivariant order zero dimension $d$.
\end{proof}

\begin{corollary}
Suppose that $G$ is a compact quantum group, and $( A,\alpha ) $ is a $G$%
-C*-algebra. Then%
\begin{equation*}
\dim _{\mathrm{nuc}}( A^{\alpha }) +1\leq ( \dim _{\mathrm{Rok}}( A,\alpha )
+1) ( \dim _{\mathrm{nuc}}( A) +1)
\end{equation*}
and 
\begin{equation*}
\mathrm{dr}( G\ltimes _{\alpha }A) +1\leq ( \dim _{\mathrm{Rok}}( A,\alpha )
+1) ( \mathrm{dr}( A) +1) \text{.}
\end{equation*}
\end{corollary}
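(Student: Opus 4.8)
The plan is to derive both inequalities by combining Theorem~\ref{Theorem:preservation-rokhlin-dim}, which expresses $\dim_{\mathrm{Rok}}(A,\alpha)$ in terms of the order zero dimensions of the inclusions $A^{\alpha}\hookrightarrow A$ and $G\ltimes_{\alpha}A\hookrightarrow\mathbb{K}_{\check{G}}\otimes A$, with the comparison result Proposition~\ref{Proposition:oz-nuclear}, which bounds the nuclear dimension and decomposition rank of the domain of a *-homomorphism in terms of those of the target and the order zero dimension of the map. Two routine auxiliary facts are needed. First, for any *-homomorphism $\theta$ between $G$-C*-algebras we have $\dim_{\mathrm{oz}}(\theta)\le\dim_{\mathrm{oz}}^{G}(\theta)$: composing a family of $G$-equivariant completely positive contractive bimodule maps witnessing $\dim_{\mathrm{oz}}^{G}(\theta)\le d$ with the canonical inclusion $\prod_{\mathcal{F}}^{G}A\hookrightarrow\prod_{\mathcal{F}}A$ (a *-homomorphism compatible with the diagonal embeddings) yields a family witnessing $\dim_{\mathrm{oz}}(\theta)\le d$; this is also immediate from the syntactic characterization in Remark~\ref{Remark:semantic}. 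Second, nuclear dimension and decomposition rank are invariant under tensoring with the compact operators, since $\mathbb{K}_{\check{G}}\otimes A$ is an inductive limit of matrix amplifications of $A$.

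For the first inequality, I would apply Proposition~\ref{Proposition:oz-nuclear} to the *-homomorphism $A^{\alpha}\hookrightarrow A$, obtaining
\begin{equation*}
\dim_{\mathrm{nuc}}(A^{\alpha})+1\le\bigl(\dim_{\mathrm{oz}}(A^{\alpha}\hookrightarrow A)+1\bigr)\bigl(\dim_{\mathrm{nuc}}(A)+1\bigr),
\end{equation*}
and then substitute the estimate $\dim_{\mathrm{oz}}(A^{\alpha}\hookrightarrow A)\le\dim_{\mathrm{Rok}}(A,\alpha)$ from Theorem~\ref{Theorem:preservation-rokhlin-dim}. For the second inequality, Theorem~\ref{Theorem:preservation-rokhlin-dim} gives $\dim_{\mathrm{Rok}}(A,\alpha)=\dim_{\mathrm{oz}}^{\check{G}}(G\ltimes_{\alpha}A\hookrightarrow\mathbb{K}_{\check{G}}\otimes A)$, so by the first auxiliary fact $\dim_{\mathrm{oz}}(G\ltimes_{\alpha}A\hookrightarrow\mathbb{K}_{\check{G}}\otimes A)\le\dim_{\mathrm{Rok}}(A,\alpha)$. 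Applying Proposition~\ref{Proposition:oz-nuclear} to $G\ltimes_{\alpha}A\hookrightarrow\mathbb{K}_{\check{G}}\otimes A$ and using that $\mathrm{dr}(\mathbb{K}_{\check{G}}\otimes A)=\mathrm{dr}(A)$ then yields
\begin{equation*}
\mathrm{dr}(G\ltimes_{\alpha}A)+1\le\bigl(\dim_{\mathrm{Rok}}(A,\alpha)+1\bigr)\bigl(\mathrm{dr}(A)+1\bigr),
\end{equation*}
as desired.

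I do not expect a genuine obstacle here: all of the substantive content is already contained in Theorem~\ref{Theorem:preservation-rokhlin-dim} (whose proof rests on the Baaj--Skandalis--Takesaki--Takai duality) and in Proposition~\ref{Proposition:oz-nuclear}. The only points requiring a moment's care are the passage from $\check{G}$-equivariant to ordinary order zero dimension and the stability of decomposition rank under tensoring with $\mathbb{K}_{\check{G}}$, both of which are standard.
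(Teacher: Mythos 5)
Your proposal is correct and follows exactly the route the paper intends: the paper's proof is simply "immediate consequence of Theorem \ref{Theorem:preservation-rokhlin-dim} and Proposition \ref{Proposition:oz-nuclear}," and your two auxiliary observations (dropping from $\check{G}$-equivariant to ordinary order zero dimension, and stability of $\mathrm{dr}$ and $\dim_{\mathrm{nuc}}$ under tensoring with $\mathbb{K}_{\check{G}}$) are precisely the routine details being left implicit there.
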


\begin{proof}
This is an immediate consequence of Theorem \ref%
{Theorem:preservation-rokhlin-dim} and Proposition \ref%
{Proposition:oz-nuclear}.
\end{proof}

\begin{remark}
Let $\theta \colon (A,\alpha )\rightarrow (B,\beta )$ be a $G$-equivariant
*-homomorphism. Using part~(1) of~Proposition \ref%
{Proposition:properties-oz-dim} at the fourth step, we get%
\begin{align*}
\dim _{\mathrm{Rok}}(A,\alpha )+1 &=\dim _{\mathrm{\mathrm{oz}}}^{G}( \alpha
) +1\leq \dim _{\mathrm{\mathrm{oz}}}^{G}( ( \mathrm{id}\otimes \theta )
\circ \alpha ) +1=\dim _{\mathrm{\mathrm{oz}}}^{G}( \beta \circ \theta ) +1
\\
&\leq (\dim _{\mathrm{\mathrm{oz}}}^{G}( \beta ) +1)(\dim _{\mathrm{\mathrm{%
oz}}}^{G}( \theta ) +1)=(\dim _{\mathrm{Rok}}( B,\beta ) +1)(\dim _{\mathrm{%
\mathrm{oz}}}^{G}( \theta ) +1)\text{.}
\end{align*}
\end{remark}

The following dimensional inequalities follow from the remark above and \cite%
[Theorem 5.40 and Theorem 4.41]{gardella_equivariant_2016}. We denote by $%
\mathcal{Z}$ the Jiang-Su algebra, and by $\mathcal{O}_2$ and $\mathcal{O}_{\infty }$ the Cuntz algebras on two and infinitely many generators,
respectively.

\begin{theorem}
Let $( A,\alpha ) $ be a $G$-C*-algebra, and let $U$ be UHF-algebra of
infinite type. Then 
\begin{equation*}
\dim _{\mathrm{Rok}}( A\otimes \mathcal{Z},\alpha \otimes \mathrm{id}_{%
\mathcal{Z}}) \leq 2\dim _{\mathrm{Rok}}( A\otimes U,\alpha \otimes \mathrm{%
id}_{U}) +1
\end{equation*}%
and%
\begin{equation*}
\dim _{\mathrm{Rok}}( A\otimes \mathcal{O}_{\infty },\alpha \otimes \mathrm{%
id}_{\mathcal{O}_{\infty }}) \leq 2\dim _{\mathrm{Rok}}( A\otimes \mathcal{O}%
_{2},\alpha \otimes \mathrm{id}_{\mathcal{O}_{2}}) +1\text{.}
\end{equation*}
\end{theorem}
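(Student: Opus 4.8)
The plan is to obtain both inequalities from the dimensional estimate recorded in the Remark immediately preceding this theorem: for every injective $G$-equivariant $*$-homomorphism $\theta\colon(C,\gamma)\to(D,\delta)$ of $G$-C*-algebras,
\begin{equation*}
\dim_{\mathrm{Rok}}(C,\gamma)+1\le\big(\dim_{\mathrm{Rok}}(D,\delta)+1\big)\big(\dim_{\mathrm{oz}}^{G}(\theta)+1\big).
\end{equation*}
For the first inequality I would apply this with $(C,\gamma)=(A\otimes\mathcal{Z},\alpha\otimes\mathrm{id}_{\mathcal{Z}})$, $(D,\delta)=(A\otimes U,\alpha\otimes\mathrm{id}_{U})$, and $\theta=\mathrm{id}_{A}\otimes\rho$, where $\rho\colon\mathcal{Z}\to U$ is any unital $*$-homomorphism; such a $\rho$ exists and is automatically injective because $\mathcal{Z}$ is simple and $U$ is $\mathcal{Z}$-stable, so $\mathcal{Z}\hookrightarrow\mathcal{Z}\otimes U\cong U$. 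Then $\theta$ is a nondegenerate injective $G$-equivariant $*$-homomorphism, since it is the identity on the tensor factor carrying the $G$-action. Provided one knows $\dim_{\mathrm{oz}}^{G}(\theta)\le 1$, the displayed estimate gives $\dim_{\mathrm{Rok}}(A\otimes\mathcal{Z},\alpha\otimes\mathrm{id}_{\mathcal{Z}})+1\le 2\big(\dim_{\mathrm{Rok}}(A\otimes U,\alpha\otimes\mathrm{id}_{U})+1\big)$, which is the first inequality; the second is obtained identically with $\mathcal{Z}$, $U$ replaced by $\mathcal{O}_{\infty}$, $\mathcal{O}_{2}$ and with $\rho\colon\mathcal{O}_{\infty}\to\mathcal{O}_{2}$ a unital $*$-homomorphism, which exists because $\mathcal{O}_{2}\cong\mathcal{O}_{2}\otimes\mathcal{O}_{\infty}$.

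So the core of the argument is the bound $\dim_{\mathrm{oz}}^{G}(\mathrm{id}_{A}\otimes\rho)\le 1$, and here I would start from the non-equivariant input $\dim_{\mathrm{oz}}(\rho)\le 1$, which is \cite[Theorem 5.40]{gardella_equivariant_2016} for the pair $(\mathcal{Z},U)$ and \cite[Theorem 4.41]{gardella_equivariant_2016} for $(\mathcal{O}_{\infty},\mathcal{O}_{2})$. Fixing a cardinal $\kappa$ and a countably incomplete $\kappa$-good ultrafilter $\mathcal{U}$ as in Definition \ref{Definition:oz-dimension}, this provides completely positive contractive order zero $\mathcal{Z}$-bimodule maps $\sigma_{0},\sigma_{1}\colon U\to\prod_{\mathcal{U}}\mathcal{Z}$ (with $U$ a $\mathcal{Z}$-bimodule via $\rho$) whose sum $\sigma$ is contractive and satisfies $\sigma\circ\rho=\Delta_{\mathcal{Z}}$. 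I would then set $\psi_{j}=\mathrm{id}_{A}\otimes\sigma_{j}$. Each $\psi_{j}$ is completely positive contractive of order zero (the tensor product of the $*$-homomorphism $\mathrm{id}_{A}$ with an order zero map, as one sees by passing to the associated cones), it is $G$-equivariant for the actions $\alpha\otimes\mathrm{id}$ because it fixes the $A$-coordinate, and it is an $(A\otimes\mathcal{Z})$-bimodule map via $\theta=\mathrm{id}_{A}\otimes\rho$ because $\sigma_{j}$ is a $\mathcal{Z}$-bimodule map via $\rho$; moreover $\psi_{0}+\psi_{1}=\mathrm{id}_{A}\otimes\sigma$ is contractive and $(\psi_{0}+\psi_{1})\circ\theta=\mathrm{id}_{A}\otimes(\sigma\circ\rho)=\Delta_{A\otimes\mathcal{Z}}$. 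Once the $\psi_{j}$ are known to take values in $\prod_{\mathcal{U}}^{G}(A\otimes\mathcal{Z})$, they witness $\dim_{\mathrm{oz}}^{G}(\theta)\le 1$.

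The step I expect to be the main obstacle is exactly that last point: a priori $\mathrm{id}_{A}\otimes\sigma_{j}$ lands only in $A\otimes\big(\prod_{\mathcal{U}}\mathcal{Z}\big)$, and I must identify this canonically with a subalgebra of the \emph{continuous part} $\prod_{\mathcal{U}}^{G}(A\otimes\mathcal{Z})$ rather than of the full $\ell^{\infty}$-product. Here nuclearity of $\mathcal{Z}$ (respectively $\mathcal{O}_{\infty}$, $\mathcal{O}_{2}$) is used to see that the canonical $*$-homomorphism $A\otimes\big(\prod_{\mathcal{U}}\mathcal{Z}\big)\to\prod_{\mathcal{U}}(A\otimes\mathcal{Z})$ is isometric (as in the finite-dimensional identification used in Subsubsection \ref{Subsubsection:ultraproducts}, cf.\ \cite[Lemma 7.4]{heinrich_ultraproducts_1980}). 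Since the action $\alpha\otimes\mathrm{id}_{\mathcal{Z}}$ is trivial on $\mathcal{Z}$, the $\lambda$-isotypical component of $A\otimes\mathcal{Z}$ is $A_{\lambda}\otimes\mathcal{Z}$, so for $a\in A_{\lambda}$ and a bounded family $(z_{i})$ in $\mathcal{Z}$ the element $[a\otimes z_{i}]_{\mathcal{U}}$ lies in $\prod_{\mathcal{U}}\big((A\otimes\mathcal{Z})_{\lambda}\big)$, hence in the continuous part; as $\bigcup_{\lambda\in\mathrm{Rep}(G)}A_{\lambda}\odot U$ is dense in $A\otimes U$ (the Podle\'{s} subalgebra being dense in $A$) and $\psi_{j}$ is norm-continuous, the image of $\psi_{j}$ is contained in the closed subspace $\prod_{\mathcal{U}}^{G}(A\otimes\mathcal{Z})$. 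The remaining verifications — complete positivity, contractivity, the order zero property, $G$-equivariance, the bimodule identity, and $(\psi_{0}+\psi_{1})\circ\theta=\Delta_{A\otimes\mathcal{Z}}$ — are routine, and combining $\dim_{\mathrm{oz}}^{G}(\theta)\le 1$ with the Remark yields the theorem.
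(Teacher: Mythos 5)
Your proposal is correct and follows essentially the same route as the paper, whose (essentially unwritten) proof is exactly the combination of the remark you quote with the non-equivariant order zero dimension bounds for the unital embeddings $\mathcal{Z}\hookrightarrow U$ and $\mathcal{O}_{\infty}\hookrightarrow\mathcal{O}_{2}$ from the cited reference; your construction of the witnesses $\mathrm{id}_{A}\otimes\sigma_{j}$ supplies precisely the details the paper leaves implicit. The only imprecision is the claim that $A\otimes\bigl(\prod_{\mathcal{U}}\mathcal{Z}\bigr)\to\prod_{\mathcal{U}}(A\otimes\mathcal{Z})$ is isometric (for non-exact $A$ this is delicate), but you do not actually need isometry: you only need the maps $\mathrm{id}_{A}\otimes\sigma_{j}$ to be well-defined completely positive contractive order zero maps landing in the continuous part, and that is exactly what your density argument over the spectral subspaces $A_{\lambda}\odot U$ (together with nuclearity of the strongly self-absorbing factors) establishes.
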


Finally, we show that actions with finite Rokhlin dimension are free. This
result is new even for actions of classical finite groups. Previous partial
results were considerably more technical, and the quantum group perspective
makes the argument significantly more transparent.

\begin{theorem}
Let $(A,\alpha)$ be a $G$-C*-algebra. If $\dim_{\mathrm{Rok}}(\alpha)<\infty$%
, then $\alpha$ is free.
\end{theorem}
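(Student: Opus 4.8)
The plan is to deduce the statement directly from the results already established, with essentially no new work. First I would unwind the hypothesis: by Definition~\ref{Definition:rokhlin-dimension}, the assumption $\dim_{\mathrm{Rok}}(\alpha)<\infty$ means precisely that the $G$-equivariant order zero dimension of the *-homomorphism
\[
\alpha\colon (A,\alpha)\longrightarrow (C(G)\otimes A,\Delta\otimes\mathrm{id}_A)
\]
is finite. Note that $\alpha$ is a nondegenerate injective *-homomorphism, since by Definition~\ref{Definition:compact-action} every action of a compact quantum group is by definition injective and nondegenerate; hence $\alpha$ is of the form to which Theorem~\ref{Theorem:properties-oz-dim-2} applies.

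Next I would recall from Example~\ref{eg:free} that the $G$-C*-algebra $(C(G)\otimes A,\Delta\otimes\mathrm{id}_A)$ is free. With this observation together with the finiteness of $\dim_{\mathrm{oz}}^{G}(\alpha)$, part~(5) of Theorem~\ref{Theorem:properties-oz-dim-2}, applied to the nondegenerate injective *-homomorphism $\theta=\alpha$ with target $(B,\beta)=(C(G)\otimes A,\Delta\otimes\mathrm{id}_A)$, yields immediately that $(A,\alpha)$ is free. This is exactly the desired conclusion.

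There is no real obstacle in this argument, as all the substantive content has been isolated in Theorem~\ref{Theorem:properties-oz-dim-2}(5): its proof already handles the passage from freeness of the target through the order zero factorization $\Delta_A = \psi\circ\alpha$ into the ultrapower $\prod_{\mathcal{U}}^G A$ and the positive existentiality of the diagonal embedding. The only verifications needed in writing this up are the trivial ones that $\alpha$ is nondegenerate and injective and that $\Delta\otimes\mathrm{id}_A$ gives a free action, both of which are recorded earlier in the paper. Accordingly, the write-up will be a two-line corollary-style proof citing Example~\ref{eg:free} and Theorem~\ref{Theorem:properties-oz-dim-2}(5).
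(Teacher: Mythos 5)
Your proposal is correct and follows exactly the paper's own argument: freeness of $(C(G)\otimes A,\Delta\otimes\mathrm{id}_A)$ from Example~\ref{eg:free}, combined with part~(5) of Theorem~\ref{Theorem:properties-oz-dim-2} applied to $\theta=\alpha$, which is admissible since actions are by definition injective and nondegenerate. No gaps; the write-up can be the two-line corollary-style proof you describe.
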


\begin{proof}
By Example~\ref{eg:free}, the $G$-C*-algebra $C(G)\otimes A$ is free. Since $%
\alpha\colon A\to C(G)\otimes A$ has finite $G$-equivariant order zero
dimension, the result follows from part~(4) of Theorem~\ref%
{Theorem:properties-oz-dim-2}.
\end{proof}

\subsection{Duality}

One can isolate a notion of dimension for actions of \emph{discrete }quantum
groups, which is dual to the notion of Rokhlin dimension for actions of
compact quantum groups. Suppose that $G$ is a compact quantum group, and let 
$( A,\alpha ) $ be a $\check{G}$-C*-algebra. Consider the reduced crossed
product $\check{G}\ltimes _{\alpha ,\mathrm{r}}A$, and the canonical
inclusion $\iota _{C(G)}\colon C( G) \rightarrow M(\check{G}\ltimes _{\alpha
,\mathrm{r}}A)$. Denote by $V$ the fundamental unitary of $G$. Consider now
the unitary 
\begin{equation*}
V_{\alpha }:=(\mathrm{id}_{c_{0}(\check{G})}\otimes \iota _{C(G)})(V)\in
M(c_{0}(\check{G})\otimes (\check{G}\ltimes _{\alpha ,\mathrm{r}}A))\text{.}
\end{equation*}%
Then the map $\mathrm{\mathrm{Ad}}(V_{\alpha }^{\ast })$ turns $M(\check{G}%
\ltimes _{\alpha ,\mathrm{r}}A)$ into a $\check{G}$-C*-algebra.

\begin{definition}
\label{Definition:approximation-dimension}Let $(A,\alpha )$ be a $\check{G}$%
-C*-algebra. The \emph{\ representation dimension } $\dim _{\mathrm{rep}%
}(A,\alpha )$ of $(A,\alpha )$ is the $\check{G}$-equivariant order zero
dimension of the canonical embedding $(A,\alpha )\rightarrow (\check{G}%
\ltimes _{\alpha ,\mathrm{r}}A,\mathrm{\mathrm{Ad}}(V_{\alpha }^{\ast }))$.
\end{definition}

In the case of dimension zero, and when $G$ is coexact and $A$ is separable,
Definition \ref{Definition:approximation-dimension} recovers the notion of
spatial approximate representability from \cite[Definition 4.7]%
{barlak_spatial_2017}. The following result generalizes \cite[Theorem 4.12]%
{barlak_spatial_2017}, which is then the case of dimension zero.

\begin{theorem}
\label{Theorem:duality}Let $G$ be a compact quantum group and let $(A,\alpha
)$ be a $G$-C*-algebra. Then 
\begin{equation*}
\dim _{\mathrm{Rok}}(A,\alpha )=\dim _{\mathrm{rep}}(G\rtimes _{\alpha ,r}A,%
\check{\alpha}).
\end{equation*}
\end{theorem}

\begin{proof}
Using the identifications from \cite[Proposition 4.10, Proposition 4.11]%
{barlak_spatial_2017}, the conclusion follows from part~(2) of Theorem \ref%
{Theorem:properties-oz-dim-3} applied to $\alpha \colon A\rightarrow
C(G)\otimes A$.
\end{proof}

\setcounter{section}{0} 
\setcounter{equation}{0} \renewcommand{\theequation}{\thesection.%
\arabic{equation}} \setcounter{figure}{0} \setcounter{table}{0}

\appendix

\section*{Appendix}

\renewcommand{\thesection}{A}

\setcounter{theorem}{0}

We recall here the fundamental notions concerning first order logic for
metric structure. A good introduction to this subject can be found in \cite%
{ben_yaacov_model_2008}. The systematic study of C*-algebras from the
perspective of model theory has been undertaken in \cite{farah_model_2016}.
We will consider the framework of languages with domains of quantification
as introduced in \cite{farah_model_2014}, which is particularly suitable for
dealing with structures from functional analysis. In fact, we will consider
a slightly more general setting, which is necessary for our purposes, where
the interpretation of the domains of quantifications of a given sort are
only required to be \emph{dense }in the interpretation of the sort. We will
also consider the situation where, rather than a single distinguished metric
single, the language contains an upward directed collection of \emph{%
pseudometric }symbols. This is useful when considering structures such as
modules and bimodules. It is clear that all the results of \cite%
{farah_model_2014} go through in this slightly more general setting.

\subsection{Syntax}

A \emph{language }$\mathcal{L}$ is given by:

\begin{itemize}
\item a collection of \emph{sorts} $\mathcal{S}$,

\item a collection of function symbols $f$,

\item a collection of relation symbols $R$,

\item for each sort $\mathcal{S}$, an ordered, \emph{upward directed }%
collection of \emph{domains of quantification} $D$ for $\mathcal{S}$,

\item for each sort $\mathcal{S}$, a collection of \emph{variables }of sort $%
\mathcal{S}$,

\item for each sort $\mathcal{S}$, an ordered, \emph{upward directed}
collection of \emph{pseudometric symbols} $d_{\mathcal{S}}$ of arity $2$
with input sorts all equal to $\mathcal{S}$,

\item for each function and relation symbol $B$, a natural number $n_{B}$
(its \emph{arity}),

\item for each function symbol $f$, a distinguished tuple $\mathcal{S}%
_{1}^{f},\ldots ,\mathcal{S}_{n_{f}}^{f}$ of \emph{input sorts} and an \emph{%
output sort} $\mathcal{S}^{f}$,

\item for each relation symbol $R$, a distinguished tuple $\mathcal{S}%
_{1}^{R},\ldots ,\mathcal{S}_{n_{R}}^{R}$ of \emph{input sorts},

\item for each function symbol $f$ and for each tuple $D_{1},\ldots
,D_{n_{f}}$ of domains of quantifications for $\mathcal{S}_{1}^{f},\ldots ,%
\mathcal{S}_{n_{f}}^{f}$, and for each tuple of pseudometric symbols $%
d_{1},\ldots ,d_{n_{f}}$ of sort $\mathcal{S}_{1},\ldots ,\mathcal{S}%
_{n_{f}} $, distinguished domains of quantification $D_{D_{1},\ldots
,D_{n_{f}}}^{f}$ and a distinguished pseudometric symbol $d_{D_{1},\ldots
,D_{n_{f}},d_{1},\ldots ,d_{n_{f}}}^{f}$ for the output sort $\mathcal{S}%
^{f} $ of $f$, and a distinguished \emph{continuity modulus} $\varpi
_{D_{1},\ldots ,D_{n_{f}},d_{1},\ldots ,d_{n_{f}}}^{f}$;

\item for each relation symbol $R$ and for each tuple $D_{1},\ldots
,D_{n_{R}}$ of domains of quantifications for $\mathcal{S}_{1}^{R},\ldots ,%
\mathcal{S}_{n_{f}}^{R}$, and for each tuple of pseudometric symbols $%
d_{1},\ldots ,d_{n_{f}}$ of sort $\mathcal{S}_{1},\ldots ,\mathcal{S}%
_{n_{f}} $, a distinguished compact interval $D_{D_{1},\ldots
,D_{n_{R}}}^{R} $ in $\mathbb{R}$ and a distinguished \emph{continuity
modulus} $\varpi _{D_{1},\ldots ,D_{n}}^{R}$.
\end{itemize}

A function symbol $f$ is allowed to have arity $n_{f}=0$, in which case it
is called a \emph{constant symbol}. Given a language $\mathcal{L}$, one can
define the notion of $\mathcal{L}$-term by recursion. Formally, one declares
that variables are terms, and if $t_{1},\ldots ,t_{n}$ are $\mathcal{L}$%
-terms and $f$ is an $n$-ary function symbol in $\mathcal{L}$, then $f(
t_{1},\ldots ,t_{n}) $ is an $\mathcal{L}$-term. If $t$ is an $\mathcal{L}$%
-term and $x_{1},\ldots ,x_{k}$ are the variables that appear in $t$, then
one also writes $t$ as $t( x_{1},\ldots ,x_{k}) $.

Formulas are defined starting from terms by recursion. A \emph{basic }$%
\mathcal{L}$-\emph{formula} in the free variables $\overline{x}=(
x_{1},\ldots ,x_{k}) $ is an expression $\varphi ( \overline{x}) $ of the
form $R( t_{1},\ldots ,t_{n}) $ where $t_{1}( \overline{x}) ,\ldots ,t_{n}( 
\overline{x}) $ are $\mathcal{L}$-terms and $R$ is an $n$-ary relation
symbol in $\mathcal{L} $. A \emph{quantifier-free }$\mathcal{L}$-formula is
an expression $\varphi ( \overline{x}) $ of the form%
\begin{equation*}
q( \psi _{1}( \overline{x}) ,\ldots ,\psi _{n}( \overline{x}) )
\end{equation*}%
where $\psi _{1},\ldots ,\psi _{n}$ are basic $\mathcal{L}$-formulas in the
free variables $\overline{x}$ and $q\colon \mathbb{R}^{n}\rightarrow \mathbb{%
R}$ is a continuous function. Such a quantifier-free formula is \emph{%
positive }if $q\colon \mathbb{R}^{n}\rightarrow \mathbb{R}$ has the property
that $s_{i}\leq t_{i}$ for $i=1,2,\ldots ,n$ implies that $q( s_{1},\ldots
,s_{n}) \leq q( t_{1},\ldots ,t_{n}) $. If furthermore $q$ is of the form $%
q( z_{1},\ldots ,z_{n}) =\max \left\{ u_{1}( z_{1}) ,\ldots ,u_{n}( z_{n})
\right\} $ where $u_{1},\ldots ,u_{n}\colon \mathbb{R}\rightarrow \mathbb{R}$
are continuous nondecreasing functions, then we say that $\varphi ( 
\overline{x}) $ is a \emph{positive primitive }quantifier-free $\mathcal{L}$%
-formula.

A (positive/positive primitive) \emph{existential} $\mathcal{L}$-formula in
the free variables $x_{1},\ldots ,x_{k}$ is an expression 
of the form $\varphi (
x_{1},\ldots ,x_{k})=\inf_{y_{1}}\cdots \inf_{y_{\ell }}\psi
( x_{1},\ldots ,x_{k},y_{1},\ldots ,y_{\ell }) $ where $\psi ( \overline{x},%
\overline{y}) $ is a (positive/positive primitive) quantifier-free $\mathcal{%
L}$-formula. A (positive/positive primitive) $\forall \exists $-$\mathcal{L}$%
-formula in the free variables $x_{1},\ldots ,x_{k}$ is an expression $%
\varphi ( x_{1},\ldots ,x_{k}) $ of the form $\sup_{z_{1}}\cdots
\sup_{z_{m}}\inf_{y_{1}}\cdots \inf_{y_{\ell }}\psi ( x_{1},\ldots
,x_{k},y_{1},\ldots ,y_{\ell },z_{1},\ldots ,z_{m}) $ where $\psi ( 
\overline{x},\overline{y}) $ is a (positive/positive primitive)
quantifier-free $\mathcal{L}$-formula. Finally, arbitrary (positive/positive
primitive) $\mathcal{L}$-formulas are defined similarly as above, but
allowing an arbitrary finite number of alternations between $\sup $ and $%
\inf $. A (positive/positive primitive) $\mathcal{L}$-\emph{sentence }is a
(positive/positive primitive) $\mathcal{L}$-formula with no free variables.

For each $\mathcal{L}$-formula $\varphi ( x_{1},\ldots ,x_{k}) $, for each
choice $D_{1},\ldots ,D_{k}$ of domains of quantification for the sorts of $%
x_{1},\ldots ,x_{k}$, and for each choice of pseudometric symbols $%
d_{1},\ldots ,d_{k}$ for the sorts $x_{1},\ldots ,x_{k}$ one can define by
recursion on the complexity of the formula, a uniform continuity modulus $%
\varpi _{D_{1},\ldots ,D_{k},d_{1},\ldots ,d_{k}}^{\varphi }$ for $\varphi $%
, as well as a compact interval $D_{D_{1},\ldots ,D_{k}}^{\varphi }$ in $%
\mathbb{R}$ where $\varphi $ takes values. A collection $\mathcal{F}$ of
formulas in the free variables $x_{1},\ldots ,x_{k}$ is\emph{\ uniformly
equicontinuous} if for any choice $D_{1},\ldots ,D_{k}$ of domains of
quantification for the sorts of $x_{1},\ldots ,x_{k}$, the continuity moduli 
$\left\{ \varpi _{D_{1},\ldots ,D_{k},d_{1},\ldots ,d_{k}}^{\varphi }\colon
\varphi \in \mathcal{F}\right\} $ are uniformly bounded, and the compact
intervals $\left\{ D_{D_{1},\ldots ,D_{k}}^{\varphi }\colon \varphi \in 
\mathcal{F}\right\} $ are all contains in a common compact interval in $%
\mathbb{R}$.

\subsection{Semantics}

An $\mathcal{L}$-structure $M$ if given by:

\begin{itemize}
\item for each sort $\mathcal{S}$, a set $\mathcal{S}^{M}$,

\item for each sort $\mathcal{S}$, and for each pseudometric symbol $d_{%
\mathcal{S}}$ of sort $\mathcal{S}$, $d_{\mathcal{S}}^{M}$ is a pseudometric
on $\mathcal{S}$, such that the uniformity defined by these pseudometrics 
\cite[Definition 2.1]{pachl_uniform_2013} is complete \cite[Definition 1.14]%
{pachl_uniform_2013}, and such that the assignment $d_{\mathcal{S}}\mapsto
d_{\mathcal{S}}^{M}$ is order preserving (where the order for pseudometric
is pointwise comparison);

\item for each domain of quantification $D$ for $\mathcal{S}$, a closed
subset $D^{M}$ of $\mathcal{S}^{M}$ such that the assignment $D\mapsto D^{M}$
is order preserving (where the collection of closed subsets of $\mathcal{S}%
^{M}$ is ordered with respect to inclusion), and such that the union of $%
D^{M}$ when \ $D$ ranges among the domains of quantification for $\mathcal{S}
$ is dense in $\mathcal{S}^{M}$ with respect to the uniformity described
above,

\item for each function symbol $f$ with input sorts $\mathcal{S}%
_{1}^{f},\ldots ,\mathcal{S}_{n_{f}}^{f}$ and output sort $\mathcal{S}^{f}$,
a function $f^{M}\colon (\mathcal{S}_{1}^{f})^{M}\times \cdots \times (%
\mathcal{S}_{n_{f}}^{f})^{M}\rightarrow (\mathcal{S}^{f})^{M}$ (the \emph{%
interpretation }of $f$ in $M$) such that, for any choice of domains of
quantification $D_{1},\ldots ,D_{n_{f}}$ and pseudometric symbols $%
d_{1},\ldots ,d_{n_{f}}$ for the sorts $\mathcal{S}_{1}^{f},\ldots ,\mathcal{%
S}_{n_{f}}^{f}$, the restriction of $f^{M}$ to $D_{1}^{M}\times \cdots
\times D_{n_{f}}^{M}$ is a uniformly continuous map with continuity modulus $%
\varpi _{D_{1},\ldots ,D_{n_{f}},d_{1},\ldots ,d_{n_{f}}}^{f}$ with respect
to the metrics $\max \{d_{1},\ldots ,d_{n_{f}}\}$ and $d_{D_{1},\ldots
,D_{n_{f}},d_{1},\ldots ,d_{n_{f}}}^{f}$, and its range is contained in $%
(D_{D_{1},\ldots ,D_{n_{f}}}^{f})^{M}$;

\item for each relation symbol $R$ with input sorts $\mathcal{S}%
_{1}^{R},\ldots ,\mathcal{S}_{n_{f}}^{R}$ and output sort $\mathcal{S}^{R}$,
a function $R^{M}\colon (\mathcal{S}_{1}^{R})^{M}\times \cdots \times (%
\mathcal{S}_{n_{R}}^{R})^{M}\rightarrow \mathbb{R}$ (the \emph{interpretation%
} of $R$ in $M$) such that, for any choice of domains of quantification $%
D_{1},\ldots ,D_{n_{f}}$ and pseudometric symbols $d_{1},\ldots ,d_{n_{f}}$
for $\mathcal{S}_{1}^{R},\ldots ,\mathcal{S}_{n_{R}}^{R}$, the restriction
of $f^{M}$ to $D_{1}^{M}\times \cdots \times D_{n_{f}}^{M}$ is a uniformly
continuous map with continuity modulus $\varpi _{D_{1},\ldots
,D_{n_{f}},d_{1},\ldots ,d_{n_{f}}}^{R}$ and its range is contained in $%
D_{D_{1},\ldots ,D_{n_{R}}}^{R}$.
\end{itemize}

Suppose now that $t( x_{1},\ldots ,x_{k}) $ is an $\mathcal{L}$-term in the
variables $x_{1},\ldots ,x_{k}$ of sorts $\mathcal{S}_{1},\ldots ,\mathcal{S}%
_{k}$, and that $M$ is an $\mathcal{L}$-structure. Then one can define by
recursion on the complexity of $t$ the output sort $\mathcal{S}$ of $t$ and
the \emph{interpretation }$t^{M}$ of $t$ in $M$, which is a function $%
t^{M}\colon \mathcal{S}_{1}^{M}\times \cdots \times \mathcal{S}%
_{k}^{M}\rightarrow \mathcal{S}^{M}$. Similarly, for any $\mathcal{L}$%
-formula $\varphi ( x_{1},\ldots ,x_{k}) $ in the free variables $%
x_{1},\ldots ,x_{k}$ of sorts $\mathcal{S}_{1},\ldots ,\mathcal{S}_{k}$, one
can defined by recursion on the complexity of $\varphi $ its interpretation $%
\varphi ^{M}$ in $M$, which is a function $\varphi ^{M}\colon \mathcal{S}%
_{1}^{M}\times \cdots \times \mathcal{S}_{k}^{M}\rightarrow \mathbb{R}$. For
any choice of domains of quantification $D_{1},\ldots ,D_{k}$ for the sorts
of $x_{1},\ldots ,x_{k}$, the restriction of $\varphi ^{M}$ to $%
D_{1}^{M}\times \cdots \times D_{k}^{M}$ is uniformly continuous with the
continuity modulus $\varpi _{D_{1},\ldots ,D_{k}}^{\varphi }$ as in the
previous section,\ and takes values in the compact interval $D_{D_{1},\ldots
,D_{k}}^{\varphi }$ of $\mathbb{R}$.

Suppose now that $\mathcal{C}$ is a class of $\mathcal{L}$-structures. Then $%
\mathcal{C}$ defines a metric in the class of $\mathcal{L}$-formulas by
setting, for every $\mathcal{L}$-formulas $\varphi ( x_{1},\ldots ,x_{k})
,\psi ( x_{1},\ldots ,x_{k}) $,%
\begin{equation*}
d_{\mathcal{C}}( \varphi ,\psi ) =\sup \left\vert \varphi ( \overline{a})
-\psi ( \overline{a}) \right\vert
\end{equation*}%
where $M$ ranges among all the $\mathcal{L}$-structures in $\mathcal{C}$,
and $\overline{a}$ ranges among all the tuples in $M$ of the correct sorts.

\begin{definition}
\label{Definition:separable-language}The language $\mathcal{L}$ is \emph{%
separable }for a class of $\mathcal{L}$-structures\emph{\ }$\mathcal{C}$ if
the space of $\mathcal{L}$-formulas endowed with the metric associated with $%
\mathcal{C}$ as above is separable. More generally, we define the \emph{%
density character }of $\mathcal{L}$ for $\mathcal{C}$ to be the density
character of the space of $\mathcal{L}$-formulas with respect to the metric $%
d_{\mathcal{C}}$.
\end{definition}

\begin{definition}
\label{Definition:axiomatizable}A class $\mathcal{C}$ of $\mathcal{L}$%
-structures is \emph{axiomatizable} in the language $\mathcal{L}$ if there
exists a collection $\mathcal{A}$ of $\mathcal{L}$-conditions of the form $%
\sigma \leq r$ (\emph{axioms}) where $\sigma $ is an $\mathcal{L}$-sentence
and $r\in \mathbb{R}$, such that for any $\mathcal{L}$-structure $M$, $M$
belongs to $\mathcal{C}$ if and only if $\sigma ^{M}\leq r$ for any
condition in $\mathcal{A}$.
\end{definition}

\begin{definition}
\label{Definition:pp-axiomatizable}A class of $\mathcal{L}$-structures is
(positively/positively primitively)\emph{\ }$\forall \exists $-\emph{%
axiomatizable} in the language $\mathcal{L}$ if it is axiomatizable by a
collection of $\mathcal{L}$-conditions of the form $\sigma \leq r$ where $%
\sigma $ is a (positive/positive primitive)\emph{\ }$\forall \exists $-$%
\mathcal{L}$-sentence and $r\in \mathbb{R}$.
\end{definition}

The following notion has been essentially introduced in \cite[Definition
5.7.1]{farah_model_2016}.

\begin{definition}
\label{Definition:pp-definable-collection}A class $\mathcal{C}$ of $\mathcal{%
L}$-structures is\emph{\ definable by a uniform collection of positive
existential} $\mathcal{L}$-\emph{formulas} if for every choice of sorts $%
\mathcal{S}_{1},\ldots ,\mathcal{S}_{k}$ in $\mathcal{L}$ and domains of
quantification $D_{1},\ldots ,D_{k}$ for $\mathcal{S}_{1},\ldots ,\mathcal{S}%
_{k}$, there exist and a uniformly equicontinuous family $\mathcal{F}(
D_{1},\ldots ,D_{k}) $ of positive existential $\mathcal{L}$-formulas in the
free variables $x_{1},\ldots ,x_{k}$ of sorts $\mathcal{S}_{1},\ldots ,%
\mathcal{S}_{k}$ such that, for every $\mathcal{L}$-structure $M$, the
following assertions are equivalent:

\begin{enumerate}
\item $M$ belongs to $\mathcal{C}$;

\item for every choice of sorts $\mathcal{S}_{1},\ldots ,\mathcal{S}_{k}$ in 
$\mathcal{L}$, of domains of quantification $D_{1},\ldots ,D_{k}$ for $%
\mathcal{S}_{1},\ldots ,\mathcal{S}_{k}$, of elements $a_{i}\in D_{i}^{M}$
for $i=1,2,\ldots ,k$, and for every $\varepsilon >0$, there exists a
formula $\varphi ( x_{1},\ldots ,x_{k}) $ in $\mathcal{F}( D_{1},\ldots
,D_{k}) $ such that $M\models \varphi ( a_{1},\ldots ,a_{k}) \leq
\varepsilon $.
\end{enumerate}
\end{definition}

It is clear that, if a class of $\mathcal{L}$-structures is positively $%
\forall \exists $-axiomatizable in the language $\mathcal{L}$, then in
particular it is definable by a uniform collection of positive existential $%
\mathcal{L}$-formulas.

\subsection{Reduced products and ultraproducts}

Reduced products and ultraproducts are a fundamental construction in model
theory. We recall here these notions and their basic properties. Suppose
that $\mathcal{L}$ is a language as above, $I$ is an index set, $M_{i}$ for $%
i\in I$ is an $\mathcal{L}$-structure, and $\mathcal{F}$ is a filter over $I$%
. One can define the reduced \emph{$\mathcal{L}$}-product $\prod_{\mathcal{F}%
}M_{i}$ of the $I$-sequence $( M_{i}) _{i\in I}$ with respect to $\mathcal{F}
$ as follows. For each sort $\mathcal{S}$ in $\mathcal{L}$ and domain of
quantification $D$ for $\mathcal{S}$, and for each pseudometric symbol $d$
of sort $\mathcal{S}$, consider the pseudometric $d^{M}$ on $%
\prod_{i}D_{i}^{M}$ defined by $d( ( x_{i}) ,( y_{i}) ) =\limsup_{\mathcal{F}%
}d^{M_{i}}( x_{i},y_{i}) $. The collection of such pseudometrics, when $d$
ranges among the pseudometric symbols of sort $\mathcal{S}$, defines a
uniformity on $\prod_{i}D_{i}^{M}$, and then one can define $D^{M}$ to be
the Hausdorff completion of such a uniform space, endowed with the canonical
pseudometrics induces by the pseudometrics on $\prod_{i}D_{i}^{M}$. We will
denote by $[ x_{i}] _{\mathcal{F}}$ the element of $D^{M}$ associated with
the sequence $( x_{i}) _{i\in I}$, and we will call $( x_{i}) $ a \emph{%
representative sequence }for $[ x_{i}] _{\mathcal{F}}$.

If $D_{1},D_{2}$ are domains of quantification for $\mathcal{S}$ with $%
D_{1}\leq D_{2}$, then one can canonically identify $D_{1}^{M}$ as a closed
subspace of $D_{2}^{M}$. Since the collection of domains of quantification
for $\mathcal{S}$ is directed, the union of $D^{M}$, where $D$ ranges among
all the domains of quantifications for $\mathcal{S}$, is a (possibly
incomplete) metric space. Define then $\mathcal{S}^{M}$ to be the completion
of such a metric space. One can regard $D^{M}$ as a closed subspace of $%
\mathcal{S}^{M}$. Clearly, by definition of $\mathcal{S}^{M}$, the union of $%
D^{M}$, where $D$ ranges among all the quantifications for $\mathcal{S}$, is
dense in $\mathcal{S}^{M}$.

For any function symbol $f$ in $\mathcal{L}$ with input sorts $\mathcal{S}%
_{1}^{f},\ldots ,\mathcal{S}_{n_{f}}^{f}$ and output sort $\mathcal{S}^{f}$,
one can then define a function $f^{M}\colon (\mathcal{S}_{1}^{f})^{M}\times
\cdots \times (\mathcal{S}_{n_{f}}^{f})^{M}\rightarrow (\mathcal{S}^{f})^{M}$
by setting, for every choice of domains of quantification $D_{1},\ldots
,D_{n}$ for $\mathcal{S}_{1}^{f},\ldots ,\mathcal{S}_{n_{f}}^{f}$, and
elements $[ a_{i}^{1}] _{\mathcal{F}},\ldots ,[ a_{i}^{n}] _{\mathcal{F}}$
of $D_{1}^{M},\ldots ,D_{n}^{M}$, 
\begin{equation*}
f( [ a_{i}^{1}] _{\mathcal{F}},\ldots ,[ x_{i}^{n}] _{\mathcal{F}}) =[ f(
a_{i}^{1},\ldots ,a_{i}^{n}) ] _{\mathcal{F}}\in (D_{D_{1},\ldots
,D_{n}}^{f})^{M}\text{.}
\end{equation*}%
Similarly, for any relation symbol $R$ in $\mathcal{L}$ with input sorts $%
\mathcal{S}_{1}^{R},\ldots ,\mathcal{S}_{n_{f}}^{R}$ and output sort $%
\mathcal{S}^{R}$, one can define a function $R^{M}\colon (\mathcal{S}%
_{1}^{R})^{M}\times \cdots \times (\mathcal{S}_{n_{R}}^{R})^{M}\rightarrow 
\mathbb{R}$ by setting, for every choice of domains of quantification $%
D_{1},\ldots ,D_{n}$ for $\mathcal{S}_{1}^{R},\ldots ,\mathcal{S}%
_{n_{R}}^{R} $, and elements $[ a_{i}^{1}] _{\mathcal{F}},\ldots ,[
a_{i}^{n}] _{\mathcal{F}}$ of $D_{1}^{M},\ldots ,D_{n}^{M}$,%
\begin{equation*}
R( [ a_{i}^{1}] _{\mathcal{F}},\ldots ,[ a_{i}^{n}] _{\mathcal{F}})
=\limsup_{\mathcal{F}}R( a_{i}^{1},\ldots ,a_{i}^{n}) \text{.}
\end{equation*}

One obtains in this way an $\mathcal{L}$-structure $M$, which is called the 
\emph{reduced $\mathcal{L}$}-\emph{product }of the $I$-sequence of $\mathcal{%
L}$-structures $( M_{i}) _{i\in I}$ with respect to the filter $\mathcal{F}$%
, and denoted by $\prod_{\mathcal{F}}M_{i}$. In the case when $\mathcal{F}$
is an \emph{ultrafilter}, then $\prod_{\mathcal{F}}M_{i}$ is called \emph{$%
\mathcal{L}$}-\emph{ultraproduct} of the $I$-sequence $( M_{i}) _{i\in I}$.
The function that maps an element $a$ of $M$ to the element of $\prod_{%
\mathcal{F}}M_{i}$ with representative sequence constantly equal to $a$
defines the canonical \emph{diagonal $\mathcal{L}$}-\emph{embedding }of $M$
into $\prod_{\mathcal{F}}M_{i}$. The fundamental properly of reduced \emph{$%
\mathcal{L}$}-products is the following result, known as \L os' theorem; see 
\cite[Proposition 4.3]{farah_model_2014}.

\begin{proposition}
\label{Proposition:Los}Suppose that $\mathcal{F}$ is a filter on a set $I$, $%
( M_{i}) _{i\in I}$ is an $I$-sequence of $\mathcal{L}$-structures, and $%
\varphi ( x_{1},\ldots ,x_{k}) $ is a formula in the free variables $%
x_{1},\ldots ,x_{n}$. If either $\varphi $ is positive primitive
quantifier-free, or $\mathcal{F}$ is an ultrafilter, then for any $%
[a_{i}^{1}],\ldots ,[a_{i}^{k}]$ in $\prod_{\mathcal{F}}M_{i}$ one has%
\begin{equation*}
\varphi ^{\prod_{\mathcal{F}}M_{i}}( [a_{i}^{1}],\ldots ,[a_{i}^{k}])
=\limsup_{\mathcal{F}}\varphi ^{M_{i}}( a_{i}^{1},\ldots ,a_{i}^{k}) \text{.}
\end{equation*}
\end{proposition}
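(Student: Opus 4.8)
The plan is to prove the statement by induction on the complexity of the formula, following the proof of \cite[Proposition~4.3]{farah_model_2014}, and checking that the argument goes through in the present, slightly more general, setting in which the domains of quantification are only interpreted as \emph{dense} subsets and the language carries an upward directed family of pseudometric symbols.

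First I would treat $\mathcal{L}$-terms: by induction on the structure of a term $t(x_1,\dots,x_k)$, using the definition of the interpretation of the function symbols in $\prod_{\mathcal{F}}M_i$, one obtains
\[
t^{\prod_{\mathcal{F}}M_i}\big([a_i^1]_{\mathcal{F}},\dots,[a_i^k]_{\mathcal{F}}\big)=\big[\,t^{M_i}(a_i^1,\dots,a_i^k)\,\big]_{\mathcal{F}},
\]
with the right-hand side lying in the appropriate domain of quantification; the base case is that of a variable and the inductive step is exactly the identity $f^{\prod_{\mathcal{F}}M_i}([a_i^1]_{\mathcal{F}},\dots)=[f^{M_i}(a_i^1,\dots)]_{\mathcal{F}}$ built into the construction of the reduced product. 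Combining this with the definition $R^{\prod_{\mathcal{F}}M_i}([a_i^1]_{\mathcal{F}},\dots)=\limsup_{\mathcal{F}}R^{M_i}(a_i^1,\dots)$ immediately gives the conclusion for every \emph{basic} formula, and this holds for an arbitrary filter $\mathcal{F}$.

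The two hypotheses then diverge only at the level of the quantifier-free connective, and what is needed are the following elementary facts about $\limsup$ along a filter of a net of reals lying in a fixed compact interval: (i) for continuous nondecreasing $u\colon\mathbb{R}\to\mathbb{R}$ one has $\limsup_{\mathcal{F}}u(s_i)=u(\limsup_{\mathcal{F}}s_i)$; (ii) $\limsup_{\mathcal{F}}\max_j s_i^{(j)}=\max_j\limsup_{\mathcal{F}}s_i^{(j)}$; and (iii) if $\mathcal{F}$ is an ultrafilter then $\limsup_{\mathcal{F}}=\lim_{\mathcal{F}}$ exists and commutes with any continuous function of finitely many bounded nets. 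Applying (i) and (ii) to a positive primitive quantifier-free formula $\max_j u_j(\psi_j)$ (with $\psi_j$ basic and $u_j$ continuous nondecreasing) settles the first alternative for an arbitrary filter; here positivity alone would not suffice, since $\limsup_{\mathcal{F}}$ need not commute with, e.g., addition. In the ultrafilter case, (iii) settles the base step of the induction for a quantifier-free formula $q(\psi_1,\dots,\psi_n)$ with $q$ merely continuous.

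It then remains, in the ultrafilter case, to push the induction through the quantifiers. For $\varphi(\overline{x})=\inf_{y\in D}\psi(\overline{x},y)$ (the case of $\sup_{y\in D}$ being symmetric), the inequality $\varphi^{\prod_{\mathcal{F}}M_i}(\overline{a})\le\lim_{\mathcal{F}}\varphi^{M_i}(\overline{a}_i)$ is obtained by fixing $\varepsilon>0$, choosing for each $i$ a witness $b_i\in D_i^{M_i}$ with $\psi^{M_i}(\overline{a}_i,b_i)\le\varphi^{M_i}(\overline{a}_i)+\varepsilon$, and applying the inductive hypothesis to the tuple enlarged by $[b_i]_{\mathcal{F}}\in D^{\prod_{\mathcal{F}}M_i}$; conversely, choosing $[b_i]_{\mathcal{F}}\in D^{\prod_{\mathcal{F}}M_i}$ with $\psi^{\prod_{\mathcal{F}}M_i}(\overline{a},[b_i]_{\mathcal{F}})\le\varphi^{\prod_{\mathcal{F}}M_i}(\overline{a})+\varepsilon$, applying the inductive hypothesis, and using the pointwise bound $\psi^{M_i}(\overline{a}_i,b_i)\ge\varphi^{M_i}(\overline{a}_i)$ gives the reverse inequality. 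Since the formula-building operations are exhausted by the quantifier-free connectives and by $\sup$ and $\inf$ over domains of quantification, this completes the induction. The step I expect to require the most care is the reverse inequality just described: an element of $D^{\prod_{\mathcal{F}}M_i}$ is a priori only a point of the \emph{Hausdorff completion} of $\prod_iD_i^{M_i}$, so it must first be approximated, within $\varepsilon$ in the relevant pseudometrics, by a genuine sequence $(b_i)$ with $b_i\in D_i^{M_i}$; together with the routine verification that every $\limsup_{\mathcal{F}}$ in sight is of a net valued in a fixed compact interval and that all term- and relation-interpretations land in the prescribed domains and intervals, this is exactly what one must check to see that the argument of \cite[Proposition~4.3]{farah_model_2014} survives the passage to dense domains and a directed family of pseudometric symbols.
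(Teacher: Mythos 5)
Your proof is correct and is essentially the argument the paper relies on: the paper offers no proof of its own but defers to \cite[Proposition 4.3]{farah_model_2014} together with the earlier remark that all results of that reference carry over to the setting with dense domains and directed families of pseudometric symbols, and your induction on complexity (terms, basic formulas, the $\limsup$-versus-$\max\circ u_j$ computation for positive primitive quantifier-free formulas over a general filter, and the witness-passing argument through quantifiers in the ultrafilter case) is exactly that standard proof. You also correctly isolate the only points where the generalization needs checking, namely approximating elements of the Hausdorff-completed domains by genuine representative sequences and using the prescribed uniform continuity moduli.
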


\begin{corollary}
\label{Corollary:Los}Suppose that $\mathcal{C}$ is a class of $\mathcal{L}$%
-structures that is axiomatizable in the language $\mathcal{L}$. Then $%
\mathcal{C}$ is closed under $\mathcal{L}$-ultraproducts. If $\mathcal{C}$
is furthermore positively primitively $\forall \exists $-axiomatizable in
the language $\mathcal{L}$, then $\mathcal{C}$ is closed under reduced $%
\mathcal{L}$-products.
\end{corollary}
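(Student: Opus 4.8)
Corollary \ref{Corollary:Los} asks for two closure statements. The plan is to deduce both directly from Łoś' theorem (Proposition \ref{Proposition:Los}), using only the form of the axioms in each case. Let me set up notation: suppose $\mathcal{C}$ is axiomatized by a collection $\mathcal{A}$ of conditions $\sigma \leq r$ with $\sigma$ an $\mathcal{L}$-sentence; in the second part we assume each such $\sigma$ is a positive primitive $\forall\exists$-$\mathcal{L}$-sentence.

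For the ultraproduct statement, let $\mathcal{F}$ be an ultrafilter on $I$ and let $(M_i)_{i \in I}$ be $\mathcal{L}$-structures in $\mathcal{C}$. Fix a condition $\sigma \leq r$ in $\mathcal{A}$; since $\sigma$ is a sentence (no free variables), Proposition \ref{Proposition:Los} applies because $\mathcal{F}$ is an ultrafilter, giving $\sigma^{\prod_{\mathcal{F}} M_i} = \limsup_{\mathcal{F}} \sigma^{M_i}$. Since $\sigma^{M_i} \leq r$ for every $i$ (as $M_i \in \mathcal{C}$), the $\limsup$ along $\mathcal{F}$ is $\leq r$, so $\sigma^{\prod_{\mathcal{F}} M_i} \leq r$. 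As this holds for every condition in $\mathcal{A}$, the ultraproduct lies in $\mathcal{C}$.

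For the reduced product statement, let $\mathcal{F}$ now be an arbitrary filter on $I$ and let $(M_i)_{i \in I}$ lie in $\mathcal{C}$, where $\mathcal{C}$ is positively primitively $\forall\exists$-axiomatized. Fix $\sigma \leq r$ in $\mathcal{A}$, so $\sigma$ is a positive primitive $\forall\exists$-sentence, say $\sigma = \sup_{\overline{z}} \inf_{\overline{y}} \psi(\overline{y},\overline{z})$ with $\psi$ positive primitive quantifier-free. The only subtlety is that Proposition \ref{Proposition:Los} as stated is phrased for a quantifier-free positive primitive formula evaluated at tuples; I need the corresponding statement for $\forall\exists$-sentences, which follows by the standard argument: $\inf_{\overline{y}} \psi^{\prod_{\mathcal{F}} M_i}(\overline{a},\overline{y}) \leq \limsup_{\mathcal{F}} \inf_{\overline{y}} \psi^{M_i}(a_i,\overline{y})$ (picking near-optimal witnesses in each coordinate and using Łoś for the quantifier-free part), and then $\sup_{\overline{z}}$ only increases values, but since each $M_i$ satisfies $\sigma^{M_i} \leq r$ we get $\inf_{\overline{y}} \psi^{M_i}(a_i,\overline{y}) \leq r$ for every choice of $\overline{a}$-tuple with representative $(a_i)$, hence the reduced-product value of $\sigma$ is $\leq r$. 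Concretely: for any tuple $[\overline{a}]_{\mathcal{F}}$ in $\prod_{\mathcal{F}} M_i$ and any $\varepsilon > 0$, choose for each $i$ a witness tuple $\overline{b}_i$ with $\psi^{M_i}(a_i, \overline{b}_i) \leq r + \varepsilon$ (possible since $\inf_{\overline{y}}\psi^{M_i}(a_i,\overline{y}) \leq \sigma^{M_i} \leq r$); monotonicity of positive formulas ensures the witnesses can be taken in a fixed domain of quantification determined by $\overline{a}$ and $\psi$. Then by Łoś for the quantifier-free positive primitive formula $\psi$, $\psi^{\prod_{\mathcal{F}} M_i}([\overline{a}], [\overline{b}]) = \limsup_{\mathcal{F}} \psi^{M_i}(a_i,\overline{b}_i) \leq r + \varepsilon$, so $\inf_{\overline{y}}\psi^{\prod_{\mathcal{F}}M_i}([\overline{a}],\overline{y}) \leq r + \varepsilon$; letting $\varepsilon \to 0$ and taking $\sup$ over $[\overline{a}]$ gives $\sigma^{\prod_{\mathcal{F}} M_i} \leq r$, as desired.

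The only genuine obstacle is the bookkeeping in the second part: ensuring that the witness tuples $\overline{b}_i$ chosen coordinatewise actually assemble into a legitimate element of the reduced product, i.e. that they can be chosen within a single domain of quantification. This is handled by the positivity (monotonicity) of the formulas $\psi$ together with the fact that domains of quantification are upward directed, so near-optimal witnesses for $\inf_{\overline{y}} \psi^{M_i}(a_i, \overline{y})$ can always be found inside a domain whose index depends only on the domains of the input tuple and the syntactic data of $\psi$, uniformly in $i$. With that observed, everything else is a direct application of Proposition \ref{Proposition:Los}.
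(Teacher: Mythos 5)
Your argument is correct and is exactly the (standard) deduction from Proposition \ref{Proposition:Los} that the paper leaves implicit, since the corollary is stated there without proof. The one subtlety you flag — assembling coordinatewise near-optimal witnesses into an element of the reduced product — is handled automatically in this framework because each quantifier ranges over a fixed domain of quantification, whose interpretation in the reduced product is by construction the (completion of the) reduced product of the coordinatewise domains.
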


\subsection{Types and saturation}

A (closed) $\mathcal{L}$-condition is an expression of the form $\varphi ( 
\overline{x}) \leq r$ for some $\mathcal{L}$-formula $\varphi $ in the free
variables $\overline{x}=( x_{1},\ldots ,x_{k}) $ and $r\in \mathbb{R}$. An $%
\mathcal{L}$-type $p( x_{1},\ldots ,x_{k}) $ in the free variables $%
x_{1},\ldots ,x_{k}$ is a collection of $\mathcal{L}$-conditions $\varphi ( 
\overline{x}) \leq r$. Such a type $p$ is \emph{quantifier-free }if all the
conditions that appear in it involve quantifier-free $\mathcal{L}$-formulas,
and \emph{positive quantifier-free }if all the conditions that appear in it
involve positive primitive quantifier-free $\mathcal{L}$-formulas. A \emph{%
realization }of the $\mathcal{L}$-type $p( \overline{x}) $ in an $\mathcal{L}
$-structure $M$ is a tuple $\overline{a}=( a_{1},\ldots ,a_{k}) $ in $M$
such that each $a_{i}$ belongs to the interpretation of the sort of $x_{i}$,
and $\varphi ^{M}( \overline{a}) \leq r$ for every condition $\varphi ( 
\overline{x}) \leq r$ in $p( \overline{x}) $. In this case, we also write $%
M\models \varphi ( \overline{a}) \leq r$ and $M\models p( \overline{a}) $.
An $\mathcal{L}$-type is \emph{realized }in an $\mathcal{L}$-structure $M$
if it admits a realization in $M$. It is \emph{approximately realized }in $M$
if for any \emph{finite }subset $p_{0}( \overline{x}) $ of $p( \overline{x}) 
$ and for any $\varepsilon >0$, the type $p_{0}^{\varepsilon }( x_{1},\ldots
,x_{k}) $ consisting of the conditions $\varphi ( \overline{x}) \leq
r+\varepsilon $ for any condition $\varphi ( \overline{x}) \leq r$ in $%
p_{0}( x_{1},\ldots ,x_{k}) $, is realized in $M$.

Suppose that $M$ is an $\mathcal{L}$-structure, and $A$ is a subset of $M$.
Then one can consider the language $\mathcal{L}( A) $ obtained by adding to $%
\mathcal{L}$ a constant symbol for each element of $A$. Then $M$ or any $%
\mathcal{L}$-structure containing $M$ can be canonically regarded as an $%
\mathcal{L}( A) $-structure.

\begin{definition}
\label{Definition:saturation}Suppose that $\kappa $ is an uncountable
cardinal. An $\mathcal{L}$-structure $M$ is $\mathcal{L}$-$\kappa $%
-saturated if for every subset $A$ of $M$ of density character less than $%
\kappa $ and for any $\mathcal{L}( A) $-type $p$, if $p$ is approximately
realized in $M$, then it is realized in $M$. Replacing arbitrary types with
positive quantifier-free types gives the notion of positively
quantifier-free $\mathcal{L}$-$\kappa $-saturated structure.
\end{definition}

In the particular case when $\kappa =\aleph _{1}$, $\mathcal{L}$-$\aleph
_{1} $-saturation is also called countable $\mathcal{L}$-saturation, and
positive quantifier-free $\mathcal{L}$-$\kappa $-saturation is also called
positive quantifier-free countable $\mathcal{L}$-saturation. The fundamental
property of ultrapowers of structures with respect to nonprincipal
ultrafilters over $\mathbb{N}$ is that they are countably saturated; see 
\cite[Proposition 4.11]{farah_model_2014}.

\begin{proposition}
\label{Proposition:countable-saturation}Suppose that $\mathcal{F}$ is a
countably incomplete filter. Let $\mathcal{C}$ be a class of $\mathcal{L}$%
-structures such that $\mathcal{L}$ is separable for $\mathcal{C}$.\ If $M$
is an $\mathcal{L}$-structure in $\mathcal{C}$, then the reduced power $%
\prod_{\mathcal{F}}M$ is countably positively quantifier-free $\mathcal{L}$%
-saturated.\ If $\mathcal{F}$ is a countably incomplete ultrafilter, then
the ultrapower $\prod_{\mathcal{U}}M$ is countably $\mathcal{L}$-saturated.
\end{proposition}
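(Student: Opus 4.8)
The plan is to run the classical argument that ultraproducts are countably saturated (\cite[Proposition 4.11]{farah_model_2014}), checking that it survives the two extra features of the present framework: that domains of quantification need only be \emph{dense} in their sorts, and that there is a directed \emph{family} of pseudometric symbols in place of a single metric. Since $\mathcal{F}$ is countably incomplete, fix a decreasing chain $I=X_{0}\supseteq X_{1}\supseteq\cdots$ of members of $\mathcal{F}$ with $\bigcap_{n}X_{n}=\emptyset$. I treat the positively quantifier-free case; the ultrafilter case is verbatim the same, using \L os' theorem (Proposition \ref{Proposition:Los}) for arbitrary formulas rather than only positive primitive quantifier-free ones. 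Let $A\subseteq\prod_{\mathcal{F}}M$ be separable, let $p=p(x_{1},\ldots,x_{k})$ be a positively quantifier-free $\mathcal{L}(A)$-type that is approximately realized in $\prod_{\mathcal{F}}M$, and fix domains $D_{1},\ldots,D_{k}$ for the sorts of the free variables, inside whose product we seek a realization. First I would reduce to the case in which $p$ is \emph{countable} with parameters drawn from a fixed countable dense $A_{0}\subseteq A$: using that $\mathcal{L}$ is separable for $\mathcal{C}$ (Definition \ref{Definition:separable-language}) and that $M\in\mathcal{C}$ — so that, via \L os' theorem, $d_{\mathcal{C}}$ controls the difference of interpretations in $\prod_{\mathcal{F}}M$ — one replaces each condition $\varphi\leq r$ of $p$ by the countably many conditions $\psi_{m}\leq r+2/m$ ($m\geq1$), where $\psi_{m}$ is a positive primitive quantifier-free $\mathcal{L}(A_{0})$-formula with $d_{\mathcal{C}}(\varphi,\psi_{m})<1/m$; the triangle inequality shows the resulting countable type $p'$ is still approximately realized and that any realization of $p'$ realizes $p$. (Confining the free variables to fixed domains is the usual routine point, the domains of a sort being directed and dense in it.)

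Now enumerate the conditions of $p'$ as $\varphi_{\ell}(\bar{x})\leq r_{\ell}$ ($\ell\in\mathbb{N}$), and write $\varphi_{\ell}=\chi_{\ell}(\bar{x},\bar{a}_{\ell})$ with $\chi_{\ell}$ an $\mathcal{L}$-formula and $\bar{a}_{\ell}=[\bar{a}_{\ell}(i)]_{\mathcal{F}}$ a tuple from $\prod_{\mathcal{F}}M$. For each $n\geq1$, approximate realizability of the subtype $\{\varphi_{0}\leq r_{0},\ldots,\varphi_{n-1}\leq r_{n-1}\}$ with tolerance $1/(2n)$ yields $\bar{b}^{(n)}=[\bar{b}^{(n)}(i)]_{\mathcal{F}}$ in $D_{1}^{\prod_{\mathcal{F}}M}\times\cdots\times D_{k}^{\prod_{\mathcal{F}}M}$ with $\varphi_{\ell}^{\prod_{\mathcal{F}}M}(\bar{b}^{(n)})\leq r_{\ell}+1/(2n)$ for all $\ell<n$. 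By \L os' theorem $\varphi_{\ell}^{\prod_{\mathcal{F}}M}(\bar{b}^{(n)})=\limsup_{\mathcal{F}}\chi_{\ell}^{M}(\bar{b}^{(n)}(i),\bar{a}_{\ell}(i))$, so
\begin{equation*}
W_{n}=\bigl\{\,i\in I:\chi_{\ell}^{M}(\bar{b}^{(n)}(i),\bar{a}_{\ell}(i))\leq r_{\ell}+1/n\ \text{for all}\ \ell<n\,\bigr\}\in\mathcal{F},
\end{equation*}
being a finite intersection of members of $\mathcal{F}$. Put $V_{0}=I$ and $V_{n}=X_{n}\cap W_{1}\cap\cdots\cap W_{n}\in\mathcal{F}$ for $n\geq1$, so that $I=V_{0}\supseteq V_{1}\supseteq\cdots$ and $\bigcap_{n}V_{n}=\emptyset$. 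For $i\in I$ let $k(i)=\max\{n:i\in V_{n}\}<\infty$, and set $\bar{c}(i)=\bar{b}^{(k(i))}(i)$ if $k(i)\geq1$ and $\bar{c}(i)=\bar{b}^{(1)}(i)$ otherwise; since every $\bar{b}^{(n)}(i)$ lies in $(D_{1})_{i}^{M}\times\cdots\times(D_{k})_{i}^{M}$, the tuple $\bar{c}=[\bar{c}(i)]_{\mathcal{F}}$ is a legitimate element of $D_{1}^{\prod_{\mathcal{F}}M}\times\cdots\times D_{k}^{\prod_{\mathcal{F}}M}$. Given $\ell$ and $\varepsilon>0$, pick $N>\ell$ with $1/N<\varepsilon$; for $i\in V_{N}$ one has $k(i)\geq N>\ell$ and $i\in V_{k(i)}\subseteq W_{k(i)}$, hence $\chi_{\ell}^{M}(\bar{c}(i),\bar{a}_{\ell}(i))=\chi_{\ell}^{M}(\bar{b}^{(k(i))}(i),\bar{a}_{\ell}(i))\leq r_{\ell}+1/k(i)\leq r_{\ell}+1/N<r_{\ell}+\varepsilon$. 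As $V_{N}\in\mathcal{F}$, \L os' theorem gives $\varphi_{\ell}^{\prod_{\mathcal{F}}M}(\bar{c})=\limsup_{\mathcal{F}}\chi_{\ell}^{M}(\bar{c}(i),\bar{a}_{\ell}(i))\leq r_{\ell}+\varepsilon$; letting $\varepsilon\to0$, $\varphi_{\ell}^{\prod_{\mathcal{F}}M}(\bar{c})\leq r_{\ell}$. Thus $\bar{c}$ realizes $p'$, hence $p$.

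For the ultrafilter statement one repeats this word for word with ``positive primitive quantifier-free'' replaced by ``arbitrary'' throughout and with the ultrafilter clause of Proposition \ref{Proposition:Los} in force (so $\limsup_{\mathcal{F}}=\lim_{\mathcal{F}}$ and the \L os equality holds for all formulas). The generalities of the framework — dense domains and a directed family of pseudometrics — enter only in the two reductions of the first paragraph (confining variables to fixed domains, and cutting $p$ down to a countable type); neither interacts with the patching argument, which is entirely classical. Consequently, the step I expect to require the most care is precisely that preprocessing: invoking separability of $\mathcal{L}$ for $\mathcal{C}$ together with $M\in\mathcal{C}$ and \L os' theorem to pass to a countable positively quantifier-free type, while keeping track of the finitely many parameters in each condition and of the domains of quantification.
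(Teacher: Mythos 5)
Your argument is correct and is precisely the standard diagonal/patching proof that the paper invokes by citation rather than writing out (the proposition is stated in the appendix without proof, with a pointer to \cite[Proposition 4.11]{farah_model_2014}). The two preprocessing steps you single out --- reducing to a countable positive quantifier-free type with parameters in a countable dense subset, using separability of $\mathcal{L}$ for $\mathcal{C}$ together with \L os' theorem and the uniform continuity moduli, and fixing domains of quantification for the free variables so that the patched representative sequence stays in a single domain --- are exactly where the paper's extra generality (dense domains, directed families of pseudometrics) is absorbed, and you handle them correctly.
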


Proposition \ref{Proposition:countable-saturation} admits a generalization
to an arbitrary uncountable cardinal $\kappa $. In this more general
setting, one needs to consider (ultra)filters that are moreover $\kappa $-%
\emph{good}. The definition of $\kappa $-good is given in \cite[Section 6.1]%
{chang_model_1977} for ultrafilters, but it applies equally well to filters.
Every countably incomplete filter is $\aleph _{1}$-good. In particular,
every nonprincipal ultrafilter over $\mathbb{N}$ is $\aleph _{1}$-good. The
same proof as \cite[Theorem 6.1.8]{chang_model_1977} gives the following.

\begin{proposition}
\label{Proposition:kappa-saturation}Let $\kappa $ be an uncountable
cardinal, and let $\mathcal{F}$ be a countably incomplete $\kappa $-good
filter. Let $\mathcal{C}$ be a class of $\mathcal{L}$-structures such that $%
\mathcal{L}$ has density character less than $\kappa $ for $\mathcal{C}$. If 
$M$ is an $\mathcal{L}$-structure, then the reduced power $\prod_{\mathcal{F}%
}M$ is positively quantifier-free $\mathcal{L}$-$\kappa $-saturated.\ If $%
\mathcal{U}$ is a countably incomplete $\kappa $-good ultrafilter, then the
ultrapower $\prod_{\mathcal{U}}M$ is $\mathcal{L}$-$\kappa $-saturated.
\end{proposition}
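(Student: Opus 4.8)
The plan is to carry over the classical argument (Chang--Keisler, Theorem~6.1.8) that $\kappa$-good ultrapowers are $\kappa$-saturated, adapting it to continuous logic and to reduced powers. Write $N=\prod_{\mathcal F}M$ (respectively $N=\prod_{\mathcal U}M$). Let $A\subseteq N$ have density character $<\kappa$, and let $p(\bar x)$ be an $\mathcal L(A)$-type---positive quantifier-free in the first case, arbitrary in the second---that is approximately realized in $N$; the goal is to realize it. First I would perform the standard reduction: using the uniform continuity of $\mathcal L$-formulas together with the hypotheses that $\mathcal L$ has density character $<\kappa$ for $\mathcal C$ and $|A|<\kappa$, one may assume $p=\{\varphi_\xi(\bar x)\le r_\xi:\xi<\lambda\}$ with $\lambda<\kappa$, in such a way that any realization of this list realizes the original type. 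I would also fix, using countable incompleteness of $\mathcal F$, a decreasing sequence $I=I_0\supseteq I_1\supseteq\cdots$ in $\mathcal F$ with $\bigcap_nI_n=\emptyset$.

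The heart of the argument is the construction of a monotone decreasing map $f\colon[S]^{<\omega}\to\mathcal F$, where $S=\lambda\times\mathbb N_{\ge1}$, to which $\kappa$-goodness is applied. A finite $s\subseteq S$ codes the finite approximate subtype $p_s=\{\varphi_\xi(\bar x)\le r_\xi+\tfrac1n:(\xi,n)\in s\}$. Since $p$ is approximately realized in $N$, a suitable tuple realizes all the conditions $\varphi_\xi\le r_\xi+\varepsilon$ with $\varepsilon=\min\{\tfrac1n:(\xi,n)\in s\}$, hence realizes $p_s$ exactly; therefore the positive primitive existential $\mathcal L(A)$-formula $\psi_s:=\inf_{\bar x}\max_{(\xi,n)\in s}\max\{0,\ \varphi_\xi(\bar x)-r_\xi-\tfrac1n\}$ has value $0$ in $N$. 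Now \L os' theorem (Proposition~\ref{Proposition:Los}) applies: over an ultrafilter it applies to $\psi_s$ directly, and over a general filter it applies to $\psi_s$ as well, since $\psi_s$ is the $\inf_{\bar x}$ of a positive primitive quantifier-free formula and for such formulas the $\limsup_{\mathcal F}$-identity remains valid over reduced powers by the usual coordinatewise choice of approximate witnesses. Consequently $X_s:=\{\,j\in I:\psi_s^{M_j}<\delta_s\,\}\in\mathcal F$, where $\delta_s:=\tfrac12\min\{\tfrac1n:(\xi,n)\in s\}$. One checks that $s\subseteq t$ forces $X_t\subseteq X_s$, so $f(s):=X_s\cap I_{|s|}$ is monotone decreasing into $\mathcal F$; since $|S|=\lambda<\kappa$, $\kappa$-goodness of $\mathcal F$ yields a monotone multiplicative $g\le f$.

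To extract a realization, set $S_j=\bigcup\{s:j\in g(s)\}$ for $j\in I$. Multiplicativity of $g$ makes $\{s:j\in g(s)\}$ directed under union, while $g(s)\subseteq I_{|s|}$ and $\bigcap_nI_n=\emptyset$ bound the cardinalities of its members; hence $S_j$ is finite, belongs to that family, and (when nonempty) satisfies $j\in g(S_j)\subseteq X_{S_j}$. Thus for such $j$ there is a tuple $b_j$ in $M_j$ with $\varphi_\xi^{M_j}(b_j)\le r_\xi+\tfrac2n$ for every $(\xi,n)\in S_j$; choose $b_j$ arbitrarily otherwise, with all $b_j$ lying in the domain over which the variables of $p$ range, and put $b=[b_j]_{\mathcal F}\in N$. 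Given $\xi<\lambda$ and $n\ge1$, for every $j\in g(\{(\xi,n)\})\in\mathcal F$ one has $(\xi,n)\in S_j$ and hence $\varphi_\xi^{M_j}(b_j)\le r_\xi+\tfrac2n$; applying the ordinary \L os' theorem to the quantifier-free $\varphi_\xi$ evaluated at the fixed element $b_j$ gives $\varphi_\xi^N(b)\le r_\xi+\tfrac2n$, and letting $n\to\infty$ yields $\varphi_\xi^N(b)\le r_\xi$. Hence $b$ realizes $p$. The ultrapower case is the same proof verbatim with arbitrary conditions in $p$ and \L os' theorem for arbitrary formulas, so no special treatment of $\psi_s$ is needed.

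The main obstacle is not any single step but the design of $f$ in the second paragraph so that three requirements hold simultaneously: $f$ must be genuinely monotone decreasing, which is why one works with the witness-free existential formulas $\psi_s$ rather than with fixed realizations of the $p_s$ (naively choosing such realizations destroys monotonicity, exactly as in the discrete proof one must pass to satisfiability sets); $f$ must be compatible with the continuous-logic tolerances, so that the slack $\delta_s$ can be absorbed and the final passage $n\to\infty$ is legitimate; and $f$ must be truncated by the $I_{|s|}$ so that the multiplicative refinement $g$ produces \emph{finite} local types $S_j$ at $\mathcal F$-almost every coordinate. Once $f$ is set up correctly, the appeal to $\kappa$-goodness and the assembly of $b$ are routine.
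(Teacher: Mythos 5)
Your proof is correct and follows exactly the route the paper intends: the paper's ``proof'' is just the remark that the argument of Chang--Keisler, Theorem~6.1.8 (good ultrafilters give $\kappa$-saturated ultrapowers) carries over, and you have executed precisely that argument in the metric setting, with the right adjustments --- replacing satisfiability sets by the sets where the positive primitive existential formulas $\psi_s$ are small, truncating by the $I_{|s|}$ from countable incompleteness, and using only the easy inequality $\limsup_{\mathcal F}\psi_s^{M_j}\leq\psi_s^{N}$ so that the reduced-product case goes through with positive primitive quantifier-free conditions. The preliminary reduction to $\lambda<\kappa$ many conditions via the density hypothesis and the final $n\to\infty$ passage are both handled correctly, so nothing further is needed.
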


\subsection{Existential embeddings}

Suppose that $\mathcal{L}$ is a language in the logic for metric structures
as above, $M,N$ are $\mathcal{L}$-structures, and $T\colon M\rightarrow N$
is a function. Then $T$ is an $\mathcal{L}$-\emph{morphism} if, for every
sort $\mathcal{S}$ in $\mathcal{L}$ and for every domain of quantification $%
D $ for $\mathcal{S}$, $T$ maps $\mathcal{S}^{M}$ to $\mathcal{S}^{N}$ and $%
D^{M}$ to $D^{N}$, and for any atomic formula $\varphi ( x_{1},\ldots
,x_{k}) $ one has that $\varphi ^{N}( T( a_{1}) ,\ldots ,T( a_{k}) ) \leq
\varphi ^{M}( a_{1},\ldots ,a_{k}) $ for any $a_{1},\ldots ,a_{n}$ of the
same sorts as $x_{1},\ldots ,x_{k}$. An $\mathcal{L}$-morphism is an $%
\mathcal{L}$-\emph{embedding }if for any atomic formula $\varphi (
x_{1},\ldots ,x_{k}) $ one has that $\varphi ^{N}( T( a_{1}) ,\ldots ,T(
a_{k}) ) =\varphi ^{M}( a_{1},\ldots ,a_{k}) $ for any $a_{1},\ldots ,a_{n}$
of the same sorts as $x_{1},\ldots ,x_{k}$. Suppose that $T\colon
M\rightarrow N$ is an $\mathcal{L}$-embedding, and $A$ is a subset of $M$.
Recall that $\mathcal{L}( A) $ is the language obtained from $\mathcal{L}$
by adding a constant symbols $c_{a}$ for any element $a$ of $A$. Then one
can regard both $M$ and $N$ as $\mathcal{L}( A) $-structures, by
interpreting $c_{a}$ as $a$ in $M$ and as $T( a) $ in $N$. We recall here
the notion of (positively) \emph{$\mathcal{L}$}-existential $\mathcal{L}$%
-embedding.

\begin{definition}
\label{Definition:existential-embeddings}Suppose that $M,N$ are $\mathcal{L}$%
-structures, and $T\colon M\rightarrow N$ is an $\mathcal{L}$-embedding.
Then $T$ is a (positively) \emph{$\mathcal{L}$}-\emph{existential }if for
every (positive) quantifier-free $\mathcal{L}( M) $-condition $\varphi (
x_{1},\ldots ,x_{n}) \leq r$ satisfied in $N$ and for every $\varepsilon >0$%
, then $\mathcal{L}( M) $-condition $\varphi ( x_{1},\ldots ,x_{n}) \leq
r+\varepsilon $ is satisfied in $M$.
\end{definition}

The following characterization of \emph{$\mathcal{L}$}-existential \emph{$%
\mathcal{L}$}-embeddings is an immediate consequence of \L os' theorem and
saturation of reduced powers. Recall that any countably incomplete filter is 
$\aleph _{1}$-good.

\begin{proposition}
\label{Proposition:existential-characterize}Fix an uncountable cardinal $%
\kappa $, a class $\mathcal{C}$ of $\mathcal{L}$-structures, and structures $%
M,N$ in $\mathcal{C}$ of density character less than $\kappa $. Suppose that 
$\mathcal{F}$ is a countably incomplete $\kappa $-good filter. Let $T\colon
M\rightarrow N$ be an $\mathcal{L}$-morphism. The following assertions are
equivalent:

\begin{enumerate}
\item $T$ is a positively \emph{$\mathcal{L}$}-existential $\mathcal{L}$%
-embedding;

\item there exists an $\mathcal{L}$-morphism $S\colon N\rightarrow \prod_{%
\mathcal{F}}M$ such that $S\circ T$ is the diagonal $\mathcal{L}$-embedding $%
M\rightarrow \prod_{\mathcal{F}}M$.
\end{enumerate}

Furthermore, if $\mathcal{F}$ is an ultrafilter, then the following
assertions are equivalent:

\begin{enumerate}
\item $T$ is an \emph{$\mathcal{L}$}-existential $\mathcal{L}$-embedding;

\item there exists an $\mathcal{L}$-embedding $S\colon N\rightarrow \prod_{%
\mathcal{U}}M$ such that $S\circ T$ is the diagonal $\mathcal{L}$-embedding $%
M\rightarrow \prod_{\mathcal{U}}M$.
\end{enumerate}
\end{proposition}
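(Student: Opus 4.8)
The plan is to prove the two biconditionals separately. The implication $(2)\Rightarrow(1)$ is a direct application of \L os' theorem (Proposition~\ref{Proposition:Los}), while $(1)\Rightarrow(2)$ is a transfinite back-and-forth construction exploiting the saturation of the reduced power $\prod_{\mathcal F}M$ (Proposition~\ref{Proposition:kappa-saturation}).

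For $(2)\Rightarrow(1)$, assume $S\colon N\to\prod_{\mathcal F}M$ is an $\mathcal L$-morphism with $S\circ T=\Delta_M$. Evaluating atomic formulas on a tuple $\bar a$ from $M$ gives $\varphi^M(\bar a)=\varphi^{\prod_{\mathcal F}M}(S(T\bar a))\le\varphi^N(T\bar a)\le\varphi^M(\bar a)$, so $T$ is an $\mathcal L$-embedding. If now $\varphi(\bar x)\le r$ is a positive quantifier-free $\mathcal L(M)$-condition satisfied in $N$ by a tuple $\bar b$ lying in the appropriate domains of $N$, then, $S$ being a morphism and $\varphi$ a continuous nondecreasing function $q$ of basic formulas, one has $\varphi^{\prod_{\mathcal F}M}(S\bar b;\Delta_M\bar a)\le\varphi^N(\bar b;T\bar a)\le r$ (where $\bar a$ lists the parameters). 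Writing $S\bar b=[\bar b^{(i)}]_{\mathcal F}$ and applying \L os' theorem to the basic subformulas (Proposition~\ref{Proposition:Los}, which applies to positive primitive quantifier-free formulas), this value is $q$ applied to the numbers $\limsup_{\mathcal F}(\cdot)^M(\bar b^{(i)};\bar a)$, so by continuity and properness of $\mathcal F$ there is, for every $\varepsilon>0$, a set $A\in\mathcal F$ with $\varphi^M(\bar b^{(i)};\bar a)\le r+\varepsilon$ for $i\in A$; thus $\varphi(\bar x)\le r+\varepsilon$ is satisfied in $M$, and $T$ is positively $\mathcal L$-existential. When $\mathcal F$ is an ultrafilter, \L os' theorem covers all quantifier-free formulas, so the same argument with arbitrary quantifier-free conditions (and $S$ an embedding) shows $T$ is $\mathcal L$-existential.

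For $(1)\Rightarrow(2)$, fix dense subsets $M_0\subseteq M$ and $N_0\supseteq T(M_0)$ of $N$, each of density character $<\kappa$, and enumerate $N_0=T(M_0)\cup\{n_\xi:\xi<\lambda\}$ with $\lambda<\kappa$; set $\mathcal M^\ast=\prod_{\mathcal F}M$, which is positively quantifier-free $\kappa$-saturated by Proposition~\ref{Proposition:kappa-saturation}. By transfinite recursion one builds an increasing chain of maps $\rho_\xi$ from $T(M_0)\cup\{n_\eta:\eta<\xi\}$ into $\mathcal M^\ast$ with $\rho_\xi(T(m))=\Delta_M(m)$, maintaining the invariant that $\psi^{\mathcal M^\ast}(\rho_\xi\bar a)\le\psi^N(\bar a)$ for every \emph{positive existential} $\mathcal L$-formula $\psi$ and tuple $\bar a$ in $\mathrm{dom}\,\rho_\xi$; the base case holds since $\psi^{\mathcal M^\ast}(\Delta_M\bar m)=\psi^M(\bar m)\le\psi^N(T\bar m)$ by \L os' theorem, the last inequality being exactly the hypothesis that $T$ is positively $\mathcal L$-existential (Definition~\ref{Definition:existential-embeddings}). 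At a successor step, to define $\rho_{\xi+1}(n_\xi)$ one realizes in $\mathcal M^\ast$, by positive quantifier-free $\kappa$-saturation over the $<\kappa$ parameters $\rho_\xi(\mathrm{dom}\,\rho_\xi)$, the positive quantifier-free type whose constraints are $\chi(z,\rho_\xi\bar a,\bar y)\le\psi^N(n_\xi,\bar a)$, one for each positive existential $\psi=\inf_{\bar y}\chi$ with $\chi$ positive quantifier-free. Approximate finite satisfiability of this type follows by collecting finitely many constraints, with right-hand sides $r_1,\dots,r_s$, into the positive quantifier-free formula $\theta=\max_i\max\{0,\chi_i(z,\bar x,\bar y)-r_i\}$: since $\psi_i^N(n_\xi,\bar a)=r_i$ one gets $(\inf_{z,\bar y}\theta)^N=0$ at the corresponding tuple of $N$, whence, applying the invariant to the positive existential formula $\inf_{z,\bar y}\theta$, $(\inf_{z,\bar y}\theta)^{\mathcal M^\ast}\le0$ at the image tuple, which is the desired approximate realization in $\mathcal M^\ast$. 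Choosing $\rho_{\xi+1}(n_\xi)$ to realize the full positive existential type preserves the invariant; at limit stages one takes unions. Extending $\bigcup_\xi\rho_\xi$ by uniform continuity from $N_0$ to all of $N$ (and tracking the domains of quantification) yields an $\mathcal L$-morphism $S\colon N\to\mathcal M^\ast$ with $S\circ T=\Delta_M$. The ultrafilter case runs identically with $\mathcal M^\ast$ now $\kappa$-saturated, ``positive existential'' replaced by ``existential'', and ``morphism'' by ``embedding'', using that \L os' theorem holds for all formulas over an ultrafilter.

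The hard part is this successor step. The hypothesis on $T$ controls only $\mathcal L(M)$-conditions, i.e.\ parameters drawn from $M$, whereas at stage $\xi$ one must realize a type over the previously placed images $\rho_\xi(\mathrm{dom}\,\rho_\xi)$, which live in $\mathcal M^\ast$. Carrying the \emph{positively existential} invariant (rather than merely ``$\rho_\xi$ is a morphism'') is precisely what is strong enough to transfer each equality $(\inf_{z,\bar y}\theta)^N=0$ to $\mathcal M^\ast$ along the whole chain, and the invariant is itself maintained because its base case is exactly the positive $\mathcal L$-existentiality of $T$. Checking that the types one needs to realize stay within the positive quantifier-free fragment --- so that the saturation of a reduced, not necessarily ultra-, product suffices, via the countable incompleteness and $\kappa$-goodness of $\mathcal F$ in Proposition~\ref{Proposition:kappa-saturation} --- is the only remaining delicate point.
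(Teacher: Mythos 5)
Your direction $(2)\Rightarrow(1)$ is correct and complete: pushing a positive quantifier-free condition forward along $S$, applying \L os' theorem (Proposition~\ref{Proposition:Los}) to the basic subformulas, and using monotonicity and continuity of the connective to pull the bound back to $\mathcal{F}$-many coordinates is exactly the right argument, and the ultrafilter case follows the same pattern. The paper itself offers no written proof (it declares the proposition an immediate consequence of \L os and saturation), so your contribution is the detailed execution, and the overall strategy matches the paper's intent.

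The gap is in the successor step of $(1)\Rightarrow(2)$, and it is precisely the point you flag but do not close. The type you must realize to place $n_{\xi}$ consists of conditions $\chi(z,\rho_{\xi}\bar a,\bar y_{\psi})\le\psi^{N}(n_{\xi},\bar a)$, one for each positive existential $\psi=\inf_{\bar y}\chi$; since the witness tuples $\bar y_{\psi}$ cannot be shared across formulas, this is a positive quantifier-free type in \emph{infinitely many} variables (equivalently, after quantifying the witnesses out, a positive \emph{existential} type in the single variable $z$). Proposition~\ref{Proposition:kappa-saturation}, with saturation as defined in Definition~\ref{Definition:saturation}, only realizes positive quantifier-free types in finitely many free variables, so it does not apply as cited: your argument needs either multi-variable saturation or positive existential saturation, and you establish neither. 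The repair is standard but must be said: \L os' theorem extends from positive primitive quantifier-free formulas to their existential quantifications over reduced products (choose near-optimal witnesses coordinatewise), whence the Keisler-style proof of Proposition~\ref{Proposition:kappa-saturation} yields positive \emph{existential} $\kappa$-saturation of $\prod_{\mathcal{F}}M$, which is exactly what your successor step requires. Alternatively, once one knows that the reduced power realizes approximately finitely satisfiable positive quantifier-free types in fewer than $\kappa$ variables over fewer than $\kappa$ parameters (which the same Keisler argument gives directly), the entire transfinite construction with the strengthened invariant becomes unnecessary: one realizes in a single step the positive quantifier-free type of a dense subset of $N$ over a dense subset of $M$, whose approximate finite satisfiability in $M$ is exactly the positive $\mathcal{L}$-existentiality of $T$, together with the conditions $d(x_{T(a)},a)\le 0$ forcing $S\circ T=\Delta_{M}$. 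This one-shot argument is presumably what the paper means by ``immediate.''
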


The following \emph{preservation result} for positively $\mathcal{L}$%
-existential $\mathcal{L}$-embedding can be easily verified directly.

\begin{proposition}
\label{Proposition:existential-preservation}Suppose that $\mathcal{C}$ is a
collection of $\mathcal{L}$-structures that is definable by a uniform
collection of positive existential $\mathcal{L}$-formulas. Suppose that $A$
and $B$ are $\mathcal{L}$-structures, and $T\colon A\rightarrow B$ is a
positively $\mathcal{L}$-existential $\mathcal{L}$-embedding. If $B$ belongs
to $\mathcal{C}$, then $A$ belongs to $\mathcal{C}$.
\end{proposition}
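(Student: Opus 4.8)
The plan is to verify the pullback of membership in $\mathcal{C}$ directly from the definitions, exploiting the one piece of semantic content carried by a positively $\mathcal{L}$-existential embedding: such an embedding cannot turn a satisfiable positive quantifier-free condition (with parameters from the domain structure) into an unsatisfiable one. First I would unwind the goal via Definition~\ref{Definition:pp-definable-collection}: to conclude $A\in\mathcal{C}$ it is enough to fix sorts $\mathcal{S}_{1},\dots,\mathcal{S}_{k}$ in $\mathcal{L}$, domains of quantification $D_{1},\dots,D_{k}$ for them, a tuple $\bar a=(a_{1},\dots,a_{k})$ with $a_{i}\in D_{i}^{A}$, and $\varepsilon>0$, and then to exhibit a formula $\varphi\in\mathcal{F}(D_{1},\dots,D_{k})$ with $A\models\varphi(\bar a)\le\varepsilon$.

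Next I would transport the data to $B$. Since $T\colon A\to B$ is an $\mathcal{L}$-embedding it maps each $D_{i}^{A}$ into $D_{i}^{B}$, so $(T(a_{1}),\dots,T(a_{k}))$ is a tuple of the same sorts lying in $D_{1}^{B}\times\cdots\times D_{k}^{B}$. As $B\in\mathcal{C}$, Definition~\ref{Definition:pp-definable-collection} applied to this tuple with error $\varepsilon/2$ produces some $\varphi\in\mathcal{F}(D_{1},\dots,D_{k})$ with $B\models\varphi(T(\bar a))\le\varepsilon/2$. Writing the positive existential formula $\varphi$ as $\varphi(\bar x)=\inf_{\bar y}\psi(\bar x,\bar y)$ with $\psi$ positive quantifier-free, this says $\inf_{\bar y}\psi^{B}(T(\bar a),\bar y)\le\varepsilon/2$, so there is a tuple $\bar b$ in $B$, ranging over the domains quantified inside $\varphi$, with $\psi^{B}(T(\bar a),\bar b)\le 3\varepsilon/4$. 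In other words, the positive quantifier-free $\mathcal{L}(A)$-condition $\psi(\bar a,\bar y)\le 3\varepsilon/4$ (the constants $\bar a$ being interpreted in $B$ through $T$) is satisfied in $B$.

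Now I would apply positive existentiality of $T$ exactly as in Definition~\ref{Definition:existential-embeddings}: since that positive quantifier-free $\mathcal{L}(A)$-condition is satisfied in $B$, the relaxed condition $\psi(\bar a,\bar y)\le 3\varepsilon/4+\varepsilon/4=\varepsilon$ is satisfied in $A$, i.e.\ there is a tuple $\bar c$ in $A$ (in the relevant domains) with $\psi^{A}(\bar a,\bar c)\le\varepsilon$. Therefore $\varphi^{A}(\bar a)=\inf_{\bar y}\psi^{A}(\bar a,\bar y)\le\psi^{A}(\bar a,\bar c)\le\varepsilon$, which is the required inequality $A\models\varphi(\bar a)\le\varepsilon$ with $\varphi\in\mathcal{F}(D_{1},\dots,D_{k})$; by Definition~\ref{Definition:pp-definable-collection} this gives $A\in\mathcal{C}$.

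I expect the only delicate point to be bookkeeping with the bounded quantifiers and the domains of quantification occurring inside the existential formula $\varphi$: one must take the witnesses $\bar b$ in $B$ and then $\bar c$ in $A$ in precisely the domains over which $\varphi$ quantifies, so that the ``satisfied in $B$''\,/\,``satisfied in $A$'' clauses of Definition~\ref{Definition:existential-embeddings} apply literally. This is purely formal and uses only that $T$ is an $\mathcal{L}$-embedding, hence respects the domain hierarchy; no equicontinuity of the family $\mathcal{F}$ is needed. Alternatively, the same argument can be routed through Proposition~\ref{Proposition:existential-characterize}, using the $\mathcal{L}$-morphism $S\colon B\to\prod_{\mathcal{F}}A$ with $S\circ T=\Delta_{A}$ together with the fact that $\mathcal{L}$-morphisms do not increase the values of positive existential formulas, but the direct computation above seems the cleanest.
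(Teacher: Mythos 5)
Your argument is correct and is precisely the direct verification the paper has in mind (the paper offers no written proof, merely asserting the result "can be easily verified directly"). The chain — transport $\bar a$ to $B$ via the embedding, extract an approximate witness for the $\inf$ in the positive existential formula, pull it back to $A$ using Definition~\ref{Definition:existential-embeddings}, and absorb the error into $\varepsilon$ — is exactly the intended reasoning, and you are right that equicontinuity of $\mathcal{F}(D_{1},\dots,D_{k})$ plays no role in this direction.
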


\subsection{Definability}

Suppose that $\mathcal{C}$ is a class of $\mathcal{L}$-structures. Fix
domains of quantifications $D_{1},\ldots ,D_{n}$ for $\mathcal{L}$ of sorts $%
\mathcal{S}_{1},\ldots ,\mathcal{S}_{n}$. A \emph{uniform assignment }$S$ in 
$D_{1}\times \cdots \times D_{n}$ relative to the class $\mathcal{C}$ is an
assignment $M\mapsto S^{M}$, where $M$ ranges among the $\mathcal{L}$%
-structures in $\mathcal{C}$ and $S^{M}$ is a closed subset $D_{1}^{M}\times
\cdots \times D_{k}^{M}$ ; see \cite[Definition 3.2.1]{farah_model_2016}.
The following definition is equivalent to the one given in \cite[Definition
3.2.1]{farah_model_2016}; see \cite[Theorem 3.2.2]{farah_model_2016}.

\begin{definition}
\label{Definition:definable-set}Suppose that $S$ is a uniform assignment in $%
D_{1}\times \cdots \times D_{n}$ relative to the class $\mathcal{C}$. Then $%
S $ is an $\mathcal{L}$-\emph{definable set} in $D_{1}\times \cdots \times
D_{n}$ relative to the class $\mathcal{C}$ if for every $\varepsilon >0$,
for every choice of pseudometric symbols $d_{1},\ldots ,d_{n}$ of sorts $%
\mathcal{S}_{1},\ldots ,\mathcal{S}_{n}$ there exists an $\mathcal{L}$%
-formula $\varphi ( x_{1},\ldots ,x_{k}) $ in the free variables $%
x_{1},\ldots ,x_{n}$ of sorts $\mathcal{S}_{1},\ldots ,\mathcal{S}_{n}$ such
that, for every $\mathcal{L}$-structure $M$ in $\mathcal{C}$ and for every $%
( a_{1},\ldots ,a_{n}) \in D_{1}\times \cdots \times D_{n}$, 
\begin{equation*}
\left\vert \varphi ^{M}( a_{1},\ldots ,a_{n}) -\inf_{( b_{1},\ldots ,b_{n})
\in S^{M}}\max_{1\leq i\leq n}d_{i}( a_{i},b_{i}) \right\vert <\varepsilon 
\text{.}
\end{equation*}

We say that $S$ is positively existentially $\mathcal{L}$-definable in the $%
\mathcal{L}$-formulas $\varphi $ above can be chosen to be positive
existential.
\end{definition}

Observe that, every choice of pseudometric symbols $d_{1},\ldots ,d_{n}$ of
sort $\mathcal{S}_{1},\ldots ,\mathcal{S}_{n}$ defines a pseudometric $\rho $
on the space of $\mathcal{L}$-definable sets in $D_{1}\times \cdots \times
D_{n}$ relative to the class $\mathcal{C}$ , obtained by setting%
\begin{equation*}
\rho ( S,S^{\prime }) =\sup_{M\in \mathcal{C}}\rho ^{M}( S^{M},S^{\prime M}) 
\text{,}
\end{equation*}%
where $\rho ^{M}$ is the Hausdorff metric on the space of closed subsets of $%
D_{1}^{M}\times \cdots \times D_{n}^{M}$ associated with the metric $d( 
\overline{a},\overline{b}) =\max_{1\leq i\leq n}d_{i}( a_{i},b_{i}) $ on $%
D_{1}^{M}\times \cdots \times D_{n}^{M}$. The collection of such
pseudometrics $\rho $, obtaining by letting $d_{1},\ldots ,d_{n}$ range
among all the pseudometric symbols of sort $\mathcal{S}_{1},\ldots ,\mathcal{%
S}_{n}$, define a uniform structure on the space of $\mathcal{L}$-definable
sets in $D_{1}\times \cdots \times D_{n}$ relative to the class $\mathcal{C}$
. It is clear from the definition that the space of (positively
existentially) $\mathcal{L}$-definable sets is \emph{complete }with respect
to such a uniform structure.

Suppose now that $M$ is an $\mathcal{L}$-structure, and $\mathcal{C}$ is a
class of $\mathcal{L}$-structures as above. Then one can consider the class $%
\mathcal{C}( M) $ of $\mathcal{L}$-structures in $\mathcal{C}$ that contain
a distinguished copy of $M$. Then one can naturally regard the structures in 
$\mathcal{C}( M) $ as $\mathcal{L}( M) $-structures. We consider a natural
notion of $\mathcal{L}$-\emph{definable substructure}. Suppose as above that 
$\mathcal{C}$ is a class of $\mathcal{L} $-structures.

\begin{definition}
\label{Definition:definable-substructure}A positively existentially $%
\mathcal{L}$-definable substructure relative to the class $\mathcal{C}$ is
an assignment $( M,D) \mapsto S^{D,M}$ where $M$ ranges among the $\mathcal{L%
}$-structures in $\mathcal{C}$ and $D$ ranges among the domains of
quantification in $\mathcal{L}$ such that:

\begin{itemize}
\item $S^{D,M}$ is a closed subset of $D^{M}$,

\item setting $S( M) ^{D}:=S^{D,M}$ for every sort $\mathcal{S}$ and every
domain of quantification $D$ of sort $\mathcal{S}$ defines an $\mathcal{L}$%
-substructure $S( M) $ of $M$,

\item for every domain $D$, the assignment $M\mapsto S^{D,M}$ is a
positively existentially $\mathcal{L}$-definable set in $D$ relative to the
class $\mathcal{C}$.
\end{itemize}
\end{definition}

The following proposition is an easy consequence of the definition of
positive existential $\mathcal{L}$-embedding.

\begin{proposition}
\label{Proposition:definable-substructure}Suppose that $M,N$ are $\mathcal{L}
$-structures, and $T\colon M\rightarrow N$ is a positively \emph{$\mathcal{L}
$}-existential $\mathcal{L}$-embedding. Suppose that $S$ is a positively
existentially $\mathcal{L}( M) $-definable substructure relative to a class $%
\mathcal{C}$ of $\mathcal{L}( M) $-structures containing $M$ and $N$. Then $%
T $ maps $S( M) $ to $S( N) $, and the restriction $T|_{S( M) }\colon S( M)
\rightarrow S( N) $ is a positively \emph{$\mathcal{L}$}-existential $%
\mathcal{L}$-embedding.
\end{proposition}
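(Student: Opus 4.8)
The plan is to verify the three assertions directly from the definitions. I would first record the routine fact that a positive existential $\mathcal{L}$-formula $\varphi$ is non-increasing along any $\mathcal{L}$-morphism $U$, i.e. $\varphi^{N'}(U(\bar a))\le\varphi^{M'}(\bar a)$; this follows by induction on complexity, using the definition of $\mathcal{L}$-morphism in the atomic case, monotonicity of the connective in the positive quantifier-free case, and the fact that $U$ sends domains into domains in the existential case. For the first assertion, fix a domain $D$ and $a\in S(M)^{D}=S^{D,M}$; since $T$ maps $D^{M}$ into $D^{N}$ it suffices to prove $T(a)\in S^{D,N}$. Given $\varepsilon>0$ and a pseudometric symbol $d$ of the relevant sort, positive existential definability of $S$ relative to $\mathcal{C}$ (which contains $M$ and $N$) provides a positive existential $\mathcal{L}(M)$-formula $\varphi$ with $\bigl|\varphi^{M'}(b)-\inf_{b'\in S^{D,M'}}d(b,b')\bigr|<\varepsilon$ for all $M'\in\mathcal{C}$ and $b\in D^{M'}$. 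Evaluating at $a$ in $M$ gives $\varphi^{M}(a)<\varepsilon$, since $a$ lies in the closed set $S^{D,M}$; evaluating at $T(a)$ in $N$ and using the monotonicity fact gives $\inf_{b'\in S^{D,N}}d(T(a),b')<\varphi^{N}(T(a))+\varepsilon\le\varphi^{M}(a)+\varepsilon<2\varepsilon$. Since $\varepsilon$ and $d$ are arbitrary and $S^{D,N}$ is a closed subset of the complete uniform space $D^{N}$, we conclude $T(a)\in S^{D,N}$, so $T$ maps $S(M)$ into $S(N)$. The second assertion is then immediate: $T|_{S(M)}$ carries the domains of $S(M)$ into those of $S(N)$ by what we just proved, and it preserves the interpretations of atomic formulas because $S(M)$ and $S(N)$ are $\mathcal{L}$-substructures of $M$ and $N$ and $T$ is an $\mathcal{L}$-embedding; hence $T|_{S(M)}$ is an $\mathcal{L}$-embedding.

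The substance is the third assertion, and the idea is to encode the relativized existential quantifier ``$\exists$ in the domains of $S$'' as an honest positive existential $\mathcal{L}(M)$-condition on $M$ and $N$. Let $\psi(\bar y)$ be a positive quantifier-free $\mathcal{L}$-formula with parameters $\bar c$ from $S(M)$ and domains $\bar D=(D_{1},\dots,D_{n})$ for $\bar y$; after an affine shift we may assume $\psi\ge0$, and we are given a condition $\psi(\bar y)\le r$ with $r\ge0$ satisfied in $S(N)$, i.e. $\inf_{\bar b\in\prod_{i}S^{D_{i},N}}\psi^{N}(\bar c,\bar b)\le r$. Fix $\varepsilon>0$. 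Choose $\delta>0$ from the uniform continuity modulus of $\psi$ on $\prod_{i}D_{i}$ (for suitable pseudometric symbols $d_{i}$), so that moving each coordinate by less than $\delta$ in $d_{i}$ changes $\psi$ by less than $\varepsilon/2$; and for each $i$ use definability of $S$ to obtain a positive existential $\mathcal{L}(M)$-formula $\varphi_{i}(y_{i})$ approximating $\mathrm{dist}(\,\cdot\,,S^{D_{i},\cdot})$ on $D_{i}$ to within $\delta/4$, uniformly over $\mathcal{C}$. Consider
\begin{equation*}
\Theta:=\inf_{\bar y\in\prod_{i}D_{i}}\ \max\Bigl\{\,\psi(\bar c,\bar y),\ K\bigl(\varphi_{1}(y_{1})-\tfrac{\delta}{4}\bigr)_{+},\ \dots,\ K\bigl(\varphi_{n}(y_{n})-\tfrac{\delta}{4}\bigr)_{+}\,\Bigr\},
\end{equation*}
where $(\,\cdot\,)_{+}$ denotes positive part and $K$ is a constant, large relative to $r$ and $\delta$. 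Since $\max$ and $s\mapsto K(s-\tfrac{\delta}{4})_{+}$ are continuous and nondecreasing, $\Theta$ is---after pulling the infima inside the $\varphi_{i}$ to the front---a positive existential $\mathcal{L}(M)$-condition of the form to which the definition of positive existentiality applies.

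Restricting the infimum defining $\Theta^{N}$ to the smaller set $\prod_{i}S^{D_{i},N}$, on which every $\varphi_{i}$ is less than $\delta/4$ so that the $K(\,\cdot\,)_{+}$-terms vanish, gives $\Theta^{N}\le\inf_{\bar b\in\prod_{i}S^{D_{i},N}}\psi^{N}(\bar c,\bar b)\le r$, using that $S(N)$ is an $\mathcal{L}$-substructure of $N$. Since $T\colon M\to N$ is positively $\mathcal{L}$-existential, $\Theta^{M}\le r$. Unwinding this produces $\bar b\in\prod_{i}D_{i}^{M}$ with $\psi^{M}(\bar c,\bar b)$ arbitrarily close to $r$ and with each $K(\varphi_{i}^{M}(b_{i})-\tfrac{\delta}{4})_{+}$ likewise; choosing the slack small and $K$ large forces $\varphi_{i}^{M}(b_{i})<\delta/2$, hence $\mathrm{dist}(b_{i},S^{D_{i},M})<\delta$. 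Replacing each $b_{i}$ by some $b_{i}'\in S^{D_{i},M}$ with $d_{i}(b_{i},b_{i}')<\delta$ and invoking the choice of $\delta$, we obtain $\psi^{M}(\bar c,\bar b')<r+\varepsilon$ with $\bar b'$ in the domains of $S(M)$; since $S(M)$ is an $\mathcal{L}$-substructure, this says the condition $\psi(\bar y)\le r+\varepsilon$ is satisfied in $S(M)$. As $\varepsilon$ was arbitrary, $T|_{S(M)}$ is positively $\mathcal{L}$-existential, completing the proof. I expect this last packaging---producing one positive existential $\mathcal{L}(M)$-condition whose value in $N$ is controlled by $r$ and from which a genuine $S(M)$-witness can be extracted in $M$---to be the only real obstacle; the truncation, the multiplier $K$, and the uniform continuity of $\psi$ are exactly what make it work. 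One can instead argue semantically via Proposition~\ref{Proposition:existential-characterize}, provided one first checks that a positively existentially definable set commutes with the relevant reduced powers.
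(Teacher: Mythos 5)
Your proof is correct and takes the route the paper intends: the paper gives no argument beyond declaring the statement ``an easy consequence of the definition,'' and your direct verification from the definitions supplies exactly the missing details, with the only genuinely non-routine step --- relativizing the existential witness to $S$ by combining $\psi$ with the truncated, rescaled distance-approximating formulas $\varphi_i$ under $\max$ and then using uniform continuity of $\psi$ to move the witness into $S(M)$ --- handled correctly. No gaps.
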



\begin{thebibliography}{10}

\bibitem{barlak_sequentially_2016}
Sel{\c{c}}uk Barlak and G{\'{a}}bor Szab{\'{o}}, \emph{Sequentially split
  *-homomorphisms between {C}*-algebras}, International Journal of Mathematics
  \textbf{27} (2016), no.~13.

\bibitem{barlak_spatial_2017}
Sel{\c{c}}uk Barlak, G{\'{a}}bor Szab{\'{o}}, and Christian Voigt, \emph{The
  spatial {Rokhlin} property for actions of compact quantum groups}, Journal of
  Functional Analysis \textbf{272} (2017), no.~6, 2308--2360.

\bibitem{ben_yaacov_model_2008}
Ita{\"{i}} Ben~Yaacov, Alexander Berenstein, C.~Ward Henson, and Alexander
  Usvyatsov, \emph{Model theory for metric structures}, Model theory with
  applications to algebra and analysis. {Vol}. 2, London {Mathematical}
  {Society} {Lecture} {Note} {Series}, vol. 350, Cambridge University Press,
  2008, pp.~315--427.

\bibitem{carlson_omitting_2014}
Kevin Carlson, Enoch Cheung, Ilijas Farah, Alexander Gerhardt-Bourke, Bradd
  Hart, Leanne Mezuman, Nigel Sequeira, and Alexander Sherman, \emph{Omitting
  types and {AF} algebras}, Archive for Mathematical Logic \textbf{53} (2014),
  no.~1-2, 157--169.

\bibitem{chang_model_1977}
Chen~C. Chang and H.~Jerome Keisler, \emph{Model theory}, second ed.,
  North-Holland Publishing Co., Amsterdam-New York-Oxford, 1977, Studies in
  Logic and the Foundations of Mathematics, 73. \MR{0532927}

\bibitem{de_commer_actions_2016}
Kenny De~Commer, \emph{Actions of compact quantum groups}, arXiv:1604.00159
  (2016).

\bibitem{eagle_fraisse_2016}
Christopher~J. Eagle, Ilijas Farah, Bradd Hart, Boris Kadets, Vladyslav
  Kalashnyk, and Martino Lupini, \emph{Fra{\"{i}}ss{\'{e}} limits of
  {C}*-algebras}, Journal of Symbolic Logic \textbf{81} (2016), no.~2,
  755--773.

\bibitem{eagle_pseudoarc_2016}
Christopher~J. Eagle, Isaac Goldbring, and Alessandro Vignati, \emph{The
  pseudoarc is a co-existentially closed continuum}, Topology and its
  Applications \textbf{207} (2016), 1--9.

\bibitem{eagle_saturation_2015}
Christopher~J. Eagle and Alessandro Vignati, \emph{Saturation and elementary
  equivalence of {C}*-algebras}, Journal of Functional Analysis \textbf{269}
  (2015), no.~8, 2631--2664.

\bibitem{ellwood_new_2000}
David~Alexandre Ellwood, \emph{A {new} {characterisation} of {principal}
  {actions}}, Journal of Functional Analysis \textbf{173} (2000), no.~1,
  49--60.

\bibitem{farah_countable_2013}
Ilijas Farah and Bradd Hart, \emph{Countable saturation of corona algebras},
  Comptes Rendus Math{\'{e}}matiques de l'Acad{\'{e}}mie des Sciences
  \textbf{35} (2013), no.~2, 35--56.

\bibitem{farah_model_2016}
Ilijas Farah, Bradd Hart, Martino Lupini, Leonel Robert, Aaron Tikuisis,
  Alessandro Vignati, and Winter Winter, \emph{Model {theory} of
  {C}*-algebras}, arXiv:1602.08072 (2016).

\bibitem{farah_relative_2017}
Ilijas Farah, Bradd Hart, Mikael R{\o}rdam, and Aaron Tikuisis, \emph{Relative
  commutants of strongly self-absorbing {C}*-algebras}, Selecta Mathematica
  \textbf{23} (2017), no.~1, 363--387.

\bibitem{farah_model_2013}
Ilijas Farah, Bradd Hart, and David Sherman, \emph{Model theory of operator
  algebras {I}: {stability}}, Bulletin of the London Mathematical Society
  \textbf{45} (2013), no.~4, 825--838.

\bibitem{farah_model_2014}
\bysame, \emph{Model theory of operator algebras {II}: model theory}, Israel
  Journal of Mathematics \textbf{201} (2014), no.~1, 477--505.

\bibitem{farah_model_2014-1}
\bysame, \emph{Model theory of operator algebras {III}: elementary equivalence
  and {II}{$_1$} factors}, Bulletin of the London Mathematical Society
  \textbf{46} (2014), no.~3, 609--628.

\bibitem{gardella_crossed_2014}
Eusebio Gardella, \emph{Crossed products by compact group actions with the
  {Rokhlin} property}, Journal of Noncommutative Geometry, in press.

\bibitem{gardella_regularity_?}
\bysame, \emph{Regularity properties and {Rokhlin} dimension for compact group
  actions}, Houston Journal of Mathematics, in press.

\bibitem{gardella_rokhlin_2014}
\bysame, \emph{Rokhlin dimension for compact group actions}, Indiana Journal of
  Mathematics, in press.

\bibitem{gardella_classification_2014}
\bysame, \emph{Classification theorems for circle actions on {Kirchberg}
  algebras, {II}}, arXiv:1406.1208 (2014).

\bibitem{gardella_rokhlin_2016}
Eusebio Gardella, Ilan Hirshberg, and Luis Santiago, \emph{Rokhlin dimension:
  tracial properties and crossed products}, preprint.

\bibitem{gardella_equivariant_2016}
Eusebio Gardella and Martino Lupini, \emph{Equivariant logic and applications
  to {C}*-dynamics}, arXiv:1608.05532 (2016).

\bibitem{ghasemi_reduced_2016}
Saeed Ghasemi, \emph{Reduced products of metric structures: a metric
  {Feferman}-{Vaught} theorem}, The Journal of Symbolic Logic \textbf{81}
  (2016), no.~3, 856--875.

\bibitem{goldbring_kirchbergs_2015}
Isaac Goldbring and Thomas Sinclair, \emph{On {Kirchberg}'s embedding problem},
  Journal of Functional Analysis \textbf{269} (2015), no.~1, 155--198.

\bibitem{goldbring_axiomatizability_2016}
\bysame, \emph{On the axiomatizability of {C}*-algebras as operator systems},
  arXiv:1603.05444 (2016).

\bibitem{heinrich_ultraproducts_1980}
Stefan Heinrich, \emph{Ultraproducts in {Banach} space theory}, Journal
  f{\"{u}}r die Reine und Angewandte Mathematik \textbf{313} (1980), 72--104.
  \MR{552464}

\bibitem{hirshberg_rokhlin_2016}
Ilan Hirshberg, G{\'{a}}bor Szab{\'{o}}, Wilhelm Winter, and Jianchao Wu,
  \emph{Rokhlin dimension for flows}, arXiv:1607.02222 (2016).

\bibitem{hirshberg_rokhlin_2015}
Ilan Hirshberg, Wilhelm Winter, and Joachim Zacharias, \emph{Rokhlin dimension
  and {C}*-dynamics}, Communications in Mathematical Physics \textbf{335}
  (2015), no.~2, 637--670.

\bibitem{izumi_finite_2004}
Masaki Izumi, \emph{Finite group actions on {C}*-algebras with the {Rohlin}
  property, {I}}, Duke Mathematical Journal \textbf{122} (2004), no.~2,
  233--280.

\bibitem{kirchberg_central_2006}
Eberhard Kirchberg, \emph{Central sequences in {C}*-algebras and strongly
  purely infinite algebras}, Operator {Algebras}: {The} {Abel} {Symposium}
  2004, Abel {Symp}., vol.~1, Springer, Berlin, 2006, pp.~175--231.

\bibitem{kirchberg_covering_2004}
Eberhard Kirchberg and Wilhelm Winter, \emph{Covering dimension and
  quasidiagonality}, International Journal of Mathematics \textbf{15} (2004),
  no.~01, 63--85.

\bibitem{kishimoto_rohlin_1995}
Akitaka Kishimoto, \emph{The {Rohlin} property for automorphisms of {UHF}
  algebras}, Journal f{\"{u}}r die reine und angewandte Mathematik
  \textbf{1995} (1995), no.~465, 183--196.

\bibitem{kodaka_rohlin_2015}
Kazunori Kodaka and Tamotsu Teruya, \emph{The {Rohlin} property for coactions
  of finite dimensional {C}*-{Hopf} algebras on unital {C}*-algebras}, Journal
  of Operator Theory \textbf{74} (2015), no.~2, 329--369. \MR{3431936}

\bibitem{kustermans_survey_1999}
Johan Kustermans and Lars Tuset, \emph{A survey of {C}*-algebraic quantum
  groups. {I}}, Irish Mathematical Society Bulletin (1999), no.~43, 8--63.

\bibitem{kustermans_survey_2000}
\bysame, \emph{A survey of {C}*-algebraic quantum groups. {II}}, Irish
  Mathematical Society Bulletin (2000), no.~44, 6--54.

\bibitem{kustermans_locally_2000}
Johan Kustermans and Stefaan Vaes, \emph{Locally compact quantum groups},
  Annales Scientifiques de l’École Normale Supérieure \textbf{33} (2000),
  no.~6, 837--934.

\bibitem{kustermans_operator_2000}
\bysame, \emph{The operator algebra approach to quantum groups}, Proceedings of
  the National Academy of Sciences of the United States of America \textbf{97}
  (2000), no.~2, 547--552 (electronic).

\bibitem{kustermans_locally_2003}
\bysame, \emph{Locally compact quantum groups in the von {Neumann} algebraic
  setting}, Mathematica Scandinavica \textbf{92} (2003), no.~1, 68--92.

\bibitem{maes_notes_1998}
Ann Maes and Alfons Van~Daele, \emph{Notes on compact quantum groups}, Nieuw
  Archief voor Wiskunde. Vierde Serie \textbf{16} (1998), no.~1-2, 73--112.
  \MR{1645264}

\bibitem{masumoto_fraisse_2016}
Shuhei Masumoto, \emph{A {Fra\"{i}ss\'{e}} theoretic approach to the
  {Jiang}--{Su} algebra}, arXiv:1612.00646 (2016).

\bibitem{masumoto_jiang-su_2016}
\bysame, \emph{Jiang-{Su} {Algebra} as a {Fra\"{i}ss\'{e}} {limit}},
  arXiv:1602.00124 (2016).

\bibitem{nawata_finite_2016}
Norio Nawata, \emph{Finite group actions on certain stably projectionless
  {C}*-algebras with the {Rohlin} property}, Transactions of the American
  Mathematical Society \textbf{368} (2016), no.~1, 471--493. \MR{3413870}

\bibitem{nest_equivariant_2010}
Ryszard Nest and Christian Voigt, \emph{Equivariant {Poincar{\'{e}}} duality
  for quantum group actions}, Journal of Functional Analysis \textbf{258}
  (2010), no.~5, 1466--1503.

\bibitem{pachl_uniform_2013}
Jan Pachl, \emph{Uniform spaces and measures}, Fields {Institute} {Monographs},
  vol.~30, Springer, New York, 2013.

\bibitem{podles_symmetries_1995}
Piotr Podle{\'{s}}, \emph{Symmetries of quantum spaces. {Subgroups} and
  quotient spaces of quantum {$\mathrm{SU}_2$} and {$\mathrm{SO}_3$} groups},
  Communications in Mathematical Physics \textbf{170} (1995), no.~1, 1--20.
  \MR{MR1331688}

\bibitem{rainone_crossed_2016}
Timothy Rainone and Christopher Schafhauser, \emph{Crossed products of nuclear
  {C}*-algebras by free groups and their traces}, {arXiv}:1601.06090 (2017).

\bibitem{timmermann_invitation_2008}
Thomas Timmermann, \emph{An invitation to quantum groups and duality}, {EMS}
  {Textbooks} in {Mathematics}, European Mathematical Society, 2008.

\bibitem{vaes_locally_2001}
Stefaan Vaes, \emph{Locally compact quantum groups}, Ph.D. thesis, Katholieke
  Universiteit Leuven, 2001.

\bibitem{van_daele_discrete_1996}
Alfons Van~Daele, \emph{Discrete {quantum} {groups}}, Journal of Algebra
  \textbf{180} (1996), no.~2, 431--444.

\bibitem{winter_completely_2009}
Wilhelm Winter and Joachim Zacharias, \emph{Completely positive maps of order
  zero}, M{\"{u}}nster Journal of Mathematics \textbf{2} (2009), 311--324.

\bibitem{winter_nuclear_2010}
\bysame, \emph{The nuclear dimension of {C}*-algebras}, Advances in Mathematics
  \textbf{224} (2010), no.~2, 461--498.

\end{thebibliography}

\providecommand{\MR}[1]{}
\providecommand{\bysame}{\leavevmode\hbox to3em{\hrulefill}\thinspace}
\providecommand{\MR}{\relax\ifhmode\unskip\space\fi MR }
\providecommand{\MRhref}[2]{%
  \href{http://www.ams.org/mathscinet-getitem?mr=#1}{#2}
}
\providecommand{\href}[2]{#2}

\end{document}